\newcommand{\xmark}{\text{\ding{55}}}
\newcolumntype{M}[1]{>{\centering\arraybackslash}m{#1}}
\newcolumntype{N}{@{}m{0pt}@{}}
\newtheorem{theorem}{Theorem}[section]
\newtheorem{prop}[theorem]{Proposition}
\newtheorem{definition}[theorem]{Definition}
\newtheorem{lemma}[theorem]{Lemma}
\newtheorem{corollary}[theorem]{Corollary}
\newtheorem{remark}[theorem]{Remark}
\newtheorem{example}[theorem]{Example}
\def\liminf{\mathop{\rm lim\,inf}\limits}
\def\N{\mathbb{N}}
\def\R{\mathbb{R}}
\def\m{\mathtt{m}}
\def\eps{\varepsilon}
\def\param{\boldsymbol{\theta}}
\def\Param{\boldsymbol{\Theta}}
\definecolor{hancolor}{rgb}{0.1, 0.5, 0.7}
\definecolor{YLcolor}{rgb}{0.8, 0.1, 0.1}
\DeclareMathOperator{\diag}{diag}
\DeclareMathOperator*{\argmin}{arg\,min}
\DeclareMathOperator*{\argmax}{arg\,max}
\DeclareMathOperator{\rtr}{\textup{Rtr}}
\DeclareMathOperator{\rinj}{r_{\textup{inj}}}
\DeclareMathOperator{\rrtr}{r_{\textup{Rtr}}}
\DeclareMathOperator{\rexp}{r_{\textup{Exp}}}
\def\M{\mathcal{M}}
\newcommand{\Exp}{\operatorname{Exp}}
\newcommand{\grad}{\operatorname{grad}}
\newlength\myindent
\newenvironment{customassumption}[1]
{\innercustomassumption}
{\endinnercustomassumption}
\theoremstyle{definition}
\newcommand{\addresseshere}{%
	\enddoc@text\let\enddoc@text\relax
}
\begin{document}
	\title[Convergence and complexity of block majorization-minimization on Riemannian manifolds]{Convergence and complexity of block majorization-minimization \\ for constrained block-Riemannian optimization
	}

	\author{Yuchen Li}
	\address{Department of Mathematics, University of Wisconsin-Madison, 480 Lincoln Dr., Madison, WI 53706, USA}
	\thanks{}
	\email{yli966@wisc.edu}

	\author{Laura Balzano}
	\address{Department of Electrical Engineering and Computer Science, University of Michigan, Ann Arbor, MI 48109, USA}
	\thanks{}
	\email{girasole@umich.edu}

	\author{Deanna Needell}
	\address{Department of Mathematics\\
		University of California, 
		Los Angeles, CA 90025, USA}
	\thanks{}
	\email{deanna@math.ucla.edu}

	\author{Hanbaek Lyu}
	\address{Department of Mathematics, University of Wisconsin-Madison, 480 Lincoln Dr., Madison, WI 53706, USA}
	\thanks{}
	\email{hlyu@math.wisc.edu}

	\begin{abstract}
		Block majorization-minimization (BMM) is a simple iterative algorithm for nonconvex optimization that sequentially minimizes a majorizing surrogate of the objective function in each block coordinate while the other block coordinates are held fixed. We consider a family of BMM algorithms for minimizing smooth nonconvex objectives, where each parameter block is constrained within a subset of a Riemannian manifold. We establish that this algorithm converges asymptotically to the set of stationary points, and attains an $\epsilon$-stationary point within $\widetilde{O}(\eps^{-2})$ iterations. In particular, the assumptions for our complexity results are completely Euclidean when the underlying manifold is a product of Euclidean or Stiefel manifolds, although our analysis makes explicit use of the Riemannian geometry. Our general analysis applies to a wide range of algorithms with Riemannian constraints: Riemannian MM, block projected gradient descent, optimistic likelihood estimation, geodesically constrained subspace tracking, robust PCA, and Riemannian CP-dictionary-learning.  We experimentally validate that our algorithm converges faster than standard Euclidean algorithms applied to the Riemannian setting. 
	\end{abstract}
	
	\maketitle

 \tableofcontents

\section{Introduction}
	Optimization over Riemannian manifolds has a wide array of applications ranging from the computation of linear algebraic quantities and factorizations and problems with nonlinear differentiable constraints to the analysis of shape space and automated learning \cite{ring2012optimization,jaquier2020bayesian}. These applications arise either because of implicit constraints on the problem to be optimized or because the domain is naturally defined as a manifold. Although not nearly as abundant as in the Euclidean optimization setting, there are various methods employed for such optimization \cite{baker2008implicit,yang2007globally, boumal2019global,edelman1998geometry}.  Most of these approaches follow standard optimization techniques by iteratively computing a descent direction and taking a step in that direction along a geodesic.  This computation is often challenging in practice so other approaches alleviate this burden by utilizing approximations or imposing additional assumptions \cite{absil2008optimization,boumal2023introduction}. Here, we consider a block majorization-minimization approach, based on the well-known majorization-minimization (MM) method \cite{lange2000optimization}. The MM method, as its name suggests, consists of two main steps. The majorization step identifies an upper bounding surrogate for the objective function and the minimization step then minimizes that surrogate.  The approach leads to iterations that decrease the objective function, with the benefit that each minimization of a well-chosen surrogate is computationally more efficient than the original problem. Such an approach has also been applied in the context of unconstrained manifold optimization with applications to robust sparse PCA on the Stiefel manifold \cite{breloy2021majorization}. Here unconstrained manifold optimization means the constraint set is only the manifold itself.  When the underlying manifold is a Hadamard manifold (i.e., complete, simply connected manifold with nonpositive sectional curvature everywhere), then one can consider majorizing the objective function by adding the Riemannian distance squared function. The resulting MM algorithm corresponds to the Riemannian proximal point method \cite{li2009monotone, bento2017iteration}.

	In this paper, we consider the minimization of a sufficiently smooth function $f:\Param=\Theta^{(1)}\times \dots \times \Theta^{(m)}\rightarrow \R$. Each constraint set $\Theta^{(i)}$ is a closed subset of a Riemannian manifold $\mathcal{M}^{(i)}$. More precisely, we seek to solve 
	\begin{align}\label{eq:def_CROPT_block}
		\min_{ \substack{\param=[\theta^{(1)},\dots,\theta^{(m)}] \\ \theta^{(i)} \in \Theta^{(i)} \subseteq \mathcal{M}^{(i)} \,\, \textup{for $i=1,\dots,m$}}  } f(\param).
	\end{align} 
    As the problem \eqref{eq:def_CROPT_block} is typically nonconvex, it is not always reasonable to expect that an algorithm would converge to a globally optimal solution starting from an arbitrary initialization. Instead, we aim to provide global convergence (from arbitrary initialization) to stationary points. In some problem classes, stationary points could be as good as global optimizers practically as well as theoretically (see \cite{mairal2010online, sun2015nonconvex}). Furthermore, obtaining the iteration complexity of such algorithms is of importance both for theoretical and practical purposes. In particular, one aims to bound the worst-case number of iterations to achieve an $\eps$-approximate stationary point (defined appropriately, see Sec. \ref{sec:optimality_measures}). 
    
	In order to obtain a first-order optimal solution to  \eqref{eq:def_CROPT_block}, we consider various Riemannian generalizations of the \textit{Block Majorization-Minimization} (BMM) algorithm in Euclidean space \cite{hong2015unified}. The high-level idea of BMM is that, in order to minimize a multi-block objective, one can minimize a majorizing surrogate of the objective in each block in a cyclic order. The algorithm we study in the present work, which we call \textit{Riemannian Block Majorization-Minimization} (RBMM), can be stated in a high-level as follows (see Algorithm \ref{algorithm:BMM} for the full statement): 
    \begin{align}\label{eq:RBMM}
    \hspace{-0.5cm} \textbf{\textup{RBMM}:} \, 
    \begin{cases}
&\hspace{-0.1cm}\textup{For $i=1,2,\dots, m$:}\\
        &g_{n}^{(i)} \leftarrow \left[ \textup{Majorizing surrogate of $\theta\mapsto f_{n}^{(i)}(\theta):=f\left(\theta_{n}^{(1)},\cdots,\theta_{n}^{(i-1)},\theta,\theta_{n-1}^{(i+1)},\cdots, \theta_{n-1}^{(m)}\right)$} \right] \\
       &  \theta_{n}^{(i)}\in \argmin_{\theta\in \Theta^{(i)}}  g_{n}^{(i)}(\theta).
    \end{cases}
    \end{align}
   In this work, we carefully analyze RBMM \eqref{eq:RBMM} in various settings and obtain first-order optimality guarantees and iteration complexity. Moreover, the connection between our RBMM and other existing algorithms is studied, providing complexity results to some of them for the first time in the literature.

    As a preview, we provide a special case of Corollary  \ref{cor:prox_Stiefel} of our main results concerning iteration complexity of RBMM on Stiefel manifolds (manifolds of orthonormal frames, see Ex. \ref{eg:stiefel}). Note the term $L$-smooth in the following corollary indicates the gradient of the function is $L$-Lipschitz continuous (see Sec. \ref{sec:results_option1} for comparison with geodesic smoothness). The $\widetilde{O}(\cdot)$ notation is the variant of ``big-O" notation that ignores the logarithmic factors.
    \begin{corollary}[Complexity of RBMM on Stiefel manifolds]\label{cor:prox_Stiefel_informal}
    Suppose we are minimizing an $L$-smooth function $f$ on the product of the Stiefel manifolds using RBMM \eqref{eq:RBMM} with $L'$-smooth surrogates with quadratic majorization gap 
    \begin{align}
    g_{n}^{(i)}(\theta)- f_{n}^{(i)}(\theta) \ge c \lVert \theta-\theta_{n-1}^{(i)} \rVert^{2}
    \end{align}
    for all $n\ge 1$ and $i=1,\dots,m$ for some $c>0$. Then the iterates asymptotically converge to the set of stationary points and the algorithm has iteration complexity of $\widetilde{O}(\eps^{-2})$. 
    \end{corollary}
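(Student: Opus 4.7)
The plan is to obtain Corollary \ref{cor:prox_Stiefel_informal} as a direct specialization of the general iteration-complexity theorem for RBMM (the detailed form being Corollary \ref{cor:prox_Stiefel}) established earlier in the paper. The task therefore reduces to verifying three abstract hypotheses in the product Stiefel setting under only the stated Euclidean smoothness: a geometric regularity condition on the constraint manifold, a Riemannian smoothness condition on $f$ and the surrogates, and a quadratic surrogate gap.

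The first step is to record the geometry. Each Stiefel factor is a compact, real-analytic, embedded submanifold of Euclidean space with bounded sectional curvature, positive injectivity radius, and a smooth globally defined retraction (for example the QR or polar retraction) whose discrepancy from the exponential map is quadratic in the tangent vector norm. Compactness in particular confines every iterate to a bounded Euclidean ball, which is what converts Euclidean smoothness into the stronger Riemannian smoothness required by the general theorem: on this compact set, the $L$-smoothness of $f$ yields a quadratic Euclidean upper model along secants, and combined with the bounded second fundamental form of the embedding this implies geodesic smoothness with a constant depending only on $L$ and the manifold geometry. The same transfer applies blockwise to each surrogate $g_n^{(i)}$ via its $L'$-smoothness.

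The second step is the descent estimate. Exact minimization $\theta_n^{(i)} \in \argmin_{\theta\in \Theta^{(i)}} g_n^{(i)}(\theta)$, combined with the majorization $g_n^{(i)} \ge f_n^{(i)}$ (with tangency at $\theta_{n-1}^{(i)}$) and the quadratic gap, yields the per-block decrease
\begin{align}
f_n^{(i)}(\theta_{n-1}^{(i)}) - f_n^{(i)}(\theta_n^{(i)}) \;\ge\; c\,\lVert \theta_n^{(i)} - \theta_{n-1}^{(i)} \rVert^{2}.
\end{align}
Chaining this across the $m$ blocks and telescoping over $n$ (using that $f$ is bounded below on a compact constraint set) produces $\sum_{n\ge 1} \sum_{i=1}^{m} \lVert \theta_n^{(i)} - \theta_{n-1}^{(i)} \rVert^{2} < \infty$, so the block displacements vanish asymptotically.

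The final and most delicate step is translating small displacements into a small Riemannian gradient. At the surrogate minimizer, first-order optimality on the manifold gives $\grad g_n^{(i)}(\theta_n^{(i)}) = 0$; the quadratic gap forces $\grad g_n^{(i)}(\theta_{n-1}^{(i)}) = \grad f_n^{(i)}(\theta_{n-1}^{(i)})$, so $L'$-smoothness of $g_n^{(i)}$, after a suitable parallel transport, controls $\lVert \grad f_n^{(i)}(\theta_n^{(i)}) \rVert$ by a constant multiple of $\lVert \theta_n^{(i)} - \theta_{n-1}^{(i)} \rVert$. Aggregating the $m$ blocks and combining with the summable-displacement bound, a standard averaging (pigeonhole) argument identifies an iterate whose stationarity measure is below $\eps$ within $\widetilde{O}(\eps^{-2})$ iterations, with the logarithmic factor absorbing the overhead of this selection. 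The principal technical obstacle---cleanly handled by the bounded curvature and bounded second fundamental form of the Stiefel embedding---is the careful control of parallel-transport and retraction errors needed to compare gradients at $\theta_{n-1}^{(i)}$ and $\theta_n^{(i)}$ across a curved block.
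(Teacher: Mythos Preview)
Your opening plan and ``first step'' match the paper exactly: Corollary~\ref{cor:prox_Stiefel_informal} is indeed obtained as the special case of Corollary~\ref{cor:prox_Stiefel}, and the only substantive verification is that Euclidean $L$-smoothness of $f$ and $L'$-smoothness of the surrogates transfers to $g$-smoothness on the Stiefel factors (Lemma~\ref{lem:g_smooth_Stiefel}), that the Euclidean quadratic gap implies the geodesic one (Lemma~\ref{lem:equiv_dist}), and that the injectivity radius is uniformly positive. Once those are recorded, the paper simply invokes Theorems~\ref{thm:RBMM_2} and~\ref{thm:BMM_rate}; nothing further is needed.

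Your ``second step'' and ``final step'' go beyond this: rather than invoking the general theorem, you sketch a direct descent-plus-gradient-bound argument. This is a genuinely different route from the paper's proof of Theorem~\ref{thm:BMM_rate}, which works through the constrained optimality measure via Propositions~\ref{prop:asymptotic_first-order_optimality} and~\ref{prop:surrogate_gap_function_gap} and an auxiliary sequence $b_n$. Your direct argument is simpler and in fact avoids the logarithmic factor, but it relies crucially on $\grad g_n^{(i)}(\theta_n^{(i)})=0$, which holds only because $\Theta^{(i)}$ is the entire Stiefel manifold; the paper's machinery is built to handle the more general constrained case of Corollary~\ref{cor:prox_Stiefel}, where the surrogate minimizer need not be a critical point.

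One point in your direct argument is glossed over and would need care: you bound $\lVert\grad f_n^{(i)}(\theta_n^{(i)})\rVert$, but the stationarity measure involves $\grad_i f(\param_n)$, and these differ because $f_n^{(i)}$ freezes the later blocks at $\theta_{n-1}^{(j)}$ rather than $\theta_n^{(j)}$. Closing this gap requires one more application of the $g$-smoothness of $f$ across blocks, contributing an extra $O\bigl(\sum_{j>i} d(\theta_{n-1}^{(j)},\theta_n^{(j)})\bigr)$ term; this is harmless given the summable-displacement bound, but should be stated. Also, the bound on $\lVert\grad f_n^{(i)}(\theta_n^{(i)})\rVert$ uses both the $g$-smoothness of $g_n^{(i)}$ (to go from $\theta_n^{(i)}$ to $\theta_{n-1}^{(i)}$) and of $f$ (to come back), not $L'$-smoothness alone as you wrote.
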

    For instance, Euclidean block-proximal updates or Euclidean prox-linear updates on the product of Stiefel manifolds have iteration complexity $\widetilde{O}(\eps^{-2})$. To the best of our knowledge, these type of iteration complexity results for block Riemannian optimization methods are new to the literature.  Note that the conditions we need to check for applying Corollary \ref{cor:prox_Stiefel_informal} (and its generalization Cor. \ref{cor:prox_Stiefel}) are completely Euclidean. However, our proof of these corollaries  incorporates Riemannian geometry in a substantial manner.

	\subsection{Related work}
 \label{sec:related_work}
	Block majorization-minimization methods in the  Euclidean setting for convex problems are studied in 
 \cite{hong2017iteration} with general surrogates, in \cite{xu2013block} with prox-linear surrogates. On the other hand, the majorization-minimization method on Riemannian manifolds is studied in \cite{li2009monotone} and \cite{bento2017iteration} with proximal surrogates on Hadamard manifolds, in \cite{chen2020proximal} with tangent prox-linear surrogates on Stiefel manifolds. These works under the  Riemannian setting, however, only consider the MM method in the one-block case.

 Recently, a tangential type of block coordinate descent has been discussed in \cite{gutman2023coordinate}. There, the authors established a sublinear convergence rate for an interesting block-wise Riemannian gradient descent. However, the retraction considered there is restricted to the exponential map, which excludes many commonly used retractions in the literature.

 Most recently, block coordinate descent on smooth manifolds has been studied in \cite{peng2023block}. The authors studied three types of block coordinate methods on manifolds, namely block majorization-minimization (BMM), block coordinate descent (BCD), and block Riemannian gradient descent (BRGD), as well as the blending of the latter two. Asymptotic convergence to stationary points of BCD and BMM is guaranteed by Theorem 1 and Theorem 2, respectively. Furthermore, the convergence rate of BRGD, as well as its blending with BCD, is shown in their Theorem 3 and Theorem 4, where the manifold is assumed to be a compact smooth submanifold of $\R^n$. It is important to note, however, that convergence (rate) results are applicable to unconstrained problems with exact computations, i.e. an exact minimizer is needed for each sub-problem. Additionally, iteration complexity of $\widetilde{O}(\eps^{-2})$ is established for BMM on compact manifolds.

 In Table \ref{table:summary}, we provide a summary of the aforementioned related works, along with the details.

	\begin{table}[H]
            \small
		\centering
		\begin{tabular}{cccccccc}
			\multicolumn{1}{c}{Methods}& Manifold & Objective & Constraints & Blocks & Complexity & $\begin{matrix} \text{Inexact} \\ \text{comp.} \end{matrix}$ \\ \hline
			\multicolumn{1}{l}{Euclidean BMM \cite{hong2017iteration}}&\multirow{2}{*}{Euclidean} &\multirow{2}{*}{convex} & \multirow{2}{*}{convex} &\multirow{2}{*}{many} &  \multirow{2}{*}{$\widetilde{O}(\eps^{-1})$} &\multirow{2}{*}{\xmark}\\
			\multicolumn{1}{l}{Euclidean Block PGD \cite{beck2013convergence}} &&&&&&\\
   \multicolumn{1}{l}{Euclidean BMM-DR \cite{lyu2023block}} &Euclidean & non-convex &convex &many &$\widetilde{O}(\eps^{-2})$& \checkmark\\
   \hline
			\multicolumn{1}{l}{Riemannian prox. \cite{li2009monotone} } &Hadamard & $g$-convex & $g$-convex &1 &- &\xmark \\
			\multicolumn{1}{l}{Riemannian prox. \cite{bento2017iteration}} &Hadamard & $g$-convex & $g$-convex &1 &$\widetilde{O}(\eps^{-1})$ &\xmark\\
			\multicolumn{1}{l}{Riemannian Prox-linear}& Riemannian \& & non-convex \& & \multirow{2}{*}{N/A} & \multirow{2}{*}{1} & 
			\multirow{2}{*}{$\widetilde{O}(\eps^{-2})$} & \multirow{2}{*}{\xmark}\\
			\multicolumn{1}{l}{(linear search)\cite{chen2020proximal}} &Compact & smooth$^{\dagger}$ & & & &\\
                \multicolumn{1}{l}{Block Riemannian GD} & \multirow{2}{*}{Riemannian} & \multirow{2}{*}{non-convex} & \multirow{2}{*}{N/A} & \multirow{2}{*}{many} & 
			\multirow{2}{*}{$\widetilde{O}(\eps^{-2})$} & \multirow{2}{*}{\xmark}\\
                \multicolumn{1}{l}{(exponential map)\cite{gutman2023coordinate}} & & & & & &\\
                \multicolumn{1}{l}{BMM on manifolds\cite{peng2023block}} &Compact & non-convex &N/A &many &$\widetilde{O}(\eps^{-2})$ &\xmark\\
			\hline
			\textbf{RBMM} (\textbf{Ours}) with surrogates:& & & & & &\\
			 $g$-smooth (Thm. \ref{thm:BMM_rate}) & Riemannian & non-convex & $g$-convex & many & $\widetilde{O}(\eps^{-4})^{\dagger}$ &\checkmark\\[5pt]
			 Riemannian proximal  & \multirow{2}{*}{Riemannian} & \multirow{2}{*}{non-convex} & \multirow{2}{*}{$g$-convex} & \multirow{2}{*}{many} & \multirow{2}{*}{$\widetilde{O}(\eps^{-2})$} &\multirow{2}{*}{\checkmark}\\
    (Thm. \ref{thm:BMM_prox})& & & & & & \\[5pt]
    \multirow{1}{*}{Euclidean proximal} & Riemannian & \multirow{2}{*}{non-convex} & $g$-convex \& & \multirow{2}{*}{many} & \multirow{2}{*}{$\widetilde{O}(\eps^{-2})$} &\multirow{2}{*}{\checkmark}\\
    (Thm. \ref{thm:BMM_prox})&$\subseteq$ Euclidean & &compact & & &\\[5pt]
   
   \multirow{2}{*}{Smooth (Cor. \ref{cor:prox_Stiefel})} & Euclidean/ &non-convex \&& convex/  & \multirow{2}{*}{many} & \multirow{2}{*}{$\widetilde{O}(\eps^{-2})$} &\multirow{2}{*}{\checkmark}\\
            &Stiefel &smooth$^{\dagger}$ &$g$-convex &\\
   \hline 
		\end{tabular}
        
		\caption{Our main contributions and comparison to existing results. ``$g$-smooth" means being geodesically smooth with respect to the geometry of the underlying manifold, and ``smooth'' means being smooth with respect to the Euclidean geometry. ``$g$-convex'' means geodesic convexity of subsets of manifolds. $\widetilde{O}(\cdot)$ notations means big-$O$ up to logarithmic factors. The objective function marked by ``smooth$^{\dagger}$'' only needs to be smooth in the Euclidean sense; In all other cases, it is required to be $g$-smooth with respect to the underlying manifold. The last column shows whether the method allows the inexact solution to a subproblem, i.e. the robustness under inexact computation. Details of comparison to known results can be found in Section \ref{sec:examples}. The complexity of $\widetilde{O}(\eps^{-4})$ in the first row of RBMM can be improved to $\widetilde{O}(\eps^{-2})$ under some additional assumption, see Theorem \ref{thm:BMM_rate}. 
		}
		\label{table:summary} 
	\end{table}

 \subsection{Our contribution}
In this work, we thoroughly analyze RBMM \eqref{eq:RBMM} and obtain asymptotic convergence to stationary points and iteration complexity. The novelty of this work, compared to the aforementioned related work, lies especially in the following three aspects:
\begin{description}
    \item{(1)} (Constrained optimization) RBMM is applicable to constrained optimization problems on manifolds. Here, constrained optimization on manifolds means we allow the constraint set $\Param$ of the optimization problem to be a closed subset of the manifold, i.e. $\Param \subseteq \M$, which is not necessarily the entire manifold.
    \item{(2)} (Iteration complexity) While asymptotic convergence of certain RBMM variants is established in existing literature, the rate of convergences remains unknown. Our work fills this gap by deriving the iteration complexity of RBMM. See Theorems \ref{thm:BMM_prox} and \ref{thm:BMM_rate}.
    \item{(3)} (Robustness) RBMM is robust in the face of inexact computation of optimization sub-problems. See \ref{assumption:A0_optimal_gap}\textbf{(ii)}.
\end{description}

RBMM, as a general Riemannian block optimization framework, entails many classical algorithms including Euclidean block  MM, proximal updates on Hadamard manifolds, and MM methods on Stiefel manifolds. We apply our results to various stylized applications such as geodesically constrained subspace tracking, optimistic likelihood under Fisher-Rao distance, Riemannian CP-dictionary-learning, and robust PCA and obtain the following results:
\begin{description}
    \item{(4)} Asymptotic convergence and complexity of $\widetilde{O}(\eps^{-2})$ for Euclidean block proximal methods when the manifolds are embedded in Euclidean spaces and the constraint sets on the manifolds are compact and $g$-convex. See Theorem \ref{thm:BMM_prox}.
    \item{(5)} Asymptotic convergence and complexity of $\widetilde{O}(\eps^{-2})$ for block MM with Euclidean $L$-smooth surrogate on the product of Stiefel and Euclidean manifolds with $g$-convex constraints. See Corollary \ref{cor:prox_Stiefel}.
    \item{(6)} A Euclidean-regularized version of MM with linear surrogates on the Stiefel manifold is proposed, with guarantees on asymptotic convergence and complexity of $\widetilde{O}(\eps^{-2})$, which can be applied to geodesically constrained subspace tracking problems \cite{blocker2023dynamic}. See Corollary \ref{cor:regularized_linear_Stiefel} and Corollary \ref{cor:geodesic_subspace}.
\end{description}

Table \ref{table:summary} presents a summary of related work, along with our main theoretical contributions.

	\subsection{Organization}

	The paper is organized as follows. We introduce preliminaries and notations in Section \ref{sec:preliminaries_Riemannian} and \ref{sec:notations}. In Section \ref{sec:Algorithm}, we give a precise statement of the RBMM algorithm. We detail the standing assumptions and state our main results in Section \ref{sec:results}. In Section \ref{sec:examples}, we present some applications and classical algorithms as special cases of our general results. We present some stylized applications of our results in Section \ref{sec:apps}. We prove the convergence results of RBMM, Algorithm \ref{algorithm:BMM}, throughout  Section \ref{sec:conv_opt1}. Figure \ref{fig:diagram} provides a structure diagram of the present paper.

	\begin{figure*}[h]
		\centering
		\includegraphics[width=1 \linewidth]{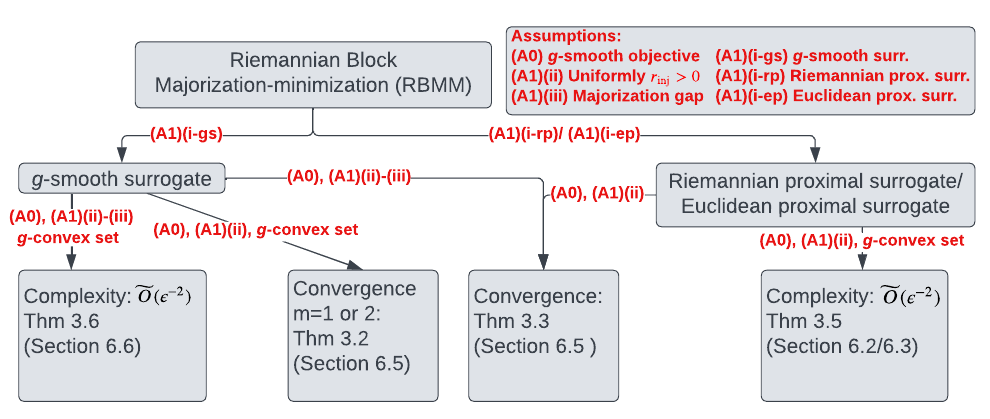}
		\vspace{-0.3cm}
		\caption{ Structure of the present paper. }
		\label{fig:diagram}
	\end{figure*}

	\section{Preliminaries and Algorithm}

    \subsection{Preliminaries on Riemannian geometry}
	\label{sec:preliminaries_Riemannian}

	The notations throughout this paper are consistent with the common literature, see e.g. \cite{absil2008optimization} and \cite{boumal2023introduction}. For background knowledge on Riemannian geometry, we refer the readers to \cite{sakai1996riemannian}, \cite{lee2003introduction}, \cite{do1992riemannian}, and \cite{helgason1979differential}. In this section, we provide a brief introduction to the notations used in our paper, with further details provided in the Appendix \ref{sec:notes}.

	A \textit{Riemannian manifold} $\mathcal{M}$ is a manifold endowed with a Riemannian metric $\left(\eta, \xi\right) \mapsto\left\langle\eta, \xi\right\rangle_x \in \mathbb{R}$, where $\eta$ and $\xi$ are tangent vectors in the tangent space $T_{x}\mathcal{M}$ (also denoted as $T_{x}$ when it is clear from context) of $\mathcal{M}$ at $x$.  This inner product in the tangent space is also denoted by $\langle\cdot, \cdot\rangle$ for convenience when the subscript is clear from the context. The induced norm on the tangent space is denoted by $\|\cdot\|_{x}$ or $\|\cdot\|$. The \textit{Riemannian gradient} of a smooth function $f: \mathcal{M} \rightarrow \R$ at $x$ is defined as the unique tangent vector, $\operatorname{grad} f(x) \in T_{x} \mathcal{M}$, such that $\langle\operatorname{grad} f(x), \xi_{x}\rangle=\mathrm{D} f(x)\left[\xi_{x}\right], \forall \xi_{x} \in T_{x} \mathcal{M}$, where $\mathrm{D} f(x)\left[\xi_{x}\right]$ is the differential of $f$ at point $x$ along the direction $\xi_{x}$. The geodesic distance between $x,y\in \mathcal{M}$ is denoted by $d_{\mathcal{M}}(x,y)$ or $d(x,y)$ when it is clear from the context. 
	
	A \textit{retraction} on a manifold $\mathcal{M}$ is a locally defined smooth mapping $\rtr$ from the tangent bundle $T\mathcal{M}$ to $\mathcal{M}$ with the following properties. 
	\begin{description}
		\item[(i)] For each $x\in \M$, let  $\rrtr(x)>0$ be the `retraction radius' such that the restriction $\rtr_{x} : T_{x}\mathcal{M} \rightarrow \mathcal{M}$ of $\rtr$ to $T_{x}\mathcal{M}$ is well-defined in a ball of radius $\rrtr(x)$ around the origin $\mathbf{0}=\mathbf{0}_{x}$. 
		
		\item[(ii)] $\rtr_{x}(\mathbf{0})=x$;  The differential of $\rtr_{x}$ at $\mathbf{0}$, $D \rtr_{x} (\mathbf{0})$, is the identity map on $T_{x}\mathcal{M}$. 
	\end{description}
	For each $x\in \mathcal{M}$ and $\eta\in T_{x}\mathcal{M}$, the retraction curve $t\mapsto \rtr_{x}(t\eta)$ agrees up to first order with geodesics passing through $x$ with velocity $\eta$ around $x$. Retractions provide a way to lift a function $g:\M\rightarrow \R$ onto the tangent space $T_{x}\M$ via its \textit{pullback} $\hat{g} :=g\circ \rtr_{x}:T_{x}\rightarrow \R$. We use this construction to lift an upper-bounding surrogate defined on the manifold onto the tangent spaces in Algorithm \ref{algorithm:BMM}. Note that for all $\eta\in T_{x}\M$, 
	\begin{align}\label{eq:pullback_derivative}
		\langle \nabla \hat{g}(\mathbf{0}),\, \eta \rangle = D \hat{g}(\mathbf{0})[\eta] = D g(x) [ D \rtr_{x}(\mathbf{0}) [\eta] ] = D g(x) [\eta] = \langle \grad g(x),\, \eta \rangle. 
	\end{align}

	If for all $x\in \M$ and $\eta\in T_{x}\M$, the retraction curve $t\mapsto \rtr_{t \eta}$ coincides with the geodesic curve $t\mapsto \gamma(t)$ with $\gamma(0)=x$ and $\gamma'(0)=\eta$ whenever $ \lVert t \eta \rVert \le \rexp(x)$ for some constant $\rexp(x)>0$,  then the retraction $\rtr$ is called the \textit{exponential map} and denoted as $\Exp$. Note that the exponential map is defined as the solution of a nonlinear ordinary differential equation. While every Riemannian manifold admits the exponential map, its computation is often challenging. Retractions provide computationally efficient alternatives to exponential maps. Some typical choices of retractions are $\rtr_{x}(\eta)=x+\eta$ on Euclidean spaces and $\rtr_{x}(\eta)=\frac{x+\eta}{\lVert x+\eta \rVert}$ on spheres. See \cite[Sec. 4.1]{absil2008optimization} for more examples of retractions. If the exponential map is defined on the entire tangent bundle (i.e., $\rexp(x)=\infty$ for all $x\in \mathcal{M}$), then we say $\mathcal{M}$ is (geodesically) \textit{complete}. Note by definition of exponential map we have $\|\eta\|=d(x,y)$ whenever $d(x,y)\le \rexp(x)$ and $\Exp_{x}(\eta)=y$.

	A Riemannian manifold is locally diffeomorphic to its tangent spaces, so it resembles the Euclidean space within a small metric ball around each point. Accordingly, there are several notions of radius functions $r:\M\rightarrow [0,\infty)$, including the injectivity and the convexity radii. For $x\in\mathcal{M}$, consider the open ball $B(x,r)=\{\eta\in T_x \mathcal{M}:\langle \eta,\eta\rangle<r\}\subseteq T_x \mathcal{M}$; the \textit{injectivity radius} of $\mathcal{M}$ at $x$, denoted as $\rinj(x)$, is the supremum of values of $r$ such that $\Exp_{x}$ defines a diffeomorphism from $B(x,r)$ to its image on $\mathcal{M}$. Thus, we can also define the inverse exponential map from $\mathcal{M}$ to $T_{x}\mathcal{M}$, denoted by $\Exp^{-1}_{x}(\cdot)$, within the injectivity radius. A set $C\subseteq \M$ is called (geodesically) \textit{strongly convex} if for any $x$ and $y$ in $C$, there is a unique minimal geodesic $\gamma$ in $\M$ joining $x$ and $y$, and $\gamma$ is contained in $C$. In particular, 
	compact manifolds have uniformly positive injectivity radius (see \cite[Thm. III.2.3]{chavel2006riemannian}). Furthermore, it is worth noting that Hadamard manifolds (complete and simply connected manifolds with non-positive curvature), which include Euclidean space, hyperbolic space, and manifolds of positive definite matrices, also have uniformly positive injectivity radius (\cite{afsari2011riemannian}, \cite[Theorem 4.1, p.221]{sakai1996riemannian}).
	When the injectivity radius is uniformly positive, there exists a retraction with a uniformly positive retraction radius (e.g., the exponential map).

For a subset $\Param\subseteq \mathcal{M}$ and $x\in \Param$, define the \textit{lifted constraint set} $T_{x}^{*}\mathcal{M}$ as
\begin{align}\label{eq:def_lift_constraints}
	T^{*}_{x} \mathcal{M} &:= \{ u\in T_{x} \mathcal{M} \,|\,  \textup{$\rtr_x(u)=x'$ for some $x'\in \Param$ with $d(x,x')\le r_0 /2$} \},
\end{align}
where $r_0$ is the lower bound of the injectivity radius (see \ref{assumption:A0_optimal_gap} in Section \ref{sec:results}).  When we use exponential map $\Exp$ as the retraction in \eqref{eq:def_lift_constraints}, one can think of the set $T^{*}_{x} \mathcal{M}$ as the `lift' of the constraint set $\Param$ onto the tangent space $T_{x}\M$ in the sense that if $\Param$ is contained in the metric ball of radius $\rinj(x)$ centered at $x$, then $T^{*}_{x} \mathcal{M}$ equals the inverse image $\Exp^{-1}_{x}(\Param)$ of $\Param$ under $\Exp_{x}$. In particular, this formula holds for all subset $\Param$ of $\M$ if $\M$ is a complete Riemannian manifold since $\Exp_{x}$ is defined on the entire $T_{x}\M$. If $\mathcal{M}$ is a Euclidean space and $\Param$ is a convex subset of it, then $T^{*}_{x} \mathcal{M}=\Param$. Lastly, we note that when $\Param$ is strongly convex in $\M$, the set $T_{x}^{*}\M$ above is locally defined near $x$. That is, we can replace the injectivity radius $r_0$ in \eqref{eq:def_lift_constraints} by any constant $\delta\in (0,r_0)$.

	On Riemannian manifolds, parallel transport provides a way to transport a vector along a smooth curve. For each smooth curve $\gamma:[0,1]\rightarrow \mathcal{M}$, denote the \textit{parallel transport} along $\gamma$ from $x=\gamma(0)$ to $y=\gamma(t)$ by $\Gamma^{\gamma}_{x\rightarrow y}$. If $\gamma$ is clear from the context (e.g., the unique distance-minimizing geodesic from $x$ to $y$), then we also write $\Gamma_{x\rightarrow y}^{\gamma}=\Gamma_{x\rightarrow y}$. Intuitively, a tangent vector $\eta\in T_x \mathcal{M}$ at $x$ of $\gamma$ is still a tangent vector $\Gamma(\gamma)\eta \in T_y \mathcal{M}$ of $\gamma$ at $y$. Recall that, one important property of parallel transport on a Riemannian manifold is that it preserves the inner product, i.e. 
	\begin{align}
		\left\langle \Gamma^{\gamma}_{\gamma(0)\rightarrow \gamma(t)} \, (\xi) ,\,  \Gamma^{\gamma}_{\gamma(0)\rightarrow \gamma(t)} \, (\zeta) \right\rangle_{\gamma(t)} = \left\langle  \xi,\, \zeta\right\rangle_{\gamma(0)}  \quad \textup{for all $t\in [0,1]$, $\xi,\zeta\in T_{\gamma(0)}$}.
	\end{align}
	See Figure \ref{fig:retr_para} for an illustration.

	\begin{figure}
		\centering
		\begin{subfigure}[t]{0.4\textwidth}
			\centering
			\includegraphics[width=\textwidth]{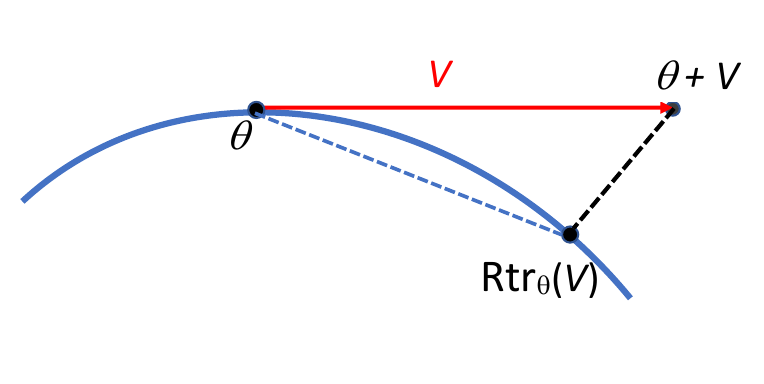}
		\end{subfigure}
		\hfill
		\begin{subfigure}[t]{0.4\textwidth}
			\centering
			\includegraphics[width=\textwidth]{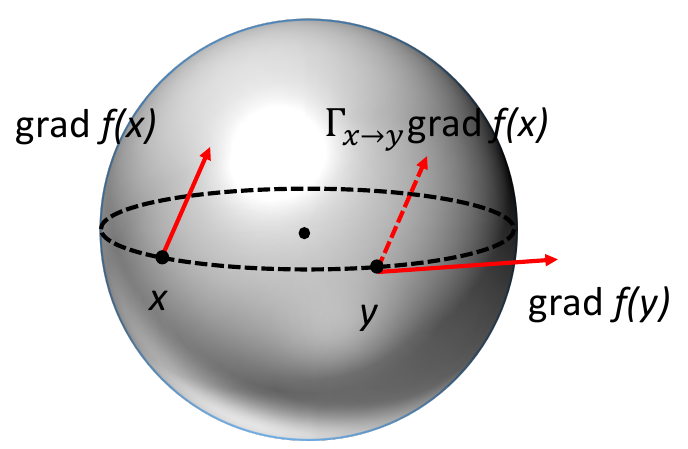}
		\end{subfigure}
		\caption{Illustration of (left) retraction and (right) parallel transport. }
		\label{fig:retr_para}
	\end{figure}

	\subsection{Notations for block Riemannian optimization}
	\label{sec:notations}
	In \eqref{eq:def_CROPT_block}, we are interested in minimizing an objective function $f$ within the product parameter space $\Param=\Theta^{(1)}\times \dots \times \Theta^{(m)}$, where each constraint set $\Theta^{(i)}$ is a subset of a Riemannian manifold $\mathcal{M}^{(i)}$. It will be convenient to introduce the following notations: For $\param=[\theta^{(1)},\dots,\theta^{(m)}]$, 
	\begin{align}
		\grad_{i} f(\param)&:=\textup{Riemmanian gradient of $\theta \mapsto f(\theta^{(1)},\dots,\theta^{(i-1)}, \theta, \theta^{(i+1)},\dots,\theta^{(m)})$},\\ 
		\grad f(\param)&:=[ \grad_{1} f(\param),\dots, \grad_{m} f(\param) ], \\
		 d(\mathbf{x}, \mathbf{y})  &:= \sqrt{\sum_{i=1}^{m} d(x^{(i)}, y^{(i)})^{2}} \quad \textup{for $\mathbf{x}=(x^{(1)},\dots,x^{(m)}), \,  \mathbf{y}=(y^{(1)},\dots,y^{(m)})\in \prod_{i=1}^{m}\M^{(i)}$}.  \label{eq:def_product_dist}
	\end{align}
    Note that if we endow the product $\prod_{i=1}^{m}\M^{(i)}$ of the manifolds a joint Riemannian structure, then we can interpret $\grad f(\param)$ above as the Riemannian gradient at $\param$ with respect to that joint Riemannian structure. However, we do not explicitly introduce or use such a product manifold structure in the manuscript. 
 
	Throughout this paper, we let $(\param_{n})_{n\ge 1}$  denote an output of Algorithm \ref{algorithm:BMM} and write $\param_{n}=[\theta_{n}^{(1)},\dots,\theta_{n}^{(m)}]$ for each $n\ge 1$. For each $n\ge 1$ and $i=1,\dots,m$, denote 
	\begin{align}\label{eq:def_f_n_marginal}
		f_{n}^{(i)}: \theta \mapsto f(\theta_{n}^{(1)},\dots,\theta_{n}^{(i-1)},\theta,\theta_{n-1}^{(i+1)},\dots,\theta_{n-1}^{(m)}),
	\end{align}
	which we will refer to as the $i$th marginal objective function at iteration $n$.

    \subsection{Statement of algorithm}
 \label{sec:Algorithm}
	Below in Algorithm \ref{algorithm:BMM}, we give a precise statement of the RBMM algorithm we stated in high-level at \eqref{eq:RBMM}.  We first define majorizing surrogate functions defined on Riemannian manifolds. 
	\begin{definition}[Majorizing surrogates on Riemannian manifolds]
		\label{def:majorizing_surrogates}
		Fix a function $h:\mathcal{M}\rightarrow \R$ and a point $\param\in \M$, where $\mathcal{M}$ is a Riemannian manifold. A function $g:\M\rightarrow \R$ is a \textit{majorizing surrogate} of $h$ at $\param$ if 
		\begin{align}
			g(x) \ge h(x) \quad \textup{for all $x\in \M$} \quad \textup{and} \quad g(\param)=h(\param). 
		\end{align}
  
	\end{definition}

	As mentioned before, the high-level idea is the following. In order to update the $i$th block of the parameter $\theta^{(i)}_{n}$ at iteration $n$, we use the RMM we described in the introduction.

	\begin{algorithm}[H]
		\small
		\caption{Riemannian Block Majorization-Minimization (RBMM) } 
		\label{algorithm:BMM}
		\begin{algorithmic}[1]
			\State \textbf{Input:} $\param_{0}=(\theta_{0}^{(1)},\cdots,\theta_{0}^{(m)})\in \Theta^{(1)}\times \cdots \times \Theta^{(m)}$ (initial estimate); $N$ (number of iterations); 
			
			\State \quad \textbf{for} $n=1,\dots,N$ \textbf{do}: \State \quad \quad Update estimate $\param_{n}=[\theta_{n}^{(1)},\cdots, \theta_{n}^{(m)}]$ by  
			\State \quad \quad \quad \textbf{For} $i=1,\cdots,m$ \textbf{do}: \quad 
			\State \hspace{1cm} $\displaystyle f_{n}^{(i)}(\cdot):=f\left(\theta_{n}^{(1)},\cdots,\theta_{n}^{(i-1)}, \,  \cdot \,\,  ,\theta_{n-1}^{(i+1)},\cdots, \theta_{n-1}^{(m)}\right): \mathcal{M}^{(i)}\rightarrow \R \qquad \textup{($\triangleright$ marginal objective function)}$ 
			
			\State  $\vspace{-0.5cm}$
			\begin{align}
				&\hspace{-3.5cm} \qquad 
				\begin{cases}\label{eq:RBMM_MmMm} 
					&g_{n}^{(i)} \leftarrow \left[ \textup{Majorizing surrogate of $f_{n}^{(i)}$ at $\theta_{n-1}^{(i)}$} \right] \\
					&\theta_{n}^{(i)}\in \argmin_{\theta\in \Theta^{(i)}}  g_{n}^{(i)}(\theta) 
				\end{cases} 
			\end{align}

			\State \quad \qquad \textbf{end for}
			\State \quad \textbf{end for}
			\State \textbf{output:}  $\param_{N}$ 
		\end{algorithmic}
	\end{algorithm}
	\vspace{-0.5cm}

 \vspace{0.2cm} 
	\noindent In Algorithm \ref{algorithm:BMM}, the majorizing surrogate $g_{n}^{(i)}$ at each iteration $n$ for each block $i$ is chosen so that 
	\begin{description}[itemsep=0.1cm]
		\item{(1)} (Majorization) $g_{n}^{(i)}(x)- f_{n}^{(i)}(x) \ge 0$ for all $x \in \M^{(i)}$;
		\item{(2)} (Sharpness) $g_{n}^{(i)}(\theta_{n-1}^{(i)})=f_{n}^{(i)}(\theta_{n-1}^{(i)})$. 
	\end{description}

	\section{Statement of results}\label{sec:results}

	In this section, we state our main results concerning the convergence and complexity of our RBMM algorithm (Alg. \ref{algorithm:BMM}) for the constrained block Riemannian optimization problem in \eqref{eq:def_CROPT_block}. 
	Figure \ref{fig:diagram} provides a structure diagram of the main results and assumptions.

	\subsection{Optimality and complexity measures} 
	\label{sec:optimality_measures}

	For iterative algorithms, 
	first-order optimality conditions may hardly be satisfied exactly in a finite number of iterations, so it is more important to know how the worst-case number of iterations required to achieve an $\eps$-approximate solution scales with the desired precision $\eps$. 

	More precisely, for the multi-block problem \eqref{eq:def_CROPT_block}, we say $\param_{*}=[\theta_{*}^{(1)},\dots,\theta_{*}^{(m)}]\in \Param$ is an \textit{$\eps$-stationary point} of $f$ over $\Param$ if

 \begin{align}\label{eq:stationary_approximate}
    \sum_{i=1}^{m} \, \left( -  \inf_{u \in T_{\theta_{*}^{(i)}}^{*},\|u\|\le 1 } \left\langle \grad_{i} f(\param_{*}) ,\, \frac{u}{\hat{r}}  \right\rangle  \right)\le \eps,
	\end{align}	
 where $\hat{r}=\min\{r_0,1\}$, i.e. the minimum of $1$ and the lower bound of injectivity radius.
    
    In the Euclidean setting, \eqref{eq:stationary_approximate} reduces to the optimality measure used in \cite{lyu2022convergence} for constrained nonconvex smooth optimization and is also equivalent to \cite[Def. 1]{nesterov2013gradient} for smooth objectives.  
	
	In the unconstrained block-Riemannian setting where  $\Theta^{(i)}=\mathcal{M}^{(i)}$ for $i=1,\dots,m$, the above equation  \eqref{eq:stationary_approximate} becomes 
	\begin{align}\label{eq:stationary_option2}
		\sum_{i=1}^{m} \, \lVert \grad_{i} f(\param_{*}) \rVert \le \eps. 
	\end{align}	
	In the case of single-block $m=1$, the above is the standard definition of $\eps$-stationary points for unconstrained Riemannian optimization problems.

	Next, for each $\eps>0$ we define the \textit{(worst-case) iteration complexity} $N_{\eps}$ of an algorithm computing $(\param_{n})_{n\ge 1}$ for solving \eqref{eq:def_CROPT_block} as 
	\begin{align}\label{eq:Neps}
		N_{\eps}:= \sup_{\param_{0}\in \Param} \, \inf\, \{ n\ge 1 \,|\, \text{$\param_{n}$ is an $\eps$-approximate stationary point of $f$ over $\Param$} \}, 
	\end{align}
	where $(\param_{n})_{n\ge 0}$ is a sequence of estimates produced by the algorithm with an initial estimate $\param_{0}$. Note that $N_{\eps}$ gives the \textit{worst-case} bound on the number of iterations for an algorithm to achieve an $\eps$-approximate solution due to the supremum over the initialization $\param_{0}$ in \eqref{eq:Neps}.

	\subsection{Statement of results}\label{sec:results_option1}

Here we state our main convergence results of solving the minimization problem \eqref{eq:def_CROPT_block} using Algorithm \ref{algorithm:BMM}. 
	First, we introduce a Riemannian counterpart of a smoothness property of a function $F:\mathcal{M}\rightarrow \R$. 
    In the Euclidean setting, The function $F$ is $L$-smooth if its gradient $\nabla F$ is $L$-Lipschitz continuous. In the Riemannian case, Riemannian gradients $\grad F(x)$ and $\grad F(y)$ at two base points $x,y\in \mathcal{M}$ live in different tangent spaces $T_x$ and $T_y$, so they have to be compared using a parallel transport (see Figure \ref{fig:retr_para} for an illustration). We extend this notion of smoothness to the setting where the function $F$ is defined on the product of Riemannian manifolds.

	\begin{definition}[Geodesic smoothness]\label{def: G-L-smooth}
		A function $F: \prod_{i=1}^{m}\M^{(i)} \to \R$ is \textit{geodesically smooth} ($g$-smooth in short) with parameter $L>0$ if $F$ is block-wise continuously differentiable and for each $\mathbf{x}=(x^{(1)},\dots,x^{(m)}),\mathbf{y}=(y^{(1)},\dots,y^{(m)}) \in \prod_{i=1}^{m}\M^{(i)}$ where there exists a minimizing geodesic joining $x^{(i)}$ and $y^{(i)}$ for each $i=1,\dots,m$,
			\begin{equation}
				\left\lVert \grad_{i} F(\mathbf{x}) - \Gamma_{y^{(i)}\rightarrow x^{(i)}}(\grad_{i} F(\mathbf{y})) \right\rVert \le \frac{L}{m} d(\mathbf{x}, \mathbf{y}),
			\end{equation}
		where $\Gamma_{y^{(i)}\rightarrow x^{(i)}}$ is the parallel transport along a distance-minimizing geodesic joining $x^{(i)}$ and $y^{(i)}$ in $\M^{(i)}$, and $d(\mathbf{x}, \mathbf{y})$ is defined in \eqref{eq:def_product_dist}. 
	\end{definition}	
	
	An important consequence of the $g$-smoothness is the following quadratic bound on first-order approximation: 
	\begin{equation}\label{eq:linear_approx_quad_bd}
		\left|F(y) - F(x)- \left\langle \grad F(x), \gamma'(0) \right\rangle_x  \right|\le \frac{L}{2} d^{2}(x, y),
	\end{equation}
	where $\gamma$ is any distance-minimizing geodesic in $\mathcal{M}$ from $x$ to $y$ and $d(x,y)$ is the Riemannian distance between $x$ and $y$. See Appendix \ref{lem:g_smooth_linear_approx} for the proof. Throughout this paper, the term ``$g$-smooth'' denotes geodesic smooth (Def \ref{def: G-L-smooth}), while ``$L$-smooth'' indicates the function $L$-smooth in Euclidean sense, meaning the Euclidean gradient of the function is $L$-Lipschitz continuous.

We start by stating some general assumptions. We allow inexact computation of the solution to the minimization sub-problems in Algorithm \ref{algorithm:BMM}. This is practical since minimizing the surrogates on the manifold (possibly with additional constraints) may not always be exactly solvable. To be precise, for each $n \geq 1$, we define the \textit{optimality gap} $\Delta_n$ by
	\begin{equation}\label{eq:def_optimality_gap}
		\Delta_n=\Delta_n\left(\boldsymbol{\theta}_0\right):=
			\max _{1 \leq i \leq m}\left(g_n^{(i)}(\theta_n^{(i)})-\inf _{\theta \in \Param^{(i)}} g_n^{(i)}(\theta)\right) 
	\end{equation}
	For the convergence analysis to hold, we require that the optimal gaps decay fast enough so that they are summable, stated as Assumption \ref{assumption:A0_optimal_gap}.

\begin{customassumption}{(A0)}
		\label{assumption:A0_optimal_gap}
		For RBMM (Alg. \ref{algorithm:BMM}), we make the following assumptions: 
		\begin{description}
			\item[(i)] (Objective) The objective $f:\Param=\prod_{i=1}^{m} \Theta^{(i)} \rightarrow \R$ is $g$-smooth with some parameter $L_{f}>0$ and 
    its values are uniformly lower bounded by some $f^{*}\in \R$. Furthermore, the sublevel sets $f^{-1}((-\infty, a))=$ $\{\boldsymbol{\theta} \in \boldsymbol{\Theta}: f(\boldsymbol{\theta}) \leq a\}$ are compact for each $a \in \mathbb{R}$.
			
			\item[(ii)] (Inexact computation) The optimality gaps $\Delta_n$ in \eqref{eq:def_optimality_gap} are summable, that is, $\sum_{n=1}^{\infty} \Delta_n<\infty$. Furthermore, let $\theta_{n}^{(i\star)}$ be an exact solution of the minimization step in \eqref{eq:RBMM_MmMm} and let $\theta_{n}^{(i)}$ be the inexact output. Then for all $i=1,\dots,m$, 
			\begin{align}\label{eq:assumption_inexact_sol_convervence}
				d(\theta_{n}^{(i\star)}, \theta_{n}^{(i)}) = o(1).
			\end{align}
		\end{description}
	\end{customassumption}

	\noindent If the surrogate function $g^{(i)}_n$ is geodesically strongly convex (see Definition \ref{def:g_strongly_convex}), then \eqref{eq:assumption_inexact_sol_convervence} is a direct consequence of the summability of the optimality gaps. In particular, this is the case for Riemannian proximal surrogates on Hadamard manifolds as a special case of \ref{assumption:A1_smoothness_surrogates}\textbf{(i-rp)} (see Prop. \ref{prop:optimality_gap_ite}).

	Next, we impose the following conditions on the underlying manifolds, majorizing surrogates, and constraint sets. 
	
	\begin{customassumption}{(A1)}
		\label{assumption:A1_smoothness_surrogates}
		
		For the manifolds and surrogates, at least one of the following holds: 
		\begin{description}[itemsep=0.1cm]
			\item[(i-gs)] ($g$-smooth surrogates) Each surrogate $g_{n}^{(i)}:\mathcal{M}^{(i)}\rightarrow \R$ is $g$-smooth with some parameter $L_{g}\ge 0$ for all $n\ge 1$ and $i=1,\dots,m$. 
			
			\item[(i-rp)] (Riemannian Proximal surrogates)
			Each surrogate $g_{n}^{(i)}$ is a Riemannian proximal surrogate; that is, for each $n\ge 1$ and some constant $\lambda_n \ge L_{f}$ with $\lambda_{n}=O(1)$, 
			\begin{align}\label{eq:def_prox_surrogate}
				g_{n}^{(i)}(\theta)  = 
				f_{n}^{(i)}(\theta)  + \frac{\lambda_n}{2} d^{2}(\theta, \theta_{n-1}^{(i)}).
			\end{align}    

            \item[(i-ep)] (Euclidean Proximal surrogates) 
            Each $\mathcal{M}^{(i)}$ is an embedded submanifold in a Euclidean space and the constraint set $\Theta^{(i)}$ is compact. Each surrogate $g_{n}^{(i)}$ is a Euclidean proximal surrogate, that is, for each $n\ge 1$ and some constant $\lambda_n \ge L_f$ with $\lambda_{n}=O(1)$, 
			\begin{align}\label{eq:def_Euclidean_prox_surrogate}
				g_{n}^{(i)}(\theta)  = 
				f_{n}^{(i)}(\theta)  + \frac{\lambda_n}{2} \lVert \theta-\theta_{n-1}^{(i)} \rVert^{2}.
			\end{align}

		\end{description}
   Furthermore, we require the following for the constraint sets: 
    \begin{description}
        \item[(ii)] For each $i=1,\dots,m$, there exists a uniform lower bound $r_{0}>0$ for  the injectivity radius $\rinj(x)$ over $x\in \Theta^{(i)}$, i.e. $\rinj(x)\ge r_0$ for all $x\in \Theta^{(i)}$.
    \end{description}
	\end{customassumption}

    When the manifolds $\mathcal{M}^{(i)}$ are non-Euclidean, constructing $g$-smooth surrogates may not always be easy. However, for Stiefel manifolds (see Ex. \ref{eg:stiefel}), Euclidean smoothness and $g$-smoothness are essentially equivalent (see Lem. \ref{lem:g_smooth_Stiefel}) so standard smooth surrogates in the Euclidean space (e.g, proximal \eqref{eq:def_Euclidean_prox_surrogate} and prox-linear \eqref{eq:block_prox_linear}) verifies \ref{assumption:A1_smoothness_surrogates}\textbf{(i-gs)}. This fact is crucially used in the proof of Corollaries \ref{cor:prox_Stiefel_informal} and \ref{cor:prox_Stiefel}.

	Note that both the Riemannian and the Euclidean proximal surrogates in \eqref{eq:def_prox_surrogate}-\eqref{eq:def_Euclidean_prox_surrogate} are not necessarily $g$-smooth (see Section \ref{sec:had} for details). Our analysis of RBMM with these proximal surrogates uses explicit forms of the Riemannian/Euclidean gradient of the squared geodesic/Euclidean distance function (see Prop. \ref{prop:R_gradient_geodesic_distance}) instead of $g$-smoothness of the surrogates. 
 
    The surrogates in \ref{assumption:A1_smoothness_surrogates}\textbf{(i-rp)} and \ref{assumption:A1_smoothness_surrogates}\textbf{(i-ep)} are identical when the manifold $\mathcal{M}^{(i)}$ is the  Euclidean space. For non-Euclidean manifolds, the difference between these two surrogates could be significant since the Riemannian proximal surrogates in \ref{assumption:A1_smoothness_surrogates}\textbf{(i-rp)} use the geometry of the non-Euclidean manifold $\mathcal{M}^{(i)}$ while the Euclidean proximal surrogates in \ref{assumption:A1_smoothness_surrogates}\textbf{(i-ep)} use the geometry of the ambient Euclidean space. For example, in the case of a sphere, \ref{assumption:A1_smoothness_surrogates}\textbf{(i-rp)} uses the arc length of great circles while \ref{assumption:A1_smoothness_surrogates}\textbf{(i-ep)} uses length of line segments. While the Riemannian proximal surrogates in  \ref{assumption:A1_smoothness_surrogates}\textbf{(i-rp)} can handle non-compact manifolds without compact constraints (e.g., see Sec. \ref{sec:app_fisher_rao}), the Euclidean proximal surrogates in \ref{assumption:A1_smoothness_surrogates}\textbf{(i-ep)}, if applicable, are computationally simpler to handle (e.g., ease of differentiation). We also remark that \ref{assumption:A1_smoothness_surrogates}\textbf{(i-ep)}  holds for unconstrained optimization problems on compact manifolds embedded in Euclidean space, i.e. $\Theta^{(i)} = \mathcal{M}^{(i)}$ and $\mathcal{M}^{(i)}$ is a compact embedded submanifold of a Euclidean space (e.g., Stiefel manifolds).

	We now state our first main result, concerning the asymptotic convergence of RBMM for the constrained single/two-block block Riemannian optimization problem \eqref{eq:def_CROPT_block}. 
	
	\begin{theorem}[Asymptotic convergence to stationary points; one or two blocks]\label{thm:RBMM_1}
		Let $f$ denote the objective function in \eqref{eq:def_CROPT_block} with $m=1$ or $2$.
        Let $(\param_{n})_{n\ge 0}$ be an output of Algorithm \ref{algorithm:BMM} under \ref{assumption:A0_optimal_gap} and \ref{assumption:A1_smoothness_surrogates}\textbf{(i-gs)},\textbf{(ii)}. Further assume that the constraint set $\Theta^{(i)}$ of $\M^{(i)}$ is strongly $g$-convex for $i=1,\dots,m$. Then every limit point of $(\param_{n})_{n\ge 0}$ is a stationary point of $f$ over $\Param$. 
	\end{theorem}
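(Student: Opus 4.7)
The plan is to combine an almost-monotone descent estimate for $f(\param_n)$ with a subsequential-limit argument that transfers the first-order optimality of each surrogate sub-problem into stationarity of $f$ at any accumulation point $\param_*$. The Riemannian ingredients enter only through parallel-transporting gradients across tangent spaces and through the lifted constraint set $T^*_{\theta}\M^{(i)}$ from \eqref{eq:def_lift_constraints}; otherwise the skeleton mirrors the standard block-MM template.

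\smallskip

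First, for each block $i$ and iteration $n$, chaining majorization, $\Delta_n$-inexact optimality, and sharpness gives
\[
f_n^{(i)}(\theta_n^{(i)}) \le g_n^{(i)}(\theta_n^{(i)}) \le g_n^{(i)}(\theta_{n-1}^{(i)}) + \Delta_n = f_n^{(i)}(\theta_{n-1}^{(i)}) + \Delta_n,
\]
so $f(\param_n) \le f(\param_{n-1}) + m\Delta_n$. Assumption~\ref{assumption:A0_optimal_gap} then implies that $\{f(\param_n)\}$ converges, the iterates lie in a compact sublevel set of $f$, and summing the same chain across $n$ yields $\sum_n (g_n^{(i)}(\theta_n^{(i)}) - f_n^{(i)}(\theta_n^{(i)})) < \infty$, so the per-step majorization gap at the new iterate vanishes. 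The key analytic consequence of \ref{assumption:A1_smoothness_surrogates}\textbf{(i-gs)} is that the nonnegative gap $g_n^{(i)} - f_n^{(i)}$ is $g$-smooth with both zero value and zero gradient at $\theta_{n-1}^{(i)}$, giving
\[
\bigl\|\grad g_n^{(i)}(\theta_n^{(i)}) - \Gamma_{\theta_{n-1}^{(i)}\to\theta_n^{(i)}}\grad f_n^{(i)}(\theta_{n-1}^{(i)})\bigr\| \le (L_g+L_f)\, d(\theta_n^{(i)}, \theta_{n-1}^{(i)}).
\]
Combined with the approximate first-order optimality $\langle \grad g_n^{(i)}(\theta_n^{(i)}), u\rangle \ge -o(1)$ over $u \in T^*_{\theta_n^{(i)}}\M^{(i)}$ (well-defined thanks to \ref{assumption:A1_smoothness_surrogates}\textbf{(ii)} and strong $g$-convexity of $\Theta^{(i)}$), this relates the gradient of $f_n^{(i)}$ at $\theta_n^{(i)}$ to the normal cone of $\Theta^{(i)}$ up to an $O(d(\theta_n^{(i)}, \theta_{n-1}^{(i)}))$ error.

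\smallskip

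The main obstacle is to force this error to vanish along a subsequence with $\param_{n_k}\to\param_*$. I would extract a further subsequence with $\param_{n_k-1}\to\param_{**}$ (possible by compactness), observe $f(\param_*) = f(\param_{**}) = f_\infty$, and show $\param_* = \param_{**}$. In the one-block case ($m=1$), the vanishing of the majorization gap at $\theta_{n_k}$ combined with $g_{n_k}(\theta_{n_k}) \le g_{n_k}(\theta_{n_k-1}) + \Delta_{n_k}$ and sharpness implies, in the limit, that $\theta_*$ and $\theta_{**}$ are both minimizers of a limiting surrogate sharp at $\theta_{**}$; strong $g$-convexity of $\Theta^{(i)}$ within the injectivity radius $r_0$ then yields $\theta_* = \theta_{**}$, so the displayed gradient bound and the first-order condition at $\theta_{n_k}$ pass to the limit to give the stationarity condition \eqref{eq:stationary_approximate} at $\param_*$. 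The two-block case is handled by running the same argument block-by-block, using the vanishing block-$1$ displacement just established, together with $g$-smoothness of $f$, to replace the block-$2$ marginal $f_{n_k}^{(2)}$ by its limit at $\param_*$ before taking the corresponding first-order limit. This sequential back-substitution closes cleanly only up to two blocks, which is precisely why the theorem is restricted to $m\in\{1,2\}$; the case $m\ge 3$ is addressed later in the paper under the stronger proximal-surrogate hypotheses \ref{assumption:A1_smoothness_surrogates}\textbf{(i-rp)}/\textbf{(i-ep)}, where an explicit formula for the surrogate gradient provides the missing coupling estimate.
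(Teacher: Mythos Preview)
Your descent/summability setup and the observation that $\grad h_n^{(i)}(\theta_{n-1}^{(i)})=0$ are correct, but the heart of the argument---showing that the first-order condition passes to the limit in \emph{both} blocks---has a genuine gap.

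The flaw is in the step ``strong $g$-convexity of $\Theta^{(i)}$ within the injectivity radius $r_0$ then yields $\theta_*=\theta_{**}$.'' Strong $g$-convexity is a property of the \emph{set} $\Theta^{(i)}$ (unique minimizing geodesics stay inside); it says nothing about uniqueness of minimizers of a function over $\Theta^{(i)}$. The surrogates $g_n^{(i)}$ are only assumed $g$-smooth, not $g$-strongly convex, and there is no ``limiting surrogate'' since the $g_{n_k}^{(i)}$ depend on $n_k$ and are not assumed to converge. So you have no mechanism to force $d(\theta_{n_k}^{(i)},\theta_{n_k-1}^{(i)})\to 0$, and your displayed bound $(L_g+L_f)\,d(\theta_n^{(i)},\theta_{n-1}^{(i)})$ does not vanish. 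This is exactly the obstruction that forces the paper to introduce assumption~\ref{assumption:A1_1}\textbf{(iii)} for $m\ge 3$ in Theorem~\ref{thm:RBMM_2}.

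The paper sidesteps this entirely with two different tools. First, Proposition~\ref{prop:surrogate_gap_function_gap} gives the sharper estimate $\|\grad g_n^{(i)}(\theta_n^{(i)})-\grad f_n^{(i)}(\theta_n^{(i)})\|^2 \le 2(L_f+L_g)\bigl(g_n^{(i)}(\theta_n^{(i)})-f_n^{(i)}(\theta_n^{(i)})\bigr)$, which is directly summable without any control on $d(\theta_n^{(i)},\theta_{n-1}^{(i)})$; this handles block $m$ since $\grad f_{n_k}^{(m)}(\theta_{n_k}^{(m)})=\grad_m f(\param_{n_k})$. Second, and this is the missing key idea, for block~$1$ the paper looks at the \emph{next} surrogate $g_{n_k+1}^{(1)}$, which is sharp at $\theta_{n_k}^{(1)}$ and whose marginal $f_{n_k+1}^{(1)}(\cdot)=f(\cdot,\theta_{n_k}^{(2)})$ already has the correct second block. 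Since $\theta_{n_k}^{(1)}$ is not a minimizer of $g_{n_k+1}^{(1)}$, first-order optimality there is obtained by a Riemannian Armijo line-search contradiction (Proposition~\ref{prop:Grippo}, adapted from Grippo--Sciandrone): if the directional derivative were strictly negative along some feasible $\eta$, Armijo would produce a strict surrogate decrease contradicting Proposition~\ref{prop:BMM_limit_same}. This line-search argument, not any coincidence of limit points, is what makes the $m\le 2$ case go through without~\ref{assumption:A1_1}\textbf{(iii)}.
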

	
	Next, we established similar asymptotic stationarity of RBMM for more than two blocks.  In this case, a well-known counterexample by Powell \cite{powell1973search} shows that block coordinates descent methods for $m\ge 3$ with block optimization may not converge to stationary points. In deed, block coordinate descent is a special case of block MM with the surrogate functions $g_n^{(i)}$ identical to $f_n^{(i)}$. To overcome this issue, the proximal regularized version of block coordinate descent was proposed (\cite{grippo2000convergence, xu2013block, attouch2010proximal}, which had the convergence guarantees under certain assumptions. With this insight, we need to require one more assumption on the surrogate function $g_{n}^{(i)}$. Namely, writing 
	\begin{align}
		g_{n}^{(i)}(\theta) = f_{n}^{(i)}(\theta) + h_{n}^{(i)}(\theta),
	\end{align}
	one can think of the nonnegative `surrogate gap' function $h_{n}^{(i)}$ as a regularization term. A primary example of such regularization in the Euclidean case is the proximal point regularization, where we would take $h_{n}^{(i)}(\theta) = \lambda \lVert \theta-\theta_{n-1}^{(i)} \rVert^{2}$. This has the effect of penalizing a large change in the iterates $\theta_{n}^{(i)}-\theta_{n-1}^{(i)}$. In the general Riemannian case, the following assumption states that the surrogate gap function $h_{n}^{(i)}$ is penalizing a large geodesic distance $d(\theta,\theta_{n-1}^{(i)})$.

	\begin{customassumption}{(A1)}[\textbf{iii}] (Distance-regularizing surrogates)\label{assumption:A1_1}
		There exists a strictly increasing function $\phi:[0,\infty) \rightarrow \R$ such that $\phi(0)=0$ and 
		\begin{align}
			h_{n}^{(i)}(\theta):=g_{n}^{(i)}(\theta)- f_{n}^{(i)}(\theta) \ge c\phi( d(\theta, \theta^{(i)}_{n-1}) )
		\end{align}
		for all $n\ge 1$ and $i=1,\dots,m$. 
	\end{customassumption}

    Note that if we use Riemannian proximal surrogates as in \ref{assumption:A1_smoothness_surrogates}\textbf{(i-rp)}, then \ref{assumption:A1_1}\textbf{(iii)} is automatically satisfied with $c=\lambda/2$ and $\phi(x)=x^{2}$. This also holds with a possibly different parameter $c$ when we use the Euclidean proximal surrogates within a compact constraint set on an embedded manifold as in \ref{assumption:A1_smoothness_surrogates}\textbf{(i-ep)}. In fact, the geodesic distance and Euclidean distance are equivalent over compact sets, see Lemma \ref{lem:equiv_dist} for details. Also, if the surrogates $g_{n}^{(i)}$ are $\rho$-strongly $g$-convex and if $\theta_{n+1}^{(i)}$ is the exact minimizer of $g_{n}^{(i)}$, then by the second-order growth property, one can verify 
	\begin{align}
		h_{n}^{(i)}(\theta) \ge \frac{\rho}{2}  d^{2}(\theta, \theta^{(i)}_{n-1}). 
	\end{align}
	Hence \ref{assumption:A1_1}\textbf{(iii)} is verified with $c=\rho/2$ and $\phi(x)=x^{2}$ in this case as well. 
	
	Note a direct implication of \ref{assumption:A1_1}\textbf{(iii)} is that if $g_{n}^{(i)}(\theta_{n})-f_{n}^{(i)}(\theta_{n})=o(1)$ then $d(\theta^{(i)}_{n}, \theta^{(i)}_{n-1})=o(1)$. This fact will be crucial to our proof of asymptotic stationarity of RBMM for $m\ge 3$ blocks, which is our second main result stated below. Note that in the following result, we do not require geodesic convexity of the constraint sets. 
	
	\begin{theorem}[Asymptotic convergence to stationary points; many blocks]\label{thm:RBMM_2}
		Let $f$ denote the objective function in \eqref{eq:def_CROPT_block} with $m\ge 1$.  Let $(\param_{n})_{n\ge 0}$ be an output of Algorithm \ref{algorithm:BMM}. Under \ref{assumption:A0_optimal_gap}, \ref{assumption:A1_smoothness_surrogates}\textbf{(ii)}, and any of \textbf{(i-gs), (i-rp)} and \textbf{(i-ep)},  every limit point of $(\param_{n})_{n\ge 0}$ is a stationary point of $f$ over $\Param$. 
	\end{theorem}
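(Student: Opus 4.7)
The plan is to extend the one/two-block argument of Theorem~\ref{thm:RBMM_1} to $m\ge 3$ blocks, where block-MM can exhibit Powell-type non-convergence unless the successive iterates stabilize. My main device is the distance-regularizing property \ref{assumption:A1_1}\textbf{(iii)}, which holds automatically under \textbf{(i-rp)} (with $\phi(t)=t^{2}$, $c=\lambda_n/2$) and under \textbf{(i-ep)} via compactness of $\Theta^{(i)}$ and Lemma~\ref{lem:equiv_dist}, and which I will also need in the $g$-smooth setting \textbf{(i-gs)}. First I would establish a telescoping block-wise descent: combining majorization, sharpness, and the optimality-gap bound \ref{assumption:A0_optimal_gap}\textbf{(ii)} with the identity $f_n^{(i)}(\theta_{n-1}^{(i)})=f_n^{(i-1)}(\theta_n^{(i-1)})$, and writing $h_n^{(i)}:=g_n^{(i)}-f_n^{(i)}\ge 0$, cascading over $i=1,\dots,m$ gives
\begin{align}
f(\param_n)+\sum_{i=1}^{m} h_n^{(i)}(\theta_n^{(i)}) \;\le\; f(\param_{n-1})+m\Delta_n.
\end{align}
By the lower bound in \ref{assumption:A0_optimal_gap}\textbf{(i)} and $\sum_n \Delta_n<\infty$, the sequence $f(\param_n)$ converges and $\sum_{n,i} h_n^{(i)}(\theta_n^{(i)})<\infty$, hence $h_n^{(i)}(\theta_n^{(i)})\to 0$ for each $i$, which via \ref{assumption:A1_1}\textbf{(iii)} yields the key asymptotic stability $d(\theta_n^{(i)},\theta_{n-1}^{(i)})\to 0$, and therefore $d(\param_n,\param_{n-1})\to 0$.

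Next I would extract a convergent subsequence $\param_{n_k}\to \param_\star$ via compactness of sublevel sets in \ref{assumption:A0_optimal_gap}\textbf{(i)}. Asymptotic stability lets me conclude that every hybrid iterate $(\theta_{n_k}^{(1)},\ldots,\theta_{n_k}^{(i)},\theta_{n_k-1}^{(i+1)},\ldots,\theta_{n_k-1}^{(m)})$ also converges to $\param_\star$, so that $f_{n_k}^{(i)}$ together with its Riemannian gradient converges locally to those of the limiting marginal $\theta\mapsto f(\theta_\star^{(1)},\ldots,\theta,\ldots,\theta_\star^{(m)})$. Under the uniform injectivity-radius bound of \ref{assumption:A1_smoothness_surrogates}\textbf{(ii)}, eventually $\theta_n^{(i)}$ and $\theta_{n-1}^{(i)}$ lie within a single totally normal neighborhood of $\theta_\star^{(i)}$, so the lifted constraint sets $T^{*}_{\theta_n^{(i)}}\mathcal{M}^{(i)}$ are well defined and continuous in the base point under parallel transport.

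The last step is to transfer the approximate first-order optimality of the surrogate to a stationarity condition for $f$ at $\param_\star$. The inexact minimality of $\theta_n^{(i)}$ together with \ref{assumption:A0_optimal_gap}\textbf{(ii)} gives that for every admissible unit $u\in T^{*}_{\theta_n^{(i)}}\mathcal{M}^{(i)}$,
\begin{align}
\langle \grad g_n^{(i)}(\theta_n^{(i)}),\, u\rangle \;\ge\; -\delta_n, \qquad \delta_n\to 0,
\end{align}
and the remaining task is to replace $\grad g_n^{(i)}(\theta_n^{(i)})$ by $\grad f_n^{(i)}(\theta_n^{(i)})$ up to a vanishing error. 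Under \textbf{(i-rp)} or \textbf{(i-ep)} their difference equals the (Riemannian or Euclidean) gradient of $\tfrac{\lambda_n}{2}d^{2}(\cdot,\theta_{n-1}^{(i)})$ at $\theta_n^{(i)}$, which by Proposition~\ref{prop:R_gradient_geodesic_distance} is proportional to $\Exp^{-1}_{\theta_n^{(i)}}(\theta_{n-1}^{(i)})$ (respectively $\theta_n^{(i)}-\theta_{n-1}^{(i)}$) and has norm at most a constant multiple of $d(\theta_n^{(i)},\theta_{n-1}^{(i)})\to 0$ by Step 1. Under \textbf{(i-gs)} the non-negativity of $h_n^{(i)}$ with equality at $\theta_{n-1}^{(i)}$ forces $\grad g_n^{(i)}(\theta_{n-1}^{(i)})=\grad f_n^{(i)}(\theta_{n-1}^{(i)})$ via \eqref{eq:pullback_derivative}, and the $g$-smoothness of $g_n^{(i)}$ combined with a parallel-transport Lipschitz estimate supplies the same vanishing bound. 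Parallel-transporting the resulting tangent inequality from $\theta_{n_k}^{(i)}$ to $\theta_\star^{(i)}$ and sending $k\to\infty$ yields $\langle \grad_i f(\param_\star),u\rangle\ge 0$ for every admissible unit $u\in T^{*}_{\theta_\star^{(i)}}\mathcal{M}^{(i)}$, which is exactly \eqref{eq:stationary_approximate} at $\eps=0$ for block $i$; summing over $i$ completes the proof.

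The hardest part will be this third step: unifying the three surrogate regimes while carefully tracking which tangent space each vector lives in, and establishing continuity of the parallel transports and of the lifted constraint sets $T^{*}_{x}\mathcal{M}^{(i)}$ near $\param_\star$ using only the uniform injectivity-radius bound rather than the global $g$-convexity of $\Theta^{(i)}$ that was available in Theorem~\ref{thm:RBMM_1}. The proximal cases rely on the explicit gradient of the squared-distance function together with $\|\Exp^{-1}_x(y)\|\le d(x,y)$, whereas the $g$-smooth case relies on Lipschitz continuity of $\grad g_n^{(i)}-\grad f_n^{(i)}$ under parallel transport plus sharpness at $\theta_{n-1}^{(i)}$. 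Bookkeeping the limit under inexact updates so that the gap error $\delta_n$ genuinely vanishes along the subsequence, rather than being merely summable, is also delicate but should follow from \eqref{eq:assumption_inexact_sol_convervence}.
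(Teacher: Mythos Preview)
Your plan is correct and tracks the paper's proof closely: the telescoping descent (Proposition~\ref{prop:forward_monotonicity_proximal}), the use of \ref{assumption:A1_1}\textbf{(iii)} to get $d(\theta_n^{(i)},\theta_{n-1}^{(i)})\to 0$, the explicit gradient of the squared distance for \textbf{(i-rp)}/\textbf{(i-ep)}, and the handling of inexact minimizers via parallel transport along a subsequence all match. The one substantive variation is your control of $\|\grad g_n^{(i)}(\theta_n^{(i)})-\grad f_n^{(i)}(\theta_n^{(i)})\|$ in the \textbf{(i-gs)} case: you argue that $\grad h_n^{(i)}(\theta_{n-1}^{(i)})=0$ at the anchor (since $h_n^{(i)}\ge 0$ with equality there) and then apply the $(L_f+L_g)$-Lipschitz bound on $\grad h_n^{(i)}$ together with $d(\theta_n^{(i)},\theta_{n-1}^{(i)})\to 0$; the paper instead proves the direct inequality $\|\grad h_n^{(i)}(\theta_n^{(i)})\|^{2}\le 2(L_f+L_g)\,h_n^{(i)}(\theta_n^{(i)})$ (Proposition~\ref{prop:surrogate_gap_function_gap}) and uses the summability of the surrogate gaps. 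Both routes work here; the paper's inequality is slightly more self-contained since it does not need to first establish iterate stability, but your argument is shorter once \ref{assumption:A1_1}\textbf{(iii)} is in hand. Your identification of the transfer of the first-order condition from $\theta_n^{(i\star)}$ to $\theta_n^{(i)}$ as the delicate point is accurate; the paper packages this as separate propositions (Propositions~\ref{prop:asym_first_order_inexact} and \ref{prop:general_asym_first_order_inexact}) and, as you anticipated, it is driven by $d(\theta_n^{(i\star)},\theta_n^{(i)})=o(1)$ together with smoothness of $\Exp$ and of the Riemannian metric, not by $g$-convexity of $\Theta^{(i)}$.
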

	
	\begin{remark}[Convexity of the constraint set]
		\normalfont
		\label{rmk:constraints}
		The proof of Theorem \ref{thm:RBMM_1} requires the constraint set $\Theta^{(i)}\subseteq \mathcal{M}^{(i)}$ to be strongly convex w.r.t. the geometry of the ambient manifold $\mathcal{M}^{(i)}$ (due to the Riemmanian line search used in Prop. \ref{prop:Grippo}). However, Theorem \ref{thm:RBMM_2} does not require the constraint set $\Theta^{(i)}\subseteq \mathcal{M}^{(i)}$  to be strongly convex, since we can use Assumption \ref{assumption:A1_1}\textbf{(iii)} to avoid using Prop. \ref{prop:Grippo}. Therefore, Theorem \ref{thm:RBMM_2} applies when $\Theta^{(i)}$ is a manifold by itself (e.g., low-rank manifold or Stiefel manifold) and $\mathcal{M}^{(i)}$ is a Euclidean space in which $\Theta^{(i)}$ is embedded.  See Section \ref{sec:EBMM} for details. 
	\end{remark}

	Now we state our result concerning the rate of convergence of RBMM for the case of Riemannian/Euclidean proximal (and not necessarily $g$-smooth) surrogates. 
	
	\begin{theorem}[Rate of convergence for Riemannian/Euclidean proximal surrogates]\label{thm:BMM_prox}
		Let $f$ denote the objective function in \eqref{eq:def_CROPT_block} with $m\ge 1$.
		Let $(\param_{n})_{n\ge 0}$ be an output of Algorithm \ref{algorithm:BMM} under \ref{assumption:A0_optimal_gap} and  \ref{assumption:A1_smoothness_surrogates}\textbf{(ii)}, and either \textbf{(i-rp)} or \textbf{(i-ep)}. Assume the geodesic convexity of the constraint sets.  Then the following hold: 
		\begin{description}
			\item[(i)] (Rate of convergence) There exists constants $M$ and $c>0$ independent of $\param_0$ such that 
			\begin{equation}
				\min _{1 \leq k \leq n}\left[-\sum_{i=1}^{m} \inf_{\eta \in T_{\theta_{k}^{(i)}}^{*},\|\eta\|\le 1 } \left\langle \grad_{i} f(\param_{k}),\frac{\eta}{\min\{r_0,1\}}\right\rangle\right] \leq \frac{M+c\sum_{n=1}^{\infty}\Delta_n}{\sqrt{n} / \log n}
			\end{equation} 
			
			\item[(ii)] (Worst-case iteration complexity) The worst-case iteration complexity $N_{\epsilon}$ for Algorithm \ref{algorithm:BMM} satisfies $N_{\epsilon}=O(\varepsilon^{-2}\left(\log \varepsilon^{-2}\right)^2)$
		\end{description}
	\end{theorem}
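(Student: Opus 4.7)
The plan is to exploit the quadratic structure of the surrogate gap $g_n^{(i)}(\theta)-f_n^{(i)}(\theta)$ in both \ref{assumption:A1_smoothness_surrogates}\textbf{(i-rp)} and \textbf{(i-ep)} to produce a descent inequality that yields summability of the squared iterate displacements, then to translate the inexact first-order optimality of each surrogate minimization into a bound on the Riemannian stationarity of $f$ at $\param_n$, and finally to assemble these pieces with Cauchy--Schwarz. For the Euclidean-proximal case \textbf{(i-ep)}, the equivalence between Euclidean and geodesic distance on the compact constraint sets (Lemma \ref{lem:equiv_dist}) will let me treat both surrogates uniformly up to constants.

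\textbf{Descent and summability.} By sharpness $g_n^{(i)}(\theta_{n-1}^{(i)})=f_n^{(i)}(\theta_{n-1}^{(i)})$ and the inexact minimization $g_n^{(i)}(\theta_n^{(i)}) \le g_n^{(i)}(\theta_{n-1}^{(i)}) + \Delta_n$, together with $g_n^{(i)}(\theta_n^{(i)}) \ge f_n^{(i)}(\theta_n^{(i)}) + \tfrac{\lambda_n}{2}d^2(\theta_n^{(i)},\theta_{n-1}^{(i)})$, I get, after telescoping in the block index $i$,
\begin{align}
f(\param_n) \le f(\param_{n-1}) - \frac{\underline{\lambda}}{2}\sum_{i=1}^m d^2(\theta_n^{(i)},\theta_{n-1}^{(i)}) + m\Delta_n,
\end{align}
where $\underline{\lambda}:=\inf_n \lambda_n>0$. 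Summing over $n$ and using $f\ge f^*$ from \ref{assumption:A0_optimal_gap}\textbf{(i)} and $\sum_n \Delta_n <\infty$ from \ref{assumption:A0_optimal_gap}\textbf{(ii)} produces
\begin{align}
\sum_{n=1}^\infty \sum_{i=1}^m d^2(\theta_n^{(i)},\theta_{n-1}^{(i)}) \le M_0 := \tfrac{2}{\underline{\lambda}}\bigl(f(\param_0)-f^* + m\sum_n \Delta_n\bigr).
\end{align}

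\textbf{Block-wise stationarity bound.} For each $n,i$, the inexact optimality of $\theta_n^{(i)}$ on the $g$-convex set $\Theta^{(i)}$ combined with a Riemannian line search along feasible geodesics (Proposition \ref{prop:Grippo}) and the injectivity radius lower bound $r_0$ from \ref{assumption:A1_smoothness_surrogates}\textbf{(ii)} yields a directional bound
\begin{align}
\inf_{\eta\in T^*_{\theta_n^{(i)}}\!,\,\|\eta\|\le 1}\bigl\langle \grad g_n^{(i)}(\theta_n^{(i)}),\eta\bigr\rangle \ge -C_0 \sqrt{\Delta_n}/\hat r .
\end{align}
Using that the Riemannian (resp.\ Euclidean) gradient of $\tfrac{1}{2}d^2(\cdot,\theta_{n-1}^{(i)})$ at $\theta_n^{(i)}$ has norm $d(\theta_n^{(i)},\theta_{n-1}^{(i)})$ (Proposition \ref{prop:R_gradient_geodesic_distance}), this transfers to a bound on $\grad_i f_n^{(i)}(\theta_n^{(i)})$ of order $\lambda_n d(\theta_n^{(i)},\theta_{n-1}^{(i)}) + C_0\sqrt{\Delta_n}/\hat r$. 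Since $\grad_i f(\param_n)$ and $\grad_i f_n^{(i)}(\theta_n^{(i)})$ differ only in the blocks $j>i$, the $g$-smoothness of $f$ (Definition \ref{def: G-L-smooth}) and parallel transport along minimizing geodesics give
\begin{align}
\bigl\|\grad_i f(\param_n) - \Gamma\bigl(\grad_i f_n^{(i)}(\theta_n^{(i)})\bigr)\bigr\| \le \tfrac{L_f}{m}\sqrt{\textstyle\sum_{j>i} d^2(\theta_n^{(j)},\theta_{n-1}^{(j)})}.
\end{align}
Summing the resulting inequalities over $i$ produces a stationarity bound
\begin{align}
\mathrm{Stat}(\param_n):= -\sum_{i=1}^m \inf_{\eta\in T^*_{\theta_n^{(i)}}\!,\,\|\eta\|\le 1}\Bigl\langle \grad_i f(\param_n),\tfrac{\eta}{\hat r}\Bigr\rangle \le \tfrac{C_1}{\hat r}\sum_{i=1}^m d(\theta_n^{(i)},\theta_{n-1}^{(i)}) + \tfrac{C_2}{\hat r}\sqrt{\Delta_n}.
\end{align}

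\textbf{Final rate and main obstacle.} Averaging over $k=1,\dots,n$ and applying Cauchy--Schwarz with the summability bound $M_0$ of Step 2 yields
\begin{align}
\min_{1\le k\le n}\mathrm{Stat}(\param_k) \le \tfrac{1}{n}\sum_{k=1}^n \mathrm{Stat}(\param_k) \le \tfrac{C_1\sqrt{mM_0}}{\hat r\sqrt n} + \tfrac{C_2}{\hat r}\cdot \tfrac{\sum_{k=1}^n \sqrt{\Delta_k}}{n},
\end{align}
where a Schwarz bound on $\sum_{k=1}^n \sqrt{\Delta_k}$ must be combined with a weighted-averaging trick (in the spirit of the harmonic weighting in the Euclidean analysis of \cite{lyu2023block}) to accommodate the non-monotonicity of $\Delta_k$; this is exactly what produces the extra logarithmic factor, yielding the advertised rate $(M+c\sum \Delta_n)/(\sqrt n/\log n)$ in part (i). Part (ii) then follows by setting this bound equal to $\varepsilon$ and solving $\varepsilon = \log n/\sqrt n$, which gives $N_\varepsilon=O(\varepsilon^{-2}(\log\varepsilon^{-2})^2)$. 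I expect the delicate point to be Step 3: without geodesic strong convexity of the surrogates, turning a function-value gap $g_n^{(i)}(\theta_n^{(i)})-\inf g_n^{(i)}\le \Delta_n$ into a first-order bound requires a careful geodesic line search along the feasible cone and relies simultaneously on the uniform injectivity-radius lower bound \ref{assumption:A1_smoothness_surrogates}\textbf{(ii)} and the $g$-convexity of $\Theta^{(i)}$. Balancing the resulting $\sqrt{\Delta_n}$ term against the $\sqrt{M_0/n}$ term from the descent is where the $\log n$ inflation arises.
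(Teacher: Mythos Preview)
Your descent/summability step is fine and matches Proposition~\ref{prop:forward_monotonicity_proximal}. The gap is in Step~2. You invoke Proposition~\ref{prop:Grippo} to turn the function-value gap $\Delta_n$ into a first-order bound of order $\sqrt{\Delta_n}$, but that proposition is stated under \ref{assumption:A1_smoothness_surrogates}\textbf{(i-gs)} (g-smooth surrogates) and gives only an asymptotic $\liminf$ statement, not a quantitative one. The argument you sketch---line search along a feasible geodesic to extract $s\le\sqrt{2L\Delta_n}$---needs a quadratic upper bound $g_n^{(i)}(\Exp_{\theta_n^{(i)}}(t\eta))\le g_n^{(i)}(\theta_n^{(i)})-st+\tfrac{L}{2}t^2$, i.e.\ $g$-smoothness of the surrogate. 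But the whole reason \textbf{(i-rp)}/\textbf{(i-ep)} are separated from \textbf{(i-gs)} in the paper is that $d^2(\cdot,p)$ is \emph{not} $g$-smooth in general (Section~\ref{sec:had}, Figures~\ref{fig:tri_hyp}--\ref{fig:circle}). You could try to salvage this via a local curvature bound on the compact set carrying the iterates (Proposition~\ref{prop:boundedness_iterates}), but that is not what Proposition~\ref{prop:Grippo} provides and you have not supplied the argument.

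The paper avoids this difficulty altogether. Instead of bounding the stationarity of the surrogate at $\theta_n^{(i)}$, Proposition~\ref{prop:asymptotic_first-order_optimality}\textbf{(ii)} works entirely at the level of $f$: it compares the actual first-order change $\langle\grad f_{n+1}^{(i)}(\theta_n^{(i)}),\eta_n^{(i)}(n{+}1)\rangle$ against $\inf_{\eta}\langle\grad f_{n+1}^{(i)}(\theta_n^{(i)}),\eta\rangle$ over feasible $\eta$ of length at most $b_{n+1}$, picking up only $c_1 b_{n+1}^2+c_2\,d^2(\param_n,\param_{n+1})+m\Delta_n$ as error. Here $\Delta_n$ enters \emph{linearly} and no smoothness of $g_n^{(i)}$ is ever invoked---only $g$-smoothness of $f$ and the explicit form $h_{n+1}(\theta)=\tfrac{\lambda_n}{2}d^2(\theta,\theta_n^{(i)})\le\tfrac{\lambda_n}{2}b_{n+1}^2$. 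Summing against weights $b_n$, the left side is finite by Proposition~\ref{prop:finite_first-order_variation}, and Lemma~\ref{lem:sum_sequence} extracts the minimum. The $\log n$ factor then comes from choosing $b_n=1/(\sqrt{n}\log n)$ so that $\sum b_n^2<\infty$ while $\sum_{k\le n}b_k\sim\sqrt{n}/\log n$; it has nothing to do with any $\sqrt{\Delta_k}$ term, which never appears in the paper's argument.
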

    Theorem \ref{thm:BMM_prox} establishes that RBMM with either Riemannian or Euclidean proximal regularizer achieves an iteration complexity of $\widetilde{O}(\eps^{-2})$. The case of Euclidean proximal regularizer in \ref{assumption:A1_smoothness_surrogates}\textbf{(i-ep)} may be of wider practical interest than the Riemannian proximal regularizer. Note in \ref{assumption:A1_smoothness_surrogates}\textbf{(i-rp)}, the Riemannian proximal regularizer is geodesic distance squared, which may introduce additional computational difficulties, especially when the close-form expression of end-point geodesic distance is unknown. When the underlying manifold is embedded in Euclidean space and the constraint set is compact (or the manifold itself is compact), as in \ref{assumption:A1_smoothness_surrogates}\textbf{(i-ep)}, we can replace the Riemannian proximal regularizer by the Euclidean proximal regularizer, which is often much easier to deal with computationally.

	Next, the theorem below states similar rates of convergence results for $g$-smooth surrogates on general manifolds.
	
	\begin{theorem}[Rate of convergence for $g$-smooth surrogates]\label{thm:BMM_rate}
		Let $f$ denote the objective function in \eqref{eq:def_CROPT_block} with $m\ge 1$. 
		Let $(\param_{n})_{n\ge 0}$ be an output of Algorithm \ref{algorithm:BMM} under \ref{assumption:A0_optimal_gap} and  \ref{assumption:A1_smoothness_surrogates}\textbf{(ii)}, and \textbf{(i-gs)}. Assume the geodesic convexity of the constraint sets. Further assume that \ref{assumption:A1_1}\textbf{(iii)} holds with $\phi(x)=c x^{2}$ for some constant $c>0$. Then the following hold: 
		\begin{description}
			
			\item[(i)] (Worst-case rate of convergence) There exists constants $M,c>0$ independent of $\param_{0}$ such that 
			\begin{equation}\label{eqn:proof_thm_bmm_ii}
				\min _{1 \leq k \leq n}\left[-\sum_{i=1}^{m} \inf_{\eta \in T_{\theta_{k}^{(i)}}^{*},\|\eta\|\le 1 } \left\langle \grad_{i} f(\param_{k}),\frac{\eta}{\min\{r_0,1\}}\right\rangle\right] \leq \frac{M+c\sum_{n=1}^{\infty}\Delta_n}{n^{1/4}/(\log n)^{1/2}}
			\end{equation}

			\item[(ii)] (Worst-case iteration complexity) The worst-case iteration complexity $N_{\epsilon}$ for Algorithm \ref{algorithm:BMM} satisfies $N_{\epsilon}=O\left(\varepsilon^{-4}\left(\log \varepsilon^{-2}\right)\right)$.
			
			\item[(iii)] (Optimal convergence rate)  Further assume that the surrogate gaps $h_{n}^{(i)}= g_{n}^{(i)}-f_{n}^{(i)}$ satisfy $h_{n}^{(i)}(\theta) \le C d^{2}(\theta, \theta_{n}^{(i)})$ for some constant $C>0$. Then 
			\begin{equation}\label{eqn:proof_thm_bmm_iii}
				\min _{1 \leq k \leq n}\left[-\sum_{i=1}^{m} \inf_{\eta \in T_{\theta_{k}^{(i)}}^{*},\|\eta\|\le 1 } \left\langle \grad_{i} f(\param_{k}),\frac{\eta}{\min\{r_0,1\}}\right\rangle\right] \leq \frac{M+c\sum_{n=1}^{\infty}\Delta_n}{n^{1/2}/(\log n)^{1/2}}
			\end{equation}
			and the worst-case iteration complexity $N_{\epsilon}$ for Algorithm \ref{algorithm:BMM} is $N_{\epsilon}=O\left(\varepsilon^{-2}\left(\log \varepsilon^{-2}\right)\right)$.
		\end{description}
	\end{theorem}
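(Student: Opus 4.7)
The plan is to adapt the standard block-MM convergence analysis to the Riemannian setting, combining the $g$-smoothness of the surrogates from \ref{assumption:A1_smoothness_surrogates}\textbf{(i-gs)} with the quadratic surrogate-gap lower bound $h_{n}^{(i)}(\theta)\ge c\,d^{2}(\theta,\theta_{n-1}^{(i)})$ furnished by \ref{assumption:A1_1}\textbf{(iii)}. First, I would establish a per-iteration descent inequality. Using sharpness $g_{n}^{(i)}(\theta_{n-1}^{(i)})=f_{n}^{(i)}(\theta_{n-1}^{(i)})$ and the optimality gap \eqref{eq:def_optimality_gap}, a block-telescoped decomposition gives
\begin{equation*}
f(\param_{n-1}) - f(\param_{n}) = \sum_{i=1}^{m}\bigl[g_{n}^{(i)}(\theta_{n-1}^{(i)}) - g_{n}^{(i)}(\theta_{n}^{(i)}) + h_{n}^{(i)}(\theta_{n}^{(i)})\bigr] \;\ge\; c\sum_{i=1}^{m} d^{2}(\theta_{n}^{(i)},\theta_{n-1}^{(i)}) - m\Delta_{n}.
\end{equation*}
Summing in $n$ and invoking the lower bound on $f$ together with $\sum_{n}\Delta_{n}<\infty$ from \ref{assumption:A0_optimal_gap} yields the essential summability $\sum_{n\ge 1}\sum_{i=1}^{m} d^{2}(\theta_{n}^{(i)},\theta_{n-1}^{(i)})<\infty$.

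Next, I would derive a per-iteration upper bound for the stationarity measure appearing on the left-hand side of \eqref{eqn:proof_thm_bmm_ii}. Since $\theta_{n}^{(i)}$ is an inexact minimizer of the $g$-smooth surrogate $g_{n}^{(i)}$ on the geodesically convex set $\Theta^{(i)}$ with suboptimality gap $\Delta_{n}$, I expect a Riemannian counterpart of the approximate first-order optimality inequality to produce, for every admissible unit $\eta\in T^{*}_{\theta_{n}^{(i)}}\M^{(i)}$, the estimate $\langle \grad g_{n}^{(i)}(\theta_{n}^{(i)}),\eta\rangle \ge -C_{0}\sqrt{L_{g}\Delta_{n}}$. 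I would then pass from $\grad g_{n}^{(i)}(\theta_{n}^{(i)})$ to $\grad_{i}f(\param_{n})$ via a three-term comparison controlled by parallel transport: (a) $g$-smoothness of $g_{n}^{(i)}$ bounds $\|\grad g_{n}^{(i)}(\theta_{n}^{(i)}) - \Gamma_{\theta_{n-1}^{(i)}\to\theta_{n}^{(i)}}\grad g_{n}^{(i)}(\theta_{n-1}^{(i)})\|$ by $L_{g}\,d(\theta_{n}^{(i)},\theta_{n-1}^{(i)})$; (b) sharpness together with \eqref{eq:pullback_derivative} yields $\grad g_{n}^{(i)}(\theta_{n-1}^{(i)}) = \grad f_{n}^{(i)}(\theta_{n-1}^{(i)})$; (c) the product-manifold $g$-smoothness of $f$ in Definition \ref{def: G-L-smooth} controls the difference between $\grad_{i}f(\param_{n})$ and the parallel transport of $\grad f_{n}^{(i)}(\theta_{n-1}^{(i)})$ by $(L_{f}/m)\sqrt{\sum_{j\ge i}d^{2}(\theta_{n}^{(j)},\theta_{n-1}^{(j)})}$. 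Summing over $i$ gives a bound of the form $C_{1}\sum_{i}d(\theta_{n}^{(i)},\theta_{n-1}^{(i)}) + C_{2}\sqrt{\Delta_{n}}$ for the stationarity measure at iterate $n$.

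For parts (i) and (ii), I would convert the summability of $\sum d^{2}$ into a worst-case rate using a dyadic or logarithmic windowing argument in the spirit of the Euclidean BMM-DR analysis of \cite{lyu2023block}: because the per-step bound involves the raw distance $d_{n}^{(i)}$ rather than its square, extracting a uniform minimum rate from $\sum d^{2}<\infty$ costs a $\log^{1/2} n$ factor and yields the rate $\widetilde{O}(n^{-1/4})$ in \eqref{eqn:proof_thm_bmm_ii}, from which $N_{\eps}=\widetilde{O}(\eps^{-4})$ follows by inversion. For part (iii), the added quadratic upper bound on $h_{n}^{(i)}$ strengthens step (a) above: the quantity $\|\grad h_{n}^{(i)}(\theta_{n}^{(i)})\|$ is controlled directly by $d(\theta_{n}^{(i)},\theta_{n-1}^{(i)})$, so that the per-step bound on the stationarity measure upgrades to the squared form $(\text{stat})^{2}\le C_{3}\sum_{i}d^{2}(\theta_{n}^{(i)},\theta_{n-1}^{(i)}) + C_{4}\Delta_{n}$, itself directly summable, yielding the improved rate $\widetilde{O}(n^{-1/2})$ in \eqref{eqn:proof_thm_bmm_iii} and complexity $\widetilde{O}(\eps^{-2})$. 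The main obstacle I anticipate is the Riemannian bookkeeping: maintaining valid parallel transports between tangent spaces at successive iterates within the uniform injectivity radius $r_{0}$ provided by \ref{assumption:A1_smoothness_surrogates}\textbf{(ii)}, and verifying that admissible directions in the lifted constraint set $T^{*}_{\theta}\M$ transport consistently when comparing approximate first-order conditions across iterations and blocks.
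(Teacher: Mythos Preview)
Your approach is essentially correct but takes a genuinely different route from the paper, and there is an internal inconsistency in how you extract the rate for part~(i).

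\textbf{How the paper proceeds.} The paper does not bound the stationarity measure per iteration. Instead it introduces an auxiliary positive sequence $(b_n)$ and proves (Proposition~\ref{prop:asymptotic_first-order_optimality}) a relation of the form
\[
\sum_{i}\langle \grad f_{n+1}^{(i)}(\theta_n^{(i)}),\eta_n^{(i)}(n+1)\rangle
\;\le\; \hat r^{-1}b_{n+1}\sum_i\inf_{\eta}\langle\,\cdot\,,\eta\rangle + c_1 b_{n+1}^2 + c_2\,d^2(\param_n,\param_{n+1}) + m\Delta_n,
\]
then sums in $n$, controls the left-hand side via Proposition~\ref{prop:finite_first-order_variation}, controls the surrogate--objective gradient mismatch via Proposition~\ref{prop:surrogate_gap_function_gap}, and finally reads off the rate from Lemma~\ref{lem:sum_sequence} with $b_n=n^{-1/2}(\log n)^{-1}$. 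The $n^{-1/4}$ in part~(i) comes from the square root in passing from the $\ell^2$ control of $\|\grad g_n^{(i)}-\grad f_n^{(i)}\|$ (Prop.~\ref{prop:surrogate_gap_function_gap}) to a pointwise bound.

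\textbf{What you do differently.} Your three-step comparison (a)--(c) is cleaner. Step~(b), first-order tangency $\grad g_n^{(i)}(\theta_{n-1}^{(i)})=\grad f_n^{(i)}(\theta_{n-1}^{(i)})$, is valid because Definition~\ref{def:majorizing_surrogates} requires majorization on all of $\M^{(i)}$, so $h_n^{(i)}\ge 0$ attains its global minimum at $\theta_{n-1}^{(i)}$. Combined with (a) and (c) and the approximate optimality bound $\langle\grad g_n^{(i)}(\theta_n^{(i)}),\eta\rangle\ge -\sqrt{2L_g\Delta_n}$ (which does follow from $g$-smoothness of $g_n^{(i)}$ and geodesic convexity of $\Theta^{(i)}$, by the one-line descent argument you sketch), you obtain
\[
\text{stat}_n \;\le\; C_1\sum_i d(\theta_n^{(i)},\theta_{n-1}^{(i)}) + C_2\sqrt{\Delta_n}.
\]
This is a more elementary route than the paper's auxiliary-sequence machinery.

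\textbf{The inconsistency.} Your final step for part~(i) is where the proposal goes wrong. You assert that because the bound is linear in $d$ while only $\sum d^2<\infty$ is available, a ``logarithmic windowing'' argument is needed and only $\widetilde O(n^{-1/4})$ results. That is not so: squaring your per-step bound and using Cauchy--Schwarz gives $(\text{stat}_n)^2\le C\bigl(\sum_i d^2(\theta_n^{(i)},\theta_{n-1}^{(i)})+\Delta_n\bigr)$, and the right-hand side is itself summable by Proposition~\ref{prop:forward_monotonicity_proximal}\textbf{(iii)} and \ref{assumption:A0_optimal_gap}\textbf{(ii)}. Hence $\min_{k\le n}\text{stat}_k=O(n^{-1/2})$ already, without any additional hypothesis. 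Consequently your framework does not actually distinguish parts~(i) and~(iii): the extra upper bound $h_n^{(i)}\le Cd^2$ in~(iii) plays no role, since your tangency step~(b) together with $g$-smoothness of $h_n^{(i)}$ already gives $\|\grad h_n^{(i)}(\theta_n^{(i)})\|\le (L_g+L_f/m)\,d(\theta_n^{(i)},\theta_{n-1}^{(i)})$. If you intend to recover exactly the paper's stated rates (including the weaker $n^{-1/4}$ in~(i)), your proposal does not explain why the argument should stop short of $n^{-1/2}$; if instead you are content to prove the theorem (and in fact a bit more), you should drop the windowing detour and state directly that your method yields $\widetilde O(\varepsilon^{-2})$ already under the hypotheses of~(i).
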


Lastly, we state a practical corollary of Theorem \ref{thm:BMM_rate} for Riemannian optimization problems involving Stiefel manifolds (see Ex. \ref{eg:stiefel}). A special case of it was stated in Corollary \ref{cor:prox_Stiefel_informal}  in the introduction. An important fact about Stiefel manifolds is that, if a function $f$ is (Euclidean) $L$-smooth in the ambient Euclidean space, then it is $g$-smooth with respect to the geometry on the Stiefel manifold for some smoothness parameter $L'$ (see Lem. \ref{lem:g_smooth_Stiefel}). Therefore, when the underlying manifolds are either Euclidean or Stiefel, we can apply Theorem \ref{thm:BMM_rate} for Euclidean smooth objectives and surrogates and obtain iteration complexity of $\widetilde{O}(\eps^{-2})$. This expands the applicability of our result to various optimization problems involving Stiefel manifolds.

\begin{corollary}[Complexity of RBMM on Stiefel manifolds]\label{cor:prox_Stiefel}
Suppose each underlying manifold $\mathcal{M}^{(i)}$ is either a Stiefel manifold or a Euclidean space. Assume the objective function $f$ in \eqref{eq:def_CROPT_block} is Euclidean $L$-smooth for some $L>0$. Suppose the surrogates $g_{n}^{(i)}$ are Euclidean $L'$-smooth for some constants $L'>0$ and for some constant $c> 0$, 
\begin{align}\label{eq:quadratic_majorization_euclidean}
    h_{n}^{(i)}(\theta):=g_{n}^{(i)}(\theta)- f_{n}^{(i)}(\theta) \ge c \lVert \theta-\theta_{n-1}^{(i)} \rVert^{2}
\end{align}
for all $n\ge 1$ and $i=1,\dots,m$. Assume the constraint sets $\Theta^{(1)},\dots,\Theta^{(m)}$ are geodesic convex. Allow inexact computation in the sense of \ref{assumption:A0_optimal_gap}\textbf{(ii)} with $d(\cdot,\cdot)$ in \eqref{eq:assumption_inexact_sol_convervence} replaced by the Euclidean distance. Then the iterates produced by Algorithm \ref{algorithm:BMM} asymptotically converge to the set of stationary points and the algorithm has iteration complexity of $\widetilde{O}(\eps^{-2})$. 
\end{corollary}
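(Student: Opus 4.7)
My plan is to reduce Corollary \ref{cor:prox_Stiefel} to Theorem \ref{thm:RBMM_2} (asymptotic stationarity) and Theorem \ref{thm:BMM_rate}\textbf{(iii)} (the $\widetilde{O}(\eps^{-2})$ rate) by translating each of the corollary's Euclidean hypotheses into the Riemannian language those theorems require. The two workhorses for the translation are Lemma \ref{lem:g_smooth_Stiefel}, which upgrades Euclidean $L$-smoothness on a Stiefel factor to $g$-smoothness with a new constant, and Lemma \ref{lem:equiv_dist}, which on a compact set provides constants $c_1,c_2>0$ with $c_1\lVert x-y\rVert\le d(x,y)\le c_2\lVert x-y\rVert$ between Euclidean and geodesic distances. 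Compactness is available because Stiefel factors are compact and because the monotone descent of Algorithm \ref{algorithm:BMM}, together with the sublevel-set compactness in \ref{assumption:A0_optimal_gap}\textbf{(i)}, confines the iterates to a bounded region on the Euclidean factors as well.

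First, I would verify \ref{assumption:A0_optimal_gap} and \ref{assumption:A1_smoothness_surrogates}\textbf{(i-gs)},\textbf{(ii)}. Block-wise $g$-smoothness of $f$ and of each surrogate $g_{n}^{(i)}$ (with uniform constants $L_{f}, L_{g}$) follows from Lemma \ref{lem:g_smooth_Stiefel} applied block by block, with Euclidean factors handled trivially since $g$-smoothness and Euclidean $L$-smoothness agree there. The inexact-computation clause \ref{assumption:A0_optimal_gap}\textbf{(ii)}, stated in the corollary for Euclidean distance, transfers to the geodesic version by $d(\theta_n^{(i\star)},\theta_n^{(i)})\le c_2\lVert \theta_n^{(i\star)}-\theta_n^{(i)}\rVert = o(1)$. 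Condition \ref{assumption:A1_smoothness_surrogates}\textbf{(ii)} is immediate: compact Stiefel factors have uniformly positive injectivity radius and Euclidean factors have infinite injectivity radius. Geodesic convexity of the constraint sets is assumed directly in the corollary.

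It remains to verify \ref{assumption:A1_1}\textbf{(iii)} with $\phi(x)=cx^{2}$ and the matching upper bound $h_{n}^{(i)}(\theta)\le C\,d^{2}(\theta,\theta_{n-1}^{(i)})$ that Theorem \ref{thm:BMM_rate}\textbf{(iii)} requires. The lower bound is transferred directly from \eqref{eq:quadratic_majorization_euclidean} using $\lVert\theta-\theta_{n-1}^{(i)}\rVert\ge c_{2}^{-1}d(\theta,\theta_{n-1}^{(i)})$. The upper bound is the main technical step. Because $h_{n}^{(i)}:=g_{n}^{(i)}-f_{n}^{(i)}$ is Euclidean $(L+L')$-smooth and attains its Riemannian minimum value $0$ at $\theta_{n-1}^{(i)}$ (by sharpness of the majorant), the Euclidean descent lemma gives
\[
h_{n}^{(i)}(\theta)\;\le\;\langle\nabla h_{n}^{(i)}(\theta_{n-1}^{(i)}),\,\theta-\theta_{n-1}^{(i)}\rangle+\tfrac{L+L'}{2}\lVert\theta-\theta_{n-1}^{(i)}\rVert^{2}.
\]
I would then split $\nabla h_{n}^{(i)}(\theta_{n-1}^{(i)})$ into tangent and normal parts: the tangent part vanishes since $\theta_{n-1}^{(i)}$ is a Riemannian minimizer of $h_{n}^{(i)}$ on the Stiefel factor, while for two points on an embedded submanifold the deviation $\theta-\theta_{n-1}^{(i)}$ has normal component of size $O(\lVert\theta-\theta_{n-1}^{(i)}\rVert^{2})$ by the quadratic order of contact between the manifold and its tangent space. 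This yields a uniform constant $C$ (a bound on $\lVert\nabla h_n^{(i)}\rVert$ along the iterate trajectory follows from compactness of the sublevel set) with $h_{n}^{(i)}(\theta)\le C\lVert\theta-\theta_{n-1}^{(i)}\rVert^{2}\le Cc_{1}^{-2}d^{2}(\theta,\theta_{n-1}^{(i)})$.

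The hardest step, in my view, is the upper-bound argument above, specifically the uniform control of the normal component of $\nabla h_n^{(i)}(\theta_{n-1}^{(i)})$ along the iterate trajectory, for which compactness of the sublevel set does the heavy lifting. Once all hypotheses are verified, Theorem \ref{thm:RBMM_2} gives asymptotic convergence to stationary points and Theorem \ref{thm:BMM_rate}\textbf{(iii)} gives $N_{\eps}=O(\eps^{-2}\log\eps^{-2})=\widetilde{O}(\eps^{-2})$.
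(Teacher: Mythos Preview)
Your reduction is exactly the paper's: verify \ref{assumption:A0_optimal_gap}, \ref{assumption:A1_smoothness_surrogates}\textbf{(i-gs)},\textbf{(ii)},\textbf{(iii)} on each factor via Lemma~\ref{lem:g_smooth_Stiefel} (Euclidean smoothness $\Rightarrow$ $g$-smoothness on Stiefel) and Lemma~\ref{lem:equiv_dist} (equivalence of the two distances on compact embedded submanifolds), then invoke Theorems~\ref{thm:RBMM_2} and~\ref{thm:BMM_rate}. Where you go beyond the paper is in explicitly supplying the quadratic \emph{upper} bound $h_{n}^{(i)}(\theta)\le C\,d^{2}(\theta,\theta_{n-1}^{(i)})$ that Theorem~\ref{thm:BMM_rate}\textbf{(iii)} requires for the $\widetilde O(\eps^{-2})$ rate; the paper's short proof only checks the lower bound in \ref{assumption:A1_1}\textbf{(iii)} and then cites Theorem~\ref{thm:BMM_rate} without addressing this extra hypothesis. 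Your tangent/normal splitting is correct on Stiefel---for $X,Y\in\mathcal V^{n\times k}$ the normal part of $Y-X$ at $X$ is $X\,\mathrm{sym}(X^{T}Y-I)=-\tfrac12 X(Y-X)^{T}(Y-X)$, of norm at most $\tfrac12\lVert Y-X\rVert_F^{2}$---so the argument genuinely fills a gap left open by the paper. One caveat: the step ``a bound on $\lVert\nabla h_n^{(i)}\rVert$ along the iterate trajectory follows from compactness of the sublevel set'' does not quite follow, since sublevel-set compactness controls $\nabla f$ but says nothing about $\nabla g_n^{(i)}$; what is needed is that $\sup_n\max_{x\in\mathcal V}\lVert\nabla g_n^{(i)}(x)\rVert<\infty$. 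This, however, is precisely the same implicit uniformity the paper already relies on when applying Lemma~\ref{lem:g_smooth_Stiefel} to get a single $g$-smoothness constant $L_g$ for the whole family $\{g_n^{(i)}\}$ (that lemma's constant is $L'+\max_x\lVert\nabla g_n^{(i)}(x)\rVert$), so your argument is no more demanding than the paper's own verification of \ref{assumption:A1_smoothness_surrogates}\textbf{(i-gs)}.
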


It is important to note that the conditions one needs to check to apply Corollary \ref{cor:prox_Stiefel} are completely Euclidean, except the $g$-convexity of constraint sets of Stiefel manifolds, which becomes vacuous when there are no additional constraints on the Stiefel manifolds. The results presented in Corollary \ref{cor:prox_Stiefel} are applicable to various MM methods on Stiefel manifolds, as discussed later in Section \ref{sec:Stiefel_prox}, including the recent MM methods investigated in \cite{breloy2021majorization}. Furthermore, in Section \ref{app:GST} we study the geodesically constrained subspace tracking problem as a stylized application.

	\section{Applications}
	\label{sec:examples}

	In this section, we discuss some examples of our general framework of RBMM and its connection to other classical algorithms.

    \subsection{Examples of  manifolds}\label{sec:examples_manifold}

    In this section, we list several examples of manifolds that are typically used in various machine learning problems. 

    \begin{example}[Stiefel Manifold]
		\label{eg:stiefel}
		\normalfont 
		
		The Stiefel manifold $\mathcal{V}^{n\times k}$ is the set of all orthonormal $k$-frames in $\mathbb{R}^n$. That is, it is the set of ordered orthonormal $k$-tuples of vectors in $\R^{n}$, i.e. 
		\begin{equation}
			\mathcal{V}^{n\times k}=\left\{A \in \mathbb{R}^{n \times k}: A^T A=I_k\right\},
		\end{equation}
		where $I_k$ denotes the $k\times k $ identity matrix. For $X\in \R^{n\times k}$, denotes its SVD as $X=U\Sigma V^{T}$, then the projection of $X$ onto $\mathcal{V}^{n\times k}$ is
		\begin{equation}\label{eq:def_proj_stiefel}
			\textup{Proj}_{\mathcal{V}^{n\times k}}(X)=UV^{T}.
		\end{equation}
		
		\hfill $\blacktriangle$
	\end{example}

    \begin{example}[Fixed-rank Matrices Manifold]
		\label{eg:fixed-rank}
		\normalfont
		The set of matrices with fixed rank-$r$
		\begin{equation}\label{eq:def_fixed_rank_manifold}
			\mathcal{R}_r=\left\{X \in \mathbb{R}^{m\times n}: \operatorname{rank}(X)=r\right\}
		\end{equation}
		is a smooth submanifold of $\R^{m\times n}$. For $X\in \R^{m\times n}$, denote its SVD as $X=U\Sigma V^{T}$, where the two matrices $U=\left[u_1, u_2, \ldots, u_m\right]$ and $V=\left[v_1, v_2, \ldots, v_n\right]$ are orthogonal matrices. The diagonal entries of $\Sigma$, which are the singular values of $X$, are written in nonincreasing order, 
		\begin{equation}
			\sigma_1(X) \geq \sigma_2(X) \geq \cdots \geq \sigma_{\min \{n, m\}}(X) \geq 0.
		\end{equation}
		Then the projection of $X$ onto $\mathcal{R}_r$ is given by
		\begin{equation}
			\textup{Proj}_{\mathcal{R}_r}(X)= \sum_{i=1}^{r}\sigma_{i}(X)u_i v_i^{T} := U\Sigma_{r} V^{T}.
		\end{equation}
		\hfill $\blacktriangle$
	\end{example}

    Also, Hadamard manifolds are a class of manifolds that is widely studied in the literature, since it includes many commonly encountered manifolds. \textit{Hadamard manifolds} are Riemannian
	manifolds with nonpositive sectional curvature that are complete and simply connected, see \cite{Burago2001ACI} and \cite{burago1992alexandrov}. Hadamard manifolds have infinite injectivity radii at every point.

    Below we provide some examples of Hadamard manifolds (more details can be found in e.g. \cite{Bacak2014convex}).
	\begin{example}[Euclidean spaces]
		\normalfont
		The Euclidean space $\mathbb{R}^n$ with its usual metric is a Hadamard manifold with constant sectional curvature equal to 0.
		\hfill $\blacktriangle$
	\end{example}
	
	\begin{example}[Hyperbolic spaces]
        \label{eg:hyperbolic_space}
		\normalfont
		We equip $\mathbb{R}^{n+1}$ with the $(-1, n)$-inner product
		$$
		\langle x, y\rangle_{(-1, n)}:=-x^0 y^0+\sum_{i=1}^n x^i y^i
		$$
		for $x:=\left(x^0, x^1, \ldots, x^n\right)$ and $y:=\left(y^0, y^1, \ldots, y^n\right)$. Define
		$$
		\mathbb{H}^n:=\left\{x \in \mathbb{R}^{n+1}:\langle x, x\rangle_{(-1, n)}=-1, x_0>0\right\} .
		$$
		Then $\langle\cdot, \cdot\rangle$ induces a Riemannian metric $g$ on the tangent spaces $T_p \mathbb{H}^n \subset T_p \mathbb{R}^{n+1}$ for $p \in \mathbb{H}^n$. The sectional curvature of $\left(\mathbb{H}^n, g\right)$ is $-1$ at every point.
		\hfill $\blacktriangle$
	\end{example}
	
	\begin{example}[Manifolds of positive definite matrices]
		\normalfont
		\label{eg:PSD}
		The space $\mathbb{S}_{++}^n$ of symmetric positive definite matrices $n\times n$ with real entries is a Hadamard manifold if it is equipped with the Riemannian metric
		\begin{equation}
			\left\langle\Omega_1, \Omega_2\right\rangle_{\Sigma} \triangleq \frac{1}{2} \operatorname{Tr}\left(\Omega_1 \Sigma^{-1} \Omega_2 \Sigma^{-1}\right) \quad \forall \Omega_1, \Omega_2 \in T_{\Sigma} \mathbb{S}_{++}^n.
		\end{equation}
		\hfill $\blacktriangle$
	\end{example}
	
	\subsection{Euclidean block MM}
 \label{sec:EBMM}
	
	When specialized on the standard Euclidean manifold, our RBMM becomes the standard Euclidean Block MM (e.g., see BSUM in \cite{hong2015unified}), where convergence rate for convex and strongly convex objectives are known. Recently, Lyu and Li \cite[Thm. 2.1]{lyu2023block} obtained convergence rates for Euclidean Block MM algorithms for nonconvex objectives with convex constraints. Our general results can recover part of their results: 
	
	\begin{corollary}[Complexity of Euclidean Block MM]\label{cor:EBMM}
		Theorems \ref{thm:RBMM_1}, \ref{thm:RBMM_2}, \ref{thm:BMM_prox}, and \ref{thm:BMM_rate} hold for Algorithm \ref{algorithm:BMM} when the underlying manifolds are Euclidean. In particular, the complexity result in Theorems \ref{thm:BMM_prox} and \ref{thm:BMM_rate} hold for the BSUM algorithm in \cite{hong2015unified}. 
	\end{corollary}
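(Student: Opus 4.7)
The plan is to verify that every Riemannian-geometric object and assumption invoked in Theorems \ref{thm:RBMM_1}, \ref{thm:RBMM_2}, \ref{thm:BMM_prox}, and \ref{thm:BMM_rate} collapses to its standard Euclidean counterpart when each $\mathcal{M}^{(i)}=\R^{n_i}$, so that the conclusions are immediate consequences of those theorems applied in this degenerate Riemannian setting. First I would list the reductions: the tangent space $T_x\mathcal{M}^{(i)}$ equals $\R^{n_i}$ itself; the canonical retraction is $\rtr_x(\eta)=x+\eta$ (which is also the exponential map); parallel transport $\Gamma_{x\to y}$ is the identity on $\R^{n_i}$; the geodesic distance $d(x,y)$ coincides with $\lVert x-y\rVert$; the injectivity radius is infinite, so \ref{assumption:A1_smoothness_surrogates}\textbf{(ii)} holds trivially for any $r_0>0$ and we may take $\hat r=\min\{r_0,1\}=1$; and for a convex constraint set $\Theta^{(i)}$, the lifted set $T_x^*\mathcal{M}^{(i)}$ in \eqref{eq:def_lift_constraints} equals $\Theta^{(i)}-x$ intersected with a ball, which is nonempty and coincides locally with $\Theta^{(i)}$ translated to the origin. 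Consequently the stationarity measure in \eqref{eq:stationary_approximate} reduces to the standard Euclidean first-order measure used for constrained nonconvex smooth optimization (as referenced in \cite{lyu2022convergence,nesterov2013gradient}).

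Next I would translate the assumptions. Under the Euclidean identification, $g$-smoothness (Def. \ref{def: G-L-smooth}) is identical to ordinary Lipschitz-gradient smoothness, so \ref{assumption:A0_optimal_gap}\textbf{(i)} becomes exactly the standard hypothesis that $f$ is $L_f$-smooth with lower bound $f^*$ and compact sublevel sets. The three surrogate conditions in \ref{assumption:A1_smoothness_surrogates}\textbf{(i-gs)}, \textbf{(i-rp)}, \textbf{(i-ep)} all coalesce: in $\R^{n_i}$ the Riemannian and Euclidean proximal surrogates \eqref{eq:def_prox_surrogate}--\eqref{eq:def_Euclidean_prox_surrogate} are the same function, and both are Euclidean smooth whenever the underlying $f_n^{(i)}$ is. The distance-regularization condition \ref{assumption:A1_1}\textbf{(iii)} becomes the usual requirement $h_n^{(i)}(\theta)\ge c\,\phi(\lVert\theta-\theta_{n-1}^{(i)}\rVert)$. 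Strong geodesic convexity of $\Theta^{(i)}$ reduces to ordinary strong convexity in $\R^{n_i}$. Once these identifications are in place, Theorems \ref{thm:RBMM_1}, \ref{thm:RBMM_2}, \ref{thm:BMM_prox}, and \ref{thm:BMM_rate} are applied verbatim, yielding the asymptotic stationarity and the $\widetilde{O}(\eps^{-2})$ complexity (under \ref{assumption:A1_smoothness_surrogates}\textbf{(i-rp)}/\textbf{(i-ep)} or the extra quadratic upper-bound in Thm. \ref{thm:BMM_rate}\textbf{(iii)}) and the $\widetilde{O}(\eps^{-4})$ complexity in the general $g$-smooth surrogate case.

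For the final sentence, I would explicitly match BSUM from \cite{hong2015unified} to Algorithm \ref{algorithm:BMM}. BSUM is defined on the Euclidean product $\prod_i \R^{n_i}$ with convex constraints $\Theta^{(i)}$; at iterate $n$ and block $i$ it minimizes a majorizing surrogate $g_n^{(i)}$ of the marginal $f_n^{(i)}$ satisfying exactly the majorization and sharpness conditions listed after Algorithm \ref{algorithm:BMM}. Thus every BSUM surrogate is a special case of our surrogates. When BSUM uses a proximal regularizer of the form $\tfrac{\lambda}{2}\lVert\theta-\theta_{n-1}^{(i)}\rVert^2$ (with $\lambda\ge L_f$), this is an instance of \ref{assumption:A1_smoothness_surrogates}\textbf{(i-ep)}, so Theorem \ref{thm:BMM_prox} applies and gives $N_\eps=O(\eps^{-2}(\log\eps^{-2})^2)$. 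When BSUM uses a Euclidean $L'$-smooth surrogate with quadratic majorization gap (e.g.\ prox-linear), we invoke Theorem \ref{thm:BMM_rate}\textbf{(iii)} with $c,C>0$ from the quadratic lower and upper bounds and obtain $N_\eps=O(\eps^{-2}\log\eps^{-2})$. The inexact-computation clause \ref{assumption:A0_optimal_gap}\textbf{(ii)} is a strengthening of the exactness assumed in BSUM and is automatically satisfied when exact block minimization is performed ($\Delta_n\equiv 0$).

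The proof is essentially a dictionary lookup rather than a new argument; the only mild care-point is to confirm that the lifted constraint set $T_x^*\mathcal{M}^{(i)}$ in \eqref{eq:def_lift_constraints} coincides, in the Euclidean convex setting, with the usual feasible-direction cone used in the Euclidean optimality measure, so that the stationarity notion in \eqref{eq:stationary_approximate} matches the one in \cite{lyu2022convergence,nesterov2013gradient}. This was already observed in the paragraph following \eqref{eq:stationary_approximate}, so no additional work is required, and the corollary follows.
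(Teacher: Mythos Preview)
Your proposal is correct and matches the paper's treatment: the paper states this corollary without proof, treating it as an immediate specialization of the general Riemannian theorems to the Euclidean setting. Your dictionary-style verification (tangent spaces $=\R^{n_i}$, exponential map $=$ addition, parallel transport $=$ identity, infinite injectivity radius, $g$-smoothness $=$ Euclidean smoothness, strong geodesic convexity of sets $=$ ordinary convexity) is exactly the kind of reduction the paper relies on implicitly, and your matching of BSUM to Algorithm~\ref{algorithm:BMM} is consistent with the discussion surrounding the corollary.
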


    We remark that \cite[Thm. 2.1]{lyu2023block}] also covers the case when convex surrogates with non-strongly-convex majorization gaps are used along with trust-regions and diminishing radii. Corollary \ref{cor:EBMM} does not cover this case.

    Below we give some examples of the Euclidean block MM. One primary example is Euclidean block proximal updates, namely, applying the surrogates in \eqref{eq:def_prox_surrogate} when the underlying manifold is the Euclidean space. The Euclidean block proximal updates reads
    \begin{equation}
        \theta_{n}^{(i)} \leftarrow \argmin_{\theta\in \Theta^{(i)}} \left(g^{(i)}_n(\theta)= f^{(i)}_n(\theta) + \lambda_n \|\theta-\theta^{(i)}_{n-1}\|^2_F\right).
    \end{equation}
    
	Another example of Euclidean block MM is the following block prox-linear update proposed in \cite{xu2013block}: For minimizing a differentiable function $f$ defined on the Euclidean space, 
	\begin{align}\label{eq:block_prox_linear}
		\theta_{n}^{(i)} &\leftarrow \argmin_{\theta\in \Theta^{(i)}} \left( g_{n}^{(i)}(\theta):= f_{n}^{(i)}(\theta_{n-1}^{(i)} ) + \langle \nabla f_{n}^{(i)}(\theta_{n-1}^{(i)}) ,\, \theta- \theta_{n-1}^{(i)}  \rangle + \frac{\lambda}{2} \lVert \theta - \theta_{n-1}^{(i)}\rVert^{2} \right).
	\end{align}
	In \cite{xu2013block}, under a mild condition, it was shown that the above algorithm converges asymptotically to a Nash equilibrium (a weaker notion than stationary points) and also a local rate of convergence under the Kurdyka-\L{}ojasiewicz condition is established. Notice that when $f$ is block-wise $L$-smooth and if $\lambda\ge L$, then $g_{n}^{(i)}$ is a majorizing surrogate of $f_{n}^{(i)}$ at $\theta_{n-1}^{(i)}$. Thus \eqref{eq:block_prox_linear} is a special instance of our RBMM algorithm in this case and hence Theorems \ref{thm:RBMM_1} and \ref{thm:BMM_rate} apply. That is, our general results imply that the block prox-linear algorithm \eqref{eq:block_prox_linear} in the Euclidean space converges asymptotically to the stationary points (not only Nash equilibrium) and also has iteration complexity of $\widetilde{O}(\eps^{-2})$. 
	
	In fact, the block prox-linear update \eqref{eq:block_prox_linear} coincides with block projected gradient descent with a fixed step size. Indeed, denoting $\nabla:=\nabla f_{n}^{(i)}(\theta_{n-1}^{(i)})$, \eqref{eq:block_prox_linear} is equivalent to 
	\begin{align}\label{eq:def_BPGD}
		\theta_{n}^{(i)} \leftarrow \argmin_{\theta\in \Theta^{(i)}} \left(  \langle \nabla ,\, \theta   \rangle + \frac{\lambda}{2} \lVert \theta \rVert^{2} - \lambda  \langle \theta, \theta^{(i)}_{n-1} \rangle  \right)
		& = \argmin_{\theta\in \Theta^{(i)}} \left\lVert   \theta - \left( \theta^{(i)}_{n-1} - \frac{1}{\lambda} \nabla \right)  \right\rVert^{2} \\ 
		&   = \textup{Proj}_{\Theta^{(i)}} \left( \theta^{(i)}_{n-1} - \frac{1}{\lambda} \nabla \right). 
	\end{align}
	Notice that since $\lambda\ge L$, the above becomes the standard \textit{block projected gradient descent} (Block PGD) update with step-size $\in (0,1/L]$. For block PGD with convex objectives, convergence of function value with complexity $\tilde{O}(\eps^{-1})$ is established in \cite{beck2013convergence}. Recently, in \cite{lyu2023block}, the authors showed the block PGD for smooth non-convex objectives converges to the set of stationary points with complexity $\tilde{O}(\eps^{-2})$. Our general results apply to this classical algorithm as well and recover the same complexity as \cite{lyu2023block}. 

    When the constraint set $\Theta^{(i)}$ is a Riemannian manifold embedded in the Euclidean space, in many cases it is non-convex as a subset of Euclidean space. Therefore, when applying the prox-linear updates as in \eqref{eq:block_prox_linear} and \eqref{eq:def_BPGD}, the complexity result in Theorem \ref{thm:BMM_prox} and \ref{thm:BMM_rate} do not hold, while the asymptotic convergence result in Theorem \ref{thm:RBMM_2} still hold. Moreover, The projection in \eqref{eq:def_BPGD} can be solved exactly on some well-known manifolds, including low-rank manifolds and Stiefel manifolds, which can be found in Section \ref{sec:examples_manifold}. More examples and details can be found in \cite{absil2012projection}. 

    Lastly, we provide two stylized applications of the Euclidean block MM in Section \ref{sec:apps}, namely the robust PCA (Section \ref{sec:RPCA}) and the Riemannian CP-dictionary learning (Section \ref{sec:RCPD_learning}).

	\subsection{Block Riemannian proximal updates on Hadamard manifolds}
	\label{sec:eg_had}
	
	
	We consider the Riemannian proximal surrogates in \ref{assumption:A1_smoothness_surrogates}\textbf{(i-rp)} on Hadamard manifolds. As shown later in Section \ref{sec:had}, this Riemannian proximal surrogate is geodesically strongly convex on Hadamard manifolds. Hence, the subproblem in the minimization step in \eqref{eq:RBMM_MmMm}, i.e. minimizing $g_{n}^{(i)}$ could be solved efficiently using classical Riemannian optimization methods (\cite{udriste1994convex}, \cite{zhang2016first}, \cite{liu2017accelerated}).

	Our general results in Theorems  \ref{thm:RBMM_2} and \ref{thm:BMM_prox} (with Riemannian proximal surrogates) apply to Hadamard manifolds and we obtain the following corollary.

 \begin{corollary}[Complexity of block Riemannian proximal updates on Hadamard manifolds]
     Theorems \ref{thm:RBMM_2} and \ref{thm:BMM_prox} hold for block Riemannian proximal updates \eqref{eq:def_prox_surrogate} on Hadamard manifolds. That is, the RBMM with proximal surrogates converges asymptotically to the set of stationary points and has iteration complexity of $\widetilde{O}(\eps^{-2})$.
 \end{corollary}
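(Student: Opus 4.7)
The plan is to reduce the corollary to a direct invocation of Theorems \ref{thm:RBMM_2} and \ref{thm:BMM_prox} by verifying that each of their hypotheses holds automatically in the Hadamard setting with the Riemannian proximal surrogate of \ref{assumption:A1_smoothness_surrogates}\textbf{(i-rp)}. Apart from one geometric input, the argument is essentially bookkeeping.

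First, I would dispatch the injectivity-radius requirement \ref{assumption:A1_smoothness_surrogates}\textbf{(ii)}. By the Cartan--Hadamard theorem, every Hadamard manifold satisfies $\rinj(x) = +\infty$ for all $x$, so any positive constant $r_0$ furnishes the required uniform lower bound over $\Theta^{(i)} \subseteq \M^{(i)}$, irrespective of whether $\Theta^{(i)}$ is bounded. Next, the surrogate choice \eqref{eq:def_prox_surrogate} is by definition in \ref{assumption:A1_smoothness_surrogates}\textbf{(i-rp)}; the majorization and sharpness requirements for $g_n^{(i)}$ follow from $d^{2}(\theta, \theta_{n-1}^{(i)}) \ge 0$ and $d(\theta_{n-1}^{(i)}, \theta_{n-1}^{(i)}) = 0$, respectively.

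The only nontrivial step is verifying the inexact-computation clause \ref{assumption:A0_optimal_gap}\textbf{(ii)}, namely that \eqref{eq:assumption_inexact_sol_convervence} holds whenever the optimality gaps $\Delta_n$ are summable. The key geometric fact I would use is that on any Hadamard manifold the squared distance function $y \mapsto d^{2}(y, x_0)$ is $2$-strongly geodesically convex (a direct consequence of the CAT($0$) comparison inequality). Combining this with the $g$-smoothness of $f_n^{(i)}$ with parameter $L_f$ and the admissibility condition $\lambda_n \ge L_f$ (indeed, up to a slight strengthening absorbed into $L_f$, we may assume $\lambda_n > L_f$), the surrogate $g_n^{(i)}$ is $\rho$-strongly $g$-convex on the $g$-convex set $\Theta^{(i)}$ for some $\rho > 0$. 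The quadratic-growth inequality at the unique minimizer $\theta_n^{(i\star)}$ then gives
\[
\frac{\rho}{2}\, d^{2}\!\left(\theta_n^{(i\star)}, \theta_n^{(i)}\right) \;\le\; g_n^{(i)}\!\left(\theta_n^{(i)}\right) - g_n^{(i)}\!\left(\theta_n^{(i\star)}\right) \;\le\; \Delta_n,
\]
so summability of $\Delta_n$ (in particular $\Delta_n \to 0$) forces $d(\theta_n^{(i\star)}, \theta_n^{(i)}) \to 0$. This is exactly the implication noted immediately after Assumption \ref{assumption:A0_optimal_gap} and formalized in Prop.~\ref{prop:optimality_gap_ite}, which I would cite rather than rederive.

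With \ref{assumption:A0_optimal_gap}, \ref{assumption:A1_smoothness_surrogates}\textbf{(ii)}, and \ref{assumption:A1_smoothness_surrogates}\textbf{(i-rp)} all verified, Theorem \ref{thm:RBMM_2} applies verbatim and yields that every limit point of $(\param_n)_{n\ge 0}$ is a stationary point of $f$ over $\Param$. Similarly, Theorem \ref{thm:BMM_prox} applies---its additional assumption of $g$-convex constraints is part of the implicit setup of the corollary---and yields the worst-case iteration complexity $N_\eps = O\!\left(\eps^{-2} (\log \eps^{-2})^{2}\right) = \widetilde{O}(\eps^{-2})$. The main obstacle in the whole plan is purely geometric: the $2$-strong $g$-convexity of squared distance, which is exactly what the Hadamard hypothesis buys us and what would fail on manifolds of positive sectional curvature.
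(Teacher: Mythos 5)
Your proposal is correct and follows essentially the same route the paper takes: the paper states this corollary as an immediate consequence of Theorems \ref{thm:RBMM_2} and \ref{thm:BMM_prox}, with the supporting facts you invoke all established in Section \ref{sec:had} (infinite injectivity radius of Hadamard manifolds, $2$-strong geodesic convexity of $d^{2}(\cdot,p)$ yielding Proposition \ref{prop:strong_convexity_surrogate}, and the optimality-gap bound of Proposition \ref{prop:optimality_gap_ite} for the inexact-computation clause). Your observation that $\lambda_n \ge L_f$ must be strengthened to $\lambda_n > L_f$ to obtain strict strong convexity matches the hypothesis of Proposition \ref{prop:strong_convexity_surrogate}.
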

In Section \ref{sec:app_fisher_rao}, we give a stylized example of block Riemannian proximal updates solving the optimistic likelihood problem, where the manifold of PSD matrices is involved.

	\subsection{Block Riemannian/Euclidean proximal and prox-linear updates on Stiefel manifolds}\label{sec:Stiefel_prox}

    In this section, we discuss both the Riemannian and Euclidean proximal/prox-linear updates on the Stiefel manifold, as well as their variants and applications.

	It is known that the Stiefel manifold has non-negative sectional curvature (\cite{ziller2007examples}), so it is not a Hadamard manifold. Nevertheless, compact manifolds have a positive injectivity radius (\cite[Thm. III.2.3]{chavel2006riemannian}). In particular, the Stiefel manifold has an injectivity radius of at least $0.89\pi$ (\cite[Eq. (5.13)]{Rentmeesters2013AlgorithmsFD}), which satisfies \ref{assumption:A1_smoothness_surrogates}\textbf{(ii)}. Hence the results in Theorems \ref{thm:RBMM_2} and \ref{thm:BMM_prox} apply and we state the results in the following corollary. 
 
 \begin{corollary}[Complexity of block Riemannian/Euclidean proximal updates on Stiefel manifolds]\label{cor:Stiefel_E/R_proximal}
     Theorems \ref{thm:RBMM_2} and \ref{thm:BMM_prox} hold for both block Riemannian proximal updates \eqref{eq:def_prox_surrogate} and block Euclidean proximal updates \eqref{eq:def_Euclidean_prox_surrogate} on Stiefel manifolds. That is, the RBMM with Euclidean/Riemannian proximal surrogates converges asymptotically to the set of stationary points and has iteration complexity of $\widetilde{O}(\eps^{-2})$.
 \end{corollary}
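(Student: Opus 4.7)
The plan is to verify that the Stiefel setting satisfies every hypothesis of Theorems \ref{thm:RBMM_2} and \ref{thm:BMM_prox}, after which the stated asymptotic convergence and $\widetilde{O}(\eps^{-2})$ complexity follow by direct invocation. The work is entirely a checklist: no new analytical estimates are required, only translation of the Stiefel-specific facts (compactness, embedding in $\R^{n\times k}$, uniformly positive injectivity radius) into the abstract conditions \ref{assumption:A0_optimal_gap}--\ref{assumption:A1_smoothness_surrogates}.

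First, I would verify \ref{assumption:A1_smoothness_surrogates}\textbf{(ii)}. Since $\mathcal{V}^{n\times k}$ is compact, its injectivity radius is uniformly positive; more concretely we may take $r_0 = 0.89\pi$ using the bound cited from \cite{Rentmeesters2013AlgorithmsFD}, and on any Euclidean block we may take $\rinj \equiv \infty$. Thus the minimum over all $m$ blocks is a strictly positive uniform lower bound on $\Theta^{(i)}$. Next, I would check that the prescribed update scheme conforms to \ref{assumption:A1_smoothness_surrogates}\textbf{(i-rp)} or \textbf{(i-ep)}. For the Riemannian proximal variant, the surrogate is by construction $g_n^{(i)}(\theta) = f_n^{(i)}(\theta) + \tfrac{\lambda_n}{2} d^{2}(\theta, \theta_{n-1}^{(i)})$ with $\lambda_n \ge L_f$, which matches \eqref{eq:def_prox_surrogate} verbatim. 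For the Euclidean proximal variant, the Stiefel manifold embeds canonically in $\R^{n\times k}$, any closed constraint set $\Theta^{(i)} \subseteq \mathcal{V}^{n\times k}$ is compact, and $g_n^{(i)}(\theta) = f_n^{(i)}(\theta) + \tfrac{\lambda_n}{2} \lVert \theta-\theta_{n-1}^{(i)}\rVert^{2}$ matches \eqref{eq:def_Euclidean_prox_surrogate}. The objective-level conditions in \ref{assumption:A0_optimal_gap}\textbf{(i)} (g-smoothness of $f$, lower-boundedness, compact sublevel sets), the inexact-computation condition \ref{assumption:A0_optimal_gap}\textbf{(ii)}, and the g-convexity of the $\Theta^{(i)}$ demanded by Theorem \ref{thm:BMM_prox} are standing hypotheses of the corollary. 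With every assumption in place, Theorem \ref{thm:RBMM_2} delivers the asymptotic stationarity statement and Theorem \ref{thm:BMM_prox} delivers the $\widetilde{O}(\eps^{-2})$ iteration complexity.

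The only subtle point is in the Euclidean proximal case, where the proof of Theorem \ref{thm:BMM_prox} internally manipulates intrinsic Riemannian objects (inverse exponentials, parallel transports, squared geodesic distance), while the surrogate gap only controls the ambient Euclidean squared distance $\lVert \theta-\theta_{n-1}^{(i)}\rVert^{2}$. This is the only place one might worry that the Stiefel specialization is not completely transparent, but the issue is dispatched by the equivalence of Euclidean and geodesic distances on compact subsets of an embedded manifold (Lemma \ref{lem:equiv_dist}): there exist constants $c_1,c_2>0$ depending only on $\Theta^{(i)}$ with $c_1 \lVert \theta-\theta'\rVert \le d(\theta,\theta') \le c_2 \lVert \theta-\theta'\rVert$ on $\Theta^{(i)}\times \Theta^{(i)}$. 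This lets the Euclidean quadratic majorization gap imply a geodesic quadratic gap of the form required in \ref{assumption:A1_1}\textbf{(iii)} (up to a constant absorbed into the $\widetilde{O}$), closing the gap between the Euclidean regularizer and the intrinsic machinery used inside the theorem's proof.
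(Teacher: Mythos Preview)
Your proposal is correct and follows the same approach as the paper: the paper's ``proof'' is the paragraph immediately preceding the corollary, which verifies \ref{assumption:A1_smoothness_surrogates}\textbf{(ii)} via the compactness of $\mathcal{V}^{n\times k}$ and the explicit lower bound $\rinj \ge 0.89\pi$, then notes that the remaining hypotheses of Theorems \ref{thm:RBMM_2} and \ref{thm:BMM_prox} are either standing assumptions or hold by construction of the proximal surrogates.

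Your final paragraph on the Euclidean proximal case is accurate but redundant at the corollary level: the distance-equivalence argument via Lemma \ref{lem:equiv_dist} is already carried out \emph{inside} the paper's proof of Theorems \ref{thm:RBMM_2} and \ref{thm:BMM_prox} under \ref{assumption:A1_smoothness_surrogates}\textbf{(i-ep)} (Section \ref{sec:conv_opt1}), and the text following \ref{assumption:A1_1}\textbf{(iii)} already records that \textbf{(i-ep)} on a compact embedded constraint set automatically yields the geodesic quadratic gap. So you need not re-argue it here; invoking the theorems directly suffices.
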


We remark that in Corollary \ref{cor:Stiefel_E/R_proximal}, both Riemannian proximal surrogates and Euclidean proximal surrogates are allowed. The Riemannian proximal surrogates involve the geodesic distance in the updates, which brings additional computational difficulty since the closed-form solution of the end-point geodesic is unknown for the Stiefel manifold. A survey of numerical methods on computing geodesics on the Stiefel manifold can be found in \cite{edelman1998geometry}. Instead, the Euclidean proximal surrogates use the Euclidean distance function as a regularizer, which provides computational savings.

In fact, our general framework of RBMM can utilize standard (Euclidean) $L$-smooth surrogates for block optimization problems involving Stiefel manifolds, as stated in Corollary \ref{cor:prox_Stiefel}. Below we first give a key lemma from \cite{chen2021decentralized} for deriving Corollary \ref{cor:prox_Stiefel} from Theorems \ref{thm:RBMM_2} and \ref{thm:BMM_rate}. This lemma states any $L$-smooth function in Euclidean space is a $g$-smooth function on the Stiefel manifold, and therefore an $L$-smooth objective function and surrogates satisfy the $g$-smoothness assumption in \ref{assumption:A0_optimal_gap} and \ref{assumption:A1_smoothness_surrogates}. 

\begin{lemma}[Euclidean smoothness implies geodesic smoothness on Stiefel manifold; Lem. 2.4 and Appendix C.1 in \cite{chen2021decentralized}]\label{lem:g_smooth_Stiefel}
    If $f$ is $L$-smooth in Euclidean space $\R^{n\times d}$, then there exists a constant $L_g = L+ L_N$ such that $f$ is $g$-smooth with parameter $L_g$ on the Stiefel manifold $\mathcal{V}^{n\times d}$, where $L_N = \max_{x\in \mathcal{V}^{n\times d}} \|\nabla f(x)\|$.
\end{lemma}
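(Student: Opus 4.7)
The plan is to exploit the fact that $\mathcal{V}^{n\times d}$ is a compact embedded submanifold of $\R^{n\times d}$, so the Riemannian gradient and parallel transport can both be written in terms of ambient Euclidean quantities. Under the embedded metric the Riemannian gradient has the closed form
\begin{equation*}
\grad f(x) = \nabla f(x) - x\,\mathrm{sym}\bigl(x^{\top}\nabla f(x)\bigr),
\end{equation*}
which is the orthogonal projection of $\nabla f(x)$ onto $T_x\mathcal{V}^{n\times d}$, so in particular $\|\grad f(x)\|_F \le \|\nabla f(x)\|_F \le L_N$.

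First I would record the elementary inequality $\|x-y\|_F \le d_{\mathcal{V}}(x,y)$, which holds because the Euclidean segment is the shortest ambient curve while a geodesic on $\mathcal{V}$ is one such ambient curve. Combined with Euclidean $L$-smoothness this gives $\|\nabla f(x)-\nabla f(y)\|_F \le L\,d_{\mathcal{V}}(x,y)$. Next I would apply the triangle inequality
\begin{equation*}
\bigl\|\grad f(x) - \Gamma_{y\to x}\grad f(y)\bigr\|_F \le \bigl\|\grad f(x) - \grad f(y)\bigr\|_F + \bigl\|\grad f(y) - \Gamma_{y\to x}\grad f(y)\bigr\|_F.
\end{equation*}
The first summand is estimated by expanding $\grad f(x)-\grad f(y) = [\nabla f(x)-\nabla f(y)] - [x\,\mathrm{sym}(x^{\top}\nabla f(x)) - y\,\mathrm{sym}(y^{\top}\nabla f(y))]$, inserting cross-terms such as $y\,\mathrm{sym}(x^{\top}\nabla f(x))$, and bounding the resulting pieces using the $L$-Lipschitz property of $\nabla f$, the definition of $L_N$, and the identity $\|y\|_{\mathrm{op}}=1$ on the Stiefel manifold. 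The second summand measures how far parallel transport deviates from the ambient identity map: since $\grad f(y)\in T_y\mathcal{V}$, the defining ODE of parallel transport forces only the normal component of the ambient derivative to be nonzero along the geodesic, and that normal component is controlled by the second fundamental form of $\mathcal{V}$ times $\|\grad f(y)\|_F \le L_N$.

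The main obstacle will be carrying out the parallel transport estimate sharply enough to land on the specific constant $L_g=L+L_N$ rather than $L+cL_N$ with an extraneous geometric prefactor. The clean way to do this is to argue entirely in the ambient space and exploit Stiefel structure: the tangency condition $x^{\top}\xi+\xi^{\top}x=0$ together with $x^{\top}x=I_d$ allows the normal-component integrand along the geodesic to be bounded directly by $\|\grad f(y)\|_F$ times arclength, without generating curvature prefactors larger than one. Assembling the two summands then yields $\|\grad f(x) - \Gamma_{y\to x}\grad f(y)\|_F \le (L+L_N)\,d_{\mathcal{V}}(x,y)$, which is exactly $g$-smoothness with parameter $L_g=L+L_N$ in the sense of Definition \ref{def: G-L-smooth} applied to a single block.
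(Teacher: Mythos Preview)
The paper does not give its own proof of this lemma; it simply cites Lemma~2.4 and Appendix~C.1 of \cite{chen2021decentralized}. So there is nothing in the paper to compare your argument against directly.

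Your outline is sound as a route to \emph{some} $g$-smoothness constant of the form $c_1 L + c_2 L_N$. In particular, your parallel-transport estimate is exactly right: writing the parallel field as $V(t)=\Gamma_{y\to\gamma(t)}\grad f(y)$, the ambient derivative $V'(t)$ is normal, and on the Stiefel manifold one checks $V'(t)=-\gamma(t)\,\mathrm{sym}(\gamma'(t)^{\top}V(t))$, whence $\lVert V'(t)\rVert_F\le \lVert\gamma'(t)\rVert_F\,\lVert V(t)\rVert_F$. Since parallel transport is an isometry this integrates to $\lVert \grad f(y)-\Gamma_{y\to x}\grad f(y)\rVert_F\le L_N\,d(x,y)$ with prefactor exactly $1$, as you claim.

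The gap is in the first summand. Expanding $\grad f(x)-\grad f(y)$ via the formula $\grad f(z)=\nabla f(z)-z\,\mathrm{sym}(z^{\top}\nabla f(z))$ and inserting cross-terms as you describe produces several pieces, each bounded by $\lVert x-y\rVert_F$ times either $L$ or $L_N$; a direct count gives at least $(2L+2L_N)\lVert x-y\rVert_F$, not $L\,\lVert x-y\rVert_F$. So the two summands together yield $c_1 L + c_2 L_N$ with $c_1,c_2>1$, not the stated $L+L_N$. This is not fatal for the downstream corollaries in the paper (they only need \emph{some} $g$-smoothness constant depending on $L$ and $L_N$), but it does mean your decomposition does not recover the specific constant asserted. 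The cited proof in \cite{chen2021decentralized} obtains $L+L_N$ by working instead with the quadratic pullback bound $f(\mathrm{Rtr}_x(\eta))\le f(x)+\langle\grad f(x),\eta\rangle+\tfrac{L_g}{2}\lVert\eta\rVert^2$ and using a sharp second-order retraction estimate specific to Stiefel, rather than going through the Lipschitz-gradient-with-parallel-transport definition.
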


In order for the iteration complexity results in Corollary \ref{cor:prox_Stiefel} to hold, one also needs to verify assumption \ref{assumption:A1_1}\textbf{(iii)}. Namely, the majorization gap should be lower bounded by an increasing function of geodesic distance. The following geometric lemma states that for compact embedded submanifolds of the Euclidean space, the Riemannian and the Euclidean distances are within constant multiples of each other. Therefore, in this case, the quadratic majorization gap in terms of the Euclidean distance in \eqref{eq:quadratic_majorization_euclidean} implies the quadratic majorization gap in terms of the Riemannian distance in \ref{assumption:A1_1}\textbf{(iii)}. This allows us to use a wide range of computationally efficient surrogates on Stiefel manifolds such as Euclidean proximal surrogates \eqref{eq:def_Euclidean_prox_surrogate},  Euclidean prox-linear surrogates \eqref{eq:block_prox_linear}, and \textit{Euclidean regularized linear surrogates}, which will be discussed later in this section. 

See Figure \ref{fig:GST_lb} for an illustration of Lemma \ref{lem:equiv_dist} on Stiefel manifolds.

\begin{lemma}[Equivalence of distance on Riemannian manifold; Lem. 4.1 in \cite{michels2019riemannian}]\label{lem:equiv_dist}
    Let $M \subset \mathbb{R}^n$ be a smooth submanifold, equipped with a Riemannian metric $g$. The geodesic distance between $x,y\in M$ induced by $g$ is denoted as d(x,y). Consider the Euclidean norm $\|\cdot\|$ on $\mathbb{R}^n$. Let $K \subseteq M$ be compact. Then there exists $c>0$ such that for $x, y \in K$,
\begin{equation}\label{eq:equiv_dist}
    c d(x,y) \le \|x-y\| \le d(x,y). 
\end{equation}
\end{lemma}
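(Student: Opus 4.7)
The plan is to handle the two inequalities separately: the upper bound reduces to the fundamental theorem of calculus in $\R^n$, while the lower bound follows from a compactness--continuity argument applied to the ratio $\|x-y\|/d(x,y)$, extended continuously to the diagonal of $K\times K$ with value $1$.

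\emph{Upper bound.} For any piecewise smooth curve $\gamma:[0,1]\to M$ with $\gamma(0)=x$ and $\gamma(1)=y$, I view $\gamma$ as a curve in $\R^n$ and estimate
\begin{equation}
\|x-y\|=\left\|\int_0^1 \gamma'(t)\,dt\right\|\le \int_0^1\|\gamma'(t)\|\,dt,
\end{equation}
then use that the Riemannian length of $\gamma$ equals its Euclidean length: because $g$ is the metric induced from the ambient inner product, $\|\gamma'(t)\|_g=\|\gamma'(t)\|$ for $\gamma'(t)\in T_{\gamma(t)}M\subset\R^n$. Taking the infimum over admissible curves yields $\|x-y\|\le d(x,y)$.

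\emph{Lower bound via compactness.} Define $\rho(x,y):=\|x-y\|/d(x,y)$ for $x\ne y$ in $K$, and set $\rho(p,p):=1$. Away from the diagonal, $\rho$ is continuous and strictly positive: the numerator is continuous and positive, and the Riemannian distance is continuous and finite on $K\times K$ (we may assume $M$ connected, so $d$ is a finite continuous metric). If $\rho$ extends continuously to the diagonal, then it is a positive continuous function on the compact $K\times K$, attains a strictly positive minimum $c>0$, and we obtain $c\,d(x,y)\le\|x-y\|$ for all $x,y\in K$, as desired.

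\emph{The main obstacle} is verifying the on-diagonal limit $\rho(x,y)\to 1$ as $(x,y)\to(p,p)$. My argument is local: in a normal neighborhood of $p$ the exponential map $\exp_p:B_\varepsilon(0)\subset T_p M\to M$ is a diffeomorphism, so nearby points can be written as $x=\exp_p(v)$, $y=\exp_p(w)$. Smoothness of $\exp_p$ together with a first-order Taylor expansion around zero velocity gives $d(x,y)=\|v-w\|\bigl(1+O(\|v\|+\|w\|)\bigr)$; since the inclusion $T_p M\hookrightarrow\R^n$ preserves the norm (because $g$ is induced), one similarly has $\|x-y\|=\|v-w\|\bigl(1+O(\|v\|+\|w\|)\bigr)$. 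The remainder terms can be bounded uniformly in $p\in K$ using compactness of $K$ and the joint smoothness of $\exp$ on a neighborhood of the zero section of $TM|_K$, so the ratio $\rho(x,y)$ tends to $1$ uniformly in $p$. This completes the continuous extension to the diagonal and hence the proof.
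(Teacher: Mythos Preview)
The paper does not give its own proof of this lemma; it merely cites \cite{michels2019riemannian}. So there is no in-paper argument to compare against.

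Your proof is correct under the reading that $g$ is the metric induced from the ambient Euclidean inner product, which you explicitly assume and which the constant $1$ in the upper bound forces (for an arbitrary $g$ the statement would need constants on both sides). The arc--chord inequality handles the upper bound, and your compactness trick---extending $\rho(x,y)=\|x-y\|/d(x,y)$ continuously to the diagonal with value $1$ and then taking the positive minimum on $K\times K$---is a clean way to get the lower bound.

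One small remark on the diagonal limit: the Taylor formulation $\|x-y\|=\|v-w\|(1+O(\|v\|+\|w\|))$ is slightly imprecise when $\|v-w\|$ is much smaller than $\|v\|+\|w\|$ (the quadratic remainder in $\exp_p$ need not be small relative to $\|v-w\|$). A robust way to phrase it is via the integral mean-value identity
\[
\exp_p(v)-\exp_p(w)=\int_0^1 D\exp_p\bigl(w+t(v-w)\bigr)(v-w)\,dt,
\]
which gives $\|x-y\|/\|v-w\|\to \|D\exp_p(0)\,u\|=1$ uniformly over unit directions $u$ as $v,w\to 0$; the analogous estimate for $d(x,y)/\|v-w\|$ follows from $g_{ij}=\delta_{ij}+O(\|\cdot\|^2)$ in normal coordinates together with geodesic convexity of small balls. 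This is what your sketch intends, and with that refinement the argument is complete.
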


    \begin{figure*}
    	\centering
    	\vspace{-0cm}
    	\includegraphics[width=0.35\linewidth]{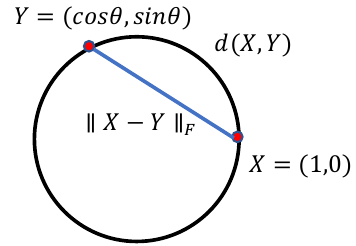}
    	\caption{Illustration of distances on $\mathcal{V}^{n\times p}$ with $n=2, p=1$. The length of the blue segment is Euclidean distance $\|X-Y\|_F$. The length of the arc between $X$ and $Y$ is $d(X,Y)$.}
    	\label{fig:GST_lb}
    \end{figure*}

Below we give a proof of Corollary \ref{cor:prox_Stiefel}. Recall that Corollary \ref{cor:prox_Stiefel_informal} is a direct consequence of Corollary \ref{cor:prox_Stiefel}.

\begin{proof}[\textbf{Proof of Corollary \ref{cor:prox_Stiefel}}]
    To deduce Corollary \ref{cor:prox_Stiefel} from Theorems \ref{thm:RBMM_2} and \ref{thm:BMM_rate}, we verify the assumptions one by one. First, by the hypothesis on Euclidean smoothness of the objective $f$ and the surrogates $g_{n}^{(i)}$ and Lemma \ref{lem:g_smooth_Stiefel} (and also using the definition of $d$ on the product manifold in \eqref{eq:def_product_dist}), we have that $f$ and $g_{n}^{(i)}$s are also $g$-smooth. This verifies \ref{assumption:A0_optimal_gap}\textbf{(i)} and \ref{assumption:A1_smoothness_surrogates}\textbf{(i-gs)}. By Lemma \ref{lem:equiv_dist} and the hypothesis in Corollary \ref{cor:prox_Stiefel}, we verify \ref{assumption:A0_optimal_gap}\textbf{(ii)} and \ref{assumption:A1_smoothness_surrogates}\textbf{(iii)}. Lastly, \ref{assumption:A1_smoothness_surrogates}\textbf{(ii)} is satisfied since both Stiefel manifolds and Euclidean space have a uniformly positive injectivity radius. Then the assertion in Corollary \ref{cor:prox_Stiefel} follows from Theorems \ref{thm:RBMM_2} and \ref{thm:BMM_rate}.
\end{proof}

\subsection{Iteration complexity of MM on Stiefel manifolds in \cite{breloy2021majorization}}

In order to solve minimization over Stiefel manifolds
\begin{align}\label{eq:opt_over_stiefel_main}
		\min_{U\in \mathcal{V}^{p\times k}}	f(U),
\end{align}
Breloy et al. \cite{breloy2021majorization} recently proposed an MM method incorporating linear surrogates and reducing the surrogate minimization problem to a projection onto the Stiefel manifold. More precisely, given an iterate $U_{n-1}\in \mathcal{V}^{p\times k}$ at iteration $n-1$, the surrogate $g_n$ at iteration $n$ takes the following form 
\begin{equation}\label{eq:linear_surr_stiefel}
    g_n (U)= -\operatorname{Tr}\left(\bold{R}^H (U_{n-1}) U\right) - \operatorname{Tr}\left(U^H \bold{R}(U_{n-1})\right) + \textup{const},
\end{equation}
where $^{H}$ is the conjugate transpose operator and  $\bold{R}(\cdot) : \mathbb{C}^{p\times k}\to \mathbb{C}^{p\times k}$ is a matrix function chosen so that $g_{n}(U_{n-1}) = f(U_{n-1})$ and $g_{n}\ge f$.  Since $g_{n}$ is linear and since $U\in \mathcal{V}^{p\times k}$, we have 
\begin{align}
	U_{n+1} \overset{\textup{def}}{=} \argmin_{U\in \mathcal{V}^{p\times k}} g_{n}(U)  =  \argmin_{U\in \mathcal{V}^{p\times k}}\,\,  \lVert \mathbf{R}^{H}(U_{n-1}) - U \rVert_{F}. 
\end{align}
When $\mathbf{R}^{H}(U_{n-1})$ is of full-rank, then the rightmost projection problem has a unique solution given by the projection operator in  \eqref{eq:def_proj_stiefel}. See various applications of this MM method in \cite{breloy2021majorization}.

In \cite{breloy2021majorization}, the authors showed this MM method asymptotically converges to the set of stationary points by adapting a convergence result of Euclidean BMM algorithm in \cite{razaviyayn2013unified}. However, there is no known bound on the iteration complexity due to the non-convexity of the constraint set and the objective function. An iteration complexity of $\tilde{O}(\eps^{-2})$ of Euclidean BMM for non-convex smooth objectives with convex constraints has been obtained recently in \cite{lyu2023block}. However, this result is not applicable here since the Stiefel manifolds cannot be viewed as convex constraint sets within Euclidean spaces.

By applying Corollary \ref{cor:prox_Stiefel}, we can obtain an iteration complexity bound of $\tilde{O}(\eps^{-2})$ for the above MM method on the Stiefel manifold with a slight modification via Euclidean proximal regularization. Namely, for a fixed proximal regularization parameter $\lambda\ge 0$, consider the following surrogate
\begin{align}\label{eq:linear_surr_stiefel_prox}
    \tilde{g}_n (U) &=  g_{n}(U) + \lambda \lVert U - U_{n-1} \rVert_{F}^{2} \\
    &= -\operatorname{Tr}\left((\bold{R}(U_{n-1})+\lambda U_{n-1})^H U\right) - \operatorname{Tr}\left(U^H (\bold{R}(U_{n-1})+\lambda U_{n-1}))\right) + \textup{const}.
\end{align}
Note that minimizing the above surrogate $\tilde{g}_{n}$ is as easy as minimizing $g_{n}$: 
\begin{align}\label{eq:MM_Stiefel_regularized}
	U_{n+1} \overset{\textup{def}}{=} \argmin_{U\in \mathcal{V}^{p\times k}} \tilde{g}_{n}(U)  &=  \argmin_{U\in \mathcal{V}^{p\times k}}\,\,  \lVert \mathbf{R}^{H}(U_{n-1}) + \lambda U_{n-1} - U \rVert_{F}.
\end{align}

Note that the MM update in \eqref{eq:MM_Stiefel_regularized} satisfies the hypothesis of Corollary \ref{cor:prox_Stiefel}. Indeed, the surrogates $g_n$ in \eqref{eq:linear_surr_stiefel} and $\tilde{g}_n$ in \eqref{eq:linear_surr_stiefel_prox} are linear and hence are $L$-smooth in Euclidean space. The addition of Euclidean proximal regularization ensures that we have at least a quadratic majorization gap as in \eqref{eq:quadratic_majorization_euclidean}. Therefore, by Corollary \ref{cor:prox_Stiefel}, the iteration complexity of the MM update in \eqref{eq:MM_Stiefel_regularized} is of $\tilde{O}(\eps^{-2})$. These results are formally stated in Corollary \ref{cor:regularized_linear_Stiefel}.

\begin{corollary}[Complexity of MM with regularized linear surrogates on the Stiefel manifold]\label{cor:regularized_linear_Stiefel}
    Consider the problem of minimizing a differentiable objective $f$ on the Stiefel manifold $\mathcal{V}^{p\times k}$  as in \eqref{eq:opt_over_stiefel_main}. 
   	
    Then the iterates generated by \eqref{eq:MM_Stiefel_regularized} with arbitrary initialization converges asymptotically to the set of stationary points of \eqref{eq:opt_over_stiefel_main}. Moreover, the iteration complexity is $\widetilde{O}(\eps^{-2})$.
\end{corollary}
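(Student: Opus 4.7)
The plan is to interpret the iteration \eqref{eq:MM_Stiefel_regularized} as a one-block instance of Algorithm \ref{algorithm:BMM} with $\mathcal{M}^{(1)} = \Theta^{(1)} = \mathcal{V}^{p\times k}$ and surrogate $\tilde g_n$ from \eqref{eq:linear_surr_stiefel_prox}, and then to invoke Corollary \ref{cor:prox_Stiefel} directly. Because $\Theta^{(1)}$ is the entire Stiefel manifold, the geodesic-convexity hypothesis on the constraint set is vacuous, so only the smoothness and quadratic-majorization properties of $\tilde g_n$, together with the regularity of $f$, remain to be checked.

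First I would verify that $\tilde g_n$ satisfies the surrogate hypotheses of Corollary \ref{cor:prox_Stiefel}. The map in \eqref{eq:linear_surr_stiefel_prox} is a linear function of $U$ plus $\lambda \|U - U_{n-1}\|_F^2$, whose Euclidean Hessian equals $2\lambda I$; hence $\tilde g_n$ is Euclidean $L'$-smooth with $L' = 2\lambda$ independent of $n$. Majorization and sharpness are inherited from $g_n$: the added proximal term is nonnegative and vanishes at $U_{n-1}$, so
\[
\tilde g_n(U) - f(U) = \bigl(g_n(U) - f(U)\bigr) + \lambda \|U - U_{n-1}\|_F^2 \ge \lambda \|U - U_{n-1}\|_F^2,
\]
which simultaneously realizes Definition \ref{def:majorizing_surrogates} and the quadratic-gap condition \eqref{eq:quadratic_majorization_euclidean} with constant $c = \lambda$.

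Next I would verify \ref{assumption:A0_optimal_gap} and \ref{assumption:A1_smoothness_surrogates}\textbf{(ii)}. The standing regularity of the MM construction in \cite{breloy2021majorization} makes $f$ Euclidean $L$-smooth in a neighborhood of $\mathcal{V}^{p\times k}$, and the compactness of the Stiefel manifold gives lower boundedness and sublevel-set compactness of $f$ for free. The Stiefel manifold has injectivity radius at least $0.89\pi$ as cited earlier, so \ref{assumption:A1_smoothness_surrogates}\textbf{(ii)} holds. The inexact-computation requirement in \ref{assumption:A0_optimal_gap}\textbf{(ii)} is nearly trivial here: the subproblem \eqref{eq:MM_Stiefel_regularized} admits the closed-form Stiefel projection \eqref{eq:def_proj_stiefel} of $\mathbf{R}^H(U_{n-1}) + \lambda U_{n-1}$ whenever that matrix has full column rank, in which case $\Delta_n = 0$; numerically approximate SVDs with summable error handle degenerate cases. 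Applying Corollary \ref{cor:prox_Stiefel} then delivers both asymptotic stationarity of $\{U_n\}$ and the iteration complexity $\widetilde{O}(\eps^{-2})$.

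The only substantive design choice, and what I would flag as the main obstacle, is that the unregularized linear surrogate of \cite{breloy2021majorization} has no quadratic lower bound on its majorization gap, so Corollary \ref{cor:prox_Stiefel} fails to apply to the original update. The $\lambda$-weighted Euclidean proximal term in \eqref{eq:linear_surr_stiefel_prox} is precisely the minimal modification that restores \eqref{eq:quadratic_majorization_euclidean} while preserving the single-Stiefel-projection form of the subproblem; beyond this observation, the argument is a clean specialization of Corollary \ref{cor:prox_Stiefel} and requires no further technical work.
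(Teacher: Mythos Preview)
Your proposal is correct and follows essentially the same route as the paper: verify that the regularized surrogate $\tilde g_n$ meets the Euclidean-smoothness and quadratic-majorization-gap hypotheses of Corollary~\ref{cor:prox_Stiefel}, note that the constraint set is the full Stiefel manifold so geodesic convexity is vacuous, and apply that corollary directly. Your account is in fact slightly more careful than the paper's informal discussion preceding the corollary (for instance, you compute the smoothness parameter $L'=2\lambda$ explicitly rather than calling $\tilde g_n$ ``linear''), but the logic is identical.
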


To the best of our knowledge, this is the first complexity result for MM on Stiefel manifolds in the literature. In particular, the iteration complexity in Corollary \ref{cor:regularized_linear_Stiefel} applies for various problem instances discussed in \cite[Sec. V]{breloy2021majorization} including power iteration for computing top eigenvector, generic non-homogenous quadratic form, and nonconvex subspace recovery.  In Section \ref{app:GST}, we use a similar idea to obtain the same iteration complexity bound for block MM algorithm for the geodesically constrained subspace tracking problem \cite{blocker2023dynamic}.

		\section{Stylized Applications}\label{sec:apps}

        In this section, we will discuss the following four stylized applications of our theory: 
        \begin{enumerate}
            \item Geodesically constrained subspace tracking \cite{blocker2023dynamic};
            \item Optimistic likelihood under Fisher-Rao distance \cite{nguyen2019calculating};
            \item Riemannian CP-dictionary-learning \cite{lyu2020online_tensor, dong2022new};
            \item Robust PCA \cite{candes2009robust, rodriguez2013fast}.
        \end{enumerate}
        The first problem above is an application of block Euclidean proximal updates on Stiefel manifolds, which verifies our assumptions for Theorem \ref{thm:BMM_rate} and we are able to derive a new iteration complexity result (Cor. \ref{cor:geodesic_subspace}), as aforementioned in Corollary \ref{cor:prox_Stiefel} and Section \ref{sec:Stiefel_prox}.
        The second problem above is an application of Riemannian proximal updates on Hadamard manifolds (see Section \ref{sec:eg_had}), which verifies our assumptions for Theorem \ref{thm:BMM_prox} and we are able to derive a new iteration complexity result (Cor. \ref{cor:Fisher_Rao}). For the last two problems above, we only prove asymptotic convergence to stationary points by using Theorem \ref{thm:RBMM_2}. 
        These two problems are formulated as minimizing a cost function involving the Euclidean distance function (e.g., the matrix Frobenius norm) over low-rank manifolds. 
        The Euclidean distance function is not $g$-smooth over low-rank manifolds (see Appendix \ref{sec:Hess_fixed_rank} for details). Therefore, in order to satisfy the  assumption of $g$-smoothness of the objective in \ref{assumption:A0_optimal_gap}\textbf{(i)}, we choose Euclidean geometry as the underlying manifold structure, and take the embedded submanifolds as constraints. In this case, the constraints are not $g$-convex with respect to the underlying Euclidean geometry, so we are not able to apply our iteration complexity results (Theorems \ref{thm:BMM_prox} and \ref{thm:BMM_rate}). However, we can still deduce asymptotic convergence to stationary points by using Theorem \ref{thm:RBMM_2} (see Section \ref{sec:EBMM}), which fortunately does not require $g$-convexity of the constraint sets (see Remark \ref{rmk:constraints}).

		\subsection{Geodesically constrained subspace tracking \cite{blocker2023dynamic}} \label{app:GST}
		Let $X_i \in \R^{d\times l}$ for $i=1,\cdots, T$ be data generated from a low-rank model with noise,
		\begin{equation}
			X_i = U_i G_i + N_i,
		\end{equation}
		where $U_i \in \mathbb{R}^{d \times k}$ has orthonormal columns representing a point on the Grassmannian $\mathcal{G}(k, d)$, the space of all rank-$k$ subspaces in $\mathbb{R}^d ; G_i \in \mathbb{R}^{k \times \ell}$ holds weight or loading vectors; and $N_i \in \mathbb{R}^{d \times \ell}$ is an independent additive noise matrix. 
		
		For the geodesic subspace tracking problem, we observe $X_i$ and our objective is to estimate $U_i$ for $i=1, \cdots, T$. Let $\mathcal{V}^{d\times k}$ denote the Stiefel manifold. We model each $U_i$ as an orthonormal basis whose span is sampled from a single continuous Grassmanian geodesic $U(t):[0,1]\to \mathcal{V}^{d\times k}$, parameterized as follows: For $H,Y\in \mathcal{V}^{d\times k}$ consisting of orthogonal columns, i.e., $H^{T}Y=O$,
		\begin{equation}
			U_i= U(t_i)= H \cos(\Theta t_i) + Y \sin(\Theta t_i).
		\end{equation}
		Here  
        $\Theta \in \R^{k \times k}$ is a diagonal matrix where its $j$th diagonal entry, $\theta_j$, is the $j$-th principal angle between the two endpoints of the geodesic. We assume either the time-points $t_i$ are given, or the observed matrices $X_i$
		are equidistant along a geodesic curve.

		The objective function is formulated as follows,
		\begin{equation}
			\label{eqn:obj_subspace_tracking}
			f(U)=f(H,Y,\Theta)=\min_{\{G_i\}_{i=1}^{T}}\|X_i -U(t_i) G_i\|^2_F = -\sum_{i=1}^{T}\|X_i^{T} U(t_i)\|^2_F +c,
		\end{equation}
		where 
        for the last equality we have substituted the optimal $G_i=U\left(t_i\right)^{T} X_i$ and $c$ is a constant (see \cite{golub2003separable}).

        To approximately minimize \eqref{eqn:obj_subspace_tracking}, Blocker et al. proposed a two-block MM approach in \cite{blocker2023dynamic}, where one alternatively optimizes two block parameters $Q:=(H, Y)\in \mathcal{V}^{d \times 2 k}$ and $\Theta$. Here we propose a proximal regularized version of the BMM method in \cite{blocker2023dynamic} and establish its asymptotic convergence property in Corollary \ref{cor:geodesic_subspace}. 
        For proximal regularization parameters $\lambda_{Q},\lambda_{\Theta}\ge 0$, the proposed algorithm reads as 
        \begin{align}
			& Q_{n+1}  \leftarrow WV^{T}, \,\, \textup{where $W \Sigma V^{T}$ is the SVD of $\lambda_{Q} Q_{n} -  \nabla_{Q} f(Q_{n},\Theta_n)$} \label{eq:Q_MM_subspace_traking}\\ 
            & (\theta_j)_{n+1} \leftarrow 
			(\theta_j)_{n}-\frac{1}{w_{j}((\theta_j)_n)+\lambda_{\Theta}}\nabla f^{(2)}_{n+1, j}\left((\theta_j)_n\right) \,\, \textup{for $j=1,\dots,k$}, \label{eq:subspace_g2}
		\end{align}
        where $(\theta_{j})_{n}$ denotes the $j$th diagonal entry of the $k\times k$ diagonal matrix $\Theta_{n}$ at iteration $n$, $w_j=\sum_{i=1}^T w_{f_{i,j}}$ is the "weighting function" defined in \cite{blocker2023dynamic}, and 
\begin{equation}\label{eq:geodesic_subspace_theta_marginal}
			f^{(2)}_{n+1, j}\left(\theta_j\right) :=-\sum_{i=1}^{T}(r_{i, j})_{n+1} \cos \left(2 \theta_j t_i-(\phi_{i, j})_{n+1}\right)+(b_{i, j})_{n+1}.
        \end{equation}
        The definition of the parameters $\phi_{i,j}$, $r_{i,j}$ and $b_{i,j}$ in \eqref{eq:geodesic_subspace_theta_marginal} can be found in \cite{blocker2023dynamic}, which we also provide in Appendix \ref{appendix:GST} for completeness. Note that when $\lambda_Q=0$, \eqref{eq:Q_MM_subspace_traking} becomes the updates for $Q$ in \cite{blocker2023dynamic}. Similarly, $\lambda_\Theta=0$ gives the updates for $\Theta$ in \cite{blocker2023dynamic}. Thus our algorithm \eqref{eq:Q_MM_subspace_traking}-\eqref{eq:subspace_g2} generalizes the BMM algorithm proposed in \cite{blocker2023dynamic}.

        We first argue why we can cast the above algorithm as RBMM. The discussion we provide here is a minor modification of the derivation in \cite{blocker2023dynamic}. First, we discuss the $Q$-update \eqref{eq:Q_MM_subspace_traking}. Let $Z_i=[\cos(\Theta t_i)\;;\;\sin(\Theta t_i)]$, which is the vertical concatenation of $\cos(\Theta t_i)$ and $\sin(\Theta t_i)$. The objective function can be rewritten as
		\begin{equation}
			\label{eqn:obj_subspace_QTheta}
			f(Q, \Theta) = -\sum_{i=1}^{T}\|X_{i}^{T}QZ_{i}\|_F^{2}
		\end{equation}
		and the gradient with respect to the first block $Q$ is given by $\nabla_{Q}f= -2\sum_{i=1}^{T}X_i X_{i}^{T}QZ_{i} Z_{i}^{T}$. The marginal objective function for updating $Q$ can be rewritten as 
		\begin{align}
			f_{n+1}^{(1)}(Q) &:=-\left\langle \sum_{i=1}^{T}X_i X_{i}^{T}Q(Z_{i})_n (Z_{i})_n^{T}, Q \right\rangle,
		\end{align}
		which is a concave-up quadratic function in $Q$. Also note that $f_{n+1}^{(1)}$ is $L$-smooth for some constant $L>0$ over the compact parameter space.

		We consider the following proximal majorizer of $f_{n+1}^{(1)}$: For $\lambda\ge 0$,
		\begin{align}\label{eq:GST_Q_surrogates}
			g_{n+1}^{(1)}(Q) &:=-\left\langle \sum_{i=1}^{T}X_i X_{i}^{T}Q_n(Z_{i})_n (Z_{i})_n^{T}, Q\right\rangle +\frac{\lambda}{4} \lVert Q-Q_{n} \rVert_{F}^{2} \\
			&= \frac{1}{2}\langle \nabla_{Q}f(Q_{n},\Theta_n), Q\rangle+ \frac{\lambda}{4} \lVert Q-Q_{n} \rVert_{F}^{2}.
		\end{align}
		We claim that 
		\begin{align}\label{eq:Q_MM_subspace_traking0}
			Q_{n+1} &=\argmin_{Q\in \mathcal{V}^{d\times 2k}} g_{n+1}^{(1)}(Q) =
			\begin{cases}
				\textup{Proj}_{\mathcal{V}^{d\times 2k}} \left(  Q_{n} - \frac{1}{\lambda} \nabla_{Q} f(Q_{n},\Theta_n) \right) & \textup{if $\lambda>0$}   \\
				\textup{Proj}_{\mathcal{V}^{d\times 2k}} \left(-  \nabla_{Q} f(Q_{n},\Theta_n) \right) & \textup{if $\lambda=0$}.
			\end{cases}
		\end{align}
		Indeed, the above MM update for $\lambda>0$ follows from \eqref{eq:def_BPGD}. For $\lambda=0$, we use the fact that $Q^{T}Q = I$ for all $Q$ in the Stiefel manifold so that 
		\begin{align}
			\argmin_{Q\in \mathcal{V}^{d\times 2k}}   \langle \nabla_{Q}f(Q_{n},\Theta_n), Q\rangle  =  \argmin_{Q\in \mathcal{V}^{d\times 2k}}  \lVert Q + \nabla_{Q}f(Q_{n},\Theta_n) \rVert^{2}_{F} =  \textup{Proj}_{\mathcal{V}^{d\times 2k}} \left(-  \nabla_{Q} f(Q_{n},\Theta_n) \right).
		\end{align}
		Notice that projecting onto the Stiefel manifold can be easily done by SVD (see, e.g., \cite{absil2012projection}). Hence \eqref{eq:Q_MM_subspace_traking0} coincides with \eqref{eq:Q_MM_subspace_traking}.

Next, we discuss the $\Theta$-update. 
For conciseness, we only put the expression of the loss function and surrogates here. For full details, we refer the readers to \cite{blocker2023dynamic}. 
The marginal loss function for $\Theta$ is separable for each diagonal element $\theta_j$ of $\Theta$. 
Consider the following proximal majorizer of $f_{n+1,j}^{(2)}$ in \eqref{eq:geodesic_subspace_theta_marginal}: For $\lambda_{\Theta} \ge 0$,
		\begin{equation}\label{eq:geodesic_subspace_theta_surrogate}
			g_{n+1,j}^{(2)}(\theta_j)= f^{(2)}_{n+1, j}\left((\theta_j)_n\right) + \nabla f^{(2)}_{n+1, j}\left((\theta_j)_n\right) \left(\theta_j - (\theta_j)_n\right) + \frac{w_{j}((\theta_j)_n)+\lambda_{\Theta}}{2}\left(\theta_j - (\theta_j)_n\right)^2
		\end{equation}
		where $(\theta_j)_n$ is the value of $\theta_j$ at iteration $n$. Then by using \eqref{eq:def_BPGD}, we see that \eqref{eq:subspace_g2} coincides with minimizing the majorizing surrogate $g_{n+1,j}^{(2)}$ above. 

Now we discuss the convergence of this block MM algorithm. In fact, we can apply Corollary \ref{cor:prox_Stiefel} and directly get the convergence and complexity results. The asymptotic convergence result in Corollary \ref{cor:prox_Stiefel} for the updates \eqref{eq:Q_MM_subspace_traking}-\eqref{eq:subspace_g2} with $\lambda_Q,\lambda_\Theta \ge 0$. Note when $\lambda_Q=\lambda_\Theta=0$, the updates \eqref{eq:Q_MM_subspace_traking}-\eqref{eq:subspace_g2} become the vanilla block MM method in \cite{blocker2023dynamic}. Moreover, when the proximal parameter $\lambda_{Q},\lambda_{\Theta}$ are strictly positive, the complexity result in Corollary \ref{cor:prox_Stiefel} holds. We state the convergence and complexity results of this proximal regularized MM method in the following corollary:

\begin{corollary}[Convergence and complexity of regularized BMM for geodesically constrained subspace tracking] \label{cor:geodesic_subspace}
    Given a sequence of data $X_{i}$ for $i=1, \cdots, T$. Let $\param_k = (Q_k, \Theta_k)$ be generated by \eqref{eq:Q_MM_subspace_traking}-\eqref{eq:subspace_g2} with arbitrary initialization $\param_0 \in \Param = \mathcal{V}^{d\times 2k}\times \R^{k\times k}$. Suppose the proximal parameters $\lambda_Q$ and $\lambda_\Theta$ are non-negative. Then the limit points of $(\param_n)_{n\ge0}$ are stationary points of problem \eqref{eqn:obj_subspace_QTheta}. Moreover, if $\lambda_Q,\lambda_\Theta>0$, then the iteration complexity is $\widetilde{O}(\eps^{-2})$.
    \end{corollary}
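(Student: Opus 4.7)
The plan is to cast the updates \eqref{eq:Q_MM_subspace_traking}--\eqref{eq:subspace_g2} as a two-block instance of RBMM (Algorithm \ref{algorithm:BMM}) on the product manifold $\mathcal{V}^{d\times 2k}\times\mathbb{R}^{k\times k}$ with surrogates $g_{n+1}^{(1)}$ in \eqref{eq:GST_Q_surrogates} and $g_{n+1,j}^{(2)}$ in \eqref{eq:geodesic_subspace_theta_surrogate}, and then to invoke Corollary \ref{cor:prox_Stiefel} for both conclusions when $\lambda_Q,\lambda_\Theta>0$, falling back on Theorem \ref{thm:RBMM_2} for the asymptotic statement when either proximal parameter is zero. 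Since both factors of the product manifold are already covered by the hypotheses of Corollary \ref{cor:prox_Stiefel}, the task reduces to verifying the four Euclidean conditions there: $L$-smoothness of $f$, $L'$-smoothness of the surrogates, the quadratic majorization gap \eqref{eq:quadratic_majorization_euclidean}, and geodesic convexity of the constraint sets.

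The smoothness verifications are routine. The objective $f(Q,\Theta)$ in \eqref{eqn:obj_subspace_QTheta} is a polynomial in the entries of $Q$ and in $\cos(\Theta t_i),\sin(\Theta t_i)$; restricting $\Theta$ to a bounded fundamental domain of its natural periodicity (justifiable since the principal angles lie in $[0,\pi/2]$) makes the effective parameter space compact and renders $f$ Euclidean $L$-smooth and lower bounded, giving \ref{assumption:A0_optimal_gap}(i). Each $g_{n+1}^{(1)}$ is linear plus quadratic in $Q$ and each $g_{n+1,j}^{(2)}$ is a univariate quadratic in $\theta_j$, so both are Euclidean $L'$-smooth with a constant controlled by $\lambda_Q,\lambda_\Theta$, the data, and a uniform bound on the weights $w_j$. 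Both subproblems admit closed-form solutions (SVD in \eqref{eq:Q_MM_subspace_traking0} and the quadratic formula in \eqref{eq:subspace_g2}), so the optimality gaps $\Delta_n$ vanish, giving \ref{assumption:A0_optimal_gap}(ii). The injectivity-radius condition \ref{assumption:A1_smoothness_surrogates}(ii) is immediate, since the Stiefel manifold has injectivity radius at least $0.89\pi$ and the Euclidean factor has infinite injectivity radius. Geodesic convexity of the constraint sets is also trivial: the $Q$-block constraint is the entire ambient Stiefel manifold, and the $\Theta$-block lives in a convex subset of Euclidean space.

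The central step is the majorization and quadratic-gap check. That $g_{n+1}^{(1)}$ (before the proximal correction) majorizes $f_{n+1}^{(1)}$ and that $g_{n+1,j}^{(2)}$ (with the weighting $w_j$) majorizes $f_{n+1,j}^{(2)}$ is established in \cite{blocker2023dynamic}; adding the nonnegative Euclidean proximal terms $\tfrac{\lambda_Q}{4}\|Q-Q_n\|_F^2$ and $\tfrac{\lambda_\Theta}{2}(\theta_j-(\theta_j)_n)^2$ preserves both the majorization inequality and sharpness at the reference points. When $\lambda_Q,\lambda_\Theta>0$, these proximal additions yield directly the Euclidean quadratic majorization gap \eqref{eq:quadratic_majorization_euclidean} with constant $c=\min\{\lambda_Q/4,\lambda_\Theta/2\}$.

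With all hypotheses in place, Corollary \ref{cor:prox_Stiefel} delivers both asymptotic stationarity of limit points and iteration complexity $\widetilde{O}(\eps^{-2})$ when $\lambda_Q,\lambda_\Theta>0$. For $\lambda_Q=0$ or $\lambda_\Theta=0$, the quadratic-gap hypothesis \eqref{eq:quadratic_majorization_euclidean} may fail and Corollary \ref{cor:prox_Stiefel} no longer applies, but the surrogates remain Euclidean $L'$-smooth and hence $g$-smooth on the product manifold by Lemma \ref{lem:g_smooth_Stiefel}; Theorem \ref{thm:RBMM_2} then yields the asymptotic stationarity without invoking Assumption \ref{assumption:A1_1}(iii). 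The main obstacle I anticipate is the non-standard form of $g_{n+1}^{(1)}$ in \eqref{eq:GST_Q_surrogates}, which is not a clean prox-linear surrogate and whose majorization property uses the orthonormality constraint $Q^\top Q=I$; I would handle this by expanding $g_{n+1}^{(1)}-f_{n+1}^{(1)}$ in $V=Q-Q_n$, isolating the indefinite cross term, and dominating it using the positive semi-definite factors $X_iX_i^\top$ and $(Z_i)_n(Z_i)_n^\top$ together with the Stiefel constraint, after which the proximal term supplies the required quadratic slack.
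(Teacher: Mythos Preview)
Your approach is essentially the paper's: reduce to Corollary \ref{cor:prox_Stiefel} by checking Euclidean $L$-smoothness of $f$, $L'$-smoothness of the surrogates, the quadratic majorization gap, and $g$-convexity of the constraint sets. The paper's own proof is even terser than yours, simply writing down $\nabla_Q f$ and $\nabla f_{n+1,j}^{(2)}$ and invoking Corollary \ref{cor:prox_Stiefel} directly.

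There is, however, a genuine gap in your handling of the case $\lambda_Q=0$ or $\lambda_\Theta=0$. You claim that Theorem \ref{thm:RBMM_2} under \ref{assumption:A1_smoothness_surrogates}\textbf{(i-gs)} yields asymptotic stationarity without Assumption \ref{assumption:A1_1}\textbf{(iii)}. This is not correct: if you read the proof of Theorem \ref{thm:RBMM_2} for $g$-smooth surrogates, it explicitly invokes \textbf{(iii)} to obtain $d(\theta_{n-1}^{(i)},\theta_n^{(i)})=o(1)$, which is essential to pass from stationarity of $f_{n_k}^{(i)}$ at $\theta_{n_k}^{(i)}$ to stationarity of $f$ at $\param_\infty$. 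For the proximal cases \textbf{(i-rp)} and \textbf{(i-ep)} Assumption \textbf{(iii)} is automatic, but under \textbf{(i-gs)} it must be supplied, and with $\lambda_Q=0$ the $Q$-block surrogate gap need not dominate any increasing function of $d(Q,Q_n)$.

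The fix is immediate: this is a two-block problem ($m=2$), so you should invoke Theorem \ref{thm:RBMM_1} instead, which covers $m\le 2$ under \textbf{(i-gs)} and \textbf{(ii)} alone, without \textbf{(iii)}, at the cost of requiring strongly $g$-convex constraint sets. Both constraint sets here are the full ambient manifolds (the whole Stiefel manifold for $Q$ and a convex Euclidean set for $\Theta$), so this hypothesis is trivially met and Theorem \ref{thm:RBMM_1} gives the asymptotic stationarity for $\lambda_Q,\lambda_\Theta\ge 0$.
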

\begin{proof}
In fact, in order to apply Corollary \ref{cor:prox_Stiefel}, we only need to verify the Euclidean smoothness of the marginal loss function. Note 
\begin{align}
        \nabla_{Q}f &= -2\sum_{i=1}^{T}X_i X_{i}^{T}QZ_{i} Z_{i}^{T},\qquad
        \nabla f^{(2)}_{n+1, j}\left(\theta_j\right) = 2 \sum_{i=1}^{T}(r_{i, j})_{n+1} t_i \sin \left(2 \theta_j t_i-(\phi_{i, j})_{n+1}\right),
    \end{align}
    so $f$ is block-wise Euclidean $L$-smooth. Hence, the convergence and complexity results in Corollary \ref{cor:prox_Stiefel} follow.
\end{proof}

Now we compare our results with other existing works. We remark that the BMM algorithm in   \cite{blocker2023dynamic} is based on the MM algorithm on Stiefel manifold by Breloy et al. \cite{breloy2021majorization}. As aforementioned in Section \ref{sec:Stiefel_prox}, the asymptotic convergence to the set of all stationary points in \cite{breloy2021majorization} was established by adopting a convergence result of Euclidean BMM algorithm in \cite{razaviyayn2013unified}. The complexity results are still unknown due to the non-convexity of the constraint set. Here, we give the first complexity result in the literature. Moreover, our method is computationally efficient with close-form updates as shown in \eqref{eq:Q_MM_subspace_traking}-\eqref{eq:subspace_g2}.

		\begin{figure}  
			\begin{subfigure}{0.49\textwidth}
				\includegraphics[width=\linewidth]{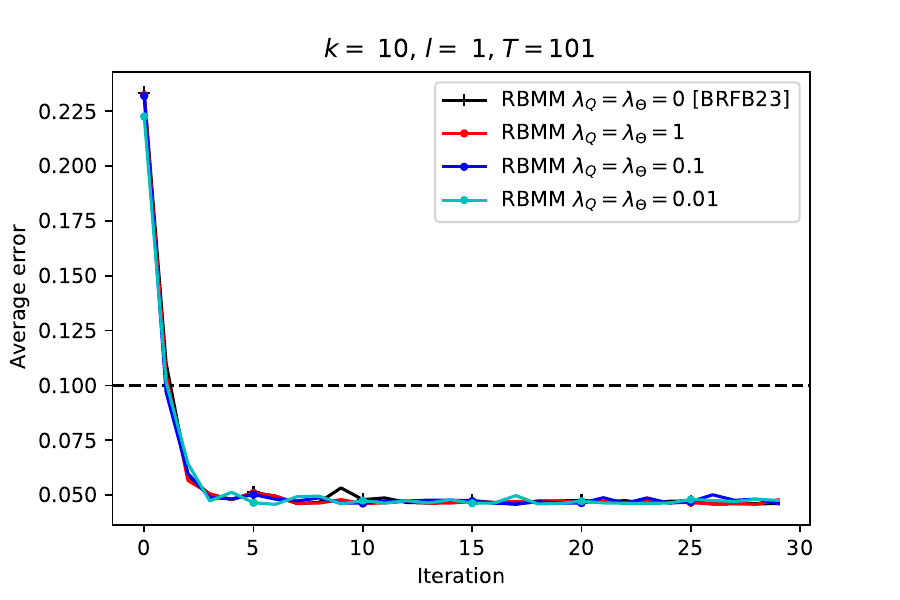}
				\caption{}
			\end{subfigure}
			\hfill 
			\begin{subfigure}{0.49\textwidth}
				\includegraphics[width=\linewidth]{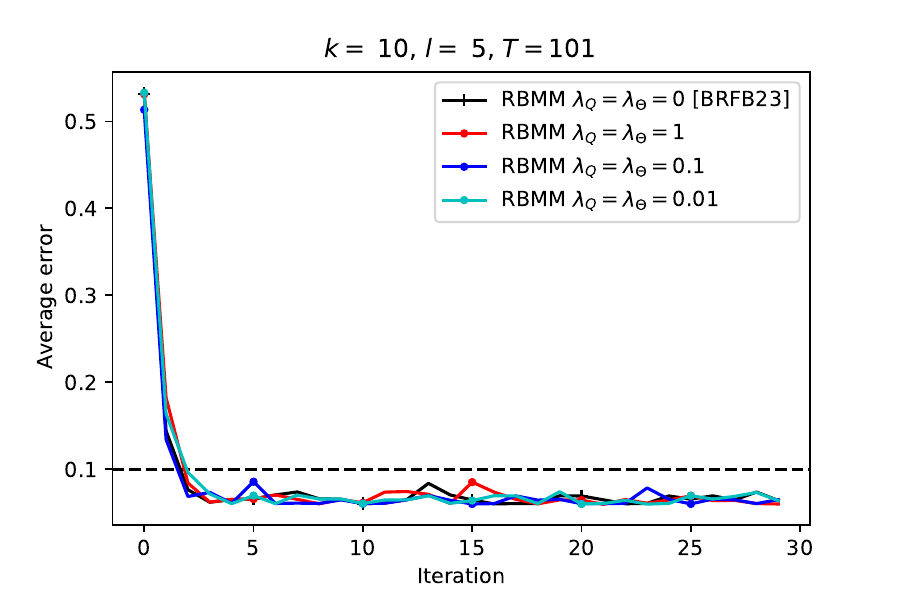}
				\caption{}
			\end{subfigure}
			\bigskip  
			\begin{subfigure}{0.49\textwidth}
				\includegraphics[width=\linewidth]{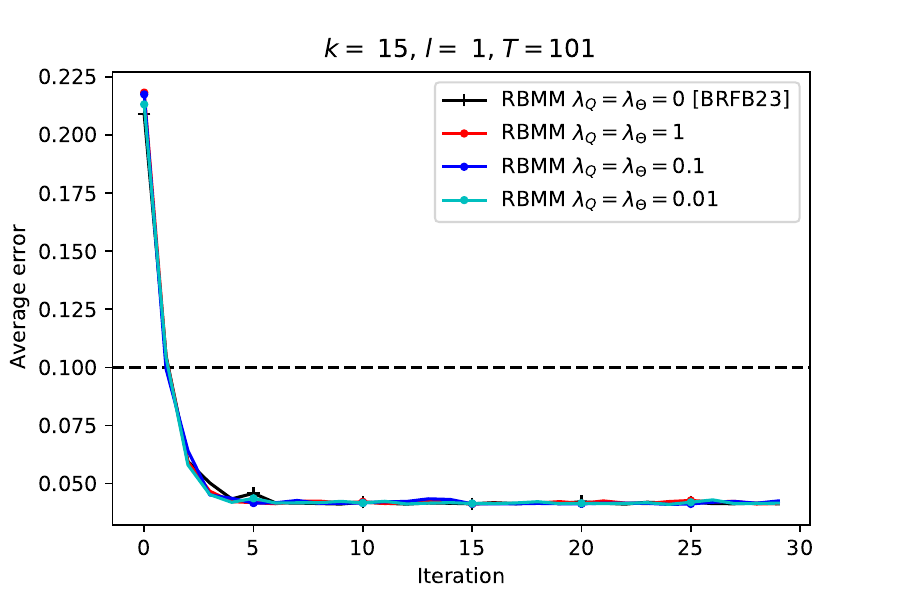}
				\caption{}
			\end{subfigure}
			\hfill 
			\begin{subfigure}{0.49\textwidth}
				\includegraphics[width=\linewidth]{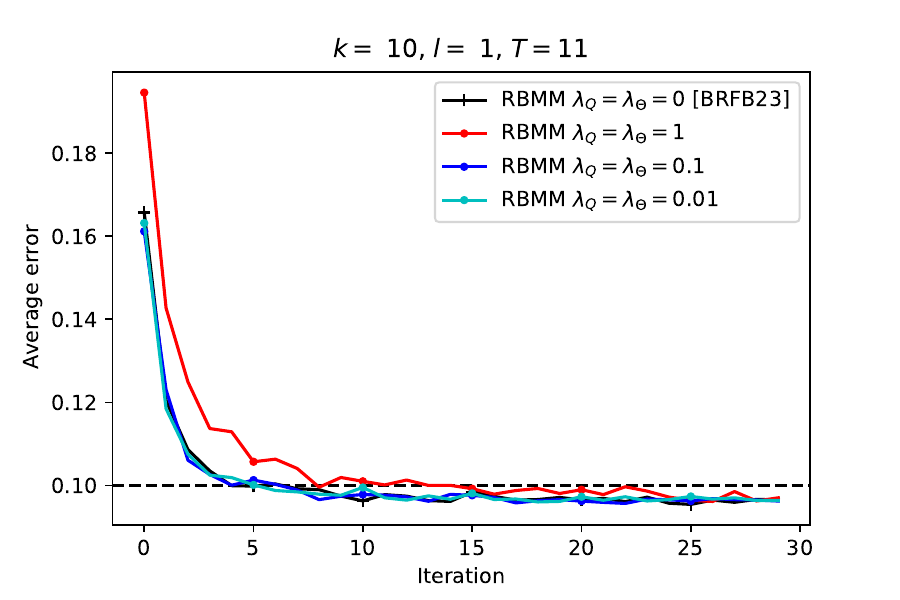}
				\caption{}
			\end{subfigure}
			\caption{Convergence of RBMM in geodesic error under different settings. Average geodesic error is computed over 50 independent trials. The dimension is $d=30$ and the additive Gaussian noise has a standard deviation $\sigma=0.1$. The value of other parameters is shown in the title for each panel. Adding Euclidean proximal regularizer does not accelerate the convergence, mainly due to the orthogonal property on Stiefel manifolds. Namely, any point $Q$ on the Stiefel manifold satisfies $Q^T Q=\mathbf{I}$. Hence the proximal regularizer degenerates into a linear term providing with limited acceleration.} 
			\label{fig:geo_conv}
		\end{figure}
		
		Next, we discuss the results of numerical experiments. In the case of synthetic data, we have access to the true geodesic, so we could compare the estimated geodesic $\hat{U}(t)$ with the true geodesic $U(t)$. The following error metric is used, which is the square root of the average squared subspace error between corresponding points along the geodesic,
		\begin{equation}
			\label{eqn:geo_error}
			\text { Geodesic Error }=\sqrt{\int_0^1 \frac{1}{2 k}\left\|\hat{U}(t) \hat{U}(t)^{T}-U(t) U(t)^{T}\right\|_{\mathrm{F}}^2 \mathrm{~d} t}.
		\end{equation}
		In practice, we approximate the integral by Riemann sum using the sample points. The geodesic error \eqref{eqn:geo_error} takes minimum value of 0 when $\operatorname{span}(\hat{U}) =\operatorname{span}(U)$ and maximum value of 1 when $\operatorname{span}(\hat{U}) \perp \operatorname{span}(U)$.
		
		In the numerical experiments, we set the dimension $d=30$ and set the elements of the noise matrix $N_i$ to be independent Gaussian noise with standard deviation $\sigma=0.1$. The other parameters including the rank $k$, number of samples $l$, number of time points $T$, and proximal parameters $\lambda_Q$ and $\lambda_\Theta$ are set differently for different experiments. We run each experiment 50 times independently to compute the average geodesic error. As shown in Figure \ref{fig:geo_conv}, RBMM with different $\lambda$ converges  under different settings and is as good in terms of convergence speed as the block MM method in \cite{blocker2023dynamic}. Furthermore, Figure \ref{fig:geo_conv} shows that changing the proximal parameter $\lambda_Q$ and $\lambda_\Theta$ only slightly affects the rate of convergence, this is due to the following two reasons: first, there are only two blocks for this problem, which gives a relatively simple setting for RBMM; second, note $Q\in \mathcal{V}^{d\times 2k}$ which gives $Q^T Q=I_{2k}$, therefore the quadratic term added would reduce to a linear term, which does not significantly accelerate convergence. For more numerical results of this geodesic subspace tracking problem, we refer the reader to \cite{blocker2023dynamic}.

		\subsection{Optimistic likelihood under Fisher-Rao distance \cite{nguyen2019calculating}}
  \label{sec:app_fisher_rao}
  
		Given a set of i.i.d data points $x^M \triangleq x_1, \ldots, x_M \in \mathbb{R}^N$ that are generated from one of several Gaussian distributions $\mathbb{P}_c$, where $c\in C$ and $|C|<\infty$. We want to determine the distribution $\mathbb{P}_{c^*}$, $c^* \in C$, under which the following log-likelihood function $\ell(x^M,\mathbb{P}_c)$ is maximized, i.e. we want to solve 
		\begin{equation}
			c^* =\argmax_{c\in C}\left\{\ell\left(x^M, \mathbb{P}_c\right) \triangleq-\frac{1}{M} \sum_{m=1}^M\left(x_m-\mu_c\right)^{T} \Sigma_c^{-1}\left(x_m-\mu_c\right)-\log \operatorname{det} \Sigma_c\right\},
		\end{equation}
		where $\mu_c$ and $\Sigma_c$ denote the means and covariance matrices of $\mathbb{P}_c$. Since methods that sample points to get good estimators of $\mu_c$ and $\Sigma_c$ are usually costly, we consider the following \textit{optimistic likelihood problem} instead. Namely, instead of aiming to find the Gaussian distribution from the set, we look for a Gaussian distribution that maximizes the likelihood close to the empirical distribution under some distance measure. More precisely, we consider the following optimistic likelihood problem
		
		\begin{equation}
			\max _{\mathbb{P} \in \mathcal{P}_c} \ell\left(x^M, \mathbb{P}\right) \text { with } \mathcal{P}_c=\left\{\mathbb{P} \in \mathcal{P}: \varphi\left(\hat{\mathbb{P}}_c, \mathbb{P}\right) \leq \rho_c\right\},
		\end{equation}
		where $\mathcal{P}$ is the set of all non-degenerate Gaussian distributions on $\mathbb{R}^N$, $\hat{\mathbb{P}}_c$ is the empirical distribution estimated from training data, $\varphi$ is the Fisher-Rao distance, and $\rho_c \in \mathbb{R}_{+}$are the radii of the ambiguity sets $\mathcal{P}_c$. For conciseness, we put the details of the Fisher-Rao distance in Appendix \ref{appendix:fisher_rao}. We refer the readers to \cite{nguyen2019calculating} for full details.

		Now denoting the empirical mean and covariance from the data by $\hat{\mu}$ and $\hat{\Sigma}$, we can explicitly write down the optimization problem as
		\begin{equation}\label{eqn:loss_likelihood}
			\min_{\mu,\Sigma}f(\mu,\Sigma) \triangleq\left\langle M^{-1} \sum_{m=1}^M\left(x_m-\mu\right)\left(x_m-\mu\right)^{T}, \Sigma^{-1}\right\rangle+\log \operatorname{det} \Sigma,
		\end{equation}
		where $\mu\in \Theta^{(1)}=\left\{\mu \in \mathbb{R}^{N}:(\mu-\hat{\mu})^{T} (\mu-\hat{\mu}) \leq \rho_{1}^2\right\}$ and $\Sigma \in \Theta^{(2)}=\left\{\Sigma \in \mathbb{S}_{++}^N: d(\Sigma, \hat{\Sigma}) \leq \rho_{2}\right\}$. Here $d(\Sigma, \hat{\Sigma})$ is the Fisher-Rao distance of two Gaussian distributions with identical mean (see Appendix \ref{appendix:fisher_rao}).

		Denote $\mu_{n}$ and $\Sigma_{n}$ as the $\theta_{n}^{(1)}$ and $\theta_{n}^{(2)}$ generated by Algorithm \ref{algorithm:BMM} applied on $f(\mu,\Sigma)$. The marginal objective functions are denoted by
		\begin{align}
			&f^{(1)}_n:=f(\mu,\Sigma_{n-1})= \left\langle M^{-1} \sum_{m=1}^M\left(x_m-\mu\right)\left(x_m-\mu\right)^{T}, \Sigma^{-1}_{n-1}\right\rangle+\log \operatorname{det} \Sigma_{n-1} \\
			&f^{(2)}_n:=f(\mu_{n},\Sigma)= \left\langle S_n, \Sigma^{-1}\right\rangle+\log \operatorname{det} \Sigma \qquad \text{where} \qquad S_n=M^{-1} \sum_{m=1}^M\left(x_m-\mu_n\right)\left(x_m-\mu_n\right)^{T}.
		\end{align}

		As aforementioned in Example \ref{eg:PSD}, this manifold of positive definite matrices is a Hadamard manifold. Thus we could construct Riemannian proximal surrogate functions as in \eqref{eq:def_prox_surrogate}, more 
 discussions can be found in Section \ref{sec:had}, i.e. 
		\begin{align}\label{eq:ite_fisher_rao}
			&\mu_n = \argmin_{\mu\in\R^N} g_{n}^{(1)}(\mu): =\left\langle M^{-1} \sum_{m=1}^M\left(x_m-\mu\right)\left(x_m-\mu\right)^{T}, \Sigma^{-1}_{n-1}\right\rangle+\log \operatorname{det} \Sigma_{n-1} + \frac{\lambda}{2}\|\mu-\mu_{n-1}\|^2 \\
			&\Sigma_n = \argmin_{\Sigma\in\mathbb{S}^{N}_{++}}g_{n}^{(2)}(\Sigma):= \left\langle S_n, \Sigma^{-1}\right\rangle+\log \operatorname{det} \Sigma +\frac{\lambda}{4}\left\|\log \left(\Sigma_{n-1}^{-\frac{1}{2}} \Sigma \Sigma_{n-1}^{-\frac{1}{2}}\right)\right\|_F^2. 
		\end{align}
		Note this block MM is a special instance of proximal methods on the Hadamard manifold discussed in Section \ref{sec:eg_had}. Hence Theorems \ref{thm:RBMM_2} and \ref{thm:BMM_prox} apply. We state it as a corollary as follows,
\begin{corollary}[Complexity of Riemannian proximal updates for optimistic likelihood problem]\label{cor:Fisher_Rao}
    Given a set of data points $(x_i)_i$ for $i=1, \cdots, M$. Let $\param_n=(\mu_n,\Sigma_n)$ be the iterates generated by \eqref{eq:ite_fisher_rao} with arbitrary initialization $\param_0\in \Param\subseteq \R^{N}\times \mathbb{S}^{N}_{++}$. Suppose the proximal regularization parameter $\lambda$ is strictly positive. Then the limit points of $(\param_n)_n$ are stationary points of problem \eqref{eqn:loss_likelihood}. Furthermore, it has iteration complexity of $\widetilde{O}(\eps^{-2})$.
\end{corollary}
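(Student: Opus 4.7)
The plan is to cast the iteration \eqref{eq:ite_fisher_rao} as an instance of Algorithm \ref{algorithm:BMM} on the product Hadamard manifold $\R^{N} \times \mathbb{S}^{N}_{++}$ and then verify the hypotheses of Theorems \ref{thm:RBMM_2} and \ref{thm:BMM_prox}. The first step is to recognize both surrogates as Riemannian proximal surrogates in the sense of \ref{assumption:A1_smoothness_surrogates}\textbf{(i-rp)}. For the $\mu$-block, the Euclidean manifold $\R^N$ has $d(\mu,\mu_{n-1})^{2}=\|\mu-\mu_{n-1}\|^{2}$, so $g_{n}^{(1)}$ is of the form \eqref{eq:def_prox_surrogate}. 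For the $\Sigma$-block, with the affine-invariant metric of Example \ref{eg:PSD} the geodesic distance satisfies $d(\Sigma,\Sigma_{n-1})^{2}=\tfrac12\lVert \log(\Sigma_{n-1}^{-1/2}\Sigma\Sigma_{n-1}^{-1/2})\rVert_{F}^{2}$, so the coefficient $\lambda/4$ in $g_{n}^{(2)}$ exactly equals $\lambda/2$ times $d^{2}$. Reading ``$\lambda>0$'' in the corollary as ``$\lambda$ at least the relevant smoothness constant $L_{f}$'' (the condition $\lambda_{n}\ge L_{f}$ in \ref{assumption:A1_smoothness_surrogates}\textbf{(i-rp)}), we match the surrogate form required by the theorem.

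Second, I would check the manifold- and constraint-side hypotheses. Both $\R^{N}$ and $\mathbb{S}^{N}_{++}$ are Hadamard manifolds (Examples 4.4 and 4.6), so their injectivity radii are infinite and \ref{assumption:A1_smoothness_surrogates}\textbf{(ii)} holds. The constraint sets $\Theta^{(1)}=\{\mu:\lVert\mu-\hat\mu\rVert\le\rho_{1}\}$ and $\Theta^{(2)}=\{\Sigma:d(\Sigma,\hat\Sigma)\le \rho_{2}\}$ are closed geodesic balls on Hadamard manifolds, hence strongly geodesically convex and compact; in particular $\Sigma$ remains bounded away from the boundary $\partial\mathbb{S}^{N}_{++}$. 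Compactness of $\Theta=\Theta^{(1)}\times\Theta^{(2)}$ immediately yields the uniform lower bound on $f$ and compactness of sublevel sets demanded by \ref{assumption:A0_optimal_gap}\textbf{(i)}. The inexact-computation hypothesis \ref{assumption:A0_optimal_gap}\textbf{(ii)} is satisfied trivially under exact subproblem solves; alternatively, since Riemannian proximal surrogates on Hadamard manifolds are geodesically strongly convex, Prop.~\ref{prop:optimality_gap_ite} converts any summable optimality-gap sequence into the required $d(\theta_{n}^{(i\star)},\theta_{n}^{(i)})=o(1)$.

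The remaining and main technical obstacle is the $g$-smoothness of $f$ on $\Theta$. On the $\mu$-block $f$ is a simple quadratic with coefficients depending continuously on $\Sigma\in\Theta^{(2)}$, which is compact, so Lipschitz continuity of $\grad_{\mu}f$ in the Euclidean sense (which equals the geodesic sense on $\R^{N}$) is immediate. On the $\Sigma$-block the affine-invariant Riemannian gradient has the closed form $\grad_{\Sigma}f = \Sigma\,\nabla^{E}_{\Sigma}f\,\Sigma$, where $\nabla^{E}_{\Sigma}f=-\Sigma^{-1}S_{n}\Sigma^{-1}+\Sigma^{-1}$ is the Euclidean gradient; on the compact set $\Theta^{(2)}$ the eigenvalues of $\Sigma$ are bounded away from $0$ and $\infty$, so $\grad_{\Sigma}f$ is a smooth map of $\Sigma$, and parallel transport along the (explicitly known) affine-invariant geodesics is an isometry. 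A standard compactness argument then produces a uniform constant $L_{f}$ making $f$ $g$-smooth in the sense of Definition \ref{def: G-L-smooth}. With all assumptions of \ref{assumption:A0_optimal_gap} and \ref{assumption:A1_smoothness_surrogates}\textbf{(i-rp),(ii)} verified, Theorem \ref{thm:RBMM_2} gives that every limit point of $(\param_{n})_{n}$ is stationary for \eqref{eqn:loss_likelihood}, and Theorem \ref{thm:BMM_prox} gives the $\widetilde{O}(\eps^{-2})$ iteration complexity.
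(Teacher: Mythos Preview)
Your proposal is correct and follows essentially the same approach as the paper: verify \ref{assumption:A0_optimal_gap}, \ref{assumption:A1_smoothness_surrogates}\textbf{(i-rp),(ii)}, and geodesic convexity of the constraint sets, then invoke Theorems \ref{thm:RBMM_2} and \ref{thm:BMM_prox}. The only minor difference is that the paper delegates the $g$-smoothness of $f$ and the convexity of $\Theta^{(1)},\Theta^{(2)}$ to explicit results adapted from \cite{nguyen2019calculating} (with concrete smoothness constants), whereas you argue $g$-smoothness via a compactness argument on the geodesic balls; both routes are valid and yield the same conclusion.
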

\begin{proof}
    Note we only need to show \ref{assumption:A0_optimal_gap}, \ref{assumption:A1_smoothness_surrogates}, and also the geodesic convexity of the constraint set are satisfied. The proof of \ref{assumption:A0_optimal_gap}\textbf{(i)} and geodesic convexity of the constraint sets are established based on some propositions adapted from \cite{nguyen2019calculating} which can be found in Appendix \ref{appendix:fisher_rao}. \ref{assumption:A1_smoothness_surrogates}\textbf{(i-rp)} and \ref{assumption:A0_optimal_gap}\textbf{(ii)} holds by our choice of Riemannian proximal surrogates. \ref{assumption:A1_smoothness_surrogates}\textbf{(ii)} holds since the underlying manifolds are Hadamard manifolds. Hence the corollary holds as a result of Theorems \ref{thm:RBMM_2} and \ref{thm:BMM_prox}.
\end{proof}

		\begin{figure}
			\centering
			\begin{subfigure}[t]{0.51\textwidth}
				\centering
				\includegraphics[width=\textwidth]{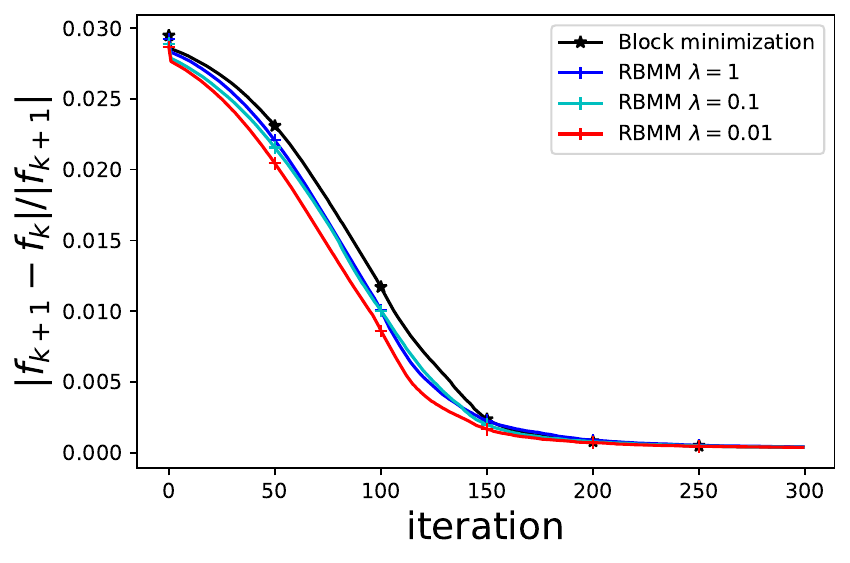}
				\caption{}
			\end{subfigure}
			\hfill
			\begin{subfigure}[t]{0.47\textwidth}
				\centering
				\includegraphics[width=\textwidth]{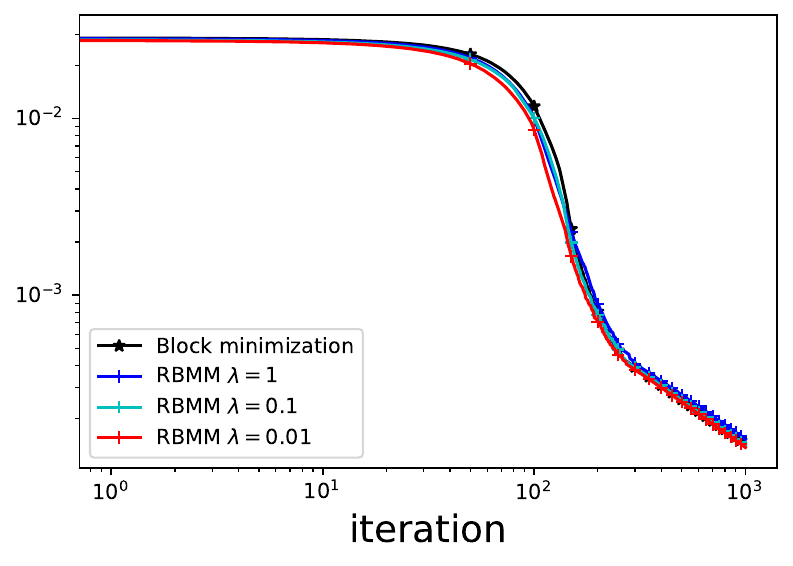}
				\caption{}
			\end{subfigure}
			\caption{Comparison of block minimization and RBMM applied to optimistic likelihood problem under Fisher-Rao distance. RBMM is implemented with $\lambda=0.01,0.1,1$ respectively. Panel (A) shows the averaged error for the first 300 iterations and Panel (B) shows the averaged error using a log-log plot for 1000 iterations.}
			\label{fig:likelihood}
		\end{figure}

		Now we show some numerical results. For numerical experiments, we use the same setup as in \cite{nguyen2019calculating} and study the empirical convergence behavior. We compare the performance of block minimization using $f_{k}^{(i)}(\theta)$ for $i$-th block and RBMM using $g_{k}^{(i)}(\theta)=f_{k}^{(i)}(\theta)+\lambda d^{2}(\theta,\theta_{k-1}^{(i)})$ for $i$-th block with different values of $\lambda$. We set the dimension of the data to be $N=10$ and denote $f_k =f (\mu_k, \Sigma_k)$, the relative improvement is thus denoted as $|f_{k+1}-f_k|/f_{k+1}$, which is computed via 10 independent experiments. Numerical results are shown in Figure \ref{fig:likelihood}. While RBMM and block minimization both perform well, RBMM is slightly faster, where the fastest convergence of RBMM is achieved with $\lambda=0.01$.

		\subsection{Riemannian CP-dictionary-learning}\label{sec:RCPD_learning}
		
		\begin{figure*}[h]
			\centering
			
			\begin{subfigure}[b]{0.49\textwidth}
				\centering
				\includegraphics[width=\textwidth]{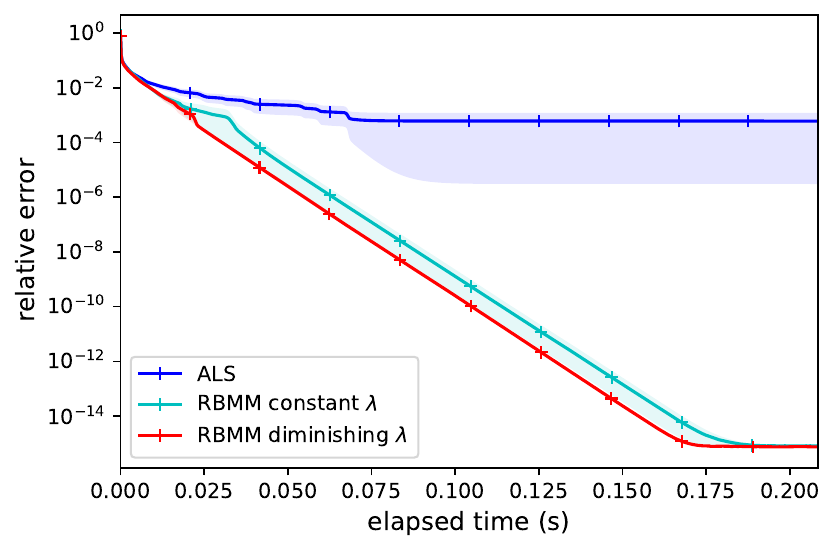}
				\caption{Euclidean: Synthetic data 1}
				\label{fig:subfig1}
			\end{subfigure}
			\hfill
			\begin{subfigure}[b]{0.49\textwidth}
				\centering
				\includegraphics[width=\textwidth]{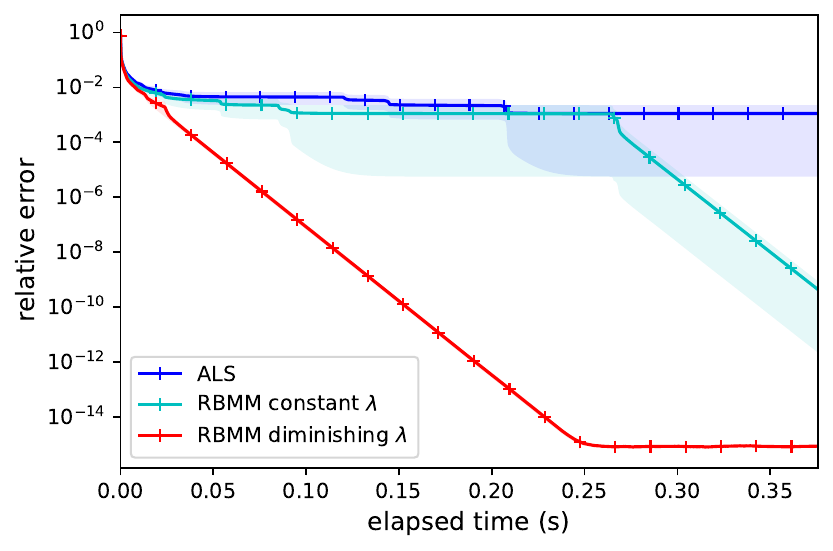}
				\caption{Euclidean: Synthetic data 2}
				\label{fig:subfig2}
			\end{subfigure}
			\hfill
			\begin{subfigure}[b]{0.49\textwidth}
				\centering
				\includegraphics[width=\textwidth]{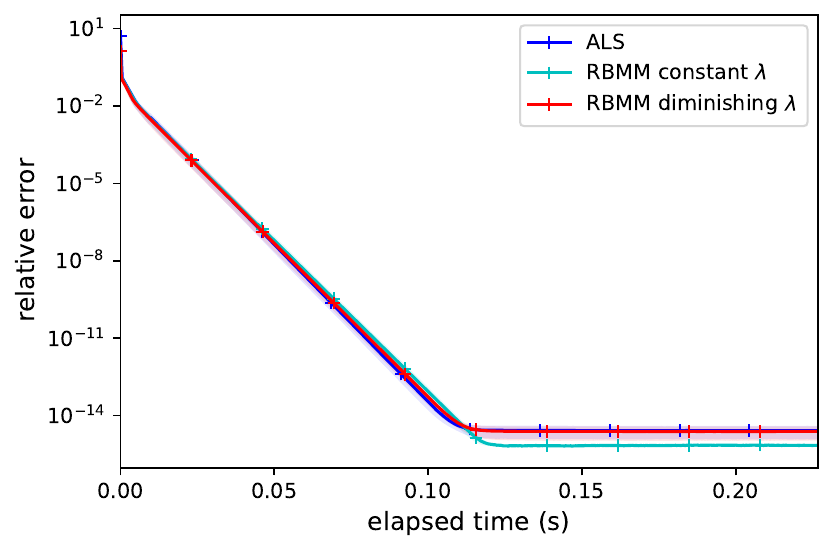}
				\caption{Stiefel: Synthetic data 1}
				\label{fig:subfig5}
			\end{subfigure}
			\begin{subfigure}[b]{0.49\textwidth}
				\centering
				\includegraphics[width=\textwidth]{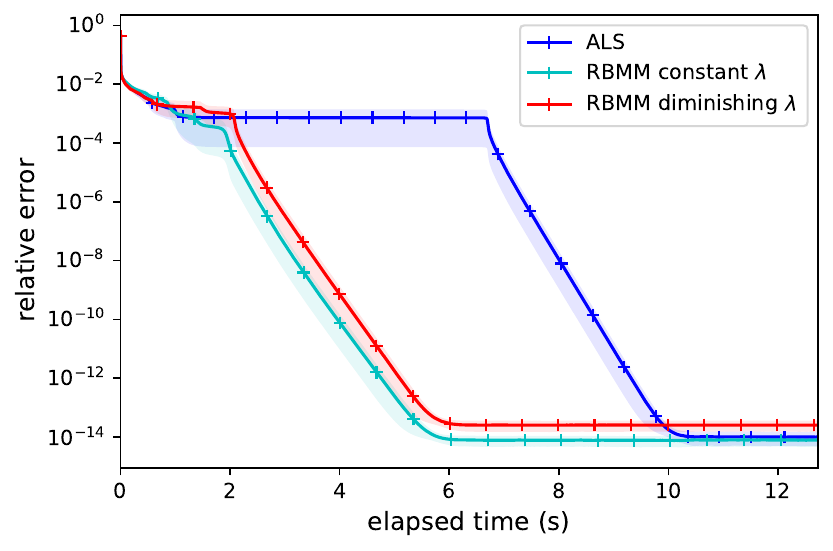}
				\caption{Low-rank: Synthetic data 1}
				\label{fig:subfig6}
			\end{subfigure}
			\caption{(A) and (B) are some typical cases of synthetic data in Euclidean case; (C) is the typical result when the first block is Stiefel manifold; (D) is a synthetic example where the first block is a point on low-rank manifold. The average relative reconstruction error with 
			standard deviations 
    are shown by the solid lines and shaded regions of respective colors.}
			\label{fig:CP}
		\end{figure*}
		
		In the CANDECOMP/PARAFAC (CP) decomposition problem \cite{kolda2009tensor}, given a data tensor $X\in \R^{I_1 \times\dots\times I_m}$ and an integer $R>0$, we would like to find the \textit{loading matrices} $U^{(i)}\in\R^{I_i \times R}$ for $i=1,\cdots, m$ such that 
		\begin{equation}
			X \approx \sum_{k=1}^R \bigotimes_{i=1}^m U^{(i)}[:, k],
		\end{equation}
		where $U^{(i)}[:, k]$ denotes the $k^{\text {th }}$ column of the $I_i \times R$ loading matrix matrix $U^{(i)}$ and $\otimes$ denotes the outer product. We could formulate the above tensor decomposition problem as the following optimization problem: 
		\begin{equation}
			\label{eq:cp_org}
			\underset{U^{(1)} \in \Theta^{(1)}, \ldots, U^{(m)} \in \Theta^{(m)}}{\operatorname{argmin}}\left(f\left(U^{(1)}, \ldots, U^{(m)}\right):=\left\|X-\sum_{k=1}^R \bigotimes_{i=1}^m U^{(i)}[:, k]\right\|_F^2\right),
		\end{equation}
		where $\Theta^{(i)} \subseteq \R^{I_i\times R}$ is an embedded manifold, which gives  Riemannian constraints on the factor matrices. This setup of Riemmanian CP-dictionary learning is related to the recent work \cite{dong2022new}, where the authors used a CP-decomposition with  Riemannian structure on the space of factor matrices as a pre-conditioning algorithm for tensor completion.
		
		It is easy to see \eqref{eq:cp_org} is equivalent to 
		\begin{equation}
			\label{eq:cp_dl}
			\underset{U^{(1)} \in \Theta^{(1)}, \ldots, U^{(m)} \in \Theta^{(m)}}{\operatorname{argmin}}\left\|X-\operatorname{Out}\left(U^{(1)}, \ldots, U^{(m-1)}\right) \times_m\left(U^{(m)}\right)^T\right\|_F^2,
		\end{equation}
		which is the CP-dictionary-learning problem in \cite{lyu2020online_tensor}. Here $\times_m$ denotes the mode-$m$ product (see \cite{kolda2009tensor}) and the outer product is given by
		\begin{equation}
			\operatorname{Out}\left(U^{(1)}, \ldots, U^{(m)}\right):=\left[\bigotimes_{k=1}^m U^{(k)}[:, 1], \bigotimes_{k=1}^m U^{(k)}[:, 2], \ldots, \bigotimes_{k=1}^m U^{(k)}[:, R]\right] \in \mathbb{R}^{I_1 \times \cdots \times I_m \times R} .
		\end{equation}
		The constrained CP-dictionary-learning problem \eqref{eq:cp_dl} falls under the framework of block minimization. In fact, let 
		\begin{align}
			\mathbf{A} &= \operatorname{Out}\left(U_{n-1}^{(1)}, \ldots, U_{n-1}^{(i-1)}, U_{n-1}^{(i+1)}, \ldots, U_{n-1}^{(m-1)}\right)^{(m)} \in \mathbb{R}^{\left(I_1 \times \cdots \times I_{i-1} \times I_{i+1} \times \cdots \times I_m\right) \times R} \\
			B &= \operatorname{unfold}(\mathbf{A}, m) \in \mathbb{R}^{\left(I_1 \cdots I_{i-1} I_{i+1} \cdots I_m\right) \times R},
		\end{align}
		where unfold(·, $i$ ) denotes the mode- $i$ tensor unfolding (see \cite{kolda2009tensor}). Then the marginal objective function can be rewritten as 
		\begin{equation}
			\label{eq:cp_marginal}
			f_n^{(i)}(U^{(i)})= \left\|\operatorname{unfold}(X, i)-B\left(U^{(i)}\right)^T\right\|_F^2 .
		\end{equation}
		Alternating block minimization of \eqref{eq:cp_org}, i.e. directly minimizing \eqref{eq:cp_marginal} in each iteration, is known as \textit{alternating least squares} (ALS), which is studied in \cite{kolda2009tensor,navasca2008swamp,hong2015unified,razaviyayn2013unified} under Euclidean settings, i.e. each $\Theta$ is a convex subset of $\R^{I_i\times R}$. Recently, ALS with diminishing radius is also studied in \cite{lyu2023block}.

        Besides the standard Euclidean setting, we also study the more interesting setting when some blocks are Riemannian manifolds, e.g. Stiefel manifolds or rank-$R$ manifolds. In order to solve the corresponding Riemannian CP-dictionary learning problem \eqref{eq:cp_org}, instead of directly minimizing $f_n^{(i)}$, we may apply RBMM, which cyclically minimizes a surrogate function $g_n^{(i)}$ in each iteration $n$ given by 
		\begin{equation}
			\label{eq:cp_surr}
			g_n^{(i)}(U^{(i)}):=f_n^{(i)}(U^{(i)})+\lambda_n \|U^{(i)}-U^{(i)}_{n-1}\|_F^2= \left\|\operatorname{unfold}(X, i)-B\left(U^{(i)}\right)^T\right\|_F^2+\lambda_n \|U^{(i)}-U^{(i)}_{n-1}\|_F^2.
		\end{equation}

        As aforementioned at the beginning of Section \ref{sec:apps},  the Euclidean distance function is not $g$-smooth with respect to the low-rank manifold. Hence when at least one $\Theta^{(i)}$ is a low-rank manifold, then in order to ensure $g$-convexity of the objective function, we will need to take the underlying manifold to be the Euclidean space, i.e., $\mathcal{M}^{(i)}=\R^{I_{i}\times R}$, and allow the constraint set $\Theta^{(i)}$ to be an embedded submanifold of $\R^{I_{i}\times R}$. Note that $\Theta^{(i)}$ is not geodesically convex in $\R^{I_{i}\times R}$ with respect to the Euclidean geometry in $\R^{I_{i}\times R}$, so we cannot apply our asymptotic complexity results (Theorems \ref{thm:BMM_prox} and \ref{thm:BMM_rate}). However, we can still apply Theorem \ref{thm:RBMM_2} (see also Remark \ref{rmk:constraints}). When all the 
 $\Theta^{(i)}$ are either a Stiefel manifold or the Euclidean space, then we can apply the results in Corollary \ref{cor:prox_Stiefel} and therefore have the complexity of $\widetilde{O}(\eps^{-2})$ along with the asymptotic convergence result.

	\begin{corollary}[Complexity of block Euclidean proximal updates for CP-dictionary learning]\label{cor:EBMM_Riemannian}
	Let $(U_{n}^{(1)},\dots,U_{n}^{(m)})_{n\ge 0}$  be a sequence of factor matrices obtained by RBMM (Alg. \ref{algorithm:BMM}) with Euclidean proximal surrogates $g_{n}^{(i)}$ as in \eqref{eq:cp_surr}. If $\inf_{n}\lambda_{n}>0$, then the sequence of iterates converges to the set of stationary points of \eqref{eq:cp_org}. Moreover, if all the 
 $\Theta^{(i)}$ are either a Stiefel manifold or the Euclidean space, then the iteration complexity is $\widetilde{O}(\eps^{-2})$.
	\end{corollary}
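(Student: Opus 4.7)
The plan is to verify the hypotheses of Theorem~\ref{thm:RBMM_2} for the asymptotic statement and those of Corollary~\ref{cor:prox_Stiefel} for the complexity statement, taking the underlying manifolds to be the ambient Euclidean spaces $\mathcal{M}^{(i)} := \R^{I_i \times R}$ and treating $\Theta^{(i)}$ as embedded submanifolds (possibly equal to $\mathcal{M}^{(i)}$). This choice sidesteps the non-$g$-smoothness of the Euclidean distance on low-rank manifolds (see Appendix~\ref{sec:Hess_fixed_rank}), at the cost of losing $g$-convexity of the constraint sets; however, $g$-convexity is not needed in Theorem~\ref{thm:RBMM_2} (cf.\ Remark~\ref{rmk:constraints}).

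For Assumption~\ref{assumption:A0_optimal_gap}, the objective $f$ in \eqref{eq:cp_org} is a polynomial of degree $2m$ in the factor matrices and is uniformly bounded below by $0$ since it is a squared Frobenius norm. The MM descent property forces the iterates $(\param_n)$ to remain in the sublevel set $\{f \le f(\param_0)\}$; on any bounded intersection of this sublevel set with $\prod_i \Theta^{(i)}$, $\grad f$ is Lipschitz, which gives the $g$-smoothness required in \textbf{(i)}. Sub-items \textbf{(ii)} on summable optimality gaps and convergence of inexact solutions hold trivially when each subproblem is solved exactly, and are standard to arrange otherwise. For Assumption~\ref{assumption:A1_smoothness_surrogates}, option \textbf{(i-gs)} applies: the surrogate $g_n^{(i)}$ in \eqref{eq:cp_surr} is a quadratic in $U^{(i)}$ (once the other blocks are frozen at values in a bounded range, by the sublevel-set argument), so its Euclidean gradient is affine with a uniform Lipschitz constant $L_g$, which coincides with $g$-smoothness in Euclidean ambient geometry. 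Item \textbf{(ii)} on injectivity radius is trivial since Euclidean space has infinite injectivity radius. Finally, for \ref{assumption:A1_1}\textbf{(iii)}, the ambient geodesic distance equals the Euclidean distance, so
\begin{equation}
h_n^{(i)}(\theta) = \lambda_n \lVert \theta - \theta_{n-1}^{(i)}\rVert_F^{\,2} \;\ge\; c \, d^{2}(\theta, \theta_{n-1}^{(i)})
\end{equation}
with $c = \inf_n \lambda_n > 0$ and $\phi(x) = x^2$. Theorem~\ref{thm:RBMM_2} then delivers asymptotic convergence to the stationary set of \eqref{eq:cp_org}.

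For the complexity assertion, suppose every $\Theta^{(i)}$ is either a Stiefel manifold or a Euclidean space. I will invoke Corollary~\ref{cor:prox_Stiefel}: $f$ is Euclidean $L$-smooth on the compact set where the iterates live (Stiefel factors are automatically bounded, and any Euclidean factors are controlled by the sublevel set, as argued above); the surrogates $g_n^{(i)}$ are Euclidean $L'$-smooth since they are quadratic; the quadratic majorization gap \eqref{eq:quadratic_majorization_euclidean} holds with $c = \inf_n \lambda_n > 0$; and the $g$-convexity requirement on $\Theta^{(i)}$ is vacuous in both permitted cases (Stiefel coinciding with the full manifold itself, and Euclidean space being trivially $g$-convex). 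Corollary~\ref{cor:prox_Stiefel} then yields the $\widetilde{O}(\eps^{-2})$ bound.

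The main obstacle I anticipate is justifying compactness of the iterate range in the presence of Euclidean blocks and the notorious scaling ambiguity of CP decomposition ($U^{(i)} \mapsto c U^{(i)},\ U^{(j)} \mapsto c^{-1} U^{(j)}$ leaves $f$ invariant). When at least one Stiefel factor appears in each rank-one term this ambiguity is broken and compactness is automatic; in the purely Euclidean case one invokes the proximal regularization: monotone decrease of $f$ together with \ref{assumption:A1_1}\textbf{(iii)} yields $\sum_n \lVert \theta_n^{(i)} - \theta_{n-1}^{(i)}\rVert_F^2 < \infty$, so drift is summable and the iterates remain bounded. Past this point the verifications are routine algebra on the quadratic form of the surrogates.
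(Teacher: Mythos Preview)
Your overall plan matches the paper's: invoke Theorem~\ref{thm:RBMM_2} with Euclidean ambient manifolds for asymptotic convergence, and Corollary~\ref{cor:prox_Stiefel} for the complexity bound in the Stiefel/Euclidean case. The verifications of \ref{assumption:A1_smoothness_surrogates}\textbf{(i-gs)}, \textbf{(ii)}, and \ref{assumption:A1_1}\textbf{(iii)} are fine.

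There is, however, a genuine gap in your treatment of \ref{assumption:A0_optimal_gap}\textbf{(i)}, specifically the compactness of sublevel sets. You correctly identify the scaling ambiguity of CP as the obstruction, but your proposed fix is wrong: from $\sum_n \lVert \theta_n^{(i)} - \theta_{n-1}^{(i)}\rVert_F^2 < \infty$ you \emph{cannot} conclude that the iterates stay bounded. A sequence with increments $1/n$ has square-summable drift yet diverges. Square-summability of increments controls neither the total variation $\sum_n \lVert \theta_n - \theta_{n-1}\rVert$ nor $\sup_n \lVert \theta_n\rVert$. So the argument you sketch for the ``purely Euclidean'' case does not close.

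The paper itself does not spell out this point either; it simply asserts that the theorems apply. In practice the issue is handled by either (a) imposing a bounded constraint set on at least one factor per rank-one term (e.g., a Stiefel or norm-ball constraint, which the paper's numerical experiments in fact use), or (b) appealing to known results that the CP least-squares sublevel sets are compact under mild nondegeneracy conditions on $X$. If you want a self-contained argument, you should either add such a hypothesis explicitly or cite a result guaranteeing coercivity of $f$ on $\prod_i \Theta^{(i)}$; the proximal term alone will not do it.
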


    In the following numerical experiments, we present the advantage of using RBMM in both the Euclidean setting and the Riemannian setting. We generate synthetic data $X\in \R^{30\times20\times10}$, then apply ALS (\cite{kolda2009tensor}), RBMM with constant $\lambda_n=0.1$ and RBMM with diminishing $\lambda_n=0.1\times 0.5^{n}$ as suggested in \cite{navasca2008swamp} to find
		loading matrices $U^{(1)},U^{(2)},U^{(3)}$ with $R = 3$ columns. We run each algorithm 100 times from the independent random initialization and then compute the averaged relative error. The typical results of the Euclidean setting, i.e. $\mathcal{M}^{(i)}=\R^{I_i \times R}$ for $i=1,2,3$ are shown in Figure \ref{fig:CP} (A) and (B), where (A) is the case when two RBMM algorithms are significantly better than ALS. When RBMM reaches a relative error of order $10^{-14}$, the relative error of ALS is still of order $10^{-3}$; (B) shows RBMM with constant $\lambda$ is better than ALS while RBMM with diminishing $\lambda$ is much better than the other two. After 0.25s elapsed time, RBMM with diminishing $\lambda$ reaches a relative error of order $10^{-14}$, while that of the other two are still of order $10^{-2}$. After 0.35s elapsed time, RBMM with constant $\lambda$ reaches error of order $10^{-8}$ while that of ALS remains $10^{-2}$.

		Figure \ref{fig:CP} (C) shows the result when  $U^{(1)}$ is a point on the Stiefel manifold $ \mathcal{V}^{I_1 \times R}$ (see Example \ref{eg:stiefel}), i.e. we let $\Theta^{(1)}=\mathcal{V}^{I_1 \times R}$, $\mathcal{M}^{(2)}=\R^{I_2\times R}$ and $\mathcal{M}^{(3)}=\R^{I_3\times R}$. In contrast to the Euclidean case, all three algorithms demonstrate good performance for random synthetic data with random initialization. One possible explanation of this phenomenon is that since $U^{(1)}_n \in \mathcal{V}^{I_1 \times R}$ for all $n$, we have $(U^{(1)}_n)^T U^{(1)}_n = I_{R}$. As a result, the quadratic terms added in RBMM would reduce to a linear term that does not significantly accelerate convergence. A similar observation is made when dealing with Stiefel manifolds in the geodesic subspace tracking problem (see Section \ref{app:GST} for details).
		
		Figure \ref{fig:CP} (D) shows the convergence results for some synthetic data when $U^{(1)}$ is a point on fixed-rank matrices manifold $\mathcal{R}_r$ with $r<R$ (see Example \ref{eg:fixed-rank}) where $r=5$ and $R=10$, i.e. we let $\Theta^{(1)}=\mathcal{R}_r$, $\mathcal{M}^{(2)}=\R^{I_2\times R}$ and $\mathcal{M}^{(3)}=\R^{I_3\times R}$. There, RBMM with constant or diminishing $\lambda$ performs much better than ALS. RBMM with constant and diminishing $\lambda$ require only $5.16$s and $5.51$s of elapsed time respectively to reach a relative error of order $10^{-13}$ while that of ALS takes $9.66$ seconds.

		\subsection{Robust PCA}
  \label{sec:RPCA}
		Principal component analysis (PCA) is a popular technique for reducing the dimensionality of a data set, while preserving the maximum amount of information, which is done by seeking the best low-rank approximation of the high dimensional data set \cite{jolliffe1986principal}. Mathematically, consider the data matrix $M\in \R^{m\times n}$, PCA seeks the best $r$-rank approximation of $M$ where $r<\min\{m,n\}$, by solving $\min_{L}\{\|L-M\|, \operatorname{rank}(L)\leq r\}$, where $\|X\|$ denotes the spectral norm. However, it is well known that classical PCA would break down when entries of $M$ are grossly corrupted. The PCA problem with corrupted entries is called \textit{robust PCA} (RPCA) problem \cite{candes2009robust}. More specifically, it considers the matrix $M$ of the form $M=L_0 +S_0$ where $L_0$ is the ideal low-rank matrix and $S_0$ is a sparse matrix. This problem can be mathematically formulated as
		\begin{equation}
			\label{eqn:robust_pca} 
			\min_{L,S}\,\, \operatorname{rank}(L) + \lambda \|S\|_0, \quad \text{subject to} \quad M=L+S,
		\end{equation}
		where $\lambda>0$ is a trade-off parameter and $\|S\|_0$ is the number of none zero entries of $S$.

        Since the objective function in  \eqref{eqn:robust_pca} is the sum of two nonconvex functions, it is natural to consider a convex relaxation of \eqref{eqn:robust_pca}  by relaxing the rank function by the nuclear norm and the $\ell_{0}$-norm by the $\ell_{1}$-norm. This gives the following \textit{Principal Component Pursuit} (PCP) \cite{candes2009robust,ma2015generalized}:
        \begin{equation}
			\label{eqn:robust_pca_convex}
			\min_{L,S} \,\, \|L\|_{*} + \lambda \|S\|_1, \quad \text{subject to} \quad M=L+S
		\end{equation}
		where $\|X\|_*$ is the nuclear norm of $X$, defined to be the sum of the singular values. Under certain conditions on the unknown parameters $L_0$ and $S_0$, it is possible to recover $L_0$ and $S_0$ by solving the relaxed problem \eqref{eqn:robust_pca_convex}.  A more general and realistic setting is that the observations are noisy, i.e. $M=L+S+Z$ where $Z$ represents the noise term. This is the \textit{stable PCP} (SPCP) problem proposed in \cite{zhou2010stable}, where the authors showed the following optimization problem efficiently recovers the true $L$ and $S$ with properly chosen $\mu$,
\begin{equation}
			\label{eqn:RPCA_noise}
			\min_{L,S} \, \, \|L\|_{*} + \lambda \|S\|_1 + \frac{1}{2\mu}\|M-L-S\|_{F}^2
\end{equation}
  More studies on solving \eqref{eqn:RPCA_noise} can also be found in \cite{yin2019stable,aravkin2014variational}.

While \eqref{eqn:RPCA_noise} and \eqref{eqn:robust_pca_convex} are well-studied formulations of RPCA, they do not always guarantee a solution with a fixed rank since they promote the rank of $L$ to be small indirectly through the nuclear norm penalization. Rodriguez and Wohlberg \cite{rodriguez2013fast}  proposed the following  alternative formulation of RPCA by using a hard low-rank constraint on $L$ instead of the soft nuclear-norm penalization: 

\begin{equation}
\label{eqn:RPCA_low_rank}
\min_{L\in \mathcal{R}_r,\;S\in\R^{m\times n}} \lambda \|S\|_1 + \frac{1}{2\mu}\|M-L-S\|_{F}^2
\end{equation}
where $\mathcal{R}_r$ is the rank-$r$ matrix manifold (see Example \ref{eg:fixed-rank}). In \cite{rodriguez2013fast}, an alternating minimization algorithm for solving \eqref{eqn:RPCA_low_rank} was demonstrated to be effective through experiments, but no theoretical guarantee of such an algorithm was provided.

Here, we revisit the alternating minimization algorithm for the Riemannian formulation of RPCA  \eqref{eqn:RPCA_low_rank} in \cite{rodriguez2013fast} through our general framework of RBMM. In order to apply our framework, we consider a smooth approximation of \eqref{eqn:RPCA_low_rank}. Note the 1-norm $\|\cdot\|_1$ is in general not smooth. A 
smooth
approximation of $g(S)=\lambda\|S\|_1$ with smoothness parameter $\sigma$ is given by the following (see e.g. \cite{goldfarb2013fast})
		\begin{equation}
			\label{eqn:g_sigma}
			g_{\sigma}(S)=\max\left\{\langle S, Z\rangle-\frac{\sigma}{2}\|Z\|^{2}_{F} : \|Z\|_{\infty}\leq \lambda\right\}.
		\end{equation}
		It is easy to check the optimal solution of \eqref{eqn:g_sigma} is $Z_{\sigma}(S)$ with 
		\begin{equation}
			\label{eqn:Z_S}
			Z_{\sigma}(S)_{ij}=\min\left\{\lambda,\max\{S_{ij}/\sigma,-\lambda\}\right\}.
		\end{equation}
		Also, the gradient of $g_{\sigma}(S)$ is given by $\nabla g_{\sigma}(S)=Z_{\sigma}(S)$ and is Lipschitz continuous with parameter $1/\sigma$.

        Now consider the following problem, which is a smooth approximation of the Riemannian RPCA \eqref{eqn:RPCA_low_rank}:
		\begin{equation}
			\label{eqn:RPCA_smooth}
			\min_{L\in\mathcal{R}_r,\;S\in \R^{m\times n}}F(L,S):=g_{\sigma}(S) + \frac{1}{2\mu}\|M-L-S\|_{F}^2 .
		\end{equation}
    The alternating minimization algorithm for \eqref{eqn:RPCA_smooth} analogous to the one in \cite{rodriguez2013fast} for \eqref{eqn:RPCA_low_rank} reads as follows. For any $k=1,\cdots, T$, 
denoting the solution at iteration $k$ as $L_k$ and $S_k$, the next 
iterates are computed by 
		\begin{align}
			L_{k+1} &=\argmin_{L\in \mathcal{R}_r} F(L,S_k)=\argmin_L  \,\,  g_{\sigma}(S_k) + \frac{1}{2\mu}\|M-L-S_k\|_{F}^2, \label{eqn:L_k} \\
			S_{k+1} &=\argmin_{S \in \R^{m\times n}} F(L_{k+1},S)= \argmin_S \,\,  g_{\sigma}(S)  + \frac{1}{2\mu}\|M-L_{k+1}-S\|_{F}^2 . \label{eqn:S_k}
		\end{align}
		The first-order optimality condition of \eqref{eqn:L_k} gives $L_{k+1} = U \Sigma_r V^{T}$, where $U\Sigma V^T$ is the SVD of $M-S_k$ with the singular values ordered in nonincreasing order and $\Sigma_r$ keeps the first $r$ singular values (see Example \ref{eg:fixed-rank}). 
		
		Similarly, the first-order optimal solution for \eqref{eqn:S_k} is 
		\begin{equation}
			(S_{k+1})_{i j}=B_{i j}-\mu \min \left\{\rho, \max \left\{-\lambda, \frac{B_{i j}}{\sigma+\mu}\right\}\right\} \text { for } i=1, \ldots, m \text { and } j=1, \ldots, n
		\end{equation}
		where $B=M-L_{k+1}$.

        Note that the alternating minimization algorithm in \cite{rodriguez2013fast} is a special instance of our RBMM with zero majorization gap. Hence can generalize it by using proximal regularization as below:
		\begin{align}\label{eq:BMM_RPCA}
			L_{k+1} =\argmin_{L\in \mathcal{R}_r}G^{(1)}_{k+1}(L):&= F(L,S_k) + \frac{\lambda_{k+1}}{2}\|L-L_k\|^{2}_F\\
            &= g_{\sigma}(S_k) + \frac{1}{2\mu}\|M-L-S_k\|_{F}^2 + \frac{\lambda_{k+1}}{2}\|L-L_k\|^{2}_F, \\
			S_{k+1} =\argmin_{S \in \R^{m\times n}}G^{(2)}_{k+1}(S):&= F(L_{k+1},S) + \frac{\lambda_{k+1}}{2}\|S-S_k\|^{2}_F &\\
            &=  g_{\sigma}(S) + \frac{1}{2\mu}\|M-L_{k+1}-S\|_{F}^2+ \frac{\lambda_{k+1}}{2}\|S-S_k\|^{2}_F.
		\end{align}
		The minimization of $G^{(1)}_{k+1}$ and $G^{(2)}_{k+1}$  admits similar closed form  
        solutions as of \eqref{eqn:L_k} and \eqref{eqn:S_k}, which can be found in Appendix \ref{appendix:RPCA}. %
        
        We remark again that these iterative updates fall in the framework of RBMM. Namely, since $\M^{(1)}=\M^{(2)}=\Theta^{(2)}=\R^{m\times n}$ and $\Theta^{(1)}=\mathcal{R}_r$, the assumptions \ref{assumption:A0_optimal_gap} and \ref{assumption:A1_smoothness_surrogates} are satisfied trivially. Therefore, we deduce the following corollary of Theorem \ref{thm:RBMM_2}:
  \begin{corollary}[Convergence of BMM for PCP]\label{cor:RPCA}
      Fix a matrix $M\in \R^{m \times n}$. Let $\param_{k}:=(L_{k}, S_{k})$ be generated by \eqref{eq:BMM_RPCA} with arbitrary initialization $\param_{0}\in \Param:=\mathcal{R}_r \times \R^{m\times n}$. Suppose the proximal regularization parameters $\lambda_{k}$ are strictly positive for all $k\ge 0$. For the smoothed PCP problem \eqref{eqn:RPCA_smooth}, the updates in \eqref{eq:BMM_RPCA} converge to the set of stationary points.
  \end{corollary}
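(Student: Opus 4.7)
The plan is to realize the smoothed PCP problem \eqref{eqn:RPCA_smooth} and the updates \eqref{eq:BMM_RPCA} as an instance of RBMM with $m=2$ blocks, and then invoke Theorem \ref{thm:RBMM_2}. Specifically, I take $\mathcal{M}^{(1)}=\mathcal{M}^{(2)}=\mathbb{R}^{m\times n}$ with the Euclidean structure, and constraint sets $\Theta^{(1)}=\mathcal{R}_r$ and $\Theta^{(2)}=\mathbb{R}^{m\times n}$. Note the first constraint set $\mathcal{R}_r$ is a smooth submanifold of the ambient Euclidean manifold but is \emph{not} geodesically convex in $\mathbb{R}^{m\times n}$, so Theorem \ref{thm:RBMM_1} does not apply. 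However, as noted in Remark \ref{rmk:constraints}, Theorem \ref{thm:RBMM_2} does not require geodesic convexity of the constraint sets, which is exactly the flexibility we need here.

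Next, I would verify the hypotheses of Theorem \ref{thm:RBMM_2} one at a time. For \ref{assumption:A0_optimal_gap}\textbf{(i)}: the quadratic term $\frac{1}{2\mu}\|M-L-S\|_F^2$ is $(1/\mu)$-smooth and the smoothed 1-norm $g_\sigma$ has $(1/\sigma)$-Lipschitz gradient (this is the standard property of the Moreau-type envelope \eqref{eqn:g_sigma}, as noted just below it in the text), so $F$ is Euclidean $L$-smooth with $L=\max\{1/\mu,1/\sigma\}$, hence $g$-smooth in the Euclidean sense; $F\geq 0$ since both summands are nonnegative; and sublevel sets of $F$ in $\Theta^{(1)}\times\Theta^{(2)}$ are bounded because $g_\sigma(S)\geq \lambda\|S\|_1 - \tfrac{\sigma mn\lambda^2}{2}$ (choose $Z=\lambda\,\mathrm{sign}(S)$ in \eqref{eqn:g_sigma}), forcing $\|S\|_F$ to be bounded on any sublevel set, and then the fitting term forces $\|L\|_F$ to be bounded. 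For \ref{assumption:A0_optimal_gap}\textbf{(ii)}: the minimizations in \eqref{eq:BMM_RPCA} have the closed-form solutions derived in Appendix \ref{appendix:RPCA}, so the iterates are computed exactly and $\Delta_n\equiv 0$.

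Finally, for Assumption \ref{assumption:A1_smoothness_surrogates}: the surrogates $G^{(i)}_{k+1}$ are each $F$ plus a Euclidean quadratic, so they are Euclidean $L'$-smooth for some $L'>0$ and thus $g$-smooth, verifying \textbf{(i-gs)}; and \textbf{(ii)} is immediate since $\mathcal{M}^{(i)}=\mathbb{R}^{m\times n}$ has infinite injectivity radius. The proximal term in \eqref{eq:BMM_RPCA} also ensures a quadratic majorization gap $h_n^{(i)}(\theta)=\tfrac{\lambda_{n+1}}{2}\|\theta-\theta_n^{(i)}\|_F^2$, so if we interpret the positivity hypothesis on $(\lambda_k)$ as a uniform positive lower bound (required to make the constant in Assumption \ref{assumption:A1_1}\textbf{(iii)} independent of $n$), the latter assumption is verified with $\phi(x)=x^2$. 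With all standing assumptions of Theorem \ref{thm:RBMM_2} in place, every limit point of $(L_k,S_k)$ is a stationary point of \eqref{eqn:RPCA_smooth}, which is the claim. The main conceptual point to highlight (rather than a technical obstacle) is the deliberate choice to take the ambient Euclidean space — and not $\mathcal{R}_r$ itself — as the underlying manifold: this is what makes $F$ block-wise $g$-smooth, while the non-$g$-convexity of $\mathcal{R}_r$ is absorbed by using Theorem \ref{thm:RBMM_2} rather than Theorem \ref{thm:RBMM_1}.
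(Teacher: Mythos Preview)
Your proposal is correct and takes essentially the same approach as the paper—casting \eqref{eqn:RPCA_smooth} as an instance of RBMM with Euclidean ambient manifolds $\mathcal{M}^{(1)}=\mathcal{M}^{(2)}=\mathbb{R}^{m\times n}$ and constraint $\Theta^{(1)}=\mathcal{R}_r$, then invoking Theorem~\ref{thm:RBMM_2} (which, per Remark~\ref{rmk:constraints}, does not require geodesic convexity of the constraint sets). The paper's justification is terser, simply asserting that \ref{assumption:A0_optimal_gap} and \ref{assumption:A1_smoothness_surrogates} are ``satisfied trivially''; your version spells out the smoothness constants, the sublevel-set boundedness argument, and the need to read ``$\lambda_k$ strictly positive'' as a uniform positive lower bound so that \ref{assumption:A1_1}\textbf{(iii)} holds with a fixed $c>0$.
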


We remark that in \cite{rodriguez2013fast}, the authors numerically verified the efficiency of the alternating minimization for solving \eqref{eqn:RPCA_low_rank}, but no theoretical results on asymptotic convergence or iteration complexity are established. Corollary \ref{cor:RPCA}  provides the asymptotic convergence of the algorithm \eqref{eq:BMM_RPCA} by using the general RBMM framework.

	\section{Convergence analysis}\label{sec:conv_opt1}
	In this section, we prove the convergence and complexity of RBMM (Algorithm \ref{algorithm:BMM}) stated in Theorems \ref{thm:RBMM_1},  \ref{thm:RBMM_2}, \ref{thm:BMM_prox},  and \ref{thm:BMM_rate}. Throughout this section, we let $(\param_{n})_{n\ge 1}$ denote the sequence of iterates in $\Param\subseteq \prod_{i=1}^{m} \M^{(i)}$ generated by Algorithm \ref{algorithm:BMM}. We will also use the notations introduced in Section \ref{sec:notations}. Note in this section, we use the exponential map as the retraction in the definition of lifted constraint set \eqref{eq:def_lift_constraints}, i.e.
 \begin{align}
	T^{*}_{x} \mathcal{M} &:= \{ u\in T_{x} \mathcal{M} \,|\,  \textup{$\Exp_x(u)=x'$ for some $x'\in \Param$ with $d(x,x')\le r_0 /2$} \}.
\end{align}
	It is worth mentioning that though in the analysis, we will be using an exponential map, which is computationally expensive, in practice access to it is not required, as shown in Algorithm \ref{algorithm:BMM}.

	\subsection{Preliminary analysis}

	In this section, we present some preliminary results that will be used in the analysis of RBMM with both $g$-smooth and proximal surrogates.

	We first establish basic monotonicity properties of the iterates $(\param_{n})_{n\ge 1}$. 
	
	\begin{prop}[Monotonicity of objective and Stability of iterates]\label{prop:forward_monotonicity_proximal}
		Suppose \ref{assumption:A0_optimal_gap}, except for the $g$-smoothness of $f$. 
  Then the following hold: 
		\begin{description}[itemsep=0.1cm]
			\item[(i)] $f(\param_{n-1}) - f(\param_{n}) \ge   \sum_{i=1}^{m}\left( g^{(i)}_{n}(\theta^{(i)}_{n}) -  f^{(i)}_{n}(\theta^{(i)}_{n}) -\Delta_n \right) \ge -m\Delta_n$;
			\vspace{0.1cm}
			\item[(ii)] $ \sum_{n=1}^{\infty} \sum_{i=1}^{m}\left( g^{(i)}_{n}(\theta^{(i)}_{n}) -  f^{(i)}_{n}(\theta^{(i)}_{n}) \right) <  f(\param_{0}) - f^{*} +m\sum_{n=1}^{\infty}\Delta_n<\infty$.
		\end{description}
		
		\begin{description}[itemsep=0.1cm]
			\item[(iii)] Further assume \ref{assumption:A1_1}\textbf{(iii)} holds. Then the following also holds:
			\begin{align}
				\sum_{n=1}^{\infty}\sum_{i=1}^{m} \phi\left( d\left(\theta_{n-1}^{(i)},\theta_{n}^{(i)}\right) \right) &\le  \sum_{n=1}^{\infty} \sum_{i=1}^{m}\left( g^{(i)}_{n}(\theta^{(i)}_{n}) -  f^{(i)}_{n}(\theta^{(i)}_{n}) \right) <  f(\param_{0}) - f^{*} +m\sum_{n=1}^{\infty}\Delta_n<\infty.
			\end{align}
			In particular, $d(\theta_{n-1}^{(i)}, \theta_{n}^{(i)})=o(1)$ for all $i=1,\dots,m$.
		\end{description}
		
	\end{prop}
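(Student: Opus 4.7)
The plan is to exploit the defining properties of majorizing surrogates (sharpness and majorization) together with the inexact optimality gap $\Delta_n$ in an elementary telescoping argument. The whole proposition follows from a single chain of inequalities repeated over the $i$-index and then over the $n$-index; no geometry of the manifolds, no $g$-smoothness of $f$, and no curvature bounds are used.

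First I would isolate the per-iteration descent. The crucial observation is that the marginal objectives $f_n^{(i)}$ are designed to telescope across blocks:
\begin{align}
f(\boldsymbol{\theta}_{n-1})-f(\boldsymbol{\theta}_{n})=\sum_{i=1}^{m}\bigl(f_n^{(i)}(\theta_{n-1}^{(i)})-f_n^{(i)}(\theta_n^{(i)})\bigr),
\end{align}
since $f_n^{(i)}(\theta_n^{(i)})=f_n^{(i+1)}(\theta_{n-1}^{(i+1)})$ by the definition \eqref{eq:def_f_n_marginal} of the marginal function. For each $i$, I would rewrite the summand using sharpness $g_n^{(i)}(\theta_{n-1}^{(i)})=f_n^{(i)}(\theta_{n-1}^{(i)})$, split off a $g_n^{(i)}(\theta_n^{(i)})$ term, and estimate the two resulting pieces:
\begin{align}
f_n^{(i)}(\theta_{n-1}^{(i)})-f_n^{(i)}(\theta_n^{(i)})
=\bigl[g_n^{(i)}(\theta_{n-1}^{(i)})-g_n^{(i)}(\theta_n^{(i)})\bigr]+\bigl[g_n^{(i)}(\theta_n^{(i)})-f_n^{(i)}(\theta_n^{(i)})\bigr].
\end{align}
The first bracket is at least $-\Delta_n$ by the inexact optimality gap in \eqref{eq:def_optimality_gap} (since $\theta_{n-1}^{(i)}\in\Theta^{(i)}$ is a candidate), and the second bracket is nonnegative by the majorization property. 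Summing over $i$ gives (i).

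For (ii), I would telescope the inequality from (i) over $n=1,\dots,N$, use the uniform lower bound $f\ge f^{*}$ from \ref{assumption:A0_optimal_gap}\textbf{(i)}, and let $N\to\infty$. The summability of $(\Delta_n)$ from \ref{assumption:A0_optimal_gap}\textbf{(ii)} is what closes the bound:
\begin{align}
\sum_{n=1}^{N}\sum_{i=1}^{m}\bigl(g_n^{(i)}(\theta_n^{(i)})-f_n^{(i)}(\theta_n^{(i)})\bigr)\le f(\boldsymbol{\theta}_0)-f^{*}+m\sum_{n=1}^{\infty}\Delta_n<\infty.
\end{align}

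Finally, (iii) is immediate: under \ref{assumption:A1_1}\textbf{(iii)} each summand on the left of (ii) dominates $c\,\phi(d(\theta_{n-1}^{(i)},\theta_n^{(i)}))$, so the double series $\sum_{n,i}\phi(d(\theta_{n-1}^{(i)},\theta_n^{(i)}))$ converges. In particular $\phi(d(\theta_{n-1}^{(i)},\theta_n^{(i)}))\to 0$, and since $\phi$ is strictly increasing with $\phi(0)=0$, this forces $d(\theta_{n-1}^{(i)},\theta_n^{(i)})=o(1)$ for each $i$. The only step requiring any care is verifying that $\theta_{n-1}^{(i)}$ is in the feasible set when invoking the optimality gap bound on the first bracket—this holds trivially since the iterates lie in $\Theta^{(i)}$ by construction in Algorithm \ref{algorithm:BMM}—so I expect no real obstacles in the proof.
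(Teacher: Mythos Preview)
Your proposal is correct and follows essentially the same approach as the paper: both exploit sharpness $g_n^{(i)}(\theta_{n-1}^{(i)})=f_n^{(i)}(\theta_{n-1}^{(i)})$, the majorization inequality, and the optimality-gap bound to obtain the per-block descent estimate, then telescope over $i$ and over $n$. The only cosmetic difference is that the paper inserts the exact minimizer $\theta_n^{(i\star)}$ as an intermediate term, whereas you bound directly via $\inf_{\theta\in\Theta^{(i)}} g_n^{(i)}(\theta)$; these are equivalent.
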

	
	\begin{proof}
		Fix $i\in \{1,\dots,m\}$. Since $\theta_{n}^{(i\star)}$ is a minimizer of $g_{n}^{(i)}$ over $\Theta^{(i)}$ (see \ref{assumption:A0_optimal_gap}\textbf{(ii)}), we get  $g^{(i)}_{n}(\theta^{(i\star)}_{n}) \le g^{(i)}_{n}(\theta^{(i)}_{n-1})=f^{(i)}_{n}(\theta^{(i)}_{n-1})$, for $n\ge 1$. Hence we deduce 
		\begin{align}\label{eq:monotonicity_g_comparison}
			f^{(i)}_{n}(\theta^{(i)}_{n-1}) - f^{(i)}_{n}(\theta^{(i)}_{n}) &=   g^{(i)}_{n}(\theta^{(i)}_{n-1}) - g^{(i)}_{n}(\theta^{(i\star)}_{n}) +g^{(i)}_{n}(\theta^{(i\star)}_{n})- g^{(i)}_{n}(\theta^{(i)}_{n})   +g^{(i)}_{n}(\theta^{(i)}_{n}) -  f^{(i)}_{n}(\theta^{(i)}_{n})\\
			& \ge   -\Delta_n+ g^{(i)}_{n}(\theta^{(i)}_{n}) -  f^{(i)}_{n}(\theta^{(i)}_{n})\ge -\Delta_n .
		\end{align}
		It follows that 
		\begin{align}
			&f(\param_{n-1}) - f(\param_{n}) \\
			&\qquad = \sum_{i=1}^{m} f(\theta_{n}^{(1)},\dots, \theta_{n}^{(i-1)},\theta_{n-1}^{(i)},\theta_{n-1}^{(i+1)}, \dots, \theta_{n-1}^{(m)}) - f(\theta_{n}^{(1)},\dots, \theta_{n}^{(i-1)},\theta_{n}^{(i)},\theta_{n-1}^{(i+1)}, \dots, \theta_{n-1}^{(m)}) \\ 
			&  \qquad = \sum_{i=1}^{m} f_{n}^{(i)}(\theta^{(i)}_{n-1}) -  f_{n}^{(i)}(\theta^{(i)}_{n}) \\
			& \qquad \geq \sum_{i=1}^{m}g^{(i)}_{n}(\theta^{(i)}_{n}) -  f^{(i)}_{n}(\theta^{(i)}_{n})-\Delta_n\\
			&\qquad \ge -m\Delta_n.
			\quad  \label{eq:proximal_monotonicity}
		\end{align}
		This shows \textbf{(i)}. 
		
		Next, to show \textbf{(ii)}, adding up the above inequality for $n\ge 1$,
		\begin{align}
			\sum_{n=1}^{\infty} \sum_{i=1}^{m}\left( g^{(i)}_{n}(\theta^{(i)}_{n}) -  f^{(i)}_{n}(\theta^{(i)}_{n}) \right) & \le \left( \sum_{n=1}^{\infty} f(\param_{n-1}) - f(\param_{n}) \right)  +m\sum_{n=1}^{\infty}\Delta_n \le  f(\param_{0}) - f^{*}+m\sum_{n=1}^{\infty}\Delta_n<\infty.
		\end{align} 
		To show \textbf{(iii)}, note that by \ref{assumption:A1_1}\textbf{(iii)},
		\begin{equation}
			g^{(i)}_{n}(\theta^{(i)}_{n}) -  f^{(i)}_{n}(\theta^{(i)}_{n})\ge \phi\left( d\left(\theta_{n-1}^{(i)},\theta_{n}^{(i)}\right) \right).
		\end{equation}
		Together with \textbf{(ii)} we get \textbf{(iii)}.
	\end{proof}
	
	The following is an immediate consequence of Proposition \ref{prop:forward_monotonicity_proximal} and sub-level compactness.

	\begin{prop}[Boundedness of iterates]
		\label{prop:boundedness_iterates}
		Under \ref{assumption:A0_optimal_gap} except for the $g$-smoothness of $f$, 
  the set $\{\param_n : n\geq1\}$ is bounded.
	\end{prop}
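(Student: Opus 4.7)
The plan is to combine the near-monotonicity of $f$ along the iterates established in Proposition \ref{prop:forward_monotonicity_proximal}\textbf{(i)} with the sublevel set compactness assumption on $f$ in \ref{assumption:A0_optimal_gap}\textbf{(i)}. The argument amounts to producing a single finite threshold $C$ such that every iterate $\param_n$ lies in $f^{-1}((-\infty,C])$.

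First, Proposition \ref{prop:forward_monotonicity_proximal}\textbf{(i)} gives $f(\param_n) \le f(\param_{n-1}) + m\Delta_n$ for every $n \ge 1$. Telescoping this inequality yields
\begin{align}
f(\param_n) \le f(\param_0) + m\sum_{k=1}^n \Delta_k \le f(\param_0) + m\sum_{k=1}^\infty \Delta_k =: C,
\end{align}
where the summability of the optimality gaps $\Delta_k$ is ensured by \ref{assumption:A0_optimal_gap}\textbf{(ii)}, making $C$ a finite constant independent of $n$.

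Consequently, $\{\param_n : n \ge 1\} \subseteq \{\param \in \Param : f(\param) \le C\}$. By the sublevel set compactness in \ref{assumption:A0_optimal_gap}\textbf{(i)}, this sublevel set is compact in the product $\prod_{i=1}^m \M^{(i)}$. Since compact subsets of the metric space equipped with the product distance $d$ in \eqref{eq:def_product_dist} are necessarily bounded, the conclusion follows. There is no substantive obstacle here; the statement is essentially a direct corollary of the monotonicity estimate together with the sublevel set compactness built into the assumptions.
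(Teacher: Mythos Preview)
Your proof is correct and follows essentially the same approach as the paper: telescope the near-monotonicity inequality from Proposition \ref{prop:forward_monotonicity_proximal}\textbf{(i)} to trap all iterates in a single sublevel set, then invoke the sublevel compactness from \ref{assumption:A0_optimal_gap}\textbf{(i)}. The paper's argument is identical in substance, differing only in notation (it writes $T := m\sum_{n=1}^\infty \Delta_n$ for your $C - f(\param_0)$).
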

	
	\begin{proof}
		Let $m\sum_{n=1}^{\infty}\Delta_n =T<\infty$, by Prop. \ref{prop:forward_monotonicity_proximal} we immediately have $f(\param_n)\leq f(\param_0)+m\sum_{k=1}^{n}\Delta_k< f(\param_0)+T$. Consider $K=\{\param \in \Param: f(\param)\leq f(\param_0)+T\}$, by \ref{assumption:A0_optimal_gap}\textbf{(i)}, $K$ is compact. Hence the set $\{\param_n : n\geq1\}$ is bounded.
	\end{proof}

	The following two propositions to be introduced will only be used in deriving the rate of convergence results, and they assume the geodesic convexity of the constraint sets. 
 
    If $\M$ is a Riemannian manifold and if $x,y\in \M$ such that $d(x,y)< \rinj(x)$, then the tangent vector $\eta_{x}(y):=\Exp_{x}(y)\in T_{x}\M$ is uniquely defined. Now we introduce the following notations: 
    \begin{align}\label{eq:def_eta_n_n}
    \eta_{n+1}^{(i)}(n):=\eta_{\theta_{n+1}^{(i)}}(\theta_{n}^{(i)}),\quad   \eta_{n}^{(i)}(n+1):=\eta_{\theta_{n}^{(i)}}(\theta_{n+1}^{(i)}). 
    \end{align}
    Note that the above tangent vectors are well-defined for all $n$ sufficiently large under \ref{assumption:A0_optimal_gap} and \ref{assumption:A1_smoothness_surrogates}\textbf{(i)-(iii)}. Indeed, by Proposition \ref{prop:forward_monotonicity_proximal} we have $d(\param_{n},\param_{n+1})=o(1)$. Also, the injectivity radius is uniformly lower bounded by some constant $r_{0}>0$ (see \ref{assumption:A1_smoothness_surrogates}\textbf{(ii)}). Hence 
     \begin{align}\label{eq:iterate_stability}
            d(\param_{n},\param_{n+1}) < r_{0} \quad \textup{for all $n\ge N_{0}$ for some $N_{0}\in \N$},
     \end{align}
    so the tangent vectors in \eqref{eq:def_eta_n_n}  are well-defined for $n\ge N_{0}$.

    First, we show that the first-order variation of the objective function along the trajectory of iterates $\param_{n}$ is summable. 
	
	\begin{prop}[Finite first-order variation]\label{prop:finite_first-order_variation}
		Suppose \ref{assumption:A0_optimal_gap}, \ref{assumption:A1_smoothness_surrogates}\textbf{(i)-(ii)}, and \ref{assumption:A1_1}\textbf{(iii)} with  $\phi(x)=x^2$ hold. Further assume that the constraint set $\Theta^{(i)}$ is strongly convex in $\M^{(i)}$ for $i=1,\dots,m$. Then
		\begin{equation}
			\sum_{n=N_{0}}^{\infty} \sum_{i=1}^{m}  \left\lvert \left\langle\grad  f_{n}^{(i)}(\theta_{n+1}^{(i)}), \eta_{n+1}^{(i)}(n) \right\rangle \right\rvert < \frac{L_f+2}{2}\left(f(\param_{0})-f^{*}\right)+3m\sum_{n=0}^{\infty}\Delta_{n+1}<\infty.
		\end{equation}
		
	\end{prop}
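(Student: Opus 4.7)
The plan is to combine the $g$-smoothness of $f$ from \ref{assumption:A0_optimal_gap}\textbf{(i)} with the monotonicity and summability estimates already established in Proposition \ref{prop:forward_monotonicity_proximal}, so as to dominate each summand by nonnegative quantities known to be summable. The key observation is that because the product metric \eqref{eq:def_product_dist} collapses to the single-block metric when all other blocks are held fixed, each marginal function inherits $g$-smoothness with the same parameter $L_f$. Applying the quadratic linearization \eqref{eq:linear_approx_quad_bd} along the minimizing geodesic from $\theta_{n+1}^{(i)}$ to $\theta_n^{(i)}$ therefore yields, for every $n \ge N_0$,
\begin{equation}
\left| \langle \grad f_{n+1}^{(i)}(\theta_{n+1}^{(i)}), \eta_{n+1}^{(i)}(n) \rangle - \bigl[f_{n+1}^{(i)}(\theta_n^{(i)}) - f_{n+1}^{(i)}(\theta_{n+1}^{(i)})\bigr] \right| \le \frac{L_f}{2}\, d^2(\theta_n^{(i)}, \theta_{n+1}^{(i)}).
\end{equation}
The marginal $f_n^{(i)}$ appearing in the statement differs from $f_{n+1}^{(i)}$ only by the block updates that have occurred at step $n+1$, so by the block-wise Lipschitz bound of Definition \ref{def: G-L-smooth} the two gradients at $\theta_{n+1}^{(i)}$ differ by a quantity controlled by the very same summable squared block distances; this shows the version written with $f_n^{(i)}$ follows from the version with $f_{n+1}^{(i)}$ up to a finite additive correction.

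The next step is to convert the two-sided estimate above into a one-sided upper bound on the absolute value. The sharpness-plus-approximate-minimality argument producing \eqref{eq:monotonicity_g_comparison}, reindexed at step $n+1$, supplies the lower bound $f_{n+1}^{(i)}(\theta_n^{(i)}) - f_{n+1}^{(i)}(\theta_{n+1}^{(i)}) \ge -\Delta_{n+1}$. Combining this with the elementary inequality $|a| \le a + 2\Delta$ valid whenever $a \ge -\Delta$ gives
\begin{equation}
\bigl|\langle \grad f_{n+1}^{(i)}(\theta_{n+1}^{(i)}), \eta_{n+1}^{(i)}(n) \rangle\bigr| \le \bigl[f_{n+1}^{(i)}(\theta_n^{(i)}) - f_{n+1}^{(i)}(\theta_{n+1}^{(i)})\bigr] + 2\Delta_{n+1} + \frac{L_f}{2}\, d^2(\theta_n^{(i)}, \theta_{n+1}^{(i)}),
\end{equation}
which expresses each summand as a sum of three nonnegative pieces (after the bracket is combined with the extra $\Delta_{n+1}$ slack).

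Finally, summing over $i$ and $n$ reduces everything to quantities already controlled. The inner sum over $i$ telescopes, $\sum_{i=1}^m [f_{n+1}^{(i)}(\theta_n^{(i)}) - f_{n+1}^{(i)}(\theta_{n+1}^{(i)})] = f(\param_n) - f(\param_{n+1})$, and further summation over $n$ is bounded by $f(\param_0) - f^{*} + m\sum_n \Delta_n$ using Proposition \ref{prop:forward_monotonicity_proximal}\textbf{(i)}. The $\Delta_{n+1}$ contributions are summable by \ref{assumption:A0_optimal_gap}\textbf{(ii)}. The squared-distance terms are summable by Proposition \ref{prop:forward_monotonicity_proximal}\textbf{(iii)} specialized to $\phi(x)=x^2$, yielding $\sum_n\sum_i d^2(\theta_n^{(i)},\theta_{n+1}^{(i)}) \le f(\param_0) - f^{*} + m\sum_n \Delta_n$. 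Collecting the three prefactors then produces the desired finite bound with coefficient $(L_f+2)/2$ on $f(\param_0)-f^{*}$ and a multiple of $m\sum_n \Delta_{n+1}$.

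The main obstacle is the geometric justification that the linearization \eqref{eq:linear_approx_quad_bd} applies along the chosen geodesic: one needs a unique minimizing geodesic from $\theta_{n+1}^{(i)}$ to $\theta_n^{(i)}$ remaining inside $\Theta^{(i)}$, so that the Riemannian gradient and distance behave as expected. The uniform injectivity-radius lower bound $r_0$ of \ref{assumption:A1_smoothness_surrogates}\textbf{(ii)}, combined with the stability estimate \eqref{eq:iterate_stability} forcing $d(\theta_n^{(i)},\theta_{n+1}^{(i)}) < r_0$ for $n \ge N_0$, gives existence and uniqueness; the strong geodesic convexity of $\Theta^{(i)}$ assumed in the proposition keeps this geodesic inside the constraint set. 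Everything beyond this is routine bookkeeping of the inexactness $\Delta_n$ through the sharpness and approximate-minimality inequalities.
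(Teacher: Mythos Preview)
Your approach is essentially identical to the paper's: apply the $g$-smoothness linearization bound (Lemma \ref{lem:g_smooth_linear_approx}) to the marginal objective between $\theta_n^{(i)}$ and $\theta_{n+1}^{(i)}$, convert the absolute value using the near-monotonicity $f^{(i)}(\theta_n^{(i)})-f^{(i)}(\theta_{n+1}^{(i)}) \ge -\Delta_{n+1}$ from \eqref{eq:monotonicity_g_comparison}, then sum over $i$ and $n$ and invoke Proposition \ref{prop:forward_monotonicity_proximal}\textbf{(i)}--\textbf{(iii)} to bound the telescoping function differences, the squared distances, and the optimality gaps. You are in fact slightly more careful than the paper: the telescoping identity $\sum_i [f^{(i)}(\theta_n^{(i)})-f^{(i)}(\theta_{n+1}^{(i)})]=f(\param_n)-f(\param_{n+1})$ and the monotonicity bound hold for the marginal $f_{n+1}^{(i)}$ rather than $f_n^{(i)}$, and you correctly note that the discrepancy is absorbed by the block Lipschitz bound and the summable squared distances, whereas the paper's proof silently treats the two as interchangeable.
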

	\begin{proof}
  By Lemma \ref{lem:g_smooth_linear_approx},
		\begin{equation}
			\left\lvert f_{n}^{(i)}(\theta_{n}^{(i)})-f_{n}^{(i)}(\theta_{n+1}^{(i)})-\left\langle\grad  f_{n}^{(i)}(\theta_{n+1}^{(i)}), \eta^{(i)}_{n+1}(n)  \right\rangle\right\rvert \le \frac{L_f}{2} d^{2}(\theta_{n}^{(i)}, \theta_{n+1}^{(i)}). 
		\end{equation}
		Then by Proposition  \ref{prop:forward_monotonicity_proximal},
		\begin{align}
			\left\lvert \left\langle\grad  f_{n}^{(i)}(\theta_{n+1}^{(i)}),  \eta^{(i)}_{n+1}(n) \right\rangle \right\rvert&\le \frac{L_f}{2} d^{2}(\theta_{n}^{(i)}, \theta_{n+1}^{(i)})+\left\lvert f_{n}^{(i)}(\theta_{n}^{(i)})-f_{n}^{(i)}(\theta_{n+1}^{(i)}) \right\rvert \\
			&\le \frac{L_f}{2} d^{2}(\theta_{n}^{(i)}, \theta_{n+1}^{(i)})+ f_{n}^{(i)}(\theta_{n}^{(i)})-f_{n}^{(i)}(\theta_{n+1}^{(i)}) + 2\Delta_{n+1}.
		\end{align}
		Summing up for all $i=1,\dots,m$ and then for $n\ge 1$, we get
		\begin{align}
			&\sum_{n=N_0}^{\infty} \sum_{i=1}^{m}  \left\lvert \left\langle\grad  f_{n}^{(i)}(\theta_{n+1}^{(i)}), \eta^{(i)}_{n+1}(n)  \right\rangle \right\rvert \\
			&\qquad \le \frac{L_f}{2}\sum_{n=N_0}^{\infty}d^2(\param_{n},\param_{n+1})+ \sum_{n=N_0}^{\infty}\left(f(\param_{n})-f(\param_{n+1})\right)+2m\sum_{n=N_0}^{\infty}\Delta_{n+1} \\
			&\qquad \le \frac{L_f}{2}\sum_{n=0}^{\infty}\phi(d(\param_{n}, \param_{n+1}))+f(\param_{0})-f^{*}+2m\sum_{n=0}^{\infty}\Delta_{n+1}\\
			&\qquad \le \frac{L_f+2}{2}\left(f(\param_{0})-f^{*}\right)+3m\sum_{n=0}^{\infty}\Delta_{n+1}
		\end{align}
		where the last inequality comes from Proposition \ref{prop:forward_monotonicity_proximal}.
		
	\end{proof}
	
	Next, we relate the first-order approximation of the objective difference $f(\param_{n+1})-f(\param_{n})$ with that of the worst-case difference $f(\param)-f(\param_{n})$. This will be used crucially in the proof of Theorem \ref{thm:BMM_prox} and \ref{thm:BMM_rate}. 
	
	\begin{prop}[Asymptotic first-order optimality]\label{prop:asymptotic_first-order_optimality}
		Suppose \ref{assumption:A0_optimal_gap} and \ref{assumption:A1_smoothness_surrogates}\textbf{(i)-(iii)} hold. Further assume that the constraint set $\Theta^{(i)}$ is strongly convex in $\M^{(i)}$ for $i=1,\dots,m$. Fix a sequence $(b_{n})_{n\ge 1}$ of positive reals with $b_n \le \hat{r}:=\min\{r_0,1\}$ for all $n$.  Then the following hold for all $n\ge N_{0}$:
		\begin{description}
			\item[(i)] ($g$-smooth surrogates) Suppose \ref{assumption:A1_smoothness_surrogates}\textbf{\textup{(i-gs)}} holds.  Then there exists constant $c_{1}, c_{2}>0$ independent of $\param_{0}$ such that
			\begin{align}
				& \sum_{i=1}^{m} \left\langle  \grad f_{n+1}^{(i)}(\theta_{n}^{(i)}), \eta^{(i)}_n (n+1) \right\rangle \\
				&\qquad \leq  \hat{r}^{-1}b_{n+1} \sum_{i=1}^{m}\inf _{ \eta^{(i)}\in  T^{*}_{\theta_{n}^{(i)}} ,\|\eta^{(i)}\|\le 1}\left\langle \grad g_{n+1}^{(i)}(\theta_{n}^{(i)}) + \grad_i f(\param_n)- \grad f_{n+1}^{(i)}(\theta^{(i)}_{n}),\eta^{(i)}\right\rangle  \label{eq:first_order_optimality_bd0} \\
				&\qquad \qquad + c_1 b_{n+1}^{2} + c_2 d^{2}(\param_{n}, \param_{n+1})+m\Delta_n. 
			\end{align}
		
		\item[(ii)] (Riemannian proximal surrogates) Suppose \ref{assumption:A1_smoothness_surrogates}\textbf{\textup{(i-rp)}} holds. Then there exists constant $c_{1}, c_{2}>0$ independent of $\param_{0}$ such that	
		\begin{align}\label{eq:first_order_optimality_bd1}
		\sum_{i=1}^{m} \left\langle  \grad f_{n+1}^{(i)}(\theta_{n}^{(i)}), \eta^{(i)}_n (n+1) \right\rangle &\le \hat{r}^{-1}b_{n+1} \sum_{i=1}^{m}\inf _{\eta^{(i)}\in  T^{*}_{\theta_{n}^{(i)}} ,\|\eta^{(i)}\|\le 1}\left\langle  \grad f_{n+1}^{(i)}(\theta_{n}^{(i)}) ,\eta^{(i)}\right\rangle  \\
			&\qquad  +c_1 b_{n+1}^{2} + c_2 d^{2}(\param_{n}, \param_{n+1})+m\Delta_n.
		\end{align}
		\end{description}
		\end{prop}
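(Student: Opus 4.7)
Both parts proceed by fixing a block index $i$ and a tangent vector $\eta \in T^{*}_{\theta_{n}^{(i)}}$ with $\|\eta\|\le 1$, then sandwiching the near-minimum value $g_{n+1}^{(i)}(\theta_{n+1}^{(i)})$ between an upper bound at a carefully chosen feasible test point and a lower bound obtained from the majorization $g_{n+1}^{(i)}\ge f_{n+1}^{(i)}$. The test point is $\tilde\theta := \Exp_{\theta_{n}^{(i)}}\!\left((b_{n+1}/\hat r)\eta\right)$; since $b_{n+1}/\hat r \in [0,1]$ and the geodesic $t\mapsto \Exp_{\theta_n^{(i)}}(t\eta)$ lies in $\Theta^{(i)}$ for $t\in[0,1]$ by strong $g$-convexity of the constraint set (together with $\eta\in T^{*}$), we have $\tilde\theta \in \Theta^{(i)}$ and $d(\theta_n^{(i)},\tilde\theta)\le b_{n+1}/\hat r \le 1$. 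This rescaling by $\hat r^{-1}$ is what ultimately produces the $\hat r^{-1}b_{n+1}$ factor on the RHS. Throughout, I would use the sharpness identity $g_{n+1}^{(i)}(\theta_n^{(i)})=f_{n+1}^{(i)}(\theta_n^{(i)})$ to cancel zero-order terms.

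\paragraph*{Proof of (i).} By near-optimality of $\theta_{n+1}^{(i)}$, $g_{n+1}^{(i)}(\theta_{n+1}^{(i)})\le g_{n+1}^{(i)}(\tilde\theta)+\Delta_{n+1}$. Apply the $g$-smooth quadratic bound \eqref{eq:linear_approx_quad_bd} to the surrogate $g_{n+1}^{(i)}$ at the base $\theta_n^{(i)}$ in the direction $(b_{n+1}/\hat r)\eta$ to get an upper bound $g_{n+1}^{(i)}(\theta_n^{(i)}) + (b_{n+1}/\hat r)\langle \grad g_{n+1}^{(i)}(\theta_n^{(i)}),\eta\rangle + \frac{L_g}{2\hat r^2}b_{n+1}^2$. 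Combined with the lower bound $g_{n+1}^{(i)}(\theta_{n+1}^{(i)}) \ge f_{n+1}^{(i)}(\theta_{n+1}^{(i)}) \ge f_{n+1}^{(i)}(\theta_n^{(i)}) + \langle\grad f_{n+1}^{(i)}(\theta_n^{(i)}),\eta_n^{(i)}(n+1)\rangle - \frac{L_f}{2}d^2(\theta_n^{(i)},\theta_{n+1}^{(i)})$ and sharpness, rearranging yields
\[
\langle \grad f_{n+1}^{(i)}(\theta_n^{(i)}),\eta_n^{(i)}(n+1)\rangle \le (b_{n+1}/\hat r)\langle \grad g_{n+1}^{(i)}(\theta_n^{(i)}),\eta\rangle + c_0\bigl(b_{n+1}^2+d^2(\param_n,\param_{n+1})\bigr)+\Delta_{n+1}.
\]
To match the stated form, I would decompose $\langle\grad g_{n+1}^{(i)}(\theta_n^{(i)}),\eta\rangle = \langle\grad g_{n+1}^{(i)}(\theta_n^{(i)})+\grad_i f(\param_n)-\grad f_{n+1}^{(i)}(\theta_n^{(i)}),\eta\rangle + \langle\grad f_{n+1}^{(i)}(\theta_n^{(i)})-\grad_i f(\param_n),\eta\rangle$, and control the second cross-term using Cauchy--Schwarz, $\|\eta\|\le 1$, and the $g$-smoothness of $f$ (which gives $\|\grad f_{n+1}^{(i)}(\theta_n^{(i)})-\grad_i f(\param_n)\|\le (L_f/m)\,d(\param_n,\param_{n+1})$, because $f_{n+1}^{(i)}$ and the $i$-th marginal of $f(\param_n)$ differ only in the blocks $j<i$ where $\theta_n^{(j)}$ is replaced by $\theta_{n+1}^{(j)}$ and no parallel transport is needed in the $i$-th slot). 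Applying AM--GM to the resulting product $(b_{n+1}/\hat r)\cdot d$ absorbs the cross-term into the $b_{n+1}^2$ and $d^2$ remainder; taking infimum over $\eta$ (the LHS does not depend on $\eta$) and summing over $i=1,\dots,m$ finishes part (i).

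\paragraph*{Proof of (ii) and the main obstacle.} Under \ref{assumption:A1_smoothness_surrogates}\textbf{(i-rp)}, $g_{n+1}^{(i)} = f_{n+1}^{(i)} + \frac{\lambda_{n+1}}{2} d^{2}(\cdot,\theta_n^{(i)})$ is \emph{not} generally $g$-smooth (the squared-distance function fails to be $g$-smooth away from nonpositive curvature or in non-star-shaped domains), so one cannot directly apply \eqref{eq:linear_approx_quad_bd} to the surrogate. This is the main obstacle, and the fix is to split the upper bound on $g_{n+1}^{(i)}(\tilde\theta)-g_{n+1}^{(i)}(\theta_n^{(i)})$ piecewise: the $f_{n+1}^{(i)}$ increment is handled by the $g$-smoothness of $f$, producing the first-order term $(b_{n+1}/\hat r)\langle\grad f_{n+1}^{(i)}(\theta_n^{(i)}),\eta\rangle$ and a $\frac{L_f}{2\hat r^2}b_{n+1}^2$ remainder, while the distance-squared increment equals exactly $\frac{\lambda_{n+1}}{2}(b_{n+1}/\hat r)^{2}\|\eta\|^{2}\le \frac{\lambda_{n+1}}{2\hat r^{2}}b_{n+1}^{2}$ and contributes \emph{no} first-order term, since $\grad \bigl[\tfrac12 d^{2}(\cdot,\theta_n^{(i)})\bigr]\big|_{\theta_n^{(i)}}=0$ by Prop.~\ref{prop:R_gradient_geodesic_distance}. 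Combining this upper bound on $g_{n+1}^{(i)}(\tilde\theta)$ with the same majorization-based lower bound on $g_{n+1}^{(i)}(\theta_{n+1}^{(i)})$ and the sharpness identity as in (i) gives exactly the claimed estimate in terms of $\grad f_{n+1}^{(i)}(\theta_n^{(i)})$ on both sides; no $\grad g + \grad_i f - \grad f$ correction is needed because the first-order content of the proximal surrogate at $\theta_n^{(i)}$ coincides with that of $f_{n+1}^{(i)}$. Taking infimum over $\eta$ and summing over $i$ yields \eqref{eq:first_order_optimality_bd1}.
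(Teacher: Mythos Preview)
Your proof is correct and follows essentially the same sandwich argument as the paper: near-optimality of $\theta_{n+1}^{(i)}$ plus the majorization $g_{n+1}^{(i)}\ge f_{n+1}^{(i)}$ for the lower bound, and $g$-smoothness of $g_{n+1}^{(i)}$ (in~(i)) or of $f_{n+1}^{(i)}$ together with the exact evaluation of the distance-squared increment (in~(ii)) for the upper bound at a test point, followed by an infimum over directions. The only cosmetic difference is that you rescale upfront by placing the test point at $(b_{n+1}/\hat r)\eta$ with $\|\eta\|\le 1$, whereas the paper first chooses test points at distance $\le b_{n+1}$ and rescales at the end via the convexity observation that $\alpha\eta\in T^{*}$ for $\alpha\ge 1$ implies $\eta\in T^{*}$; your treatment of the cross-term $\grad f_{n+1}^{(i)}(\theta_n^{(i)})-\grad_i f(\param_n)$ in part~(i) is also more explicit than the paper's.
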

		
\begin{proof}
			We first show \textbf{(ii)}. Let $\hat{r}=\min\{r_0,1\}$, where $r_0$ is the lower bound of injectivity radius (see \ref{assumption:A1_smoothness_surrogates} \textbf{(ii)}). Fix arbitrary $\param=[\theta^{(1)},\cdots,\theta^{(m)}]\in \Param$ such that \begin{align}
				d(\theta^{(i)},\theta_{n}^{(i)})\le  b_{n+1} \le \hat{r}\le \rinj(\theta^{(i)})
			\end{align}
			 for all $i$. For each $i$, let $\gamma^{(i)}:[0,1]\rightarrow \M^{(i)}$ be the unique distance-minimizing geodesic from $\theta_{n}^{(i)}$ to $\theta^{(i)}$. 
			 Denote $\eta^{(i)}:=(\gamma^{(i)})'(0)$. Then $\eta^{(i)}\in T^{*}_{\theta_{n}^{(i)}}$.  
    Denote $h_{n}^{(i)}:=g_{n}^{(i)}-f_{n}^{(i)}$. 
			
			Fix $i=1,\dots,m$. We will drop the superscripts $(i)$ indicating block $i$ in the proof until we mention otherwise. 
			As in \ref{assumption:A0_optimal_gap}\textbf{(ii)}, let  $\theta_{n+1}^{\star}$ be an exact minimizer of $g_{n+1}$ over $\Theta$. Then
			\begin{align}
				& \langle \grad f_{n+1}(\theta_{n}),\eta_{n}(n+1)  \rangle-\frac{L_f}{2}d^{2}(\theta_{n},\theta_{n+1}) \\
				&\qquad  \overset{(a)}{\leq} f_{n+1}(\theta_{n+1})-f_{n+1}(\theta_{n}) \\
				&\qquad  \overset{(b)}{\leq} g_{n+1}(\theta_{n+1})-g_{n+1}(\theta_{n}) \\
				&\qquad  \overset{(c)}{\leq} g_{n+1}(\theta^{\star}_{n+1})+\Delta_n -g_{n+1}(\theta_{n}) \label{eq:g_n_difference_pf} \\
				&\qquad  \overset{(d)}{\leq} f_{n+1}(\gamma(1))-f_{n+1}(\theta_{n})+\Delta_n + h_{n+1}(\gamma(1))   \\
				&\qquad  \overset{(e)}{\leq}
				\langle \grad f_{n+1}(\theta_{n}), \eta \rangle+\frac{L_f}{2}d^{2}(\theta_{n},\theta)+\Delta_n + h_{n+1}(\gamma(1))
				. 
			\end{align}
			Here, $(a)$ follows from geodesic $L_{f}$-smoothness of $f$ (see \ref{assumption:A0_optimal_gap}\textbf{(i)}), $(b)$ follows from definition of majorizing surrogates (Def. \ref{def:majorizing_surrogates}), $(c)$ follows from the definition of $\theta_{n}^{\star}$  and the optimality gap $\Delta_{n}$ (see \ref{assumption:A0_optimal_gap}\textbf{(ii)}), $(d)$ follows from the fact that $g_{n+1}(\theta_{n})=f_{n+1}(\theta_{n})$ and  $g_{n+1}(\theta_{n+1}^{\star}) \le g_{n+1}(\gamma(t))$ for all $t\in [0,1]$, $(e)$ uses geodesic $L_{f}$-smoothness of $f$. Also,  
			\begin{align}\label{eq:pf_majorization_gap}
				h_{n+1}(\gamma(1)) = \frac{\lambda_{n}}{2} d^{2}(\theta, \theta_{n}) \le \frac{\lambda_{n}}{2} b_{n+1}^{2}. 
			\end{align}
			This gives 
			\begin{align}\label{eq:lemma_rate_optimality_pf1}
			 \left\langle \grad f_{n+1}(\theta_{n}), \, \eta_{n}(n+1) \right\rangle 
				&  \le  \left\langle \grad f_{n+1}(\theta_{n}), \, \eta \right\rangle    
				+c_{1} b_{n+1}^2+ c_{2} d^{2}(\theta_{n}, \theta_{n+1}) + \Delta_n ,
			\end{align}
			where $c_{1}:=\frac{L_f+ \lambda_n}{2}$ and $c_{2}=\frac{L_f}{2}$.
   
The above holds for all $\theta\in \Theta$ with $d(\theta,\theta_n)\le b_{n+1}$. Hence 
\begin{align}\label{eq:lemma_rate_optimality_pf2}
			 \left\langle \grad f_{n+1}(\theta_{n}), \, \eta_{n}(n+1) \right\rangle 
				&  \le  \inf_{\|\eta\|\le b_{n+1},\eta\in T^{*}_{\theta_n}}\left\langle \grad f_{n+1}(\theta_{n}), \, \eta \right\rangle    
				+c_{1} b_{n+1}^2+ c_{2} d^{2}(\theta_{n}, \theta_{n+1}) + \Delta_n .
			\end{align}
Now observe by g-convexity of $\Theta$, if $\alpha\eta \in T^*_{\theta_n}$ for $\alpha\ge 1$, then $\eta\in  T^*_{\theta_n}$. Therefore we get
\begin{align}
    \sup_{\|\eta\|\le \hat{r},\eta\in T^{*}_{\theta_n}}\left\langle -\grad f_{n+1}(\theta_{n}), \, \eta \right\rangle &= \sup_{\|u\|\le b_{n+1},\hat{r} b^{-1}_{n+1}u\in T^{*}_{\theta_n}}\left\langle -\grad f_{n+1}(\theta_{n}), \, \hat{r} b^{-1}_{n+1}u \right\rangle \\
    & \le \hat{r} b^{-1}_{n+1}\sup_{\|u\|\le b_{n+1},u\in T^{*}_{\theta_n}}\left\langle -\grad f_{n+1}(\theta_{n}), \, u \right\rangle
\end{align}
   The above together with \eqref{eq:lemma_rate_optimality_pf2} show \textbf{(ii)}.

   Lastly, we note that \textbf{(i)}  be shown by a similar argument. The main difference is that we do not have an explicit expression for the majorization gap $h_{n+1}$ as in \eqref{eq:pf_majorization_gap} but instead, we have $g$-smoothness of the surrogates. Hence we can proceed by using the $g$-smoothness of $g_{n+1}^{(i)}$ in line \eqref{eq:g_n_difference_pf} instead of decomposing $g_{n+1}^{(i)}=f_{n+1}^{(i)} + h_{n+1}^{(i)}$ and using the $g$-smoothness of $f_{n+1}^{(i)}$. The rest of the analysis is identical.
		\end{proof}

		\subsection{RBMM with Riemannian proximal surrogates }\label{sec:had}

		In this section, We analyze our RBMM algorithm with Riemannian proximal surrogates. Under \ref{assumption:A1_smoothness_surrogates}\textbf{(i-rp)}, 
		Algorithm \ref{algorithm:BMM} is the Riemannian analog of the Euclidean 
 block majorization-minimization with Euclidean proximal regularizer. 
  
        Before we present the proof of our general results in Theorem \ref{thm:RBMM_2} and \ref{thm:BMM_prox}, we will first delve into the special case of block Riemannian proximal methods on Hadamard manifolds, as briefly mentioned in Section \ref{sec:eg_had}. Examining this special case will be helpful for understanding RBMM with Riemannian proximal surrogates, particularly in two key aspects. First, the Riemannian proximal surrogate $g_{n}^{(i)}$ is geodesically strongly convex on Hadamard manifolds, which makes the subproblem for the minimization step in \eqref{eq:RBMM_MmMm} easy to solve. Second, $d^{2}(\theta,\theta^{(i)}_{n-1})$ is not $g$-smooth even on Hadamard manifolds, highlighting the necessity of analyzing the case of $g$-smooth surrogates and Riemannian/Euclidean proximal surrogates separately.

        We first recall some useful facts on Hadamard manifolds. First, recall that on Hadamard manifolds, the exponential map is a global diffeomorphism, and therefore its injectivity radius is $+\infty$ \cite[Theorem 4.1, p.221]{sakai1996riemannian}. Second, $d^2 (\cdot,p):\mathcal{M}\to \R$ is geodesically $2$-strongly convex for fixed $p\in \mathcal{M}$ where $\mathcal{M}$ is a Hadamard manifold (\cite{bento2015proximal}). We give the formal definition of geodesically strongly convex below.

		\begin{definition}[Geodesic strong convexity]
			\label{def:g_strongly_convex}
			Suppose $\M$ is a Hadamard manifold. A function $f: \mathcal{M}\to \R$ is geodesically $\alpha$-strongly convex if it satisfies
			\begin{equation}\label{eqn:def_strongly_convex}
				f(y) - f(x)-\left\langle \grad f(x), \operatorname{Exp}_x^{-1}(y)\right\rangle_x \geq \frac{\alpha}{2} d^2(x, y)
			\end{equation}
			for all $x$, $y\in \mathcal{M}$, where $d(x,y)$ is the geodesic distance between $x$ and $y$ on $\M$.
		\end{definition}
		
		We can also show by definition, that the Riemannian proximal surrogate $g_{n}^{(i)}(\theta)$ is geodesically strongly convex with appropriate Riemannian proximal regularization parameter. This is formally stated in the following proposition.
		
		\begin{prop}[Geodesical strong convexity of surrogates]\label{prop:strong_convexity_surrogate}
			Let $\mathcal{M}$ be a Hadamard manifold and $f: \mathcal{M}\to \R$ is geodesically $L_f$-smooth (see Definition \ref{def: G-L-smooth}). Fix $p\in \mathcal{M}$, let $g: \mathcal{M}\to \R$
			\begin{equation}
				g(x)=f(x)+ \frac{c}{2}d^{2}(x,p)
			\end{equation}
			where $c>L_f$.
			Then $g$ is geodesically strongly convex with parameter $c-L_f$.
		\end{prop}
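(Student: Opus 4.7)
The plan is to decompose $g = f + \frac{c}{2}\,d^2(\cdot,p)$ and derive the strong-convexity inequality \eqref{eqn:def_strongly_convex} by summing two separate first-order lower bounds: one coming from the $g$-smoothness of $f$, and one coming from the strong convexity of the squared distance function. Since $\M$ is Hadamard, $\Exp_x$ is a global diffeomorphism, so $\Exp_x^{-1}(y)$ is well-defined for all $x,y\in\M$ and equals the initial velocity of the unique distance-minimizing geodesic from $x$ to $y$; in particular $\|\Exp_x^{-1}(y)\| = d(x,y)$, so Definition \ref{def: G-L-smooth} and the quadratic first-order estimate \eqref{eq:linear_approx_quad_bd} apply globally.

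First, I would use $g$-smoothness of $f$ with parameter $L_f$ (via \eqref{eq:linear_approx_quad_bd}) to obtain
\begin{align}
f(y) - f(x) - \langle \grad f(x),\, \Exp_x^{-1}(y)\rangle \;\ge\; -\frac{L_f}{2}\, d^2(x,y).
\end{align}
Second, I would invoke the classical fact that on a Hadamard manifold the function $x\mapsto d^2(x,p)$ is geodesically $2$-strongly convex (equivalently, $\tfrac{1}{2}d^2(\cdot,p)$ is $1$-strongly convex); this is the only place where the nonpositive-curvature hypothesis enters, and it can be cited from \cite{sakai1996riemannian, bento2015proximal}. Scaling by $c/2$ yields
\begin{align}
\tfrac{c}{2}d^2(y,p) - \tfrac{c}{2}d^2(x,p) - \left\langle \grad\!\bigl(\tfrac{c}{2} d^2(\cdot,p)\bigr)(x),\, \Exp_x^{-1}(y)\right\rangle \;\ge\; \frac{c}{2}\, d^2(x,y).
\end{align}

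Finally, I would add these two inequalities and use linearity of the Riemannian gradient, $\grad g(x) = \grad f(x) + \grad\!\bigl(\tfrac{c}{2}d^2(\cdot,p)\bigr)(x)$, to conclude
\begin{align}
g(y) - g(x) - \langle \grad g(x),\, \Exp_x^{-1}(y)\rangle \;\ge\; \frac{c - L_f}{2}\, d^2(x,y),
\end{align}
which is exactly \eqref{eqn:def_strongly_convex} with parameter $\alpha = c - L_f > 0$ (by hypothesis $c > L_f$). There is essentially no obstacle once the two ingredients are in place; the only nontrivial input is the $2$-strong convexity of the squared distance, which is precisely where the Hadamard assumption cannot be dropped (it fails on positively curved spaces such as the sphere).
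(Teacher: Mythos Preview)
Your proposal is correct and follows essentially the same approach as the paper: both derive the lower bound $f(y)-f(x)-\langle\grad f(x),\Exp_x^{-1}(y)\rangle\ge -\tfrac{L_f}{2}d^2(x,y)$ from $g$-smoothness, invoke the $2$-strong convexity of $d^2(\cdot,p)$ on Hadamard manifolds, scale the latter by $c/2$, and add. The paper's proof is slightly terser but the structure and ingredients are identical.
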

		\begin{proof}
			First by $g$-smoothness of $f$ using Lemma \ref{lem:g_smooth_linear_approx} and the geodesical 2-strong convexity of $d^{2}(\cdot, p)$, 
			\begin{align}
				f(y) - f(x)-\left\langle \grad f(x), \operatorname{Exp}_x^{-1}(y)\right\rangle_x &\geq -\frac{L_f}{2} d^2 (x, y) \\
				d^2(y,p) - d^2(x,p)-\left\langle \grad d^2(x,p), \operatorname{Exp}_x^{-1}(y)\right\rangle_x &\geq  d^2 (x, y)
			\end{align}
			for any $x$, $y\in \mathcal{M}.$ Finally multiplying the second inequality by $\frac{c}{2}$ and adding up with the first inequality finish the proof.
		\end{proof}
		Hence, a major benefit of using Riemannian proximal surrogates on Hadamard manifolds is that the sub-problems in RBMM are geodesically strongly convex so can be efficiently solved by using standard convex Riemannian optimization methods (see, e.g., \cite{udriste1994convex, zhang2016first, liu2017accelerated}). 

  Another benefit from the geodesic strong convexity of surrogates is related to the inexact computation, which is stated in the following proposition. Namely, the square of the geodesic distance between the inexact and the exact minimizer of the proximal surrogate \eqref{eq:def_prox_surrogate} is upper bounded by the optimality gap \eqref{eq:def_optimality_gap}, when the surrogate is geodesically strongly convex. Therefore, \eqref{eq:assumption_inexact_sol_convervence} in \ref{assumption:A0_optimal_gap}\textbf{(ii)} is automatically satisfied on Hadamard manifolds.
		
		\begin{prop}[Optimality gap for iterates]
			\label{prop:optimality_gap_ite}
			For each $n \geq 1$ and $i \in\{1, \ldots, m\}$, let $\theta_n^{(i \star)}$ be the exact minimizer of the geodesically $\left(\lambda_n-L_f\right)$-strongly convex function $\theta \mapsto g_n^{(i)}(\theta)$ in \eqref{eq:def_prox_surrogate} over the strongly convex set $\Theta^{(i)}$. Then
			$$
			\frac{\lambda_n-L_f}{2}d^{2}\left(\theta_n^{(i \star)},\theta_n^{(i)}\right) \leq \Delta_n .
			$$
		\end{prop}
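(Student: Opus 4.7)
The plan is to exploit the second-order growth property of geodesically strongly convex functions on Hadamard manifolds, which is essentially the only tool needed here.

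First I would invoke Proposition \ref{prop:strong_convexity_surrogate}: under the standing smoothness hypothesis on $f_n^{(i)}$ and the choice $\lambda_n \ge L_f$, the Riemannian proximal surrogate $g_n^{(i)}$ in \eqref{eq:def_prox_surrogate} is geodesically $(\lambda_n-L_f)$-strongly convex on the underlying Hadamard manifold $\mathcal{M}^{(i)}$. Since $\Theta^{(i)}$ is strongly g-convex and $\theta_n^{(i\star)}$ is an exact minimizer of $g_n^{(i)}$ over $\Theta^{(i)}$, the first-order optimality condition reads
\begin{equation}
    \left\langle \grad g_n^{(i)}(\theta_n^{(i\star)}),\, \Exp_{\theta_n^{(i\star)}}^{-1}(\theta) \right\rangle_{\theta_n^{(i\star)}} \ge 0 \qquad \text{for all } \theta\in \Theta^{(i)}.
\end{equation}

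Next I would combine this optimality condition with the defining inequality \eqref{eqn:def_strongly_convex} of geodesic strong convexity applied at $x=\theta_n^{(i\star)}$ and $y=\theta_n^{(i)}\in \Theta^{(i)}$. This yields the quadratic growth bound
\begin{equation}
    g_n^{(i)}(\theta_n^{(i)}) - g_n^{(i)}(\theta_n^{(i\star)}) \;\ge\; \frac{\lambda_n - L_f}{2}\, d^{2}\!\left(\theta_n^{(i\star)},\, \theta_n^{(i)}\right).
\end{equation}
Finally, by the definition \eqref{eq:def_optimality_gap} of the optimality gap, the left-hand side is at most $\Delta_n$, giving the claimed inequality.

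The only subtle point is checking that the strong-convexity inequality \eqref{eqn:def_strongly_convex} may legitimately be combined with the first-order optimality condition over the constrained set $\Theta^{(i)}$; this is where g-convexity of $\Theta^{(i)}$ and the fact that Hadamard manifolds have globally defined exponential maps (and hence $\Exp_{\theta_n^{(i\star)}}^{-1}(\theta_n^{(i)})$ is well defined) are used. Apart from that, the argument is a direct transcription of the standard Euclidean proof of quadratic growth for strongly convex minimization.
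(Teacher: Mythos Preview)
Your proposal is correct and follows essentially the same approach as the paper: write down the first-order optimality condition for $\theta_n^{(i\star)}$ over the g-convex set $\Theta^{(i)}$, combine it with the strong-convexity inequality \eqref{eqn:def_strongly_convex} at $x=\theta_n^{(i\star)}$, $y=\theta_n^{(i)}$, and bound the resulting surrogate gap by $\Delta_n$. The paper's proof is exactly this chain of inequalities, so there is nothing to add.
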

		\begin{proof}
			Note the optimality condition of $\theta_n^{(i\star)}$ over $\Theta^{(i)}$ is 
			\begin{equation}
				\langle g_n^{(i)}(\theta_n^{(i\star)}),\Exp^{-1}_{\theta_n^{(i\star)}}(\theta)\rangle\geq 0, \quad \forall \theta \in \Theta^{(i)}
			\end{equation}
			then we have
			\begin{align}
				\frac{\lambda_n-L_f}{2}d^{2}\left(\theta_n^{(i \star)},\theta_n^{(i)}\right)&\leq g_n^{(i)}(\theta_n^{(i)})-g_n^{(i)}(\theta_n^{(i\star)})-\langle g_n^{(i)}(\theta_n^{(i\star)}),\Exp^{-1}_{\theta_n^{(i\star)}}(\theta_n^{(i)})\rangle\\
				&\leq g_n^{(i)}(\theta_n^{(i)})-g_n^{(i)}(\theta_n^{(i\star)}) \\
				&\leq \Delta_n .
			\end{align}
		\end{proof}
		
		Next, we give an illustration of $d^2(\cdot,p)$ is not $g$-smooth in general. The following proposition gives the expression of the Riemannian gradient of geodesic distance. 
		\begin{prop}[Riemannian gradient of geodesic distance]
			\label{prop:R_gradient_geodesic_distance}
			Let $\mathcal{M}$ be a complete Riemannian manifold, $p\in \mathcal{M}$ with $r_{\operatorname{inj}}(p)= r$. Consider the function $h: \mathcal{M}\rightarrow \mathbb{R}$ where $h(x)=d_{\mathcal{M}}^2(x, p)$, where $p \in \mathcal{M}$ is fixed. If $d(x,p)<r$, then $\grad(h)=-2 \Exp _x^{-1}(p)$ as a vector in $T_x \mathcal{M}$. 
		\end{prop}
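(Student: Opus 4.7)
The plan is to verify the formula $\grad h(x) = -2\Exp_x^{-1}(p)$ by computing the directional derivative $Dh(x)[\xi]$ for an arbitrary $\xi \in T_x\mathcal{M}$ and matching it against the defining relation $\langle \grad h(x), \xi \rangle = Dh(x)[\xi]$. First I would note that within the open ball $B_r(p) := \{y \in \mathcal{M} : d(y,p) < r\}$, the exponential map $\Exp_p : B(0,r) \subset T_p\mathcal{M} \to B_r(p)$ is a diffeomorphism (by definition of $\rinj(p) = r$), so $h(y) = \|\Exp_p^{-1}(y)\|^2$ is smooth on $B_r(p)$; in particular $h$ is differentiable at $x$ and $\Exp_x^{-1}(p)$ makes unambiguous sense as the initial velocity, at $t=0$, of the unique unit-interval minimizing geodesic from $x$ to $p$.

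Next I would fix a smooth curve $\sigma:(-\varepsilon,\varepsilon)\to B_r(p)$ with $\sigma(0)=x$ and $\sigma'(0)=\xi$, and construct a smooth variation $(s,t)\mapsto \gamma_s(t)$ so that each $\gamma_s:[0,1]\to \mathcal{M}$ is the unique minimizing geodesic from $\sigma(s)$ to $p$. Smoothness in $s$ is guaranteed by writing $\gamma_s(t) = \Exp_p\bigl((1-t)\Exp_p^{-1}(\sigma(s))\bigr)$, which makes the dependence a composition of smooth maps. Because each $\gamma_s$ is parametrized over $[0,1]$ as a geodesic, its speed $\|\gamma_s'(t)\|$ is constant in $t$ and equals $d(\sigma(s),p)$, so the energy functional satisfies
\begin{align}
E(\gamma_s) := \int_0^1 \|\gamma_s'(t)\|^2\, dt = d^2(\sigma(s), p) = h(\sigma(s)).
\end{align}

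The key step is to apply the first variation formula for the energy. Since every $\gamma_s$ is a geodesic we have $\nabla_{\gamma_s'}\gamma_s' \equiv 0$, so only the boundary term survives:
\begin{align}
Dh(x)[\xi] = \frac{d}{ds}\bigg|_{s=0} E(\gamma_s) = 2\langle \gamma'(1), V(1)\rangle - 2\langle \gamma'(0), V(0)\rangle,
\end{align}
where $V(t) = \partial_s \gamma_s(t)|_{s=0}$ is the variation field along $\gamma:=\gamma_0$. The endpoint $\gamma_s(1)=p$ is held fixed, so $V(1)=0$, while $V(0)=\sigma'(0)=\xi$, and $\gamma'(0) = \Exp_x^{-1}(p)$ by construction. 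Consequently $Dh(x)[\xi] = -2\langle \Exp_x^{-1}(p), \xi\rangle$, and since $\xi$ was arbitrary the defining property of the Riemannian gradient gives $\grad h(x) = -2\Exp_x^{-1}(p)$.

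The main obstacle I anticipate is the smooth-dependence step: the computation would be invalid if the family $\gamma_s$ failed to depend smoothly on $s$. This is settled by the fact that $\Exp_p^{-1}$ is smooth on $B_r(p)$, which is precisely where the injectivity radius hypothesis is used. An alternative route that avoids the explicit variation is to invoke the Gauss lemma directly: it identifies $\grad\bigl(d(\cdot,p)\bigr)(x)$ with the unit vector at $x$ pointing along the outgoing radial geodesic from $p$, and the chain rule $\grad h = 2\,d(\cdot,p)\,\grad d(\cdot,p)$ combined with the identity $d(x,p)\cdot(\text{outward unit radial})=-\Exp_x^{-1}(p)$ yields the same conclusion.
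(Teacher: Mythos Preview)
Your argument is correct: the first-variation-of-energy computation with a variation through minimizing geodesics is the standard route to this identity, and you handle the smoothness of the variation and the well-definedness of $\Exp_x^{-1}(p)$ carefully. The paper does not give its own proof at all---it simply cites \cite[Proposition 4.8, p.~108]{sakai1996riemannian}---so your write-up supplies exactly the details the paper omits, and by essentially the same method one finds in that reference.
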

		\begin{proof}
			See e.g. \cite[Proposition 4.8, p. 108]{sakai1996riemannian}.
		\end{proof}
		
		With Prop. \ref{prop:R_gradient_geodesic_distance}, we can now explain why the surrogate given in \ref{assumption:A1_smoothness_surrogates}\textbf{(i-rp)} is not $g$-smooth in general. Recall Def. \ref{def: G-L-smooth}, in order $\frac{1}{2}d^2(x,p)$ in Prop. \ref{prop:R_gradient_geodesic_distance} to be $g$-smooth, we need the following to hold,
		\begin{equation}
			\label{eqn:g_smooth_d2}
			\frac{1}{2}\left\lVert \grad d^2(y,p) - \Gamma_{x\rightarrow y}(\grad d^2(x,p)) \right\rVert = \left\lVert - \Exp _y^{-1}(p) + \Gamma_{x\rightarrow y}( \Exp _x^{-1}(p)) \right\rVert \le L d(x, y)
		\end{equation}
		In Euclidean space, parallel transport $\Gamma_{x\rightarrow y}$ is identical, and \eqref{eqn:g_smooth_d2} would degenerate to equality with $L=1$. Hence $\frac{1}{2}d^2(x,p)$ is $g$-smooth in Euclidean space, see Fig. \ref{fig:tri_hyp}(a) for an illustrative example. However, on a general Riemannian manifold, even on the Hadamard manifold, this is no longer true in general, see Fig. \ref{fig:tri_hyp}(b)
		for a counterexample of hyperbolic space (see Example \ref{eg:hyperbolic_space}). Some more examples of $S^1$ are shown in Fig. \ref{fig:circle}.

		\begin{figure*}[h]
			\centering
			\includegraphics[width=0.7 \linewidth]{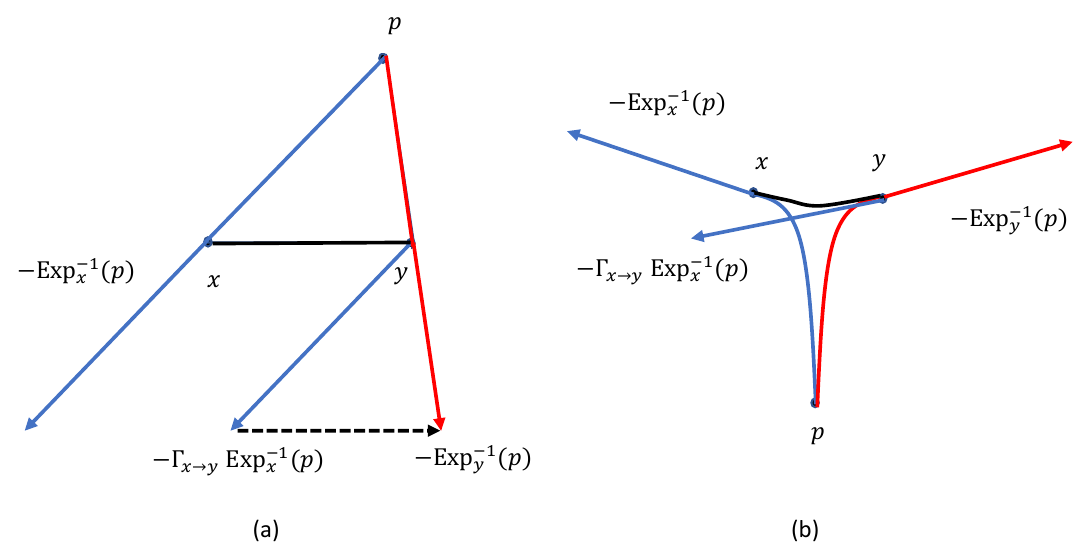}
			\vspace{-0.3cm}
			\caption{ Examples on $g$-smoothness of $d^2(x,p)$. Panel (a) is an example in Euclidean space, the length of the dashed black arrow represents the LHS of \eqref{eqn:g_smooth_d2} and the length of the black segment is $d(x,y)$; Panel (b) is a counterexample in hyperbolic space.
			}
			\label{fig:tri_hyp}
		\end{figure*}  
		
		\begin{figure*}[h]
			\centering
			\includegraphics[width=1 \linewidth]{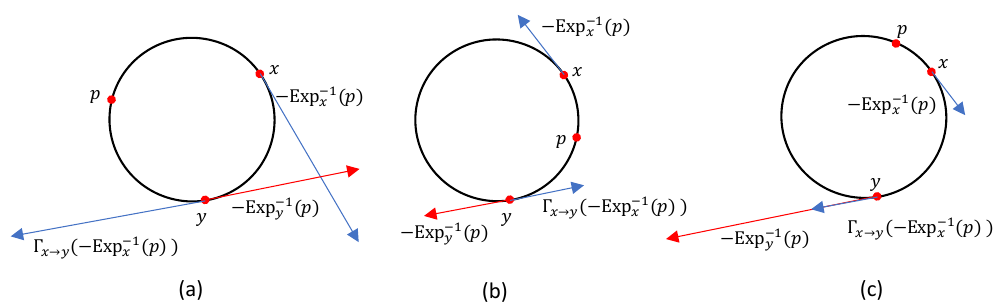}
			\vspace{-0.3cm}
			\caption{ Examples on $g$-smoothness of $d^2(x,p)$ on $S^1$. Panel (a) is the case when \eqref{eqn:g_smooth_d2} does not hold; Panel (b) (c) is the case when \eqref{eqn:g_smooth_d2} becomes an equality with $L=2$.
			}
			\label{fig:circle}
		\end{figure*}

		Now we come back to establish the proof of Theorems \ref{thm:RBMM_2} and \ref{thm:BMM_prox}. Let $\param_{\infty}=[\theta^{(1)}_{\infty},\cdots,\theta^{(m)}_{\infty}]$ be a limit point of $(\param_n)_{n}$, for simplicity of notations, replace $(\param_n)_{n}$ by the convergent subsequence. We introduce the following notations of tangent vectors and sets, which will be used throughout the remaining proofs. For $n\ge1$, let 
  \begin{align}
  \label{def:Theta_n}
      \Theta^{(i)}_n = \{\theta\in \Theta^{(i)} :d(\theta,\theta^{(i)}_n)\le r_0 /2 \}
  \end{align}
  where $r_0$ is the uniform lower bound of injectivity radius (see \ref{assumption:A1_smoothness_surrogates}\textbf{(ii)}). Similarly, denote 
  \begin{equation}\label{def:Theta_infty}
        \Theta^{(i)}_\infty =\{\theta\in \Theta^{(i)} :d(\theta,\theta^{(i)}_\infty)\le r_0 /2 \}.
  \end{equation}
  
  For any $\theta^{(i)}\in \Theta^{(i)}_\infty$, let  
    \begin{align}\label{eq:eta_n_nstar}
    \eta_{n}^{(i)}:=\eta_{\theta_{n}^{(i)}}(\theta^{(i)}),\quad   \eta_{n}^{(i\star)}:=\eta_{\theta_{n}^{(i\star)}}(\theta^{(i)}), \;\textup{and}\;\; \eta_{\infty}^{(i)}:=\eta_{\theta_{\infty}^{(i)}}(\theta^{(i)}).
    \end{align}
    Note that the above tangent vectors are well-defined for all $n$ sufficiently large under \ref{assumption:A0_optimal_gap} and \ref{assumption:A1_smoothness_surrogates}\textbf{(i)-(ii)}. In fact, since $\param_n$ converges to $\param_\infty$,
    \begin{align}\label{eq:iterate_conv}
        d(\theta^{(i)}_{n},\theta^{(i)}_{\infty}) < r_{0}/4 \quad \textup{for all $n\ge N_{1}$ for some $N_{1}\in \N$}.
     \end{align}
    Also, by \ref{assumption:A0_optimal_gap},
  \begin{align}\label{eq:iterate_conv}
        d(\theta^{(i)}_{n},\theta^{(i\star)}_{n}) < r_{0}/4 \quad \textup{for all $n\ge N_{2}$ for some $N_{2}\in \N$}.
     \end{align}
     Therefore by triangle inequality, the tangent vectors in \eqref{eq:eta_n_nstar} are well defined for $n\ge \max\{N_1,N_2\}$. The following proposition shows the asymptotic first-order optimality of inexact solutions, which is the key proposition to prove Theorem \ref{thm:RBMM_2}.

		\begin{prop}[Asymptotic first order optimality of inexact solutions] 
			\label{prop:asym_first_order_inexact}
			Assume  \ref{assumption:A1_smoothness_surrogates}\textbf{(i-rp), (ii)}, \ref{assumption:A0_optimal_gap}. Let $\param_{\infty}=[\theta^{(1)}_{\infty},\cdots,\theta^{(m)}_{\infty}]$ be a limit point of $(\param_n)_{n}$, for simplicity of notations, replace $(\param_n)_{n}$ by the convergent subsequence. Consider any $\theta^{(i)}\in \Param^{(i)}_\infty$ (see \eqref{def:Theta_infty}) such that $\eta_{\infty}^{(i)}$ defined in \eqref{eq:eta_n_nstar} satisfies $\|\eta_{\infty}^{(i)}\|\le 1$. Let $\eta_{n}^{(i\star)}$ and $\eta_{n}^{(i)}$ be tangent vectors defined in \eqref{eq:eta_n_nstar}.
			For each $i=1,\cdots,m$, the following holds, 
			\begin{equation}
				\left|\left\langle\grad g^{(i)}_{n}(\theta^{(i\star)}_{n}),\eta_{n}^{(i\star)} \right\rangle-\left\langle\grad g^{(i)}_{n}(\theta^{(i)}_{n}),\eta_{n}^{(i)} \right\rangle\right|=o(1).
			\end{equation}
		\end{prop}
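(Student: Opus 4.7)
The plan is to treat the two inner products as scalar evaluations of smooth objects, bring them to a common base point by parallel transport, and then use that $d(\theta_n^{(i\star)},\theta_n^{(i)})=o(1)$ from \ref{assumption:A0_optimal_gap}\textbf{(ii)} together with uniform continuity on the compact trajectory (Prop.~\ref{prop:boundedness_iterates}). Write $\Gamma := \Gamma_{\theta_n^{(i)}\to \theta_n^{(i\star)}}$ for the parallel transport along the unique minimizing geodesic joining $\theta_n^{(i)}$ and $\theta_n^{(i\star)}$; this geodesic exists for all large $n$ because both points lie in a compact set and \ref{assumption:A1_smoothness_surrogates}\textbf{(ii)} gives a uniform injectivity radius $r_0>0$. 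Since parallel transport is an isometry,
\begin{align}
\left\langle \grad g_n^{(i)}(\theta_n^{(i\star)}),\eta_n^{(i\star)}\right\rangle-\left\langle \grad g_n^{(i)}(\theta_n^{(i)}),\eta_n^{(i)}\right\rangle
&= \left\langle \grad g_n^{(i)}(\theta_n^{(i\star)}) - \Gamma\bigl(\grad g_n^{(i)}(\theta_n^{(i)})\bigr),\,\eta_n^{(i\star)}\right\rangle \\
&\quad + \left\langle \Gamma\bigl(\grad g_n^{(i)}(\theta_n^{(i)})\bigr),\,\eta_n^{(i\star)} - \Gamma(\eta_n^{(i)})\right\rangle.
\end{align}
I would bound each of the two resulting terms by the Cauchy--Schwarz inequality.

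For the first term, expand $\grad g_n^{(i)}(\theta) = \grad f_n^{(i)}(\theta) - \lambda_n \Exp_{\theta}^{-1}(\theta_{n-1}^{(i)})$ using Proposition~\ref{prop:R_gradient_geodesic_distance}. Then $\bigl\|\grad f_n^{(i)}(\theta_n^{(i\star)}) - \Gamma(\grad f_n^{(i)}(\theta_n^{(i)}))\bigr\|\le \tfrac{L_f}{m}\,d(\theta_n^{(i\star)},\theta_n^{(i)})=o(1)$ by the $g$-smoothness of $f$ (Def.~\ref{def: G-L-smooth}), while the difference of the two inverse-exponential terms is $o(1)$ by smoothness of the exponential map on the compact set of iterates (with uniform injectivity radius) and $\lambda_n=O(1)$. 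Combined with $\|\eta_n^{(i\star)}\|\le d(\theta_n^{(i\star)},\theta^{(i)}) \le r_0/2 + o(1) = O(1)$, the first term is $o(1)$.

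For the second term, note $\|\Gamma(\grad g_n^{(i)}(\theta_n^{(i)}))\| = \|\grad g_n^{(i)}(\theta_n^{(i)})\|$ is uniformly bounded: $\grad f_n^{(i)}$ is continuous on a compact set, $\lambda_n=O(1)$, and $\|\Exp^{-1}_{\theta_n^{(i)}}(\theta_{n-1}^{(i)})\|=d(\theta_n^{(i)},\theta_{n-1}^{(i)})$ is uniformly bounded (in fact $o(1)$ by Prop.~\ref{prop:forward_monotonicity_proximal}\textbf{(iii)} applied with $\phi(x)=x^2$ coming from \ref{assumption:A1_smoothness_surrogates}\textbf{(i-rp)}). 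It remains to show $\|\eta_n^{(i\star)} - \Gamma(\eta_n^{(i)})\|=o(1)$. Since $\eta_n^{(i)} = \Exp^{-1}_{\theta_n^{(i)}}(\theta^{(i)})$ and $\eta_n^{(i\star)} = \Exp^{-1}_{\theta_n^{(i\star)}}(\theta^{(i)})$ with the common target point $\theta^{(i)}$ inside the injectivity-radius ball around both base points (for $n$ large, by the triangle inequality and $\theta^{(i)}\in\Theta_\infty^{(i)}$), the map $(x,y)\mapsto \Exp_x^{-1}(y)$ is jointly smooth on a neighborhood in $\mathcal{M}^{(i)}\times \mathcal{M}^{(i)}$; combined with the smoothness of parallel transport between nearby base points, one obtains a Lipschitz-type estimate $\|\eta_n^{(i\star)} - \Gamma(\eta_n^{(i)})\|\le C\, d(\theta_n^{(i)},\theta_n^{(i\star)}) = o(1)$ on the compact trajectory.

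The main technical obstacle is getting \emph{uniform in $n$} Lipschitz/continuity constants for the gradient difference and the tangent-vector difference, since $g_n^{(i)}$ depends on $n$ through both $f_n^{(i)}$ and $\theta_{n-1}^{(i)}$. This is handled by observing (i) sublevel-set compactness (Prop.~\ref{prop:boundedness_iterates}) confines all relevant base points to a fixed compact set where the Riemannian data are uniformly smooth, (ii) $\lambda_n=O(1)$ from \ref{assumption:A1_smoothness_surrogates}\textbf{(i-rp)}, and (iii) the uniform injectivity radius $r_0>0$ from \ref{assumption:A1_smoothness_surrogates}\textbf{(ii)} guarantees that $\Exp^{-1}$, parallel transport, and their derivatives extend smoothly and with uniform bounds over balls of radius $r_0$ around every iterate.
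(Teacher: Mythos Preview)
Your proposal is correct and uses the same ingredients as the paper --- the decomposition $\grad g_n^{(i)}=\grad f_n^{(i)}-\lambda_n\Exp^{-1}_{\,\cdot\,}(\theta_{n-1}^{(i)})$ from Proposition~\ref{prop:R_gradient_geodesic_distance}, the $g$-smoothness of $f$, the vanishing of $d(\theta_n^{(i)},\theta_{n-1}^{(i)})$ and $d(\theta_n^{(i)},\theta_n^{(i\star)})$, and smoothness of the exponential map on a compact set with uniform injectivity radius. The only structural difference is where the comparison happens: you parallel-transport from $\theta_n^{(i)}$ to $\theta_n^{(i\star)}$ and bound the difference directly, whereas the paper transports both inner products to the fixed limit point $\theta_\infty^{(i)}$ and shows each converges to the common value $\langle \grad_i f(\param_\infty),\eta_\infty^{(i)}\rangle$. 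Your route gives a slightly more quantitative estimate (an explicit $O(d(\theta_n^{(i)},\theta_n^{(i\star)}))$ bound), while the paper's route is a touch cleaner in that the smoothness of $\Exp^{-1}$ and parallel transport need only be invoked at the single base point $\theta_\infty^{(i)}$ rather than uniformly along the trajectory; but both are sound and essentially interchangeable.
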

		\begin{proof}
			First by triangle inequality and \ref{assumption:A0_optimal_gap},
			\begin{equation}
				\label{eqn:Hadamard_conv_theta_star}  d(\theta^{(i\star)}_{n},\theta^{(i)}_{\infty})\leq d(\theta^{(i)}_{n},\theta^{(i)}_{\infty})+ d(\theta^{(i\star)}_{n},\theta^{(i)}_{n})=o(1)
			\end{equation}
			so $\theta^{(i\star)}_{n}\to \theta^{(i)}_{\infty}$ as $n\to \infty$. 
			
			Next, note that Prop. \ref{prop:forward_monotonicity_proximal}, 
			\begin{equation}
				\label{eqn:distance_consecutive1}
				d(\theta^{(i)}_{n},\theta^{(i)}_{n-1})=o(1)
			\end{equation}
			this means when $n>>1$, $d(\theta^{(i)}_{n},\theta^{(i)}_{n-1})\leq r^{(i)}\leq r_{\operatorname{inj}}(\theta^{(i)}_n )$.
			
			Therefore, by \eqref{eqn:distance_consecutive1} and \ref{assumption:A0_optimal_gap},
			\begin{equation}
				\|\Exp^{-1}_{\theta^{(i\star)}_{n}}(\theta^{(i)}_{n-1})\|=d(\theta^{(i\star)}_{n},\theta^{(i)}_{n-1})\leq d(\theta^{(i\star)}_{n},\theta^{(i)}_{n})+ d(\theta^{(i)}_{n},\theta^{(i)}_{n-1})=o(1) . 
			\end{equation}
			Hence 
			\begin{equation}
				\label{eqn:Exp_inv_consecutive}
				\Exp^{-1}_{\theta^{(i\star)}_{n}}(\theta^{(i)}_{n-1})\to \mathbf{0}  .
			\end{equation} 
			Similarly, we have $\Exp^{-1}_{\theta^{(i)}_{n}}(\theta^{(i)}_{n-1})\to \mathbf{0}$. 
			
			Let $\Gamma_{\theta^{(i\star)}_{n}\rightarrow \theta^{(i)}_{\infty}}$ be the parallel transport along the minimal geodesic joining $\theta^{(i\star)}_{n}$ and $\theta^{(i)}_{\infty}$. 
   Since $\theta^{(i\star)}_{n}\to \theta^{(i)}_{\infty}$, by smoothness of the exponential map, we have $\Gamma_{\theta^{(i\star)}_{n}\rightarrow \theta^{(i)}_{\infty}}\eta_{n}^{(i\star)} \to \eta_{\infty}^{(i)}$ where $\Exp_{\theta_{\infty}^{(i)}}(\eta_{\infty}^{(i)})=\theta^{(i)}$ (see \cite[Remark 6.11]{azagra2005nonsmooth} for a concise conclusion). Also note $\|\eta_{\infty}^{(i)}\|\le 1$. Therefore by continuity of the Riemann metric together with \eqref{eqn:distance_consecutive1}, \eqref{eqn:Exp_inv_consecutive},  and the geodesical smoothness of $f$ as well as $\lambda_n=O(1)$, we have
			\begin{align}
				\left\langle\grad g^{(i)}_{n}(\theta^{(i\star)}_{n}),\eta_{n}^{(i\star)}\right\rangle =& \left\langle\grad f^{(i)}_{n}(\theta^{(i\star)}_{n}),\eta_{n}^{(i\star)}\right\rangle- \left\langle \lambda_{n}\Exp^{-1}_{\theta^{(i\star)}_{n}}(\theta^{(i)}_{n-1}),\eta_{n}^{(i\star)}\right\rangle \\
				\to& \left\langle \grad_{i}f(\param_{\infty}),  \eta_{\infty}^{(i)}\right\rangle.
			\end{align}
			Similarly, we have $\left\langle\grad g^{(i)}_{n}(\theta^{(i)}_{n}),\eta_{n}^{(i)}\right\rangle\to \left\langle \grad_{i}f(\param_{\infty}),  \eta_{\infty}^{(i)}\right\rangle$. The assertion follows.
		\end{proof}
		We are now ready to prove Theorem \ref{thm:RBMM_2}.
		\begin{proof}[\textbf{Proof of Theorem \ref{thm:RBMM_2} under \ref{assumption:A1_smoothness_surrogates}(i-rp)}. ]
			Assume \ref{assumption:A0_optimal_gap}, and \ref{assumption:A1_smoothness_surrogates}\textbf{ (i-rp), (ii)}. Fix a convergent subsequence $\left(\boldsymbol{\theta}_{n_k}\right)_{k \geq 1}$ of $\left(\boldsymbol{\theta}_n\right)_{n \geq 1}$. We wish to show that $\param_{\infty}=\lim _{k \rightarrow \infty} \param_{n_k}$ is a stationary point of $f$ over $\Param$. Fix any $\theta^{(i)}\in \Theta^{(i)}_\infty$ (see \eqref{def:Theta_infty})  with $d(\theta^{(i)},\theta^{(i)}_\infty)\le 1$, by first-order optimality of $\theta^{(i\star)}_{n_k}$,
			\begin{equation}
				\left\langle\grad g^{(i)}_{n_k}(\theta^{(i\star)}_{n_k}),\eta_{n_k}^{(i\star)}\right\rangle\geq 0 .
			\end{equation}
			Note that by Prop. \ref{prop:forward_monotonicity_proximal}, 
			\begin{equation}
				\label{eqn:distance_consecutive}
				d(\theta^{(i)}_{n},\theta^{(i)}_{n-1})=o(1),
			\end{equation}
			which means when $n\gg 1$, $d(\theta^{(i)}_{n},\theta^{(i)}_{n-1})\leq r^{(i)}_{\operatorname{inj}}\leq r_{\operatorname{inj}}(\theta^{(i)}_n )$, so the inverse exponential map is well defined.
			
			Then by Prop. \ref{prop:R_gradient_geodesic_distance} and Prop. \ref{prop:asym_first_order_inexact},
			\begin{equation}
				\lim_{k\to\infty}\left\langle \grad g^{(i)}_{n_k}(\theta^{(i)}_{n_k}),\eta_{n_k}^{(i)}\right\rangle = 
				\lim_{k\to\infty}\left\langle\grad f^{(i)}_{n_k}(\theta^{(i)}_{n_k})- \lambda_{n_k}\Exp^{-1}_{\theta^{(i)}_{n_k}}(\theta^{(i)}_{n_k-1}),\eta_{n_k}^{(i)}\right\rangle \geq 0.
			\end{equation}

			Note by Prop. \ref{prop:forward_monotonicity_proximal} and $\lambda_{n}=O(1)$, we get $\lambda_{n_k}d(\param_{n_k},\param_{n_k-1})=o(1)$ and therefore $\|\lambda_{n_k}\Exp^{-1}_{\theta^{(i)}_{n_k}}(\theta^{(i)}_{n_k-1})\|=\lambda_{n_k}d(\theta^{(i)}_{n_k},\theta^{(i)}_{n_k-1})=o(1)$. Also recall $\|\eta_{\infty}^{(i)}\|\le 1$, so $\|\eta_{n_k}^{(i)}\|$ is uniformly bounded. Thus
			\begin{equation}
				\lim_{k\to \infty}\left\langle \grad f^{(i)}_{n_k}(\theta^{(i)}_{n_k}),\eta_{n_k}^{(i)}\right\rangle \geq 0 . 
			\end{equation}
			Denote $\boldsymbol{\theta}_{\infty}=\left[\theta_{\infty}^{(1)}, \ldots, \theta_{\infty}^{(m)}\right]$. Let $\Gamma_{\theta^{(i)}_{n_k}\rightarrow \theta^{(i)}_{\infty}}$ be the parallel transport along the minimal geodesic joining $\theta^{(i)}_{n_k}$ and $\theta^{(i)}_{\infty}$. By smoothness of exponential map, we have $\Gamma_{\theta^{(i)}_{n_k}\rightarrow \theta^{(i)}_{\infty}}\eta^{(i)}_{n_k} \to \eta_{\infty}^{(i)}\in T_{\theta^{(i)}_{\infty}}$ satisfying $\Exp_{\theta_{\infty}^{(i)}}(\eta_{\infty}^{(i)})=\theta^{(i)}$. Then the above gives
			\begin{align}
				&\left\langle\grad_i f\left(\theta_{\infty}^{(1)}, \ldots, \theta_{\infty}^{(i-1)}, \theta_{\infty}^{(i)}, \theta_{\infty}^{(i+1)}, \ldots, \theta_{\infty}^{(m)}\right),\eta_{\infty}^{(i)}\right\rangle
				= \lim_{k\to \infty}\left\langle\grad f^{(i)}_{n_k}(\theta^{(i)}_{n_k}),\eta_{n_k}^{(i)}\right\rangle
				\geq 0.
			\end{align}
			This holds for all $i=1, \ldots, m$. Therefore we verified $\langle\grad f(\param_{\infty}),\eta\rangle\geq 0$ for all $\eta \in T^{*}_{\param_{\infty}}$ with $\|\eta\|\le 1$, which means that $\boldsymbol{\theta}_{\infty}$ is a stationary point of $f$ over $\boldsymbol{\Theta}$, as desired.
		\end{proof}

		\begin{proof}[\textbf{Proof of Theorem \ref{thm:BMM_prox} under \ref{assumption:A1_smoothness_surrogates}(i-rp)} ]
			Let $b_n$ be any square-summable positive sequence. 
  
			Then by Prop. \ref{prop:asymptotic_first-order_optimality} with $h_{n}^{(i)}(\theta)=\frac{\lambda_n}{2}d^2(\theta,\theta_{n-1}^{(i)})$, using Prop. \ref{prop:forward_monotonicity_proximal} and Prop. \ref{prop:finite_first-order_variation},
			\begin{align}
				&\sum_{n=1}^{\infty} b_{n+1}\left[\sum_{i=1}^{m}-\inf _{\eta\in T^{*}_{\param_n}, \|\eta\|\le 1}\left\langle \grad_i f(\param_{n}),\frac{\eta^{(i)}}{\min\{r_0,1\}}\right\rangle\right] \\
				&\qquad \leq C\left(f(\param_0)-\inf _{\boldsymbol{\theta} \in \boldsymbol{\Theta}} f(\boldsymbol{\theta})+\sum_{n=1}^{\infty} b_n^2 + \sum_{n=1}^{\infty} d^2(\param_n,\param_{n+1})+\sum_{n=1}^{\infty}\Delta_n(\param_0)\right)
			\end{align}
			for some constant $C>0$ independent of $\boldsymbol{\theta}_0$ and the right hand side is finite. Thus by taking $b_n=1 /(\sqrt{n} \log n)$, using Lemma \ref{lem:sum_sequence}, we deduce
			\begin{equation}
				\min _{1 \leq k \leq n}\left[\sum_{i=1}^{m}-\inf _{\eta\in T^{*}_{\param_k},\|\eta\|\le 1}\left\langle\grad_i f(\param_{k}),\frac{\eta^{(i)}}{\min\{r_0,1\}}\right\rangle\right] \leq \frac{M+c\sum_{n=1}^{\infty}\Delta_n(\param_0)}{\sqrt{n} / \log n}
			\end{equation}  
			for some constants $M, c>0$ independent of $\param_0$. This shows \textbf{(i)}.
			
			Lastly, we show \textbf{(ii)}. Assume $\sup _{\boldsymbol{\theta}_0 \in \boldsymbol{\Theta}}\sum_{n=1}^{\infty}\Delta_n(\param_0)<\infty$, then the above inequality implies for some constant $M^{\prime}>0$ independent of $\boldsymbol{\theta}_0$,
			\begin{equation}
				\min _{1 \leq k \leq n} \sup _{\boldsymbol{\theta}_0 \in \boldsymbol{\Theta}}\left[\sum_{i=1}^{m}-\inf _{\eta\in T^{*}_{\param_k},\|\eta\|\le 1}\left\langle\grad_i f(\param_{k}),\frac{\eta^{(i)}}{\min\{r_0,1\}}\right\rangle\right] \leq \frac{M^{\prime}\log n}{\sqrt{n}} .
			\end{equation}
			Then we could conclude \textbf{(ii)} by using the fact that $n \geq 2 \varepsilon^{-2}\left(\log \varepsilon^{-2}\right)^2$ implies $(\log n)^2 / n \leq \varepsilon^2$ for all sufficiently small $\varepsilon>0$. This completes the proof.

		\end{proof}
\subsection{RBMM with Euclidean proximal surrogates}

In this section, we prove Theorems \ref{thm:RBMM_2} and \ref{thm:BMM_prox} under \ref{assumption:A1_smoothness_surrogates}\textbf{(i-ep)}. The proof is very similar to that of Theorems \ref{thm:RBMM_2} and \ref{thm:BMM_prox} under \ref{assumption:A1_smoothness_surrogates}\textbf{(i-rp)}, as shown above. However, it is not a direct corollary from the aforementioned theorems, due to the difference of assumptions. In the following proof, we omit the repeated details for conciseness and only show the key parts.

\begin{proof}[\textbf{Proof of Theorems \ref{thm:RBMM_2} and \ref{thm:BMM_prox} under \ref{assumption:A1_smoothness_surrogates}\textbf{(i-ep)}}.]
We first show the parallel results from Prop. \ref{prop:forward_monotonicity_proximal} and \ref{prop:asymptotic_first-order_optimality}. Using same analysis as in \ref{prop:forward_monotonicity_proximal}, with Euclidean proximal surrogate as in \ref{assumption:A1_smoothness_surrogates}\textbf{(i-ep)}, we have
\begin{align}
				\sum_{n=1}^{\infty}\sum_{i=1}^{m} \|\theta_{n-1}^{(i)}- \theta_{n}^{(i)}\|^2_F &\le  \sum_{n=1}^{\infty} \sum_{i=1}^{m}\left( g^{(i)}_{n}(\theta^{(i)}_{n}) -  f^{(i)}_{n}(\theta^{(i)}_{n}) \right) <  f(\param_{0}) - f^{*} +m\sum_{n=1}^{\infty}\Delta_n<\infty.
			\end{align}
In particular, $\|\theta_{n-1}^{(i)}- \theta_{n}^{(i)}\|=o(1)$ for all $i=1,\dots,m$. To get the parallel results from Prop. \ref{prop:asymptotic_first-order_optimality}, we further use Lemma \ref{lem:equiv_dist} to bound the geodesic distance by Euclidean distance,
\begin{align}\label{eq:Eu_first_order_optimality_bd1}
		\sum_{i=1}^{m} \left\langle  \grad f_{n+1}^{(i)}(\theta_{n}^{(i)}), \eta^{(i)}_n (n+1) \right\rangle &\le \hat{r}^{-1}b_{n+1} \sum_{i=1}^{m}\inf _{\eta^{(i)}\in  T^{*}_{\theta_{n}^{(i)}} ,\|\eta^{(i)}\|\le 1}\left\langle  \grad f_{n+1}^{(i)}(\theta_{n}^{(i)}) ,\eta^{(i)}\right\rangle  \\
			&\qquad  +c_1 b_{n+1}^{2} + c_2 \sum_{i=1}^{m} \|\theta_{n-1}^{(i)}- \theta_{n}^{(i)}\|^2_F+m\Delta_n.
		\end{align}
To show the complexity results, the rest of the proof is identical to that of Theorem \ref{thm:BMM_prox} under \ref{assumption:A1_smoothness_surrogates}\textbf{(i-rp)} and we omit it here.

To show the asymptotic convergence, first note that 
\begin{equation}
    \grad g^{(i)}_{n}(\theta^{(i)}_{n}) = 
				\grad f^{(i)}_{n}(\theta^{(i)}_{n})- \lambda_{n}\operatorname{Proj}_{T_{\theta^{(i)}_{n-1}}}(\theta^{(i)}_{n}-\theta^{(i)}_{n-1}).
\end{equation}
Also note $\|\operatorname{Proj}_{T_{\theta^{(i)}_{n-1}}}(\theta^{(i)}_{n}-\theta^{(i)}_{n-1})\| \le \|\theta^{(i)}_{n}-\theta^{(i)}_{n-1}\|=o(1)$. Hence, by an identical argument as in the proof of Theorem \ref{thm:RBMM_2} under \ref{assumption:A1_smoothness_surrogates}\textbf{(i-rp)}, we conclude $(\param_n)_n$ asymptotically converges to the set of stationary points.
    
\end{proof}

		\subsection{RBMM with $g$-smooth surrogates}
		
		In this section, we prove Theorems \ref{thm:RBMM_1}, \ref{thm:RBMM_2}, and \ref{thm:BMM_rate} for smooth surrogates. Throughout this section, we assume \ref{assumption:A1_smoothness_surrogates}\textbf{(i-gs)}, that is, the surrogates are $g$-smooth.
		
		One of the most important tools in our analysis for the general smooth surrogate case is stated in Proposition \ref{prop:surrogate_gap_function_gap}. Recall that in Proposition \ref{prop:forward_monotonicity_proximal}, we have shown that the surrogate gaps $h_{n}^{(i)}(\theta_{n}^{(i)})$ are summable. In the next proposition, we show that this is enough to deduce that the norm of the Riemannian gradient of the surrogate gaps is also summable. This will be used later to deduce asymptotic optimality with respect to the objective function from that with respect to the surrogates. 
		
		\begin{prop}[Bound on surrogate optimality gap]\label{prop:surrogate_gap_function_gap}
			Assume \ref{assumption:A0_optimal_gap},  \ref{assumption:A1_smoothness_surrogates}\textbf{(i-gs), (ii)}. Then for any sequence of nonnegative real numbers $(w_{n})_{n\ge 1}$,
			\begin{align}
				\sum_{n=1}^{\infty} \sum_{i=1}^{m} w_{n} \lVert  \grad g_{n}^{(i)}(\theta_{n}^{(i)})-\grad f_{n}^{(i)}(\theta_{n}^{(i)})  \rVert^{2} \le 2(L_{f}+L_{g}) \sum_{n=1}^{\infty} \sum_{i=1}^{m} w_{n} (g_{n}^{(i)}(\theta_{n}^{(i)})-f_{n}^{(i)}(\theta_{n}^{(i)})).
			\end{align}
		\end{prop}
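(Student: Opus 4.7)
The plan is to reduce the stated weighted inequality to the termwise estimate
\[
\|\grad h_n^{(i)}(\theta_n^{(i)})\|^2 \;\le\; 2(L_f+L_g)\, h_n^{(i)}(\theta_n^{(i)}), \qquad h_n^{(i)} := g_n^{(i)}-f_n^{(i)},
\]
after which the claim follows by multiplying by $w_n\ge 0$ and summing over $n$ and $i$. This is a Riemannian analog of the well-known ``gradient-dominated'' inequality that holds for every nonnegative $L$-smooth function in the Euclidean setting.

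Two structural facts about $h_n^{(i)}$ drive the argument. First, the majorization and sharpness properties of the surrogates in Algorithm \ref{algorithm:BMM} give $h_n^{(i)}\ge 0$ on the whole manifold $\mathcal{M}^{(i)}$ with $h_n^{(i)}(\theta_{n-1}^{(i)})=0$, so $\theta_{n-1}^{(i)}$ is a global minimizer of $h_n^{(i)}$ and hence $\grad h_n^{(i)}(\theta_{n-1}^{(i)})=\mathbf{0}$. Second, because $g_n^{(i)}$ is $g$-smooth with constant $L_g$ by \ref{assumption:A1_smoothness_surrogates}\textbf{(i-gs)} and $f_n^{(i)}$ is $g$-smooth with constant $L_f$ by \ref{assumption:A0_optimal_gap}\textbf{(i)}, linearity of the Riemannian gradient and of parallel transport make $h_n^{(i)}$ itself $g$-smooth with constant $L := L_f + L_g$. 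Applying the quadratic upper bound from Lemma \ref{lem:g_smooth_linear_approx} to $h_n^{(i)}$ with base point $\theta_n^{(i)}$ and proximal step $\eta^{*}:=-L^{-1}\grad h_n^{(i)}(\theta_n^{(i)})$ yields
\[
0 \;\le\; h_n^{(i)}\!\left(\Exp_{\theta_n^{(i)}}(\eta^{*})\right) \;\le\; h_n^{(i)}(\theta_n^{(i)}) - \tfrac{1}{2L}\|\grad h_n^{(i)}(\theta_n^{(i)})\|^2,
\]
which rearranges to the desired termwise bound.

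The main technical obstacle is that Lemma \ref{lem:g_smooth_linear_approx} requires the geodesic to be distance-minimizing, so $\|\eta^{*}\| = L^{-1}\|\grad h_n^{(i)}(\theta_n^{(i)})\|$ must lie within the injectivity radius $r_0$ supplied by \ref{assumption:A1_smoothness_surrogates}\textbf{(ii)}. To handle this I would combine $\grad h_n^{(i)}(\theta_{n-1}^{(i)})=\mathbf{0}$ with the $g$-smoothness of $h_n^{(i)}$ to get the Lipschitz estimate $\|\grad h_n^{(i)}(\theta_n^{(i)})\|\le L\, d(\theta_n^{(i)},\theta_{n-1}^{(i)})$, and then invoke Proposition \ref{prop:forward_monotonicity_proximal}\textbf{(iii)} (which forces $d(\theta_n^{(i)},\theta_{n-1}^{(i)})\to 0$) to conclude $\|\eta^{*}\|\le r_0$ for all $n$ beyond some index $N_0$. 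The finitely many earlier terms are uniformly bounded on the compact sublevel set of Proposition \ref{prop:boundedness_iterates} and can be absorbed into the constant when forming the weighted sum, so the termwise inequality---and hence the stated bound---holds throughout.
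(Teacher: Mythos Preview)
Your core argument is exactly the paper's: both reduce to the termwise bound $\|\grad h_n^{(i)}(\theta_n^{(i)})\|^2 \le 2(L_f+L_g)\,h_n^{(i)}(\theta_n^{(i)})$ by combining the quadratic upper bound for the $(L_f+L_g)$-smooth function $h_n^{(i)}$ with its nonnegativity, taking the step $-\tfrac{1}{L_f+L_g}\grad h_n^{(i)}(\theta_n^{(i)})$ (the paper writes this as $\eps\eta$ with $\eta=-\alpha_n$ and leaves the optimal choice of $\eps$ implicit).

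On the injectivity-radius issue you raise: you are actually more careful than the paper, which applies Lemma~\ref{lem:g_smooth_linear_approx} without checking that the step lies within $r_0$. That said, your proposed fix has two wrinkles. First, Proposition~\ref{prop:forward_monotonicity_proximal}\textbf{(iii)} requires \ref{assumption:A1_1}\textbf{(iii)}, which is not among the hypotheses of this proposition. Second, ``absorbing finitely many early terms into the constant'' would not preserve the exact constant $2(L_f+L_g)$ in the stated inequality. Since the paper itself does not address this point, your argument matches the paper's level of rigor; just be aware that neither proof, as written, fully closes this technical gap.
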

		
		\begin{proof}
			Denote $\alpha_{n}^{(i)} = \grad g_{n}^{(i)}(\theta_{n}^{(i)})-\grad f_{n}^{(i)}(\theta_{n}^{(i)})$. Fix a tangent vector $\eta \in T_{\theta_{n}^{(i)}} \mathcal{M}^{(i)}$ and $\eps>0$.  By Lemma \ref{lem:g_smooth_linear_approx}, we can write
			\begin{align}
				\left| g_{n}^{(i)}(\Exp_{\theta_{n}^{(i)}}(\epsilon \eta)) - g_{n}^{(i)}(\theta_{n}^{(i)}) - \langle  \nabla g_{n}^{(i)}(\theta_{n}^{(i)}), \eps\eta \rangle \right| &\le \frac{L_{g}\eps^{2}}{2} \lVert \eta \rVert^{2} ,\\
				\left|  f_{n}^{(i)}(\Exp_{\theta_{n}^{(i)}}(\epsilon \eta)) -  f_{n}^{(i)}(\theta_{n}^{(i)}) - \langle \nabla  f_{n}^{(i)}(\theta_{n}^{(i)}), \eps \eta \rangle \right| &\le  \frac{L_{f}\eps^{2}}{2} \lVert \eta \rVert^{2},
			\end{align}
			for constants $L_{f},L_{g}\ge 0$ in 
 \ref{assumption:A1_smoothness_surrogates}.  
			Hence  
			\begin{align}
				-	\frac{L_{f}\eps^{2}}{2} \lVert \eta \rVert^{2} +   f_{n}^{(i)}(\theta_{n}^{(i)}) + \langle \nabla  f_{n}^{(i)}(\theta_{n}^{(i)}),  \eps \eta \rangle &\le    f_{n}^{(i)}(\theta_{n}^{(i)}+\eps \eta)  \\
				&\le   g_{n}^{(i)}(\theta_{n}^{(i)}+\eps \eta)   \\
				&\le g_{n}^{(i)}(\theta_{n}^{(i)}) + \langle \nabla g_{n}^{(i)}(\theta_{n}^{(i)}, \eps\eta \rangle+ \frac{L_{g}\eps^{2}}{2} \lVert \eta \rVert^{2}.
			\end{align}
			Thus, denoting $c=(L_{f}+L_{g})/2$, we obtain the following inequality
			\begin{align}\label{eq:gradient_key_inequality}
				\langle  \alpha_{n}, \eps\eta \rangle  \ge  f_{n}^{(i)}(\theta_{n}^{(i)}) - g_{n}^{(i)}(\theta_{n}^{(i)}) - c \eps^{2} \lVert \eta\rVert^{2}.
			\end{align}
			Choosing $\eta=- \alpha_{n}$, we get 
			\begin{align}
				-\eps \lVert  \alpha_{n} \rVert^{2} \ge   f_{n}^{(i)}(\theta_{n}^{(i)}) - g_{n}^{(i)}(\theta_{n}^{(i)}) - c\eps^{2} \lVert \alpha_{n} \rVert^{2}.
			\end{align}
			Rearranging, we get 
			\begin{align}
				(\eps-c\eps^{2}) \lVert \alpha_{n} \rVert^{2} \le  g_{n}^{(i)}(\theta_{n}^{(i)})  -  f_{n}^{(i)}(\theta_{n}^{(i)}). 
			\end{align}
			Then the assertion follows by multiplying both sides of the above inequality by $w_{n}$ and summing up in $i=1,\dots,m$ and then for $n\ge 1$.
		\end{proof}
		
		\subsection{Asymptotic stationarity}
		In this section, we prove Theorems \ref{thm:RBMM_1} and \ref{thm:RBMM_2}. We start with Theorem \ref{thm:RBMM_1} where the number of blocks is $m\le 2$. Below we focus on the proof when $m=2$. In fact, the proof of the single block case ($m=1$) follows from a similar analysis and is much simpler.
		
  For conciseness, denote $w(k, i)=( \theta_{k}^{(1)},\dots,\theta_{k}^{(i-1)},\theta_{k}^{(i)},\theta_{k-1}^{(i+1)},\dots,\theta_{k-1}^{(m)} )$, which is the variable value in the $k$-th cycle after updating $i$-th block. Denote $w(k,i)^{(j)}$ as the j-th component of $w(k,i)$. The following observations would be helpful: $w(k,i)^{(i+1)}=\theta_{k-1}^{(i+1)}$, $w(k,i+1)^{(i+1)}=\theta_{k}^{(i+1)}$. The following proposition gives an observation related to the limit behavior of surrogate values.
		
		\begin{prop}\label{prop:BMM_limit_same}
			Assume \ref{assumption:A1_smoothness_surrogates}\textbf{(i-gs), (ii)} and \ref{assumption:A0_optimal_gap}. 
			Suppose that for some $i \in\{1, \ldots, m\}$ the sequence $\{w(k, i)\}$ admits a limit point $\bar{w}$and the subsequence converging to $\bar{w}$ is denoted by $w(k(n),i)$, $n\geq 1$. Then we have
			\begin{equation}
				\lim_{n\to \infty}g^{(i^+)}_{k(n)^+}\left(w(k(n),i)^{(i^+)}\right)=\lim_{n\to \infty}g^{(i^+)}_{k(n)^+}\left(w(k(n),i^+ )^{(i^+)}\right)=f(\bar{w})
			\end{equation}
			where $i^+ = i(\operatorname{mod} m) +1$, $k(n)^+ = k(n)$ or $k(n)+1$ for $i\in \{1,\cdots,m-1\}$ or $i=m$ respectively 
		\end{prop}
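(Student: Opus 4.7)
The plan is to handle the generic case $i \in \{1,\ldots,m-1\}$ in detail, so that $i^+ = i+1$ and $k(n)^+ = k(n)$; the case $i = m$ is in fact degenerate, since both $w(k(n), m)^{(1)}$ and $w(k(n), 1)^{(1)}$ equal $\theta_{k(n)}^{(1)}$, so both limits in the statement reduce to $\lim_n g_{k(n)+1}^{(1)}(\theta_{k(n)}^{(1)}) = \lim_n f(\param_{k(n)}) = f(\bar{w})$ by sharpness and continuity of $f$.

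For the first limit in the main case, I will invoke the sharpness property of the majorizing surrogate at the reference point $\theta_{k(n)-1}^{(i+1)}$ to write $g_{k(n)}^{(i+1)}(\theta_{k(n)-1}^{(i+1)}) = f_{k(n)}^{(i+1)}(\theta_{k(n)-1}^{(i+1)}) = f(w(k(n), i))$. Continuity of $f$ together with the hypothesis $w(k(n), i) \to \bar{w}$ then yields convergence to $f(\bar{w})$.

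For the second limit $g_{k(n)}^{(i+1)}(\theta_{k(n)}^{(i+1)})$, I will split it as $f_{k(n)}^{(i+1)}(\theta_{k(n)}^{(i+1)}) + [g_{k(n)}^{(i+1)}(\theta_{k(n)}^{(i+1)}) - f_{k(n)}^{(i+1)}(\theta_{k(n)}^{(i+1)})]$. By Proposition \ref{prop:forward_monotonicity_proximal}\textbf{(ii)} the double sum of the surrogate gaps is finite, hence the bracketed term tends to zero. It therefore suffices to prove that $f(w(k(n), i+1)) = f_{k(n)}^{(i+1)}(\theta_{k(n)}^{(i+1)})$ converges to $f(\bar{w})$. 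Note that I am not trying to show that $w(k(n), i+1)$ itself has a limit in the product manifold --- only that the objective values along this subsequence do.

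The main technical step, and the primary obstacle, is to upgrade the convergence of $f(w(k(n), i))$ to the convergence of $f(w(k(n), j))$ for every $j$. I will first show that the full sequence $f(\param_n)$ admits a limit $L \in \R$ by observing that Proposition \ref{prop:forward_monotonicity_proximal}\textbf{(i)} makes the perturbed sequence $n \mapsto f(\param_n) + m \sum_{k > n} \Delta_k$ monotone decreasing and bounded below, while the tail vanishes by \ref{assumption:A0_optimal_gap}\textbf{(ii)}. Next, telescoping the block inequality $f(w(k, j+1)) \leq f(w(k, j)) + \Delta_k$ (which follows from $g_k^{(j+1)}(\theta_k^{(j+1)}) \leq g_k^{(j+1)}(\theta_{k-1}^{(j+1)}) + \Delta_k$ and sharpness, with the convention $w(k,0) := \param_{k-1}$), together with the summed reverse inequality supplied by Proposition \ref{prop:forward_monotonicity_proximal}\textbf{(i)}, sandwiches $|f(w(k, j)) - f(\param_k)| \leq m \Delta_k$. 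Along the subsequence this forces $f(w(k(n), j)) \to L$ for every $j$. Specializing to $j = i$ and invoking continuity of $f$ identifies $L = f(\bar{w})$, and specializing to $j = i+1$ yields the desired second limit.
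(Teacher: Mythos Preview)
Your proposal is correct, but it takes a different route from the paper's argument. The paper establishes a single chain of inequalities
\[
f\bigl(w(k(n),i)\bigr)=g^{(i+1)}_{k(n)}\bigl(w(k(n),i)^{(i+1)}\bigr)\ge g^{(i+1)}_{k(n)}\bigl(w(k(n),i+1)^{(i+1)}\bigr)-\Delta_{k(n)}\ge f\bigl(w(k(n),i+1)\bigr)-\Delta_{k(n)}\ge f\bigl(w(k(n+1),i)\bigr)-m\textstyle\sum_{j=k(n)}^{k(n+1)}\Delta_j,
\]
and then squeezes: both outer terms tend to $f(\bar w)$ directly by continuity of $f$ along the given subsequence $w(k(n),i)\to\bar w$, so the two inner terms do as well. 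No appeal to full-sequence convergence of $f(\param_n)$ or to the summability of surrogate gaps in Proposition~\ref{prop:forward_monotonicity_proximal}\textbf{(ii)} is needed.

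Your argument instead first proves the auxiliary fact that the \emph{entire} sequence $f(\param_n)$ converges (via the perturbed monotonicity $f(\param_n)+m\sum_{k>n}\Delta_k$ is nonincreasing), then sandwiches every intermediate value $f(w(k,j))$ against $f(\param_k)$ and $f(\param_{k-1})$. This is sound and yields a stronger intermediate statement than the paper needs; one minor imprecision is that your bound $|f(w(k,j))-f(\param_k)|\le m\Delta_k$ is asymmetric---the upper side naturally involves $f(\param_{k-1})$ rather than $f(\param_k)$---but since you have already shown both converge to the same limit $L$, the conclusion survives. The paper's direct squeeze is shorter; your approach buys the independently useful fact that the objective values converge along the full trajectory, not just the subsequence.
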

		\begin{proof}
			For conciseness, let $i\in \{1,\cdots,m-1\}$ so that $i^+ =i+1$. Note that the proof for $i=m$ would be exactly the same after modifying the notations. 
			
			By Prop. \ref{prop:forward_monotonicity_proximal} along with the tightness and upper-boundedness of surrogate functions, we have the following inequalities,
			\begin{align}
				f\left(w(k(n),i)\right) &= f^{(i+1)}_{k(n)}\left(\theta^{(i+1)}_{k(n)-1}\right) \\
				& =g^{(i+1)}_{k(n)}\left(\theta^{(i+1)}_{k(n)-1}\right) \\
				&=g^{(i+1)}_{k(n)}\left(w(k(n),i)^{(i+1)}\right) \\
				&\geq g^{(i+1)}_{k(n)}\left(\theta_{k(n)}^{(i\star)}\right) \\
				&\geq g^{(i+1)}_{k(n)}\left(w(k(n),i+1)^{(i+1)}\right) -\Delta_{k(n)}\\
				&\geq f^{(i+1)}_{k(n)}\left(w(k(n),i+1)^{(i+1)}\right) -\Delta_{k(n)}\\
				&=f\left(w(k(n),i+1)\right)  -\Delta_{k(n)}\\
				&\geq f\left(w(k(n+1),i)\right) -m\sum_{i=k(n)}^{k(n+1)}\Delta_{i}.      
			\end{align}
			Recall 
			\begin{equation}
				\lim_{n\to \infty}f\left(w(k(n),i)\right)=\lim_{n\to \infty}f\left(w(k(n+1),i)\right)=f(\bar{w}) \quad\text{and} \quad\lim_{n\to\infty}\sum_{i=n}^{\infty}\Delta_i =0. 
			\end{equation}
			This completes the proof.
		\end{proof}

		\begin{prop}[Asymptotic first order optimality of inexact solutions] 
			\label{prop:general_asym_first_order_inexact}
			Assume \ref{assumption:A1_smoothness_surrogates}\textbf{(i-gs), (ii)}, and  \ref{assumption:A0_optimal_gap}. Consider any $\theta^{(i)}\in \Param^{(i)}_\infty$ (see \eqref{def:Theta_infty}) such that $\eta_{\infty}^{(i)}$ defined in \eqref{eq:eta_n_nstar} satisfies $\|\eta_{\infty}^{(i)}\|\le 1$. Let $\eta_{n}^{(i\star)}\in T_{\theta^{(i\star)}_{n}}$ and $\eta_{n}^{(i)}\in T_{\theta^{(i)}_{n}}$ be the tangent vectors defined in \eqref{eq:eta_n_nstar}).
   For each $i=1,\cdots,m$, the following holds: 
			\begin{equation}
				\left|\left\langle\grad g^{(i)}_{n}(\theta^{(i\star)}_{n}),\eta_{n}^{(i\star)}\right\rangle-\left\langle\grad g^{(i)}_{n}(\theta^{(i)}_{n}),\eta_{n}^{(i)} \right\rangle\right|=o(1).
			\end{equation}
		\end{prop}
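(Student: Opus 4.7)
The plan is to mimic the structure of Proposition \ref{prop:asym_first_order_inexact} but, since under \ref{assumption:A1_smoothness_surrogates}\textbf{(i-gs)} we no longer have an explicit form for $\grad g_n^{(i)}$, use only the $g$-smoothness of $g_n^{(i)}$ (with uniform parameter $L_g$) together with the smoothness of the exponential map. I will drop the superscript $(i)$ throughout for brevity, and reduce both inner products to the single tangent space $T_{\theta_n}\mathcal{M}^{(i)}$ using parallel transport $\Gamma_n := \Gamma_{\theta_n^{\star}\to\theta_n}$ along the minimal geodesic (well-defined for $n$ large since $d(\theta_n^{\star},\theta_n)=o(1)$ by \ref{assumption:A0_optimal_gap}\textbf{(ii)}).

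First I would establish convergence of the relevant base points and vectors. By the assumption $\param_n\to\param_\infty$ and \ref{assumption:A0_optimal_gap}\textbf{(ii)}, the triangle inequality yields $\theta_n^\star \to \theta_\infty$ and $\theta_n\to\theta_\infty$. Since the exponential and inverse exponential maps are smooth on a neighbourhood of $(\theta_\infty,\theta^{(i)})$ (using \ref{assumption:A1_smoothness_surrogates}\textbf{(ii)} to guarantee a uniform injectivity radius), continuity gives $\Gamma_{\theta_n\to\theta_\infty}\eta_n\to \eta_\infty^{(i)}$ and $\Gamma_{\theta_n^\star\to\theta_\infty}\eta_n^\star\to \eta_\infty^{(i)}$, so in particular $\|\eta_n\|$ and $\|\eta_n^\star\|$ are uniformly bounded, and $\|\Gamma_n\eta_n^\star-\eta_n\|=o(1)$ by composing parallel transports to a common reference (and using that parallel transport along a collapsing geodesic is close to the identity in this smooth neighborhood).

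Next, I would invoke the isometry property of parallel transport to write
\begin{align}
 \langle \grad g_n(\theta_n^\star),\eta_n^\star\rangle_{\theta_n^\star} - \langle \grad g_n(\theta_n),\eta_n\rangle_{\theta_n}
 &= \langle \Gamma_n\grad g_n(\theta_n^\star) - \grad g_n(\theta_n),\, \Gamma_n\eta_n^\star\rangle_{\theta_n} \\
 &\quad + \langle \grad g_n(\theta_n),\, \Gamma_n\eta_n^\star - \eta_n\rangle_{\theta_n}.
\end{align}
The first summand is bounded in absolute value by $\|\Gamma_n\grad g_n(\theta_n^\star)-\grad g_n(\theta_n)\|\cdot\|\eta_n^\star\|$, and here the $g$-smoothness of $g_n$ with uniform constant $L_g$ (from \ref{assumption:A1_smoothness_surrogates}\textbf{(i-gs)}) gives $\|\Gamma_n\grad g_n(\theta_n^\star)-\grad g_n(\theta_n)\|\le L_g\, d(\theta_n^\star,\theta_n)=o(1)$. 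Combined with boundedness of $\|\eta_n^\star\|$, this term is $o(1)$. The second summand is bounded by $\|\grad g_n(\theta_n)\|\cdot\|\Gamma_n\eta_n^\star-\eta_n\|$, and the second factor is $o(1)$ by the previous paragraph, so it remains only to show $\|\grad g_n(\theta_n)\|=O(1)$.

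The main technical obstacle is controlling $\|\grad g_n(\theta_n)\|$ uniformly in $n$, since the surrogates $g_n$ change with $n$. I would resolve this via Proposition \ref{prop:surrogate_gap_function_gap}: taking $w_n\equiv 1$ and combining with Proposition \ref{prop:forward_monotonicity_proximal}\textbf{(ii)} yields $\sum_n\|\grad g_n(\theta_n)-\grad f_n(\theta_n)\|^2<\infty$, so in particular $\|\grad g_n(\theta_n)-\grad f_n(\theta_n)\|=o(1)$, and boundedness of $\|\grad g_n(\theta_n)\|$ reduces to boundedness of $\|\grad f_n(\theta_n)\|$. The latter follows from the $g$-smoothness of $f$ in \ref{assumption:A0_optimal_gap}\textbf{(i)} together with boundedness of the iterates (Proposition \ref{prop:boundedness_iterates}) and continuity of $\grad f$ on the compact sub-level set containing $\{\param_n\}$. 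Putting the two bounds together gives the claimed $o(1)$ estimate.
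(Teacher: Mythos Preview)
Your argument is correct and follows the same overall strategy as the paper: reduce both inner products to a common tangent space via parallel transport, split the difference into a ``gradient-difference'' term controlled by the uniform $g$-smoothness constant $L_g$ and a ``vector-difference'' term controlled by boundedness of $\|\grad g_n\|$ together with convergence of the transported $\eta$-vectors.

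There are two minor differences worth noting. First, you transport to the moving tangent space $T_{\theta_n}$, whereas the paper transports both quantities to the fixed space $T_{\theta_\infty}$ and shows each inner product separately converges to $\langle \grad g_n(\theta_\infty),\eta_\infty\rangle$; either choice works. Second, and more interestingly, your bound on $\|\grad g_n(\theta_n)\|$ routes through Proposition~\ref{prop:surrogate_gap_function_gap} to first reduce to $\|\grad f_n(\theta_n)\|$, while the paper asserts the bound directly from $g$-smoothness on the compact sub-level set $K$. The paper's sentence is somewhat brief there: $g$-smoothness of $g_n$ by itself only controls the \emph{variation} of $\grad g_n$, not its size, so one implicitly needs the tangency condition $\grad g_n(\theta_{n-1})=\grad f_n(\theta_{n-1})$ (since $\theta_{n-1}$ minimizes the nonnegative gap $g_n-f_n$) to anchor the gradient at one point and then propagate via $L_g$. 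Your route through Proposition~\ref{prop:surrogate_gap_function_gap} achieves the same anchoring at $\theta_n$ rather than $\theta_{n-1}$ and is arguably more self-contained, at the cost of invoking an extra proposition.
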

		\begin{proof}
			Let $\param_{\infty}=[\theta^{(1)}_{\infty},\cdots,\theta^{(m)}_{\infty}]$ be a limit point of $(\param_n)_{n}$. For simplicity of notations, replace $(\param_n)_{n}$ by the convergent subsequence.
			
			First by triangle inequality and \ref{assumption:A0_optimal_gap}, 
			\begin{equation}
				\label{eqn:conv_theta_star}  d(\theta^{(i\star)}_{n},\theta^{(i)}_{\infty})\leq d(\theta^{(i)}_{n},\theta^{(i)}_{\infty})+ d(\theta^{(i\star)}_{n},\theta^{(i)}_{n})=o(1),
			\end{equation}
			so $\theta^{(i\star)}_{n}\to \theta^{(i)}_{\infty}$ as $n\to \infty$. 
			
			Let $T=m\sum_{k=1}^{\infty}\Delta_k<\infty$, by Prop. \ref{prop:forward_monotonicity_proximal} we have 
			\begin{equation}
				\sup_{\param\in \Param}f(\param)\leq f(\param_{0})+T.
			\end{equation}
			Now let $K=\{\param\in \Param : f(\param)\leq f(\param_{0})+T\}$. By \ref{assumption:A0_optimal_gap}, $K$ is compact. Denote $K=K^{(1)}\times \cdots \times K^{(m)}$ where $K^{(i)}\subseteq \mathcal{M}^{(i)}$, then $K^{(i)}$ is compact for each $i=1,\cdots,m$. Since $g^{(i)}_n$ is geodesically smooth, $\|\grad g^{(i)}_{n}(\theta^{(i)})\|$ is uniformly bounded for $\theta^{(i)}\in K^{(i)}$ by some constant, say $L_k >0$.
			
			Let $\Gamma_{\theta^{(i\star)}_{n}\rightarrow \theta^{(i)}_{\infty}}$ be the parallel transport along a minimal geodesic joining $\theta^{(i\star)}_{n}$ and $\theta^{(i)}_{\infty}$. 
   By smoothness of the exponential map, we have we have $\Gamma_{\theta^{(i\star)}_{n}\rightarrow \theta^{(i)}_{\infty}}\eta^{(i\star)}_{n} \to \eta_{\infty}^{(i)}\in T_{\theta^{(i)}_{\infty}}$ where $\Exp_{\theta_{\infty}^{(i)}}(\eta_{\infty}^{(i)})=\theta^{(i)}$. 
			
			Then 
			\begin{align}
				& \left|\left\langle\grad g^{(i)}_{n}(\theta^{(i\star)}_{n}), \eta_{n}^{(i\star)} \right\rangle-\left\langle\grad g^{(i)}_{n}(\theta^{(i)}_{\infty}),\eta_{\infty}^{(i)} \right\rangle\right| \\
				&\quad =\left|\left\langle\Gamma_{\theta^{(i\star)}_{n}\rightarrow \theta^{(i)}_{\infty}}\grad g^{(i)}_{n}(\theta^{(i\star)}_{n}),\Gamma_{\theta^{(i\star)}_{n}\rightarrow \theta^{(i)}_{\infty}}\eta_{n}^{(i\star)} \right\rangle-\left\langle\grad g^{(i)}_{n}(\theta^{(i)}_{\infty}),\eta_{\infty}^{(i)} \right\rangle\right| \\
				&\quad \leq  \left|\left\langle\Gamma_{\theta^{(i\star)}_{n}\rightarrow \theta^{(i)}_{\infty}}\grad g^{(i)}_{n}(\theta^{(i\star)}_{n}),\Gamma_{\theta^{(i\star)}_{n}\rightarrow \theta^{(i)}_{\infty}}\eta_{n}^{(i\star)} \right\rangle-\left\langle\grad g^{(i)}_{n}(\theta^{(i)}_{\infty}),\Gamma_{\theta^{(i\star)}_{n}\rightarrow \theta^{(i)}_{\infty}}\eta_{n}^{(i\star)} \right\rangle\right| \\
				&\quad + \left|\left\langle\grad g^{(i)}_{n}(\theta^{(i)}_{\infty}),\Gamma_{\theta^{(i\star)}_{n}\rightarrow \theta^{(i)}_{\infty}}\eta_{n}^{(i\star)} \right\rangle-\left\langle\grad g^{(i)}_{n}(\theta^{(i)}_{\infty}),\eta_{\infty}^{(i)} \right\rangle\right| \\
				&\quad \leq  d(\theta^{(i\star)}_{n},\theta^{(i)}_{\infty}) + \|\grad g^{(i)}_{n}(\theta^{(i)}_{\infty})\|\cdot \left\|\Gamma_{\theta^{(i\star)}_{n}\rightarrow \theta^{(i)}_{\infty}}\eta_{n}^{(i\star)} - \eta_{\infty}^{(i)}  \right\| \\
				&\quad =o(1),
			\end{align}
			where the last inequality is by $g$-smoothness of $g^{(i)}_{n}$, the last equality is by \eqref{eqn:conv_theta_star} and the boundedness of $\|\grad g^{(i)}_{n}(\theta^{(i)}_{\infty})\|$.
			
			Similarly, we also have $\left|\left\langle\grad g^{(i)}_{n}(\theta^{(i)}_{n}),\eta_{n}^{(i)} \right\rangle-\left\langle\grad g^{(i)}_{n}(\theta^{(i)}_{\infty}),\eta_{\infty}^{(i)} \right\rangle\right|=o(1)$.
			Therefore we have
			\begin{align}
				&\left|\left\langle\grad g^{(i)}_{n}(\theta^{(i\star)}_{n}),\eta_{n}^{(i\star)} \right\rangle-\left\langle\grad g^{(i)}_{n}(\theta^{(i)}_{n}),\eta_{n}^{(i)} \right\rangle\right| \\
				\leq & \left|\left\langle\grad g^{(i)}_{n}(\theta^{(i\star)}_{n}),\eta_{n}^{(i\star)} \right\rangle-\left\langle\grad g^{(i)}_{n}(\theta^{(i)}_{\infty}),\eta_{\infty}^{(i)} \right\rangle\right| \\
				&+ \left|\left\langle\grad g^{(i)}_{n}(\theta^{(i)}_{n}),\eta_{n}^{(i)} \right\rangle-\left\langle\grad g^{(i)}_{n}(\theta^{(i)}_{\infty}),\eta_{\infty}^{(i)} \right\rangle\right| \\
				=&o(1).
			\end{align}

		\end{proof}

		Now we show the following proposition, which is the Riemannian version of \cite[Prop. 3]{grippo2000convergence}.
		\begin{prop}\label{prop:Grippo}
			Assume \ref{assumption:A1_smoothness_surrogates}\textbf{(i-gs), (ii)}, and  \ref{assumption:A0_optimal_gap}. Further assume that the constraint set $\Theta^{(i)}$ is strongly convex in $\M^{(i)}$ for $i=1,\dots,m$.
			Suppose that for some fixed $i \in\{1, \ldots, m\}$ the sequence $(w(k, i))_{k\ge 1}$ admits a limit point $\bar{w}$ and the subsequence converging to $\bar{w}$ is denoted by $w(k(n),i)$, $n\geq 1$. Denote $i^+ = i(\operatorname{mod} m) +1$, $k(n)^+ = k(n)$ or $k(n)+1$ for $i\in \{1,\cdots,m-1\}$ or $i=m$ respectively and $w(k(n),i)^{(i)}$ as the i-th coordinate of $w(k(n),i)$. For $n\gg 1$, consider any $\theta^{(i)} \in \Theta^{(i)}$ such that $d(\theta^{(i)},\bar{w}^{(i)})\le r_0/2$ and $\theta^{(i^+)} \in \Theta^{(i^+)}$ such that $d(\theta^{(i^+)},\bar{w}^{(i^+)})\le r_0/2$, denote $\eta^{(i)}_n \in T_{\theta^{(i)}_n}$ such that $\Exp_{\theta^{(i)}_n}(\eta^{(i)}_n)=\theta^{(i)}$ and $\eta^{(i^+)}_n \in T_{w(n,i)^{(i^+)}}$ such that $\Exp_{w(n,i)^{(i^+)}}(\eta^{(i^+)}_n)=\theta^{(i^+)}$. Then we have
			\begin{equation}\label{eqn:prop_fosc_i}
				\lim_{n\to\infty}\left\langle \grad g^{(i)}_{k(n)}\left(w(k(n),i)^{(i)}\right), \eta^{(i)}_{k(n)}\right\rangle\geq 0 \quad \text{and} \quad \liminf_{n\to \infty}\left\langle\grad g^{(i^+)}_{k(n)^+}\left(w(k(n),i)^{(i^+)}\right),\eta^{(i^+)}_{k(n)}\right\rangle\geq 0
			\end{equation}
		\end{prop}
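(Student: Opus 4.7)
The plan is to handle the two inequalities separately, since the first follows from direct first-order optimality while the second requires a Riemannian Grippo--Sciandrone-style line-search argument.

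For the first inequality, I would start from the exact first-order optimality condition for $\theta^{(i\star)}_{k(n)}$ as an exact minimizer of $g^{(i)}_{k(n)}$ over the strongly $g$-convex set $\Theta^{(i)}$: for every $\theta^{(i)} \in \Theta^{(i)}_\infty$ (well within the injectivity radius of $\theta^{(i\star)}_{k(n)}$ for large $n$ thanks to \ref{assumption:A0_optimal_gap} and \ref{assumption:A1_smoothness_surrogates}\textbf{(ii)}), one has $\langle \grad g^{(i)}_{k(n)}(\theta^{(i\star)}_{k(n)}), \eta^{(i\star)}_{k(n)}\rangle \ge 0$, where $\eta^{(i\star)}_{k(n)}=\Exp^{-1}_{\theta^{(i\star)}_{k(n)}}(\theta^{(i)})$ is a feasible direction by $g$-convexity. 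Invoking Proposition~\ref{prop:general_asym_first_order_inexact} then transfers this inequality from the exact iterate $\theta^{(i\star)}_{k(n)}$ to the inexact iterate $\theta^{(i)}_{k(n)}=w(k(n),i)^{(i)}$ up to an $o(1)$ error, yielding the first claim in the limit.

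The second inequality is the delicate part. My plan is to argue by contradiction: suppose that $\liminf_{n\to\infty}\langle \grad g^{(i^+)}_{k(n)^+}(w(k(n),i)^{(i^+)}),\eta^{(i^+)}_{k(n)}\rangle = -2\delta < 0$ and, after passing to a further subsequence, the inner product stays below $-\delta$. Using $g$-convexity of $\Theta^{(i^+)}$, define, for each $n$, the unique minimal geodesic $\gamma_n:[0,1]\to\Theta^{(i^+)}$ from $w(k(n),i)^{(i^+)}$ to $\theta^{(i^+)}$, whose initial velocity is exactly $\eta^{(i^+)}_{k(n)}$. By $g$-smoothness of $g^{(i^+)}_{k(n)^+}$ (\ref{assumption:A1_smoothness_surrogates}\textbf{(i-gs)} and Lemma~\ref{lem:g_smooth_linear_approx}), a quadratic Taylor expansion along $\gamma_n$ gives, for a sufficiently small $t\in(0,1]$ chosen independently of $n$ (only in terms of $\delta$, $L_g$, and the diameter $r_0/2$),
\begin{equation}
g^{(i^+)}_{k(n)^+}(\gamma_n(t)) \;\le\; g^{(i^+)}_{k(n)^+}\bigl(w(k(n),i)^{(i^+)}\bigr) - \tfrac{t\delta}{2}.
\end{equation}
Since $\gamma_n(t)\in\Theta^{(i^+)}$ is feasible and $w(k(n),i^+)^{(i^+)}$ is the inexact minimizer of $g^{(i^+)}_{k(n)^+}$ over $\Theta^{(i^+)}$ up to optimality gap $\Delta_{k(n)^+}$, this yields $g^{(i^+)}_{k(n)^+}(w(k(n),i^+)^{(i^+)}) \le g^{(i^+)}_{k(n)^+}(w(k(n),i)^{(i^+)}) - \tfrac{t\delta}{2} + \Delta_{k(n)^+}$. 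Letting $n\to\infty$ and invoking Proposition~\ref{prop:BMM_limit_same} (both surrogate values converge to $f(\bar w)$) together with $\Delta_n\to 0$ yields $0\le -t\delta/2$, a contradiction.

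The main obstacle I anticipate is the uniform-in-$n$ well-posedness of every exponential/inverse-exponential map and every geodesic used above. This will rest on the uniform injectivity-radius lower bound in \ref{assumption:A1_smoothness_surrogates}\textbf{(ii)} combined with the stability estimate $d(\theta^{(i)}_{n-1},\theta^{(i)}_n)=o(1)$ from Proposition~\ref{prop:forward_monotonicity_proximal}: for $n\gg 1$, the base points, the iterates, and the probe points $\theta^{(i)},\theta^{(i^+)}$ all lie mutually inside the injectivity radius, so all tangent-space lifts are defined, and the small-step size $t$ and the majorization-gap constants entering the Taylor remainder can be chosen uniformly. A secondary bookkeeping point is that the statement covers both $i<m$ and $i=m$; in the latter case $k(n)^+=k(n)+1$, $w(k(n),i)^{(i^+)}=\theta^{(1)}_{k(n)}$, and the sharpness identity $g^{(1)}_{k(n)+1}(\theta^{(1)}_{k(n)}) = f^{(1)}_{k(n)+1}(\theta^{(1)}_{k(n)})$ plays the role of $g^{(i+1)}_{k(n)}(\theta^{(i+1)}_{k(n)-1}) = f^{(i+1)}_{k(n)}(\theta^{(i+1)}_{k(n)-1})$ when $i<m$, so exactly the same Taylor-plus-Proposition~\ref{prop:BMM_limit_same} contradiction argument goes through uniformly.
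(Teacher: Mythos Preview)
Your proposal is correct, and the first inequality is handled exactly as in the paper. For the second inequality, however, you take a genuinely more direct route than the paper does.

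The paper establishes the second inequality by invoking the Armijo-type Riemannian line search (Algorithm~\ref{algorithm:line_search}): it normalizes $\eta^{(i^+)}_{k(n)}$ into a bounded search direction $d_k^{(i+1)}$, computes an Armijo step size $\alpha_k^{(i+1)}$, sandwiches $g^{(i^+)}_{k(n)^+}$ at the line-search point between the surrogate values at $w(k(n),i)^{(i^+)}$ and $w(k(n),i^+)^{(i^+)}$, and then appeals to Proposition~\ref{prop:BMM_limit_same} to force the Armijo decrease to zero. Proposition~\ref{prop:line_search_limit} (the Riemannian Armijo-failure analysis) is then invoked to conclude $\grad g^{(i^+)}_{k(n)^+}(w(k(n),i)^{(i^+)})\to\mathbf{0}$, contradicting the assumed negative directional derivative. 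You bypass this entire line-search apparatus: by exploiting the $g$-smoothness assumption~\ref{assumption:A1_smoothness_surrogates}\textbf{(i-gs)} directly through Lemma~\ref{lem:g_smooth_linear_approx}, you obtain a \emph{uniform} step size $t$ (depending only on $\delta$, $L_g$, and $r_0$) along the geodesic toward $\theta^{(i^+)}$ that produces a fixed decrease $t\delta/2$, and then the same sandwich against Proposition~\ref{prop:BMM_limit_same} yields the contradiction in one step. Your argument is shorter and self-contained; the paper's argument is closer to the classical Euclidean Grippo--Sciandrone template and would continue to work under weaker smoothness assumptions on the surrogates (e.g., mere continuous differentiability), whereas yours genuinely needs the quantitative quadratic upper bound from $g$-smoothness.
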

		
		\begin{proof}
			Firstly, by the first-order optimality condition of $\theta^{(i\star)}_{k(n)}$ with respect to $g^{(i)}_{k(n)}$, for any $\theta^{(i)}\in \Theta^{(i)}$, we have 
			\begin{equation}
				\left\langle \grad g^{(i)}_{k(n)}\left(\theta^{(i\star)}_{k(n)}\right), \eta^{(i\star)}_{k(n)}\right\rangle\geq 0, \quad \text{for all} \;n
			\end{equation}
			where $\eta^{(i\star)}_n \in T_{\theta^{(i\star)}_n}$ such that $\Exp_{\theta^{(i\star)}_n}(\eta^{(i\star)}_n)=\theta^{(i)}$. Then by Prop. \ref{prop:general_asym_first_order_inexact}, we get the first identity of \eqref{eqn:prop_fosc_i}.
			
			In order to prove the second identity in  \eqref{eqn:prop_fosc_i}, suppose towards a contradiction that there exists $\theta^{i^+} \in \Theta^{(i^+)}$ such that
			\begin{equation}
				\liminf_{n\to \infty}\left\langle\grad g^{(i^+)}_{k(n)^+}\left(w(k(n),i)^{(i^+)}\right),\eta^{(i^+)}_{k(n)}\right\rangle< 0.
			\end{equation} 
			Then there exists an infinite index set  $K_{1}\subset K=\{k(n):n\ge 1\}$ such that for any $k\in K_1$,
			\begin{equation}
				\left\langle \grad g^{(i^+)}_{k(n)^+}\left(w(k(n),i)^{(i^+)}\right), \eta^{(i^+)}_{k(n)}\right\rangle < 0.
			\end{equation}
			For conciseness, from now on we only consider the case $i\in \{1,\cdots,m-1\}$, then $i^+ =i+1$. Note that when $i=m$, the following proof would be exactly the same after some modification of notation. 
			
			Let the search direction 
			\[
			d_{k}^{(i+1)} =
			\begin{cases}
				\eta^{(i+1)}_{k} & \text{if $\|\eta^{(i+1)}_{k}\|\leq 1$} \\
				\eta^{(i+1)}_{k}/\|\eta^{(i+1)}_{k}\| & \text{if $\|\eta^{(i+1)}_{k}\|>1$} .
			\end{cases}
			\]
			Now for all $k\in K_{1}$ suppose we compute the step size $\alpha^{(i+1)}_{k}$ by the line search algorithm \ref{algorithm:line_search}, we have 
			\begin{equation}
				g^{(i+1)}_{k}\left(\Exp_{w(k,i)^{(i+1)}}(\alpha^{(i+1)}_{k} d_{k}^{(i+1)})\right) = g^{(i+1)}_{k}\left(\Exp_{\theta^{(i+1)}_{k-1}}(\alpha^{(i+1)}_{k} d_{k}^{(i+1)})\right)\leq g^{(i+1)}_{k}\left(\theta^{(i+1)}_{k-1}\right)=g^{(i+1)}_{k}\left(w(k,i)^{(i+1)}\right).
			\end{equation}
			Note here we have $\Exp_{\theta^{(i+1)}_{k-1}}(\alpha^{(i+1)}_{k} d_{k}^{(i+1)})\in \Param^{(i+1)}$ by geodesic convexity of $\Param^{(i+1)}$.
			
			Recall by definition, the optimality of $\theta^{(i+1)}_{k}$ gives
			\begin{equation}
				g^{(i+1)}_{k}\left(w(k,i+1)^{(i+1)}\right)-\Delta_n\leq g^{(i+1)}_{k}\left(\theta^{(i+1\star)}_{k}\right)=\min_{\theta\in\Param^{(i+1)}}g^{(i+1)}_{k}(\theta).
			\end{equation}
			We can then write
			\begin{equation}
				g^{(i+1)}_{k}\left(w(k,i+1)^{(i+1)}\right)-\Delta_n\leq g^{(i+1)}_{k}\left(\Exp_{w(k,i)^{(i+1)}}(\alpha^{(i+1)}_{k} d_{k}^{(i+1)})\right) \leq g^{(i+1)}_{k}\left(w(k,i)^{(i+1)}\right).
			\end{equation}
			Then by noting that $k\in K_1\subset K=\{k(n):n\ge 1\}$ and use Proposition \ref{prop:BMM_limit_same} we get
			\begin{equation}
				\lim_{k\to \infty} g^{(i+1)}_{k}\left(w(k,i)^{(i+1)}\right)-g^{(i+1)}_{k}\left(\Exp_{w(k,i)^{(i+1)}}(\alpha^{(i+1)}_{k} d_{k}^{(i+1)})\right)=0.
			\end{equation}
			Finally by Proposition \ref{prop:line_search_limit} we have
			\begin{equation}
				\lim_{k\to\infty}\grad g^{(i+1)}_{k}\left(w(k,i)^{(i+1)}\right)=\mathbf{0},
			\end{equation}
			which gives a contradiction.
		\end{proof}

		The preceding proposition indicates that every limit point of the sequence generated by Algorithm \ref{algorithm:BMM} is a critical point with respect to the first and last component, i.e. $\theta^{(1)}$ and $\theta^{(m)}$, which is formally stated below.
		\begin{corollary}\label{corollary:BMM_conv_first_last}
			Under the same assumptions as in Prop. \ref{prop:Grippo}. Let $\param_{n}$ be a sequence generated by Algorithm \ref{algorithm:BMM} which admits a limit point $\param_{\infty}$, denote the subsequence converging to $\param_{\infty}$ by $\param_{n_k}$, then for any $\theta^{(m)}\in \Theta^{(m)}_\infty$ and $\theta^{(1)}\in \Theta^{(1)}_\infty$ 
			\begin{align}
				\lim_{k\to \infty}\langle\grad g^{(m)}_{n_k}\left(\theta_{n_k}^{(m)}\right),\eta^{(m)}_{n_k}\rangle\ge 0 \quad \text{and} \quad \liminf_{k\to \infty}\langle\grad g^{(1)}_{n_k+1}\left(\theta_{n_k}^{(1)}\right),\eta^{(1)}_{n_k}\rangle\ge 0.
			\end{align}
		\end{corollary}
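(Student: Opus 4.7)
The plan is to obtain Corollary \ref{corollary:BMM_conv_first_last} as a direct specialization of Proposition \ref{prop:Grippo} at $i = m$. The key identification is that under the convention $w(k, i) = (\theta_k^{(1)}, \ldots, \theta_k^{(i)}, \theta_{k-1}^{(i+1)}, \ldots, \theta_{k-1}^{(m)})$, taking $i = m$ makes the trailing tuple of ``stale'' coordinates empty, so $w(k, m) = \param_k$. Hence the convergent subsequence $\param_{n_k} \to \param_\infty$ given in the hypothesis is literally a convergent subsequence $w(n_k, m) \to \bar{w} := \param_\infty$, which is exactly the premise that Proposition \ref{prop:Grippo} requires (with $k(n) = n_k$).

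With this choice of $i$, the successor index is $i^+ = m(\operatorname{mod} m) + 1 = 1$ and the cycle index is $k(n)^+ = k(n) + 1$. The $m$th coordinate of $w(n_k, m)$ is $\theta_{n_k}^{(m)}$, and the first coordinate is $\theta_{n_k}^{(1)}$. Substituting these identifications into the two conclusions of Proposition \ref{prop:Grippo} yields respectively $\lim_{k\to\infty}\langle \grad g^{(m)}_{n_k}(\theta_{n_k}^{(m)}), \eta^{(m)}_{n_k}\rangle \ge 0$ and $\liminf_{k\to\infty}\langle \grad g^{(1)}_{n_k+1}(\theta_{n_k}^{(1)}), \eta^{(1)}_{n_k}\rangle \ge 0$, which is precisely the statement to be proved.

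The only bookkeeping is to verify that the test points $\theta^{(m)} \in \Theta^{(m)}_\infty$ and $\theta^{(1)} \in \Theta^{(1)}_\infty$ satisfy the distance requirement $d(\theta^{(j)}, \bar{w}^{(j)}) \le r_0/2$ appearing in Proposition \ref{prop:Grippo}; this is immediate from the definition \eqref{def:Theta_infty} together with $\bar{w} = \param_\infty$. The tangent vectors $\eta^{(m)}_{n_k} \in T_{\theta^{(m)}_{n_k}}$ and $\eta^{(1)}_{n_k} \in T_{\theta^{(1)}_{n_k}}$ match the $\Exp$-preimage construction used in Proposition \ref{prop:Grippo} (noting that $w(n_k, m)^{(m)} = \theta^{(m)}_{n_k}$ and $w(n_k, m)^{(1)} = \theta^{(1)}_{n_k}$, so the base points agree). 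There is no genuine analytic obstacle, since the substantive work — the line-search argument combined with the asymptotic first-order optimality in Proposition \ref{prop:general_asym_first_order_inexact} — has already been carried out in Proposition \ref{prop:Grippo}; the corollary is just the clean read-off at the endpoints of the block-cycle.
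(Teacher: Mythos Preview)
Your proposal is correct and matches the paper's approach exactly: the paper presents Corollary \ref{corollary:BMM_conv_first_last} as an immediate consequence of Proposition \ref{prop:Grippo} (without writing out a proof), and specializing that proposition at $i = m$ so that $w(k,m) = \param_k$, $i^+ = 1$, and $k(n)^+ = k(n)+1$ is precisely the intended read-off. Your verification of the base points, tangent vectors, and the distance requirement via $\Theta^{(j)}_\infty$ is the right bookkeeping.
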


		We are now ready to give a proof of Theorem \ref{thm:RBMM_1} for the case \ref{assumption:A1_smoothness_surrogates}\textbf{(i-gs)} of $g$-smooth surrogates on general manifolds. 
		\begin{proof}[\textbf{Proof of Theorem \ref{thm:RBMM_1} for $g$-smooth surrogates}]
			Assume  \ref{assumption:A1_smoothness_surrogates}\textbf{(i-gs), (ii)}, and  \ref{assumption:A0_optimal_gap}. Further assume that the constraint set $\Theta^{(i)}$ is strongly convex in $\M^{(i)}$ for $i=1,\dots,m$. Fix a convergent subsequence $(\param_{k_{n}})_{k\ge 1}$ of $(\param_{n})_{n\ge 1}$. We wish to show that  $\param_{\infty}=\lim_{k\rightarrow \infty} \param_{k_{n}}$ is a stationary point of $f$ over $\Param$ when $m=2$. The proof of single block case ($m=1$) follows from the same analysis and is much simpler, which can be done without Corollary \ref{corollary:BMM_conv_first_last}. Below we omit the $m=1$ case and focus on proving it for $m=2$. 
			
			First, we apply Proposition \ref{prop:surrogate_gap_function_gap} with $w_{n}\equiv 1$ and Proposition \ref{prop:forward_monotonicity_proximal} \textbf{(ii)} to deduce that 
			\begin{align}
				\sum_{n=1}^{\infty} \sum_{i=1}^{m} \lVert  \grad g_{n}^{(i)}(\theta_{n}^{(i)})-\grad f_{n}^{(i)}(\theta_{n}^{(i)})  \rVert^{2} <\infty.
			\end{align}
			
			\begin{align}\label{eq:o1_changes}
				\sum_{i=1}^{m} \lVert  \grad g_{n}^{(i)}(\theta_{n}^{(i)})-\grad f_{n}^{(i)}(\theta_{n}^{(i)})  \rVert^{2} = o(1).
			\end{align}
			Hence, by Corollary \ref{corollary:BMM_conv_first_last}, for any $\theta^{(1)}\in \Param^{(1)}_\infty$ and $\theta^{(m)}\in \Param^{(m)}_\infty$ we get 
			\begin{align}
				\liminf_{k\rightarrow \infty} \, \langle\grad f^{(1)}_{n_k+1}(\theta^{(1)}_{n_k}),\eta^{(1)}_{n_k}\rangle  \ge0 \qquad and \qquad \liminf_{k\rightarrow \infty} \, \langle\grad f^{(m)}_{n_k}(\theta^{(m)}_{n_k})  ,\eta^{(m)}_{n_k}\rangle  \ge0
			\end{align}
			where $\Exp_{\theta_{n_k}^{(m)}}(\eta^{(m)}_{n_k})=\theta^{(m)}$ and $\Exp_{\theta_{n_k}^{(1)}}(\eta^{(1)}_{n_k})=\theta^{(1)}$.
			
			Note that $\grad f^{(1)}_{n_k+1}(\param^{(1)}_{n_k})=\grad_{1}f(\param_{n_k})$ \text{and} $\grad f^{(m)}_{n_k}(\param^{(m)}_{n_k})=\grad_{m}f(\param_{n_k})$.   Thus for the case when $m=2$, by continuity of Riemannian metric and the continuity of $\grad f$, we verified for each $i=1,2$ and any $\theta\in \Theta^{(i)}_\infty$ we have $\langle\grad_i f (\param_{\infty} ),\eta\rangle \ge 0 $, where $\Exp_{\theta^{(i)}_{\infty}}(\eta)=\theta$. This shows $\param_{\infty}$ is a stationary point of $f$ over $\Param$, as desired.
		\end{proof}

		Next, we prove Theorem \ref{thm:RBMM_2} for smooth surrogates. 
		
		\vspace{0.1cm}
		\begin{proof}[\textbf{Proof of Theorem \ref{thm:RBMM_2} for $g$-smooth surrogates}]
			Suppose \ref{assumption:A1_smoothness_surrogates}\textbf{(i-gs), (ii), (iii)} and \ref{assumption:A0_optimal_gap} hold. Fix a convergent subsequence $(\param_{n_{k}})_{k\ge 1}$ of $(\param_{n})_{n\ge 1}$. We wish to show that  $\param_{\infty}=\lim_{k\rightarrow \infty} \param_{n_{k}}$ is a stationary point of $f$ over $\Param$. 
			First, we apply Proposition \ref{prop:surrogate_gap_function_gap} with $w_{n}\equiv 1$ and Proposition \ref{prop:forward_monotonicity_proximal} \textbf{(ii)} to deduce that 
			\begin{align}
				\sum_{n=1}^{\infty} \sum_{i=1}^{m}   \phi\left( d\left(\theta_{n-1}^{(i)},\theta_{n}^{(i)}\right) \right)<\infty ,\qquad    \sum_{n=1}^{\infty} \sum_{i=1}^{m} \lVert  \grad g_{n}^{(i)}(\theta_{n}^{(i)})-\grad f_{n}^{(i)}(\theta_{n}^{(i)})  \rVert^{2} <\infty.
			\end{align}
			In particular, this yields 
			\begin{align}\label{eq:o1_changes}
				\sum_{i=1}^{m}   \phi\left( d\left(\theta_{n-1}^{(i)},\theta_{n}^{(i)}\right) \right)=o(1),\qquad  \sum_{i=1}^{m} \lVert  \grad g_{n}^{(i)}(\theta_{n}^{(i)})-\grad f_{n}^{(i)}(\theta_{n}^{(i)})  \rVert^{2} = o(1).
			\end{align}
            Fix $\theta^{(i)}\in \Theta^{(i)}_{\infty}$ (see \eqref{def:Theta_infty}) such that $\eta_{\infty}^{(i)}$ defined in \eqref{eq:eta_n_nstar} satisfies $\|\eta_{\infty}^{(i)}\|\le 1$. Let $\eta_{n}^{(i\star)}\in T_{\theta^{(i\star)}_{n}}$ and $\eta_{n}^{(i)}\in T_{\theta^{(i)}_{n}}$ be the tangent vectors defined in \eqref{eq:eta_n_nstar}). Since $\theta_{n}^{(i\star)}$ minimizes $g_{n}^{(i)}$ over $\Theta^{(i)}$, we have 
			\begin{equation}
				\langle \grad g_{n}^{(i)} (\theta^{(i\star)}_{n}),\eta_{n}^{(i\star)}\rangle \geq 0.
			\end{equation}
			Note by Prop. \ref{prop:general_asym_first_order_inexact}, we have 
			\begin{equation}
				\lim_{k\to\infty}\langle \grad g_{n_k}^{(i)} (\theta^{(i)}_{n_k}),\eta^{(i)}_{n_k}\rangle \geq 0.
			\end{equation}

			Then recall the second part of \eqref{eq:o1_changes}, $\lVert  \grad g_{n}^{(i)} (\theta^{(i)}_{n})  -  \grad f_{n}^{(i)} (\theta^{(i)}_{n})   \rVert = o(1).$
			Note since $\theta^{(i)}_{n_k}\to \theta^{(i)}_{\infty}$ and $\|\eta^{(i)}_{\infty}\|\le 1$, we have $\|\eta^{(i)}_{n_k}\|$ is uniformly bounded by some constant $C_0>1$. Hence, for each $i=1,\dots,m$, we get
			\begin{equation}
				\left\|\left\langle \grad g_{n}^{(i)} (\theta^{(i)}_{n}),\eta^{(i)}_{n}\right\rangle - \left\langle \grad f_{n}^{(i)} (\theta^{(i)}_{n}),\eta^{(i)}_{n}\right\rangle\right\| \leq C_0 \|\grad g_{n}^{(i)} (\theta^{(i)}_{n})-\grad f_{n}^{(i)} (\theta^{(i)}_{n}) \|=o(1),
			\end{equation}
			so $\liminf_{k\to \infty}\left\langle \grad f_{n_k}^{(i)} (\theta^{(i)}_{n_k}),\eta^{(i)}_{n_k}\right\rangle \geq 0$.
			
			Let $\Gamma_{\theta^{(i)}_{n_k}\rightarrow \theta^{(i)}_{\infty}}$ be the parallel transport along a minimal geodesic joining $\theta^{(i)}_{n_k}$ and $\theta^{(i)}_{\infty}$. By smoothness of exponential map, we have $\Gamma_{\theta^{(i)}_{n_k}\rightarrow \theta^{(i)}_{\infty}}\eta^{(i)}_{n_k}\to \eta_\infty^{(i)}\in T_{\theta^{(i)}_{\infty}}$. By the first part of \eqref{eq:o1_changes} and the fact $\phi$ is strictly increasing, we have $d\left(\theta_{n-1}^{(i)},\theta_{n}^{(i)} \right)=o(1)$ and hence $\theta_{n_{k}-1}^{(j)}\rightarrow \theta_{\infty}^{(j)}$ as $k\rightarrow \infty$ for all $j=1,\dots,m$. Hence,
			\begin{align}
				0\leq \liminf_{k\to \infty}\left\langle \grad f_{n_k}^{(i)} (\theta^{(i)}_{n_k}),\eta^{(i)}_{n_k}\right\rangle & = \liminf_{k\to \infty}\left\langle \Gamma_{\theta^{(i)}_{n_k}\rightarrow \theta^{(i)}_{\infty}} \grad f_{n_k}^{(i)} (\theta^{(i)}_{n_k}), \Gamma_{\theta^{(i)}_{n_k}\rightarrow \theta^{(i)}_{\infty}}\eta^{(i)}_{n_k}\right\rangle \\
				&= \left\langle \grad_{i} f( \theta^{(1)}_{\infty},\dots,\theta^{(i-1)}_{\infty}, \theta^{(i)}_{\infty}, \theta^{(i+1)}_{\infty},\dots, \theta^{(m)}_{\infty} ),\eta_{\infty}^{(i)}\right\rangle. 
			\end{align}
			Since this holds for any $\theta^{(i)}\in \Theta^{(i)}_\infty$ with $d(\theta^{(i)},\theta_{\infty}^{(i)})\le 1$ and also holds for any $i=1,\cdots,m$, we conclude 
			\begin{equation}
				\left\langle \grad f (\param_{\infty} ), \mathbf{\eta} \right\rangle\geq 0, \quad \forall \eta\in T^{*}_{\param_{\infty}} \;\;\textup{with} \;\; \|\eta\|\le 1,
			\end{equation}
			which means $\param_{\infty}$ is a stationary point of $f$ in $\Param$, as desired.
		\end{proof}

		\subsection{Rate of convergence}
		
		In this section, we prove Theorem \ref{thm:BMM_rate}.  
		
		\begin{proof}[\textbf{Proof of Theorem \ref{thm:BMM_rate}}]
			We first show \textbf{(i)}. Let $b_n$ be any square-summable positive sequence, i.e. $\sum_{n} b_{n}^{2}<\infty$. 
			By Prop. \ref{prop:asymptotic_first-order_optimality} and Prop. \ref{prop:forward_monotonicity_proximal},
			\begin{align}
				&\sum_{n=0}^{\infty}b_{n+1}\left[\sum_{i=1}^{m}-\inf _{\eta\in T^{*}_{\param_n}}\left\langle \grad g_{n+1}^{(i)}(\theta^{(i)}_{n})+\grad_i f(\param_n)-\grad f_{n+1}^{(i)}(\theta^{(i)}_{n}),\frac{\eta^{(i)}}{\min\{r_0,1\}}\right\rangle\right] \\
				&\qquad \le C\left( \sum_{n=0}^{\infty} b_{n+1}^{2}+ \left(f(\param_{0})-f^{*}\right) + \sum_{k=1}^{\infty}d^{2}(\param_k,\param_{k+1} )+ \sum_{n=1}^{\infty}\Delta_n(\param_0)\right) <\infty.
			\end{align}
			for some constant $C>0$ independent of $\param_{0}$ and the right hand side is finite.

			Using Prop. \ref{prop:surrogate_gap_function_gap}, we get
			\begin{align}
				\sum_{n=0}^{\infty}b_{n+1}&\left[\left| -\sum_{i=1}^{m}\inf _{\eta\in T^{*}_{\param_n}}\left\langle \grad g_{n+1}^{(i)}(\theta^{(i)}_{n})+\grad_i f(\param_n)-\grad f_{n+1}^{(i)}(\theta^{(i)}_{n}),\frac{\eta^{(i)}}{\min\{r_0,1\}}\right\rangle \right| \right. \\
				&+\left. \sum_{i=1}^{m}\|\grad g_{n+1}^{(i)}(\theta^{(i)}_{n})- \grad f_{n+1}^{(i)}(\theta^{(i)}_{n})\|^{2} \right] < \infty.
			\end{align}
			Then by Lemma \ref{lem:sum_sequence} we have 
			\begin{align}
				\min_{1\leq k\leq n} &\left[ \left| -\sum_{i=1}^{m}\inf _{\eta\in T^{*}_{\param_k}}\left\langle \grad g_{k+1}^{(i)}(\theta^{(i)}_{k})+\grad_i f(\param_k)-\grad f_{k+1}^{(i)}(\theta^{(i)}_{k}),\frac{\eta^{(i)}}{\min\{r_0,1\}}\right\rangle \right|\right. \\ &\left.+\sum_{i=1}^{m}\|\grad g_{k+1}^{(i)}(\theta^{(i)}_{k})- \grad f_{k+1}^{(i)}(\theta^{(i)}_{k})\|^{2} \right]\leq 
				O\left(\left(\sum_{k=1}^n b_k\right)^{-1}\right).
			\end{align}
			Let $t_n \in\{1, \ldots, n\}$ for $n \geq 1$ be such that the minimum above is achieved. Namely, denoting the term in the minimum above by $A_{t_n}$, we have $A_{t_n}=O\left(\left(\sum_{k=1}^n b_k\right)^{-1}\right)$. Since all terms in $A_{t_n}$ are nonnegative, it follows that there exists a constant $c'_1, c'_2>0$ such that for all $n \geq 1$, almost surely,
			\begin{align}
				&\left| -\sum_{i=1}^{m}\inf _{\eta\in T^{*}_{\param_n}}\left\langle \grad g_{n+1}^{(i)}(\theta^{(i)}_{n})+\grad_i f(\param_n)-\grad f_{n+1}^{(i)}(\theta^{(i)}_{n}),\frac{\eta^{(i)}}{\min\{r_0,1\}}\right\rangle \right| \leq \frac{c'_1}{\sum_{k=1}^n b_k}\\
				&\text{and}\quad \|\grad g_{n+1}^{(i)}(\theta^{(i)}_{n})- \grad f_{n+1}^{(i)}(\theta^{(i)}_{n})\|\leq \frac{c'_{2}}{\left(\sum_{k=1}^n b_k\right)^{1/2}}\qquad \text{for any $i=1,\cdots,m$}.
			\end{align}

			On the other hand, by the Cauchy-Schwartz inequality, for all $\eta\in T^{*}_{\param_n}$,
			\begin{equation}
				\left| \sum_{i=1}^{m}\left\langle \grad g_{n+1}^{(i)}(\theta^{(i)}_{n})- \grad f_{n+1}^{(i)}(\theta^{(i)}_{n}),\frac{\eta^{(i)}}{\min\{r_0,1\}}\right\rangle \right| \leq \sum_{i=1}^{m}\|\grad g_{n+1}^{(i)}(\theta^{(i)}_{n})- \grad f_{n+1}^{(i)}(\theta^{(i)}_{n})\|.
			\end{equation}
			Hence by Lemma \ref{lem:sum_sequence}, it follows that there exists some constant $c'_3>0$ such that for all $n \geq 1$,
			\begin{equation}\label{eq:convergence_rate_pf0}
				\min_{1\leq k\leq n}\left[-\sum_{i=1}^{m}\inf_{\eta\in T^{*}_{\param_k},\|\eta\|\le 1}\left\langle \grad f(\param_{k}),\frac{\eta^{(i)}}{\min\{r_0,1\}}\right\rangle\right]\leq \frac{c'_3}{\left(\sum_{k=1}^n b_k\right)^{1/2}}.
			\end{equation}
			Thus by taking $b_{n}=n^{-1/2}(\log n)^{-1}<1$, we have $\sum_{n} b_{n}^{2}<\infty$, $\sum_{k=1}^{n} b_{k} = O(n^{1/2}/\log n)$. For some $M,c>0$ independent of $\param_{0}$ we get,
			\begin{equation}\label{eqn:proof_thm_bmm_ii}
				\min_{1\leq k\leq n}\left[-\sum_{i=1}^{m}\inf_{\eta\in T^{*}_{\param_k},\|\eta\|\le 1}\left\langle \grad f(\param_{k}),\frac{\eta^{(i)}}{\min\{r_0,1\}}\right\rangle\right]\leq \frac{M+c\sum_{n=1}^{\infty}\Delta_n(\param_0)}{n^{1/4}/(\log n)^{1/2}}.
			\end{equation}
			
			Next, it is easy to derive \textbf{(ii)} from \textbf{(i)}. According to \ref{assumption:A0_optimal_gap}, $\sup_{\param_{0} \in \Param}\sum_{n=1}^{\infty}\Delta_n(\param_0)<\infty$. Then the above implies for some $M^{'}$ independent of $\param_{0}$ we have
			\begin{equation}
				\min_{1\leq k\leq n}\sup_{\param_{0} \in \Param}\left[-\sum_{i=1}^{m}\inf_{\eta\in T^{*}_{\param_k},\|\eta\|\le 1}\left\langle \grad f(\param_{k}),\frac{\eta^{(i)}}{\min\{r_0,1\}}\right\rangle\right] \leq \frac{M^{'}(\log n)^{1/2}}{n^{1/4}}.
			\end{equation}
			Then we can conclude \textbf{(ii)} by using the fact that $n \geq 2 \varepsilon^{-4}\log \varepsilon^{-2}$ implies $(\log n)^{1/2} / n^{1/4} \leq \varepsilon$ for all sufficiently small $\epsilon>0$.
			
			Lastly, we show \textbf{(iii)}. For this, we assume \ref{assumption:A1_1}\textbf{(iii)} holds for $cx^{2} \le \phi(x) \le Cx^{2}$ for some constants $c,C>0$. Then by Prop. \ref{prop:asymptotic_first-order_optimality} and Prop. \ref{prop:forward_monotonicity_proximal},
			\begin{align}
				&\sum_{n=0}^{\infty}b_{n+1}\left[-\sum_{i=1}^{m}\inf _{\eta\in T^{*}_{\param_k},\|\eta\|\le 1}\left\langle \grad_i f(\param_k),\, \frac{\eta^{(i)}}{\min\{r_0,1\}}\right\rangle\right] \\
				&\qquad \le C'\left( \sum_{n=0}^{\infty} b_{n+1}^{2}+ \left(f(\param_{0})-f^{*}\right) + \sum_{k=1}^{\infty}d^{2}(\param_k,\param_{k+1} )+ \sum_{n=1}^{\infty}\Delta_n(\param_0)\right) <\infty
			\end{align}
			for some constant $C'>0$. Hence by Lemma \ref{lem:sum_sequence}, it follows that there exists some constant $c'>0$ such that for all $n \geq 1$, 
			\begin{equation}\label{eq:convergence_rate_pf1}
				\min_{1\leq k\leq n}\left[-\sum_{i=1}^{m}\inf_{\eta\in T^{*}_{\param_k},\|\eta\|\le 1}\left\langle \grad f(\param_{k}),\frac{\eta^{(i)}}{\min\{r_0,1\}}\right\rangle\right]\leq \frac{c'}{\sum_{k=1}^n b_k }.
			\end{equation}
			Notice that the bound in \eqref{eq:convergence_rate_pf1} has an improvement of the power of $2$ of the bound in \eqref{eq:convergence_rate_pf0}. Now the rest of the argument is identical to before. 
		\end{proof}

		\section*{Acknowledgements} 
		YL is partially supported by the Institute for Foundations of Data Science RA fund through NSF Award DMS-2023239 and by the National Science Foundation through grants DMS-2206296. HL is partially supported by the National Science Foundation through grants DMS-2206296 and DMS-2010035. DN is partially supported by NSF DMS-2011140 and NSF DMS-2108479. LB is partially supported by NSF CAREER award CCF-1845076 and ARO YIP award W911NF1910027.

		\vspace{0.3cm}
		\newcommand{\etalchar}[1]{$^{#1}$}
\providecommand{\bysame}{\leavevmode\hbox to3em{\hrulefill}\thinspace}
\providecommand{\MR}{\relax\ifhmode\unskip\space\fi MR }
\providecommand{\MRhref}[2]{%
  \href{http://www.ams.org/mathscinet-getitem?mr=#1}{#2}
}
\providecommand{\href}[2]{#2}

		\vspace{0.5cm}
		\addresseshere
		
		\newpage 
		
		\appendix

		\section{Background on Riemannian Optimization}\label{sec:notes}

		Let $\gamma:[a, b] \rightarrow \mathcal{M}$ be a piece-wise differentiable curve, then it assigns to each $t \in(a, b)$ a vector $\gamma^{\prime}(t)$ in the vector space $T_{\gamma(t)} \mathcal{M}$, the size of which can be measured by the norm $\|\cdot\|_{\gamma(t)}$. The length of $\gamma$ is given by $L(\gamma)=\int_a^b\left\|\gamma^{\prime}(t)\right\|_{\gamma(t)} dt$. The distance for any $x,y \in \mathcal{M}$ is then given by
		$$
		d_{\mathcal{M}}(x, y)=\inf \{L(\gamma): \gamma \text { a piecewise continuously differentiable curve from } x \text { to } y\}
		$$
		We drop the subscript $\mathcal{M}$ when it is clear from context. If $L(\gamma)=d_{\mathcal{M}}(x, y)$, then $\gamma$ is called a \textit{distance-minimizing geodesic} joining $x$ and $y$.

		For each $\theta,\theta'\in \mathcal{M}$, define $\eta=\eta_{\theta}(\theta')$ to be the 
		(Such tangent vector need not be unique unless $\theta'$ is within the injectivity radius at $\theta$)
		tangent vector in $T_{\theta}$ such that $\Exp_{\theta}(\eta)=\theta'$. 
  
  For a subset $\Param\subseteq \mathcal{M}$ and $x\in \Param$, define the \textit{tangent cone} $\mathcal{T}_{\Param}(x)$ and the \textit{normal cone} $\mathcal{N}_{\Param}(x)$ at $x$ as 
	\begin{align}\label{eq:def_tangent_normal_cone}
		&	\mathcal{T}_{\Param}\left(x \right):=\left\{u \in T_{x}\M \,\bigg|\,  \textup{$\Exp_{x}\left(t \frac{u}{\lVert u \rVert}\right)\in \Param$ for some $t\in (0, \rinj(x))$} \right\}\cup \{ \mathbf{0} \}, \\
		&	\mathcal{N}_{\Param}\left(x \right):=\left\{u \in T_{x}\M \,\bigg|\, \left\langle u, \eta\right\rangle \leq 0 \,\,  \textup{for all $\eta\in T_{x}\M$ s.t. $\Exp_{x}\left(t \frac{\eta}{\lVert \eta\rVert}\right)\in \Param$ for some $t\in (0, \rinj(x))$} \right\}.
	\end{align}
	Note that	$\mathcal{T}_{\Param}(x)=T_{x}\M$ and $\mathcal{N}_{\Param}(x)=\{ \mathbf{0} \}$ if $x$ is in the interior of $\Param$. When $\Param$ is strongly convex, then the tangent cone $\mathcal{T}_{\Param}(x)$ is a convex cone in the tangent space $T_{x}\M$ (see \cite[Prop.1.8]{cheeger1972structure} and \cite{afsari2011riemannian}.

		\section{Auxiliary lemmas}

		Recall that for each $\theta,\theta'\in \mathcal{M}$, we define $\eta_{\theta}(\theta')$ to be the set of all tangent vectors $\eta\in T_{\theta}\M $ such that $\Exp_{\theta}(\eta)=\theta'$. Define the (Riemannian) subdifferential of $f:\mathcal{M}\rightarrow \R$ by 
		\begin{equation}
			\partial f\left(\theta\right):=\left\{v \in T_{\theta}\M \mid f(\theta')-f\left(\theta\right) \geq\left\langle v,\eta_{\theta}(\theta')\right\rangle+o\left(d(\theta,\theta' )\right) \text { as } \theta'\rightarrow \theta\right\}.
		\end{equation}
		Recall that for a subset $\Param\subseteq \mathcal{M}$, its (Riemannian) \textit{normal cone} at $\theta\in \Theta$ is defined as (equivalent to \eqref{eq:def_tangent_normal_cone})
		\begin{align}
			\mathcal{N}_{\Param}\left(\theta \right)
			&= \left\{u \in T_{\theta} \mid\left\langle u, \eta \right\rangle \leq 0 \quad\forall \, \theta' \in \Theta\,\, \forall \eta\in \eta_{\theta}(\theta')\right\}.
		\end{align}

		\begin{lemma}[Bound on the linear approximation for $g$-smooth functions]
			\label{lem:g_smooth_linear_approx}
			Suppose the function $f: \mathcal{M}\to \R$ is geodesically $L$-smooth (see Definition \ref{def: G-L-smooth}) and $\mathcal{M}$ is a 
			Riemannian manifold. Suppose $x,y\in \M$  and there exists a distance-minimizing geodesic $\gamma:[0,1]\rightarrow \M$ from $x$ to $y$. Then \begin{equation}\label{eq:linear_approx_quad_bd}
				\left|f(y) - f(x)- \left\langle \grad f(x), \gamma'(0) \right\rangle_x  \right|\le \frac{L}{2} d^{2}(x, y),
			\end{equation}
			where $d(x,y)$ is the Riemannian distance between $x$ and $y$. Moreover, if $\Exp^{-1}_x (y)$ is well defined, the above can be rewritten as 
			\begin{equation}\label{eq:linear_approx_inv_quad_bd}
				\left|f(y) - f(x)- \left\langle \grad f(x), \Exp^{-1}_x (y)\right\rangle_x   \right|\le \frac{L}{2} d^2(x, y).
			\end{equation}
		\end{lemma}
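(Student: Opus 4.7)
The plan is to reduce the inequality to a one-dimensional estimate along the distance-minimizing geodesic $\gamma$ and apply the fundamental theorem of calculus. Specifically, define $h(t) := f(\gamma(t))$ for $t \in [0,1]$ and write
\begin{align}
f(y) - f(x) = h(1) - h(0) = \int_{0}^{1} h'(t)\, dt = \int_{0}^{1} \langle \grad f(\gamma(t)),\, \gamma'(t) \rangle_{\gamma(t)}\, dt.
\end{align}

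The next step is to express $\gamma'(t)$ in terms of $\gamma'(0)$ so that the integrand can be compared to $\langle \grad f(x), \gamma'(0)\rangle_x$. Because $\gamma$ is a geodesic, its velocity field is parallel along itself, so $\gamma'(t) = \Gamma_{x \to \gamma(t)} \gamma'(0)$, where $\Gamma$ denotes parallel transport along $\gamma$. Combined with the inner-product-preserving property of parallel transport recalled in Section \ref{sec:preliminaries_Riemannian}, this rewrites the integrand as
\begin{align}
\langle \grad f(\gamma(t)),\, \Gamma_{x\to \gamma(t)} \gamma'(0) \rangle_{\gamma(t)} = \langle \Gamma_{\gamma(t)\to x} \grad f(\gamma(t)),\, \gamma'(0) \rangle_{x}.
\end{align}
Subtracting $\langle \grad f(x), \gamma'(0)\rangle_x = \int_0^1 \langle \grad f(x), \gamma'(0)\rangle_x\, dt$ from both sides yields
\begin{align}
f(y) - f(x) - \langle \grad f(x), \gamma'(0)\rangle_x = \int_0^1 \langle \Gamma_{\gamma(t)\to x}\grad f(\gamma(t)) - \grad f(x),\, \gamma'(0) \rangle_x\, dt.
\end{align}

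From here the bound follows by Cauchy--Schwarz and the $g$-smoothness assumption in Definition \ref{def: G-L-smooth}, which gives $\|\Gamma_{\gamma(t)\to x}\grad f(\gamma(t)) - \grad f(x)\|_x \leq L\, d(x,\gamma(t))$. Since $\gamma$ is distance-minimizing on $[0,1]$, we have $\|\gamma'(0)\|_x = d(x,y)$ and $d(x,\gamma(t)) = t\, d(x,y)$, so the right-hand side is bounded by $\int_0^1 L t\, d(x,y) \cdot d(x,y)\, dt = \tfrac{L}{2} d^2(x,y)$, as desired. The second formula \eqref{eq:linear_approx_inv_quad_bd} then follows from the first by noting that when $\Exp_x^{-1}(y)$ is defined, $\gamma(t) = \Exp_x(t \Exp_x^{-1}(y))$ is the unique distance-minimizing geodesic and $\gamma'(0) = \Exp_x^{-1}(y)$.

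I do not anticipate a serious obstacle: the only subtle point is that the identification $\gamma'(t) = \Gamma_{x\to\gamma(t)}\gamma'(0)$ for the geodesic velocity field is a standard fact from Riemannian geometry, but it is essential, since without it the $g$-smoothness hypothesis (which compares gradients across different tangent spaces via parallel transport) cannot be directly applied to the integrand.
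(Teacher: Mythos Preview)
Your proposal is correct and takes essentially the same approach as the paper: both write $f(y)-f(x)$ via the fundamental theorem of calculus along $\gamma$, use that parallel transport is an isometry together with the fact that the geodesic velocity is parallel ($\gamma'(t)=\Gamma_{x\to\gamma(t)}\gamma'(0)$), and then apply Cauchy--Schwarz and the $g$-smoothness bound. The only cosmetic difference is that the paper transports $\grad f(x)$ forward to $\gamma(s)$ and compares there, whereas you transport $\grad f(\gamma(t))$ back to $x$; these are equivalent since parallel transport is an isometric isomorphism.
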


		\begin{proof}

   Denote the minimal geodesic from $x$ to $y$ as $\gamma:[0,1]\rightarrow \mathcal{M}$. That is, $\gamma(0)=x$, $\gamma(1)=y$, and $\int_{0}^{1} \lVert \gamma'(s) \rVert \,ds=d(x,y)$. Since the geodesic has a constant speed, we have $\lVert \gamma'(s) \rVert \equiv d(x,y)$. Then by the fundamental theorem of calculus, 
			\begin{align}
				f(y) - f(x) = f(\gamma(1)) - f(\gamma(0)) = \int_{0}^{1} (f\circ \gamma)'(s) \,ds =  \int_{0}^{1} \left\langle \grad f \left( \gamma(s) \right),\, \gamma'(s) \right\rangle_{\gamma(s)} \,ds.
			\end{align}
			By Cauchy-Schwarz inequality and geodesic $L$-smoothness of $f$, 
			\begin{align}
				&\left| \int_{0}^{1} \left\langle \grad f \left( \gamma(s) \right),\, \gamma'(s) \right\rangle_{\gamma(s)} - \int_{0}^{1} \left\langle \grad f \left( \gamma(0) \right),\, \gamma'(0) \right\rangle_{\gamma(0)} \,ds  \right| \\
				&\qquad = \left| \int_{0}^{1} \left\langle \grad f \left( \gamma(s) \right),\, \gamma'(s) \right\rangle_{\gamma(s)} - \int_{0}^{1} \left\langle \Gamma_{\gamma(0)\rightarrow \gamma(s)}\grad f \left( \gamma(0) \right),\,\gamma'(s) \right\rangle_{\gamma(s)} \,ds  \right| \\
				&\qquad = \left| \int_{0}^{1} \left\langle \grad f \left( \gamma(s) \right) - \Gamma_{\gamma(0)\rightarrow \gamma(s)} \grad f \left( \gamma(0) \right),\, \gamma'(s)  \right\rangle_{\gamma(s)} \,ds  \right| \\
				&\qquad \le \int_{0}^{1} \left\lVert \grad f \left( \gamma(s) \right) - \Gamma_{\gamma(0)\rightarrow \gamma(s)} \grad f \left( \gamma(0) \right)\right\rVert\, \lVert \gamma'(s) \rVert \,ds \\
				&\qquad \le \int_{0}^{1} L d(\gamma(s), \gamma(0))\lVert \gamma'(s) \rVert  \,ds \\
					&\qquad \overset{(*)}{=} L d^{2}(\gamma(1), \gamma(0))\int_{0}^{1}  s  \,ds \\
					&\qquad = \frac{L}{2} d^{2}(y, x). 
			\end{align}
 
			Now the assertion follows by noting that 
			\begin{align}
				\int_{0}^{1} \left\langle \grad f \left( \gamma(0) \right),\, \gamma'(0) \right\rangle_{\gamma(0)} \,ds  = \left\langle \grad f \left( x \right),\, \gamma'(0) \right\rangle_{x} = \left\langle \grad f \left( x \right),\, \frac{\gamma'(0)}{\lVert \gamma'(0) \rVert}\right\rangle_{x} d(x,y). 
			\end{align}
		\end{proof}

		\begin{lemma}[Additivity of $g$-smooth functions]
			\label{lem:additivity_g_smooth}
			Suppose the function $f,g: \mathcal{M}\to \R$ are geodesically smooth function with positive constants $L_{f}$ and $L_{g}$, respectively (see definition \ref{def: G-L-smooth}). Then $f+g$ is geodesically smooth with constant $L_{f}+L_{g}$. 
		\end{lemma}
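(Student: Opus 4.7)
The plan is to prove additivity directly from Definition \ref{def: G-L-smooth} using three elementary facts: linearity of the Riemannian gradient, linearity of parallel transport (as a linear isometry between tangent spaces), and the triangle inequality for the tangent-space norm.

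First, I would note that for the single-manifold case $m=1$, the $g$-smoothness condition on $F:\mathcal{M}\to\R$ reads
\begin{equation}
\left\lVert \grad F(x) - \Gamma_{y\to x}(\grad F(y))\right\rVert \le L\, d(x,y)
\end{equation}
whenever $x,y\in\M$ are joined by a minimizing geodesic. Since the Riemannian gradient is a linear operator on smooth functions, we have $\grad(f+g)(x) = \grad f(x) + \grad g(x)$ at every point $x\in\M$. Since the parallel transport $\Gamma_{y\to x}$ along a fixed minimizing geodesic is a linear map from $T_y\M$ to $T_x\M$, we also have $\Gamma_{y\to x}(\grad f(y) + \grad g(y)) = \Gamma_{y\to x}(\grad f(y)) + \Gamma_{y\to x}(\grad g(y))$.

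Next, I would substitute these identities into the left-hand side of the $g$-smoothness condition for $f+g$ and apply the triangle inequality followed by the hypothesized $g$-smoothness of $f$ and $g$ individually:
\begin{align}
\left\lVert \grad(f+g)(x) - \Gamma_{y\to x}(\grad(f+g)(y))\right\rVert
&\le \left\lVert \grad f(x) - \Gamma_{y\to x}(\grad f(y))\right\rVert \\
&\quad + \left\lVert \grad g(x) - \Gamma_{y\to x}(\grad g(y))\right\rVert \\
&\le L_f \, d(x,y) + L_g \, d(x,y) = (L_f+L_g)\, d(x,y).
\end{align}
Continuous differentiability of $f+g$ is immediate from that of $f$ and $g$.

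There is no real obstacle here; the argument is purely algebraic once one records that parallel transport is $\R$-linear (indeed, an isometry, but linearity is what is used). If desired, the proof generalizes verbatim to the product-manifold definition by applying the same argument block-by-block to each $\grad_i$ and using the block-wise bound with constant $(L_f+L_g)/m$.
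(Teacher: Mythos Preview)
Your proof is correct and follows essentially the same approach as the paper: both establish linearity of the Riemannian gradient and of parallel transport, then apply the triangle inequality to the $g$-smoothness inequality. The paper additionally spells out the identity $\grad(f+g)=\grad f+\grad g$ via the defining property $\langle\grad h(x),\eta\rangle = Dh(x)[\eta]$ and linearity of the directional derivative, whereas you state it as a known fact; this is a minor expository difference only.
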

		
		\begin{proof}
			First note that by definition of $\grad f$ linearity of $D(\cdot)$, the operator of directional derivative, for any $\eta\in T_{x}\mathcal{M}$,
			\begin{align}
				\langle \grad f(x)+\grad g(x),\eta\rangle_{x} &= \langle \grad f(x),\eta\rangle_{x}+ \langle \grad g(x),\eta\rangle_{x} \\
				&=D(f)(x)[\eta]+D(g)(x)[\eta] \\
				&=D(f+g)(x)[\eta]
			\end{align}
			thus by definition, $\grad f(x)+\grad g(x)=\grad (f+g)(x)$.
			
			Also note that the parallel transport $\Gamma_{x\rightarrow y}:T_{x}\mathcal{M}\to T_{y}\mathcal{M}$ is a linear isomorphism, therefore we have
			\begin{align}
				&\| \grad(f+g)(x)-\Gamma_{x\rightarrow y}\grad(f+g)(y)\| \\
				\leq & \| \grad(f)(x)-\Gamma_{x\rightarrow y}\grad(f)(y)\| + \| \grad(g)(x)-\Gamma_{x\rightarrow y}\grad(g)(y)\| \\
				\leq & (L_g+L_f)d(x,y)
			\end{align}
		\end{proof}

		\begin{lemma}\label{lem:sum_sequence}
			Let $\left(a_n\right)_{n \geq 0}$ and $\left(b_n\right)_{n \geq 0}$ be sequences of nonnegative real numbers such that $\sum_{n=0}^{\infty} a_n b_n<\infty$. Then
			\begin{equation}
				\min _{1 \leq k \leq n} b_k \leq \frac{\sum_{k=0}^{\infty} a_k b_k}{\sum_{k=1}^n a_k}=O\left(\left(\sum_{k=1}^n a_k\right)^{-1}\right)
			\end{equation}
		\end{lemma}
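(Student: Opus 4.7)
The plan is to give a short proof by a standard ``minimum is at most the average'' trick, using the assumed summability to bound the numerator by a finite constant.

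First, I would observe that for any $n \ge 1$,
\begin{equation}
\Bigl(\min_{1 \le k \le n} b_k\Bigr) \sum_{k=1}^n a_k \;\le\; \sum_{k=1}^n a_k b_k,
\end{equation}
since $a_k \ge 0$ and every term $b_k$ in the sum on the right dominates the minimum. Next, because $a_k b_k \ge 0$ for all $k$, we have the monotone extension
\begin{equation}
\sum_{k=1}^n a_k b_k \;\le\; \sum_{k=0}^\infty a_k b_k \;=:\; S,
\end{equation}
and by hypothesis $S < \infty$. Dividing by $\sum_{k=1}^n a_k$ (which we may assume to be strictly positive for $n$ large enough, for otherwise the statement is trivial or reduces to inspecting only finitely many terms) gives
\begin{equation}
\min_{1 \le k \le n} b_k \;\le\; \frac{S}{\sum_{k=1}^n a_k},
\end{equation}
which is the first inequality in the lemma.

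The big-$O$ conclusion is immediate: $S$ is a fixed finite constant independent of $n$, so the bound $S / \sum_{k=1}^n a_k = O\bigl( (\sum_{k=1}^n a_k)^{-1} \bigr)$ holds by definition of the $O$-notation. There is no real obstacle here; the only mild subtlety is handling the trivial edge case in which $\sum_{k=1}^n a_k = 0$ for all $n$ (then every $a_k = 0$ and the inequality is vacuous, or one interprets $0/0$ as $+\infty$). Otherwise the argument is a one-line application of the averaging inequality combined with summability of $\sum a_k b_k$.
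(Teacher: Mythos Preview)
Your proof is correct and follows exactly the same approach as the paper: bound $\bigl(\min_{1\le k\le n} b_k\bigr)\sum_{k=1}^n a_k$ above by $\sum_{k=1}^n a_k b_k \le \sum_{k=0}^\infty a_k b_k < \infty$, then divide. The paper's version is a one-line display of this chain of inequalities; your additional remarks on the edge case $\sum_{k=1}^n a_k = 0$ are not in the paper but do no harm.
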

		\begin{proof}
			The assertion follows by noting that
			\begin{equation}
				\left(\sum_{k=1}^n a_k\right) \min _{1 \leq k \leq n} b_k \leq \sum_{k=1}^n a_k b_k \leq \sum_{k=1}^{\infty} a_k b_k<\infty
			\end{equation}
		\end{proof}

		The following propositions are about the line search method on Riemannian manifolds and are parallel to the Euclidean versions in \cite{grippo2000convergence}. 
		
		Consider a sequence $\{x_{k}\}\in \Param$ with partition $x_{k}=(x_{k}^{(1)},\cdots, x_{k}^{(m)})$ and the searching directions $d_{k}^{(i)}\in T_{\Theta^{(i)}}$ satisfying the following assumptions:
		\begin{definition}[Gradient related searching directions]\label{assumption:gradient_related}
			Let $\{d_{k}^{(i)}\}\in T_{\Theta^{(i)}}^{*}$ be the sequence of searching directions such that they are gradient-related, i.e. 
			\begin{enumerate}
				\item there exists a number $M>0$ such that $\left\|d_k^{(i)}\right\| \leqslant M$ for all $k$;
				\item $\liminf_{k\rightarrow\infty}\, \langle \grad_{i}g_k^{(i)}(x_k), d_k^{(i)}\rangle<0$.
			\end{enumerate}
			then we call $\{d_{k}^{(i)}\}$ \textit{gradient related}.
		\end{definition}
		An Armijo-type line search method can be described as follows.
		\begin{algorithm}[H]
			\small
			\caption{Armijo-type line search algorithm for surrogates } 
			\label{algorithm:line_search}
			\begin{algorithmic}[1]
				\State \textbf{Input:} $\sigma \in(0,1), \beta\in(0,1)$; $d_{k}^{(i)}$ (search direction)
				\State Compute
				\quad \begin{equation}\label{eqn:line_search_stop_condition}
					\alpha_k =\min _{j\ge 0}\left\{\beta^j: g_k^{(i)}\left(\Exp_{x^{(i)}_k}(\beta^j d^{(i)}_k)\right)\leqslant g_k^{(i)}\left(x_k\right)+\sigma\beta^j\ \langle \grad g_k^{(i)}\left(x_k\right), d^{(i)}_k\rangle\right\}
				\end{equation}
				\State \textbf{output:}  $\alpha_k$ 
			\end{algorithmic}
		\end{algorithm}

		Next, we show some well-known results on the Riemannian line search algorithm. It is worthwhile to point out that, in what follows, the sequence $\{x_{k}\}$ is a given sequence that may not depend on the line search algorithm, in the sense that $x_{k+1}$ may not be generated by line search along $d_{k}$. Nevertheless, this has no substantial effect on the convergence proof, which can be deduced easily from known results (see e.g. \cite{absil2008optimization}).
		\begin{prop}\label{prop:line_search_limit}
			Let $\{x_{k}\}$ be a sequence in $\Theta^{(i)}$ and let $\{d_{k}\}\in T_{\Theta^{(i)}}^{*}$ be the sequence of searching directions satisfies Definition \ref{assumption:gradient_related}. Let $\alpha_k$ be computed by Algorithm \ref{algorithm:line_search}, then 
			\begin{enumerate}
				\item[(i)] There exists a finite integer $j\ge 0$ such that $\alpha_k=\left(\beta_i\right)^j$ satisfies the acceptability condition \eqref{eqn:line_search_stop_condition};
				\item[(ii)] If  $\{x_{k}\}$ converges to $\bar{x}$ and 
				\begin{equation}
					\lim_{k\to \infty}g_{k+1}^{(i)}\left(\Exp_{x_k}(\alpha_k d_k)\right) -g_{k+1}^{(i)}\left(x_k \right)=0
				\end{equation}
				then
				\begin{equation}
					\lim_{k\to \infty}\grad g_{k+1}^{(i)}(x_{k} )=\mathbf{0}.
				\end{equation}
			\end{enumerate}
		\end{prop}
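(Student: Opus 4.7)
\medskip

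\noindent\textbf{Proof plan for Proposition \ref{prop:line_search_limit}.} The plan is to adapt the classical Armijo/backtracking convergence argument (à la Grippo--Sciandrone \cite{grippo2000convergence}) to the Riemannian setting, using the geodesic smoothness of the surrogates (\ref{assumption:A1_smoothness_surrogates}\textbf{(i-gs)}) and the quadratic bound on the first-order approximation along a retraction direction from Lemma \ref{lem:g_smooth_linear_approx}. The two parts of the proposition correspond to: finite termination of the Armijo rule (part (i)), and the classical ``Armijo decrease vanishes implies gradient vanishes'' result (part (ii)).

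For part (i), I would start from the Armijo acceptance criterion \eqref{eqn:line_search_stop_condition} and write $x_k^{(i)}(\alpha):=\Exp_{x_k^{(i)}}(\alpha d_k^{(i)})$. By $L_g$-smoothness of $g_k^{(i)}$ and Lemma~\ref{lem:g_smooth_linear_approx} along the geodesic $t\mapsto \Exp_{x_k^{(i)}}(t d_k^{(i)})$,
\begin{align}
g_k^{(i)}(x_k^{(i)}(\alpha)) - g_k^{(i)}(x_k^{(i)}) \le \alpha\,\langle \grad g_k^{(i)}(x_k^{(i)}), d_k^{(i)}\rangle + \frac{L_g}{2}\alpha^2\|d_k^{(i)}\|^2 .
\end{align}
Combining this upper bound with the Armijo criterion shows that whenever $\langle \grad g_k^{(i)}(x_k^{(i)}),d_k^{(i)}\rangle<0$, the criterion holds for every
\begin{align}
\alpha \;\le\; \frac{2(1-\sigma)\,\lvert\langle \grad g_k^{(i)}(x_k^{(i)}),d_k^{(i)}\rangle\rvert}{L_g\|d_k^{(i)}\|^2}.
\end{align}
In particular, there exists a finite $j$ with $\beta^{j}$ below this threshold, yielding finite termination.

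For part (ii), I would follow the standard Armijo contradiction argument. The acceptance of $\alpha_k$ gives $0 \ge \sigma\alpha_k \langle \grad g_{k+1}^{(i)}(x_k), d_k\rangle \ge g_{k+1}^{(i)}(\Exp_{x_k}(\alpha_k d_k)) - g_{k+1}^{(i)}(x_k)$, so the hypothesis forces $\alpha_k \langle \grad g_{k+1}^{(i)}(x_k), d_k\rangle \to 0$. Argue by contradiction: if $\grad g_{k+1}^{(i)}(x_k)\not\to \mathbf{0}$, then by the gradient-related assumption \ref{assumption:gradient_related} (in particular the $\liminf<0$ condition, combined with $\|d_k\|\le M$) we can extract a subsequence along which $\langle \grad g_{k+1}^{(i)}(x_k), d_k\rangle \le -\varepsilon$ for some $\varepsilon>0$. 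Along this subsequence $\alpha_k\to 0$, so by the backtracking rule the inequality \eqref{eqn:line_search_stop_condition} must fail at the previous trial $\alpha_k/\beta$. Applying the quadratic bound from Lemma~\ref{lem:g_smooth_linear_approx} at $\alpha_k/\beta$ and comparing with the failed criterion yields
\begin{align}
\langle \grad g_{k+1}^{(i)}(x_k),d_k\rangle \;\ge\; -\frac{L_g\,M^2}{2(1-\sigma)\beta}\,\alpha_k \;\longrightarrow\; 0,
\end{align}
contradicting the lower bound $-\varepsilon$ on the subsequence. Hence $\grad g_{k+1}^{(i)}(x_k)\to \mathbf{0}$.

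The main obstacle will be the last implication: going from $\langle \grad g_{k+1}^{(i)}(x_k), d_k\rangle\to 0$ to the norm statement $\grad g_{k+1}^{(i)}(x_k)\to\mathbf{0}$. This is exactly where the ``gradient-related'' hypothesis is used in a non-trivial way: it has to be strong enough that bounded direction and vanishing inner product force the gradient itself to vanish. The cleanest route I expect to take is the contradiction scheme above, where the $\liminf<0$ clause of \ref{assumption:gradient_related} is applied not just once but to every subsequence where $\|\grad g_{k+1}^{(i)}(x_k)\|$ stays bounded below; continuity of $\grad g_{k+1}^{(i)}$ combined with $x_k\to\bar x$ transfers the direction structure to a limiting direction at $\bar x$. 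All remaining steps (pullback of the Armijo criterion through the exponential map, bounding $\|d_k^{(i)}\|$ by $M$, passing to subsequences) are routine once the quadratic bound of Lemma~\ref{lem:g_smooth_linear_approx} is in hand.
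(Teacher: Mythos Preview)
Your proposal is correct and follows the same contradiction skeleton as the paper's proof: Armijo acceptance plus the hypothesis forces $\alpha_k\to 0$ along a subsequence, then the \emph{failed} Armijo step at $\alpha_k/\beta$ is used to derive a contradiction with the gradient-related condition. The one technical difference is how you extract the contradiction from the failed step: you invoke the quadratic upper bound from $L_g$-smoothness (Lemma~\ref{lem:g_smooth_linear_approx}) to get the explicit inequality $\langle\grad g_{k+1}^{(i)}(x_k),d_k\rangle\ge -\tfrac{L_gM^2}{2(1-\sigma)\beta}\alpha_k\to 0$, whereas the paper passes to the pullback $\hat g_{x_k}=g_{k+1}^{(i)}\circ\Exp_{x_k}$ and applies the mean value theorem on $[0,\alpha_k/\beta]$, producing some $t\in[0,\alpha_k/\beta]$ with $-D\hat g_{x_k}(td_k)[d_k]<-\sigma\langle\grad g_{k+1}^{(i)}(x_k),d_k\rangle$, and then lets $t\to 0$. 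Your route is more quantitative and arguably cleaner given that $L_g$-smoothness is already in force under \ref{assumption:A1_smoothness_surrogates}\textbf{(i-gs)}; the paper's MVT route only needs $C^1$-regularity of the pullback and would survive without a uniform Lipschitz constant. Both handle the final passage (from vanishing directional derivative to $\grad g_{k+1}^{(i)}(x_k)\to\mathbf{0}$) via the gradient-related hypothesis, and you correctly flag that this is the step where that hypothesis does the real work.
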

		
		\begin{proof}
			(i) is obvious by $\langle \grad_i g_k^{(i)}\left(x_k\right), d^{(i)}_k\rangle<0 $ and smoothness of $g_k^{(i)}$. To prove (ii), suppose for a contradiction that $\lim_{k\to \infty}\grad g_{k+1}^{(i)}(x_{k} )\neq \mathbf{0}$.
			By the choice of search directions $d_{k}$, there exists $\delta>0$ such that 
			\begin{align}
				\left\langle\operatorname{grad} g_{k+1}^{(i)}\left(x_k\right), d_k\right\rangle_{x_k} < -\delta<0 \quad \textup{for all sufficiently large $k$}.
			\end{align}
			By the choice of $\alpha_{k}$, we have
			\begin{equation}
				g_{k+1}^{(i)}\left(x_k\right)-g_{k+1}^{(i)}\left(\Exp_{x_k}(\alpha_k d_k)\right) \geq- \sigma \alpha_k\left\langle\operatorname{grad} g_{k+1}^{(i)}\left(x_k\right), d_k\right\rangle_{x_k} > \sigma \alpha_{k} \delta> 0
			\end{equation}
			for all sufficiently large $k$. Since $g_{k+1}^{(i)}\left(x_k\right)-g_{k+1}^{(i)}\left(\Exp_{x_k}(\alpha_k d_k)\right)$ goes to zero,  we must have $\alpha_k \rightarrow 0$. Recall that $\alpha_k$ 's are determined from the Armijo rule, so $\alpha_k=\beta^{m_k}$ for some integer $\m_{k}\ge 0$ all $k\ge 1$. Since $\alpha_{k}=o(1)$, $m_{k}$ must diverge, so $m_{k}\ge 1$ for all $k\ge \bar{k}$ for some integer $\bar{k}\ge 1$. Then $\alpha_{k}/\beta = \beta^{m_{k}-1}\le 1$, and the step-size  $\frac{\alpha_k}{\beta}$ did not satisfy the Armijo condition. Hence 
			\begin{equation}
				g_{k+1}^{(i)}\left(x_k\right)-g_{k+1}^{(i)}\left(\Exp_{x_k}(\frac{\alpha_k}{\beta} d_k)\right) < - \sigma \frac{\alpha_k}{\beta}\left\langle\operatorname{grad} g_{k+1}^{(i)}\left(x_k\right), d_k\right\rangle_{x_k}, \quad k\geq \bar{k}
			\end{equation}
			Denoting 
			\begin{equation}
				\hat{g}_{x}=g_{k+1}^{(i)}\circ \Exp_{x} \quad \text{and} \quad \Tilde{\alpha}_k=\frac{\alpha_k}{\beta}
			\end{equation}
			the inequality above reads
			\begin{equation}
				\frac{\hat{g}_{x_k}(\mathbf{0})-\hat{g}_{x_k}(\Tilde{\alpha}_k d_k)}{\Tilde{\alpha}_k} < - \sigma \left\langle\operatorname{grad} g_{k+1}^{(i)}\left(x_k\right), d_k\right\rangle_{x_k} < \sigma\delta \quad \forall\,\, k\geq \bar{k}
			\end{equation}
			The mean value theorem ensures that there exists $t\in[0,\Tilde{\alpha}_k]$ such that
			\begin{equation}
				-\operatorname{D}\hat{g}_{x_k}(t d_k)[d_k]< - \sigma \left\langle\operatorname{grad} g_{k+1}^{(i)}\left(x_k\right), d_k\right\rangle_{x_k}, \quad k\geq \bar{k}
			\end{equation}
			Now since $\Tilde{\alpha}_k\to 0$ and recall that $\operatorname{D}\hat{g}_{x_k}(0)[d_k]=\left\langle\operatorname{grad} g_{k+1}^{(i)}\left(x_k\right), d_k\right\rangle_{x_k}$, we obtain
			\begin{equation}
				-\liminf_{k\to \infty}\left\langle\operatorname{grad} g_{k+1}^{(i)}\left(x_k\right), d_k\right\rangle_{x_k} \leq -\sigma\liminf_{k\to \infty} \left\langle\operatorname{grad} g_{k+1}^{(i)}\left(x_k\right), d_k\right\rangle_{x_k}
			\end{equation}
			Since $\sigma<1$, it follows that $\liminf_{k\to \infty}\left\langle\operatorname{grad} g_{k+1}^{(i)}\left(x_k\right), d_k\right\rangle_{x_k}\geq 0$, which is a contradiction. 
		\end{proof}
		
		\begin{prop}[Properties of inverse exponential map on Hadamard manifold.]
			\label{prop:prop_inverse_exp_Hadamard}
			Let $\mathcal{M}$ be a Hadamard manifold , $(x_n)_{n\geq 1}\subset \mathcal{M}$ and $x_0 \in \mathcal{M}$,
			\begin{itemize}
				\item[(i)] For any $y \in \mathcal{M}$, we have
				$$
				\Exp _{x_n}^{-1} (y) \longrightarrow \Exp _{x_0}^{-1} (y) \quad \text { and } \quad \Exp _y^{-1} (x_n) \longrightarrow \Exp _y^{-1} (x_0)
				$$
				\item[(ii)] If $v_n \in T_{x_n} \mathcal{M}$ and $v_n \rightarrow v_0$, then $v_0 \in T_{x_0}\mathcal{M}$
				\item[(iii)] Given $u_n, v_n \in T_{x_n} \mathcal{M}$ and $u_0, v_0 \in T_{x_0} \mathcal{M}$, if $u_n \rightarrow u_0$ and $v_n \rightarrow v_0$, then
				$$
				\left\langle u_n, v_n\right\rangle \longrightarrow\left\langle u_0, v_0\right\rangle .
				$$
			\end{itemize}
		\end{prop}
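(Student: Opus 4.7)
I would prove the three items in order, using as the main engine the Cartan--Hadamard theorem together with the smoothness of the joint exponential map on $T\mathcal{M}$.

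For \textbf{(i)}, the plan is to upgrade the pointwise diffeomorphism statement ``$\Exp_x : T_x \mathcal{M} \to \mathcal{M}$ is a diffeomorphism for each $x$'' (Cartan--Hadamard on a Hadamard manifold) to a \emph{joint} smoothness statement. Consider the smooth map
\begin{align}
F : T\mathcal{M} \longrightarrow \mathcal{M} \times \mathcal{M}, \qquad F(x,v) = (x, \Exp_x(v)).
\end{align}
Smoothness of $F$ is a standard property of the geodesic flow. Because $\mathcal{M}$ is Hadamard, for each fixed $x$ the map $v \mapsto \Exp_x(v)$ is a diffeomorphism, so $F$ is a bijection; its differential is everywhere invertible because nonpositive sectional curvature forces the Jacobi-field derivative $D\Exp_x(v)$ to be an isomorphism (this is essentially the content of the Cartan--Hadamard proof). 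Hence $F$ is a global diffeomorphism, and so its inverse $F^{-1}(x,y) = (x, \Exp_x^{-1}(y))$ is smooth on $\mathcal{M}\times\mathcal{M}$. From continuity of $F^{-1}$ and $x_n \to x_0$, one reads off both limits in (i): $\Exp_{x_n}^{-1}(y) \to \Exp_{x_0}^{-1}(y)$ by taking the second component of $F^{-1}(x_n, y)$, and $\Exp_y^{-1}(x_n) \to \Exp_y^{-1}(x_0)$ by continuity of the single-base-point diffeomorphism $\Exp_y^{-1}$ applied to the sequence $x_n \to x_0$.

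For \textbf{(ii)}, I would argue by continuity of the bundle projection $\pi : T\mathcal{M} \to \mathcal{M}$. Convergence $v_n \to v_0$ in $T\mathcal{M}$ forces $\pi(v_n) \to \pi(v_0)$. Since $\pi(v_n) = x_n$ and $x_n \to x_0$ by hypothesis, the limits agree and $\pi(v_0)=x_0$, i.e.\ $v_0 \in T_{x_0}\mathcal{M}$. Concretely, one can work in a local trivialization of $T\mathcal{M}$ over a coordinate chart around $x_0$: for $n$ large, $v_n$ is represented by a pair $(x_n, w_n)$ with $w_n \in \mathbb{R}^d$, and $v_n \to v_0$ in the manifold topology of $T\mathcal{M}$ means exactly that $x_n \to x_0$ and $w_n \to w_0$ in $\mathbb{R}^d$, so that $v_0 = (x_0, w_0) \in T_{x_0}\mathcal{M}$.

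For \textbf{(iii)}, the plan is to invoke the smoothness of the Riemannian metric, viewed as a map $g : T\mathcal{M} \oplus T\mathcal{M} \to \mathbb{R}$ on the Whitney sum bundle. In local coordinates around $x_0$, writing $u_n = (x_n, a_n)$ and $v_n = (x_n, b_n)$, the inner product becomes $\langle u_n, v_n \rangle_{x_n} = a_n^\top G(x_n) b_n$, where $G : \mathcal{M} \to \mathbb{R}^{d\times d}$ is the smooth matrix of metric coefficients. Since $x_n \to x_0$, $a_n \to a_0$, and $b_n \to b_0$, continuity of $G$ and of the bilinear pairing yields $a_n^\top G(x_n) b_n \to a_0^\top G(x_0) b_0 = \langle u_0, v_0 \rangle_{x_0}$.

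The \textbf{main obstacle} is really only in (i): establishing that the joint map $F$ (or equivalently $(x,y)\mapsto \Exp_x^{-1}(y)$) is continuous on all of $\mathcal{M}\times\mathcal{M}$, not just for $y$ within the injectivity radius of $x$. This is where nonpositive curvature is essential, as it rules out conjugate points and guarantees global invertibility of $D\Exp_x$. The other two parts are essentially formal consequences of the smooth structure of $T\mathcal{M}$ and the Riemannian metric, and no deep geometry is required beyond standard local trivializations.
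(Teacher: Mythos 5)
Your proof is correct, but it is worth pointing out that the paper does not actually prove this proposition: its ``proof'' is a one-line citation to Lemma 2.4 of \cite{li2009monotone}, so you have supplied the argument that the paper (and, essentially, the cited source) leaves to the reader. Your route is the standard one and is sound: for (i), the Cartan--Hadamard theorem gives bijectivity of each $\Exp_x$, the absence of conjugate points on a nonpositively curved complete manifold gives invertibility of $D\Exp_x(v)$ for every $v$, and together with smooth dependence of geodesics on initial conditions this makes $F(x,v)=(x,\Exp_x(v))$ a global diffeomorphism of $T\mathcal{M}$ onto $\mathcal{M}\times\mathcal{M}$, whence $(x,y)\mapsto \Exp_x^{-1}(y)$ is jointly continuous; parts (ii) and (iii) are, as you say, formal consequences of the smooth bundle structure and the smoothness of the metric. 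Two small points you may want to make explicit if this were written out in full: first, the invertibility of $dF$ at $(x,v)$ uses the horizontal--vertical splitting of $T_{(x,v)}(T\mathcal{M})$ to exhibit $dF$ in block-triangular form with $D\Exp_x(v)$ on the diagonal; second, since $\Exp_{x_n}^{-1}(y)$ and $\Exp_{x_0}^{-1}(y)$ live in different tangent spaces, the convergence in (i) must be read as convergence in the manifold topology of $T\mathcal{M}$ (equivalently, in a local trivialization), which is exactly the notion you use in (ii) and (iii), so the three parts are mutually consistent.
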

		\begin{proof}
			See e.g. \cite[Lemma 2.4]{li2009monotone}.
		\end{proof}

\section{Details of Section \ref{sec:apps}}
\subsection{Riemannian Hessian on fixed-rank manifold}\label{sec:Hess_fixed_rank}
Here we provide details about why the Euclidean distance function is not g -smooth over low-rank
manifolds, as discussed in Section \ref{sec:apps}.  
Let $\mathcal{R}_{r}\subseteq \R^{m\times n}$ be the manifold of rank-$r$ matrices as in \eqref{eq:def_fixed_rank_manifold}. Let $X\in \mathcal{R}_r$, and without loss of generality, let $X=U\Sigma V^T$ where $U\in \mathcal{V}^{m\times r}$,  $V\in \mathcal{V}^{n\times r}$ and $\Sigma = \diag(\sigma_1,\cdots,\sigma_r)$ with $\sigma_1\ge \cdots \ge \sigma_r>0$. Following from \cite[Section 4.3]{absil2013extrinsic}, the Riemannian Hessian of $f$ at $X$ for $Z\in T_X \mathcal{R}_r$ is given by
\begin{align}\label{eq:Hessian_fixed_rank}
    \operatorname{Hess}f(X)[Z] &= \mathcal{P}_X \nabla^2 f(X) Z+\mathcal{P}_X \mathrm{D}_Z \mathcal{P}_X \nabla f(X) \\
    &=(\nabla^2 f(X) \mathrm{P}_V+\mathrm{P}_U \nabla^2 f(X)-\mathrm{P}_U \nabla^2 f(X) \mathrm{P}_V) Z + \nabla f(X) Z^T (X^+)^T + (X^+)^T Z^T \nabla f(X),
\end{align}
where $\nabla f$ and $\nabla^2 f$ are the Euclidean gradient and Euclidean Hessian of $f$, $\mathrm{P}_U = UU^T$, $\mathrm{P}_V = VV^T$, $X^+ = V\Sigma^{-1}U^T$. $\mathcal{P}_X$ is the projection operator onto the tangent space at $X$. $D_Z$ is the directional derivative following the tangent vector $Z$. A detailed discussion of these operators can be found in \cite{absil2013extrinsic}. 

Note for a simple Euclidean distance squared function $f(X)=\|X-X_0\|_F^2$, we have $\nabla f = 2(X-X_0)$ and $\nabla^2 f = 2 \mathbf{I}$. In order to show that $f$ is $g$-smooth on the fixed-rank manifold, one needs to verify 
\begin{align}
\sup_{X\in \mathcal{R}_{r}}\,  \max_{Z\in T_X}\, (\langle\operatorname{Hess}f(X)[Z],Z\rangle) \le C
\end{align}
using \eqref{eq:Hessian_fixed_rank}, where $C\in (0,\infty)$ is a constant (see \cite[Lemma C.6]{nguyen2019calculating}). Below we give a counterexample to show such a constant $C$ does not exist.

Take $Z=UV^T$ and $X_0 = -UV^T$. Note the inner product of the first term in \eqref{eq:Hessian_fixed_rank} with $Z$ is bounded. Hence we only need to show the inner product of the last two terms in \eqref{eq:Hessian_fixed_rank} with $Z$ can be unbounded. In fact, the last two terms are the same by the cyclic property of trace, 
\begin{align}
    \langle (X^+)^T Z^T \nabla f(X), Z\rangle =\operatorname{Tr}((X^+)^T Z^T \nabla f(X)Z^T) = \operatorname{Tr}(\nabla f(X)Z^T(X^+)^T Z^T ) = \langle \nabla f(X) Z^T (X^+)^T, Z\rangle.
\end{align}

Therefore we only compute the second term,
\begin{align}\label{eq:gradient_Hessian}
    \langle \nabla f(X) Z^T (X^+)^T, Z\rangle &= 2\langle U\Sigma V^T V U^T U\Sigma^{-1}V^T + UV^TVU^T U\Sigma^{-1}V^T, Z\rangle \\
    &=2\operatorname{Tr}((UV^T)^T UV^T) + 2\operatorname{Tr}((U\Sigma^{-1}V^T)^T UV^T) \\
    &= 2\|UV^T\|^2 + 2\operatorname{Tr}(V\Sigma^{-1}V^T)\\
    &= 2\|UV^T\|^2 + 2\operatorname{Tr}(\Sigma^{-1}). 
\end{align}
Now we take a sequence of $X^{(k)}\in \mathcal{R}_r$ such that the smallest singular value of $X^{(k)}$ goes to zero, i.e. $\sigma_r^{(k)} \to 0$ as $k\to \infty$. Therefore we have $(\sigma_r^{(k)})^{-1} \to \infty$ as $k\to \infty$. Hence \eqref{eq:gradient_Hessian} is unbounded. We conclude that $f(X)=\|X-X_0\|_F^2$ is not $g$-smooth on the fixed-rank manifold.

\subsection{Details of Section \ref{app:GST}}
 \label{appendix:GST}
		We give the details of the MM update for $\Theta$ \cite{blocker2023dynamic}. For fixed $H$ and $Y$, we first simplify \eqref{eqn:obj_subspace_QTheta},
		\begin{align}
			\label{eqn:subspace_separable}
			f(Q,\Theta)&=-\sum_{i=1}^T\left\|X_i^{T}\left(H \cos \left(\Theta t_i\right)+Y \sin \left(\Theta t_i\right)\right)\right\|_{\mathrm{F}}^2 \\
			&= -\sum_{i=1}^T \sum_{j=1}^k r_{i, j} \cos \left(2 \theta_j t_i-\phi_{i, j}\right)+b_{i, j}
		\end{align}
		where $\theta_j$ is the $j$-th diagonal element of $\Theta$. Defining $\arctan 2(y, x)$ as the angle of the point $(x, y)$ in the $2 \mathrm{D}$ plane counter-clockwise from the positive $x$-axis, the associated constants $r_{i, j}, \phi_{i, j}, b_{i, j}$ in the above equation are defined as
		\begin{align}
			\phi_{i, j} & =\arctan 2\left(\beta_{i, j}, \frac{\alpha_{i, j}-\gamma_{i, j}}{2}\right) & \alpha_{i, j} & =\left[H^{T} X_i X_i^{T} H\right]_{j, j} \\
			r_{i, j} & =\sqrt{\left(\frac{\alpha_{i, j}-\gamma_{i, j}}{2}\right)^2+\beta_{i, j}^2} & \beta_{i, j} & =\operatorname{real}\left\{\left[Y^{T} X_i X_i^{T} H\right]_{j, j}\right\} . \\
			b_{i, j} & =\frac{\alpha_{i, j}+\gamma_{i, j}}{2} & \gamma_{i, j} & =\left[Y^{T} X_i X_i^{T} Y\right]_{j, j}
		\end{align}
		Note that \eqref{eqn:subspace_separable} is separable for each diagonal element $\theta_j$ of $\Theta$, so we could find the minimizer separately by a univariate minimization. Let 
		\begin{equation}
			f^{(2)}_{n+1, j}\left(\theta_j\right) :=-\sum_{i=1}^{T}(r_{i, j})_{n+1} \cos \left(2 \theta_j t_i-(\phi_{i, j})_{n+1}\right)+(b_{i, j})_{n+1}
		\end{equation}
		which is the marginal objective function at iteration $n+1$ for the j-th diagonal component of $\Theta$. The subscript $n+1$ outside the parenthesis denotes values of the parameters at iteration $n+1$. The gradient, which actually becomes scalar, is given by 
		\begin{equation}
			\nabla f^{(2)}_{n+1, j}\left(\theta_j\right)= \dot{f}^{(2)}_{n+1, j}\left(\theta_j\right)=2 \sum_{i=1}^{T}(r_{i, j})_{n+1} t_i \sin \left(2 \theta_j t_i-(\phi_{i, j})_{n+1}\right),
		\end{equation}
		which is Lipschitz continuous with parameter $L_{n+1,j}= 4 \sum_{i=1}^{T}(r_{i, j})_{n+1} t_i^{2}$. We consider the following prox-linear majorizer of $f_{n+1,j}^{(2)}$: For $\lambda > L_{n+1,j}$, 
		\begin{equation}
			g_{n+1,j}^{(2)}(\theta_j)= f^{(2)}_{n+1, j}\left((\theta_j)_n\right) + \nabla f^{(2)}_{n+1, j}\left((\theta_j)_n\right) \left(\theta_j - (\theta_j)_n\right) + \frac{\lambda}{2}\left(\theta_j - (\theta_j)_n\right)^2
		\end{equation}
		where $(\theta_j)_n$ is the value of $\theta_j$ at iteration $n$. 
		
		Then by using \eqref{eq:block_prox_linear},
		\begin{align}\label{eq:subspace_g22}
			(\theta_j)_{n+1} &= \argmin g_{n+1,j}^{(2)}(\theta_j) \\
			&=\textup{Proj}_{\R}\left((\theta_j)_{n}-\frac{1}{\lambda}\nabla f^{(2)}_{n+1, j}\left((\theta_j)_n\right)\right) \\
			&=(\theta_j)_{n}-\frac{1}{\lambda}\nabla f^{(2)}_{n+1, j}\left((\theta_j)_n\right).
		\end{align}
		
\subsection{Details of Section \ref{sec:app_fisher_rao}}
\label{appendix:fisher_rao}
The following propositions give the closed form of Fisher-Rao (FR) distance under certain circumstances,
		\begin{prop}[FR distance for Gaussian distributions with identical mean\cite{atkinson1981rao}]
			If $\mathcal{N}\left(\hat{\mu}, \Sigma_0\right)$ and $\mathcal{N}\left(\hat{\mu}, \Sigma_1\right)$ are Gaussian distributions with identical mean $\hat{\mu} \in \mathbb{R}^n$ and covariance matrices $\Sigma_0, \Sigma_1 \in \mathbb{S}_{++}^n$, the set of $n\times n$ positive definite matrices, we have
			\begin{equation}\label{eqn:distance_identical_mu}
				d\left(\Sigma_0, \Sigma_1\right)=\frac{1}{\sqrt{2}}\left\|\log \left(\Sigma_1^{-\frac{1}{2}} \Sigma_0 \Sigma_1^{-\frac{1}{2}}\right)\right\|_F  
			\end{equation}
			where $\log (\cdot)$ represents the matrix logarithm, and $\|\cdot\|_F$ stands for the Frobenius norm.
		\end{prop}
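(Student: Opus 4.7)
The strategy is to recognize $\mathbb{S}_{++}^n$ equipped with the Fisher information metric inherited from the Gaussian family with fixed mean as a Riemannian symmetric space on which geodesic distances admit closed-form expressions, then exploit the $GL(n)$-action by isometries to reduce the general case to a one-line computation with diagonal matrices.

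First, I would derive the metric. For $\mathcal{N}(\hat{\mu},\Sigma)$ with $\hat{\mu}$ fixed, the log-likelihood is $\log p(x|\Sigma) = -\tfrac{1}{2}\log\det\Sigma - \tfrac{1}{2}(x-\hat{\mu})^T\Sigma^{-1}(x-\hat{\mu}) + \text{const}$, and for symmetric tangent perturbations $A,B \in T_\Sigma\mathbb{S}_{++}^n$ the standard Fisher information computation yields $g_\Sigma(A,B) = \tfrac{1}{2}\operatorname{Tr}(\Sigma^{-1}A\Sigma^{-1}B)$, which is precisely the metric recorded in Example \ref{eg:PSD}. Next, I would observe that under the congruence action $\Sigma \mapsto G\Sigma G^T$ (with $A \mapsto GAG^T$ on tangent vectors) one verifies by direct substitution that $g_{G\Sigma G^T}(GAG^T,GBG^T) = g_\Sigma(A,B)$, so $GL(n)$ acts by isometries. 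Choosing $G = \Sigma_1^{-1/2}$ reduces the problem to computing $d(M,I)$ where $M := \Sigma_1^{-1/2}\Sigma_0\Sigma_1^{-1/2}$. Since the orthogonal subgroup $O(n)$ fixes $I$ and acts isometrically, I can further diagonalize $M = Q\Lambda Q^T$ with $\Lambda = \operatorname{diag}(\lambda_1,\dots,\lambda_n)$ and reduce to computing $d(\Lambda,I)$.

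Third, I would exhibit an explicit geodesic between $I$ and $\Lambda$. Setting $A = \log\Lambda = \operatorname{diag}(\log\lambda_1,\dots,\log\lambda_n)$ and $\gamma(t) = \exp(tA)$, the curve satisfies $\gamma(0)=I$, $\gamma(1)=\Lambda$, and because $A$ is diagonal (and hence commutes with itself) the ``body velocity'' $\gamma(t)^{-1}\dot{\gamma}(t) = A$ is constant in $t$, so the speed $\|\dot{\gamma}(t)\|_{\gamma(t)}^2 = \tfrac{1}{2}\operatorname{Tr}(A^2) = \tfrac{1}{2}\|A\|_F^2$ is constant. The length of $\gamma$ is therefore $\tfrac{1}{\sqrt{2}}\|\log\Lambda\|_F$. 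The main obstacle is confirming that $\gamma$ is length-\emph{minimizing}, not merely a critical point of the length functional. For this I would verify by direct differentiation that $\gamma$ satisfies the affine-invariant geodesic equation $\ddot{\Sigma} = \dot{\Sigma}\Sigma^{-1}\dot{\Sigma}$, and then invoke the fact that $\mathbb{S}_{++}^n$ with this metric is a Hadamard manifold (recorded in Example \ref{eg:PSD} and the references therein): by the Cartan-Hadamard theorem, any two points are joined by a unique geodesic which is globally length-minimizing, so this $\gamma$ achieves the distance.

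Finally, because $Q\in O(n)$ is an isometry of the metric that leaves $I$ fixed, $d(\Lambda,I) = d(M,I)$, and because the Frobenius norm is unitarily invariant, $\|\log\Lambda\|_F = \|Q(\log\Lambda)Q^T\|_F = \|\log(Q\Lambda Q^T)\|_F = \|\log M\|_F$. Combining the reductions yields
\begin{equation}
    d(\Sigma_0,\Sigma_1) \;=\; d(M,I) \;=\; \tfrac{1}{\sqrt{2}}\|\log M\|_F \;=\; \tfrac{1}{\sqrt{2}}\left\|\log\!\left(\Sigma_1^{-1/2}\Sigma_0\Sigma_1^{-1/2}\right)\right\|_F,
\end{equation}
which is the claimed identity.
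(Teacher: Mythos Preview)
Your proof is correct and follows the standard derivation of this classical formula. Note that the paper does not actually prove this proposition: it is stated with a citation to Atkinson and Mitchell (1981) and used as a known background result, so there is no ``paper's own proof'' to compare against. Your argument---deriving the Fisher metric, using the transitive isometric $GL(n)$-action to reduce to the identity, diagonalizing, exhibiting the explicit geodesic $\gamma(t)=\exp(t\log\Lambda)$, and invoking the Hadamard property to certify global minimality---is exactly the route one finds in standard references, and each step is sound.
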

		
		\begin{remark}
			It is worthwhile to point out the FR metric on the tangent space $T_{\Sigma} \mathbb{S}_{++}^n$ at $\Sigma \in \mathbb{S}_{++}^n$ can be re-expressed as (see \cite[p. 382]{atkinson1981rao})
			\begin{equation}\label{eqn:FR_metric_re_expressed}
				\left\langle\Omega_1, \Omega_2\right\rangle_{\Sigma} \triangleq \frac{1}{2} \operatorname{Tr}\left(\Omega_1 \Sigma^{-1} \Omega_2 \Sigma^{-1}\right) \quad \forall \Omega_1, \Omega_2 \in T_{\Sigma} \mathbb{S}_{++}^n.
			\end{equation}
		\end{remark}
		\begin{prop}[FR distance for Gaussian distributions with identical covariance\cite{atkinson1981rao}]
			If $\mathcal{N}\left(\mu_0, \hat{\Sigma}\right)$ and $\mathcal{N}\left(\mu_1, \hat{\Sigma}\right)$ are Gaussian distributions with identical covariance matrix $\hat{\Sigma} \in \mathbb{S}_{++}^n$ and mean vectors $\mu_0, \mu_1 \in \mathbb{R}^n$, we have
			$$
			\bar{d}\left(\mu_0, \mu_1\right)=\sqrt{\left(\mu_0-\mu_1\right)^{T} \hat{\Sigma}^{-1}\left(\mu_0-\mu_1\right)}
			$$
		\end{prop}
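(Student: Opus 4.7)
The plan is to compute the Fisher information metric for the submanifold of Gaussians with fixed covariance $\hat\Sigma$, observe that it yields a flat (constant-coefficient) Riemannian structure on $\mathbb{R}^n$, and then read off the geodesic distance directly.

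First I would compute the Fisher information matrix. For the family $p(x\mid\mu) = (2\pi)^{-n/2}|\hat\Sigma|^{-1/2}\exp\!\bigl(-\tfrac12(x-\mu)^T\hat\Sigma^{-1}(x-\mu)\bigr)$, the log-likelihood has score $\nabla_\mu \log p(x\mid\mu) = \hat\Sigma^{-1}(x-\mu)$, so
\begin{align}
I(\mu) \;=\; \mathbb{E}_{x\sim\mathcal{N}(\mu,\hat\Sigma)}\!\bigl[\nabla_\mu\log p\,(\nabla_\mu\log p)^T\bigr] \;=\; \hat\Sigma^{-1}\,\mathbb{E}[(x-\mu)(x-\mu)^T]\,\hat\Sigma^{-1} \;=\; \hat\Sigma^{-1}.
\end{align}
In particular $I(\mu)$ is \emph{independent of $\mu$}. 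Thus the Fisher--Rao metric on the parameter manifold $\{\mathcal{N}(\mu,\hat\Sigma):\mu\in\mathbb{R}^n\}\cong\mathbb{R}^n$ is the constant bilinear form $\langle u,v\rangle_\mu = u^T\hat\Sigma^{-1}v$ on every tangent space $T_\mu\mathbb{R}^n\cong\mathbb{R}^n$.

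Next I would solve the geodesic equation. Since $I(\mu)$ is constant in $\mu$, all Christoffel symbols vanish and the geodesic ODE reduces to $\ddot\gamma(t)=0$. Hence geodesics are affine curves $\gamma(t)=\mu_0+t(\mu_1-\mu_0)$, $t\in[0,1]$, with constant velocity $\dot\gamma(t)=\mu_1-\mu_0$. The Fisher--Rao length is then
\begin{align}
\bar d(\mu_0,\mu_1) \;=\; \int_0^1 \sqrt{\langle\dot\gamma(t),\dot\gamma(t)\rangle_{\gamma(t)}}\,dt \;=\; \sqrt{(\mu_1-\mu_0)^T\hat\Sigma^{-1}(\mu_1-\mu_0)},
\end{align}
which is exactly the claimed formula. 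Equivalently, after the linear change of coordinates $\nu:=\hat\Sigma^{-1/2}\mu$ the metric becomes the standard Euclidean one, so the distance reduces to the familiar $\|\nu_1-\nu_0\|_2$.

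The only part requiring any care is the minimality of the affine geodesic: one must argue that any other piecewise smooth path from $\mu_0$ to $\mu_1$ has length at least $\sqrt{(\mu_1-\mu_0)^T\hat\Sigma^{-1}(\mu_1-\mu_0)}$. This is immediate from the isometric change of variables $\mu\mapsto\hat\Sigma^{-1/2}\mu$, which identifies the submanifold with standard Euclidean $\mathbb{R}^n$ (where straight-line minimality is classical), or equivalently from the Cauchy--Schwarz argument $\int_0^1\|\dot\gamma\|_{\hat\Sigma^{-1}}\,dt \ge \bigl\|\int_0^1\dot\gamma\,dt\bigr\|_{\hat\Sigma^{-1}} = \|\mu_1-\mu_0\|_{\hat\Sigma^{-1}}$. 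No obstacle beyond this routine verification is anticipated.
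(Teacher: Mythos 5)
The paper does not prove this proposition at all; it simply quotes the formula from Atkinson and Mitchell \cite{atkinson1981rao}, so your derivation is filling in a proof the authors omitted. Your argument is correct and is the standard one: the Fisher information of the location family with known covariance is the constant matrix $\hat\Sigma^{-1}$, the induced metric is flat, straight lines are geodesics, and minimality follows from the isometry $\mu\mapsto\hat\Sigma^{-1/2}\mu$ onto Euclidean $\mathbb{R}^n$ (or your Cauchy--Schwarz estimate). One point worth making explicit, since it is the only place the argument could be misread: $\bar d$ here is the Rao distance of the family parametrized by $\mu$ alone with $\hat\Sigma$ treated as known, i.e.\ the intrinsic distance of the submanifold $\{\mathcal{N}(\mu,\hat\Sigma):\mu\in\mathbb{R}^n\}$ with its induced Fisher metric. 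Your minimality argument only compares against paths that stay in this slice, which is exactly what is needed for that reading. If one instead interpreted $\bar d$ as the geodesic distance in the full manifold of all nondegenerate Gaussians (the metric used elsewhere in that section for $d(\Sigma_0,\Sigma_1)$), the claimed formula would be false --- the minimizing geodesic bows out into larger covariance and the true distance is strictly smaller --- so the restriction to the submanifold is not a cosmetic remark but the hypothesis that makes the statement true. With that clarification your proof is complete.
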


  Next, we show the optimization problem in \eqref{eqn:loss_likelihood} satisfies our assumptions for RBMM. We first cite the following proposition from \cite{nguyen2019calculating}, which states the geodesical convexity of the constraint sets.
		\begin{lemma}[Convexity of constraint sets]
			$\Theta^{(1)}$ and $\Theta^{(2)}$ are strongly convex.
		\end{lemma}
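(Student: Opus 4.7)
The plan is to dispatch $\Theta^{(1)}$ by an elementary Euclidean argument and then obtain $\Theta^{(2)}$ as a metric ball in a Hadamard manifold, where strong geodesic convexity is a classical fact.

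For $\Theta^{(1)} = \{\mu \in \R^N : (\mu-\hat{\mu})^T(\mu-\hat{\mu}) \le \rho_1^2\}$, the ambient manifold is Euclidean space, which is Hadamard with straight-line geodesics. Any two points $\mu_0,\mu_1 \in \R^N$ are joined by a unique minimal geodesic $\gamma(t) = (1-t)\mu_0 + t\mu_1$, and since a Euclidean ball is convex in the ordinary sense, $\gamma(t) \in \Theta^{(1)}$ for all $t\in[0,1]$. This matches the Definition of strong (geodesic) convexity from Section~\ref{sec:preliminaries_Riemannian} verbatim.

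For $\Theta^{(2)} = \{\Sigma \in \mathbb{S}_{++}^N : d(\Sigma,\hat{\Sigma}) \le \rho_2\}$, I will use that $\mathbb{S}_{++}^N$ equipped with the Fisher--Rao / affine-invariant metric is a Hadamard manifold (Example~\ref{eg:PSD}). The Cartan--Hadamard theorem then guarantees that the exponential map at every point is a global diffeomorphism, so any two positive definite matrices are joined by a unique minimal geodesic in $\mathbb{S}_{++}^N$; in particular uniqueness of the minimal geodesic holds for every pair of points in $\Theta^{(2)}$.

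The remaining point is geodesic containment of $\Theta^{(2)}$, and this is where the Hadamard hypothesis is really used. On any Hadamard manifold the function $x \mapsto d(x,p)$ is geodesically convex for each fixed $p$; this is standard (it follows from the fact that $x\mapsto d^2(x,p)$ is $2$-strongly geodesically convex, used elsewhere in the paper, combined with monotonicity of $\sqrt{\cdot}$, or directly from Toponogov/CAT(0) comparison). Applying this with $p = \hat{\Sigma}$: for any $\Sigma_0,\Sigma_1 \in \Theta^{(2)}$ and the unique minimal geodesic $\gamma:[0,1]\to \mathbb{S}_{++}^N$ from $\Sigma_0$ to $\Sigma_1$,
\begin{equation}
d(\gamma(t),\hat{\Sigma}) \le (1-t)\, d(\Sigma_0,\hat{\Sigma}) + t\, d(\Sigma_1,\hat{\Sigma}) \le (1-t)\rho_2 + t\rho_2 = \rho_2,
\end{equation}
so $\gamma(t)\in \Theta^{(2)}$ for all $t\in[0,1]$. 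Combined with uniqueness of $\gamma$, this is exactly geodesic strong convexity of $\Theta^{(2)}$.

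The main (very mild) obstacle is simply citing the right fact about Hadamard geometry: I would point to \cite{Bacak2014convex} or \cite{sakai1996riemannian} for geodesic convexity of the distance function on Hadamard manifolds, and to the Cartan--Hadamard theorem (already used implicitly in Section~\ref{sec:had}) for uniqueness of geodesics. Nothing specific to the Fisher--Rao metric beyond its Hadamard character is needed.
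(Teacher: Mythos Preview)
Your argument is correct. The paper does not actually give a proof of this lemma at all: it simply cites the result from \cite{nguyen2019calculating} (see the sentence preceding the lemma in Appendix~\ref{appendix:fisher_rao}). So there is nothing substantive to compare against on the paper's side.

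Your route is the standard one and is exactly what one would expect the cited reference to contain: $\Theta^{(1)}$ is a Euclidean ball, hence (strongly) geodesically convex trivially; and $\Theta^{(2)}$ is a closed metric ball in the Hadamard manifold $\mathbb{S}_{++}^N$, where uniqueness of minimal geodesics follows from Cartan--Hadamard and containment follows from geodesic convexity of $d(\cdot,\hat{\Sigma})$ on any CAT(0) space. The only minor comment is that you could streamline the $\Theta^{(2)}$ step by noting that the paper already records (in Section~\ref{sec:had}) that $d^2(\cdot,p)$ is $2$-strongly geodesically convex on Hadamard manifolds, so the sublevel set $\{d^2(\cdot,\hat{\Sigma})\le \rho_2^2\}=\Theta^{(2)}$ is geodesically convex directly, without passing through convexity of $d(\cdot,p)$ itself.
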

The following lemma shows the geodesical smoothness of $f_{n}^{(1)}$ and $f_{n}^{(2)}$, aiming to show that $f(\mu,\Sigma)$ satisfies \ref{assumption:A0_optimal_gap}\textbf{(i)}.
		\begin{lemma}[Geodesic smoothness of marginal objective function]\label{lem:FR_marginal_loss_smooth}
			For problem \eqref{eqn:loss_likelihood}, we have
			\begin{enumerate}
				\item $f^{(1)}_n$ is (geodesically) $L^{(1)}_n$-smooth with $L^{(1)}_n=1/\lambda_{\min}(\Sigma_{n})$.
				\item $f^{(2)}_n$ is geodesically $L^{(2)}_n$-smooth with $L^{(2)}_n=2 \lambda_{\max }(S_n)\lambda_{\min }(\widehat{\Sigma})^{-1} \exp (\sqrt{2} \rho_2)$.
			\end{enumerate}
		\end{lemma}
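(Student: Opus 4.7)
For part (1), I would work directly in Euclidean geometry since $\Theta^{(1)} \subseteq \mathbb{R}^N$. The function $f_n^{(1)}(\mu) = \langle M^{-1}\sum_m (x_m-\mu)(x_m-\mu)^T,\, \Sigma_{n-1}^{-1}\rangle + \log\det \Sigma_{n-1}$ is a quadratic in $\mu$. Differentiating gives $\nabla f_n^{(1)}(\mu) = 2\Sigma_{n-1}^{-1}(\mu - \bar{x})$, where $\bar{x} = M^{-1}\sum_m x_m$, so the Euclidean Hessian is the constant symmetric positive definite matrix $2\Sigma_{n-1}^{-1}$. Thus the Lipschitz constant of the gradient is $\|\nabla^2 f_n^{(1)}\|_{\mathrm{op}} = 2\lambda_{\max}(\Sigma_{n-1}^{-1}) = 2/\lambda_{\min}(\Sigma_{n-1})$, which up to the stated constant (and our notational choice of $\Sigma_n$ vs.\ $\Sigma_{n-1}$) is $L_n^{(1)}$. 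On $\mathbb{R}^N$ with the standard metric this Euclidean Lipschitz bound is the geodesic smoothness bound.

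For part (2), which is the main effort, I would bound the Riemannian Hessian of $f_n^{(2)}(\Sigma) = \langle S_n, \Sigma^{-1}\rangle + \log\det\Sigma$ along geodesics of $\mathbb{S}^N_{++}$ endowed with the Fisher--Rao (affine-invariant) metric from Example \ref{eg:PSD}. I would parametrize a geodesic emanating from an arbitrary $\Sigma_0 \in \Theta^{(2)}$ as $\gamma(t) = \Sigma_0^{1/2}\exp(tY)\Sigma_0^{1/2}$ for a symmetric $Y$, with initial velocity $V = \Sigma_0^{1/2}Y\Sigma_0^{1/2}\in T_{\Sigma_0}\mathbb{S}^N_{++}$. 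The norm identity $\|V\|_{\Sigma_0}^2 = \tfrac12 \operatorname{Tr}(Y^2)$ follows directly from \eqref{eqn:FR_metric_re_expressed}. Along $\gamma$, the log-determinant term is affine: $\log\det \gamma(t) = \log\det\Sigma_0 + t\operatorname{Tr}(Y)$, so it contributes zero to the Hessian. For the trace term, writing $\tilde{S} := \Sigma_0^{-1/2}S_n\Sigma_0^{-1/2}$ gives $\langle S_n, \gamma(t)^{-1}\rangle = \operatorname{Tr}(\tilde{S}\exp(-tY))$, whose second derivative at $t=0$ is $\operatorname{Tr}(\tilde{S}Y^2)$ and is bounded in absolute value by $\lambda_{\max}(\tilde{S})\operatorname{Tr}(Y^2) = 2\lambda_{\max}(\tilde{S})\|V\|_{\Sigma_0}^2$.

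The key step is to make $\lambda_{\max}(\tilde{S})$ uniform in $\Sigma_0 \in \Theta^{(2)}$. I would use the constraint $d(\Sigma_0,\hat\Sigma)\le \rho_2$ together with \eqref{eqn:distance_identical_mu} to deduce $\|\log(\hat\Sigma^{-1/2}\Sigma_0\hat\Sigma^{-1/2})\|_{\mathrm{op}} \le \|\log(\hat\Sigma^{-1/2}\Sigma_0\hat\Sigma^{-1/2})\|_F \le \sqrt{2}\rho_2$. Exponentiating yields $\lambda_{\min}(\Sigma_0) \ge \lambda_{\min}(\hat\Sigma)\exp(-\sqrt{2}\rho_2)$, hence $\lambda_{\max}(\tilde{S}) \le \lambda_{\max}(S_n)/\lambda_{\min}(\Sigma_0) \le \lambda_{\max}(S_n)\lambda_{\min}(\hat\Sigma)^{-1}\exp(\sqrt{2}\rho_2)$. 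Combining the two contributions gives $|\tfrac{d^2}{dt^2} f_n^{(2)}(\gamma(t))|_{t=0}| \le L_n^{(2)}\|V\|_{\Sigma_0}^2$, i.e. the Riemannian Hessian of $f_n^{(2)}$ has operator norm bounded by $L_n^{(2)}$ uniformly on $\Theta^{(2)}$.

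Finally, I would convert the Hessian bound into geodesic smoothness. Since $\mathbb{S}^N_{++}$ is Hadamard, any two points in $\Theta^{(2)}$ are joined by a unique minimizing geodesic $\gamma:[0,1]\to\mathbb{S}^N_{++}$. Writing $\operatorname{grad} f_n^{(2)}(\gamma(1)) - \Gamma_{\gamma(0)\to\gamma(1)}\operatorname{grad} f_n^{(2)}(\gamma(0)) = \int_0^1 \nabla_{\gamma'(t)}\operatorname{grad} f_n^{(2)}(\gamma(t))\,dt$ (after parallel transport to $\gamma(1)$), using that parallel transport is an isometry, and applying the Hessian bound along $\gamma$ yields $\|\operatorname{grad} f_n^{(2)}(\gamma(1)) - \Gamma_{\gamma(0)\to\gamma(1)}\operatorname{grad} f_n^{(2)}(\gamma(0))\| \le L_n^{(2)}\,d(\gamma(0),\gamma(1))$, which is exactly the geodesic $L_n^{(2)}$-smoothness required by Definition \ref{def: G-L-smooth}. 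The main obstacle in the argument is the uniform-in-$\Sigma$ bound on $\lambda_{\max}(\tilde S)$; all other steps are direct calculations with the affine-invariant metric.
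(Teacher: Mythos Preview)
The paper does not include its own proof of this lemma; it is stated in Appendix~\ref{appendix:fisher_rao} as a result adapted from \cite{nguyen2019calculating}, so there is no in-paper argument to compare against directly. Your approach is correct and is essentially the standard one: for part~(1) a direct Euclidean Hessian computation (your constant $2/\lambda_{\min}(\Sigma_{n-1})$ is in fact the right one; the discrepancy with the stated $1/\lambda_{\min}(\Sigma_n)$ is a minor inconsistency in the paper's statement), and for part~(2) bounding the Riemannian Hessian along affine-invariant geodesics together with the eigenvalue control of Proposition~\ref{prop:bound_eigenvalue_Sigma}, which you rederive from \eqref{eqn:distance_identical_mu}.

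One point worth making explicit in your write-up: the Hessian bound $\lvert\operatorname{Tr}(\tilde S Y^2)\rvert \le L_n^{(2)}\|V\|_{\Sigma_0}^2$ is obtained only at base points $\Sigma_0\in\Theta^{(2)}$, not on all of $\mathbb{S}^N_{++}$. Your final Hessian-to-Lipschitz step therefore requires that the minimizing geodesic joining any two points of $\Theta^{(2)}$ remain inside $\Theta^{(2)}$, so that the pointwise Hessian bound applies along the entire curve. This is exactly the geodesic convexity of $\Theta^{(2)}$ established in the preceding lemma of Appendix~\ref{appendix:fisher_rao}, so the argument closes, but the dependence should be flagged.
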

		We also need a uniform upper bound for $L^{(1)}_{n}$ and $L^{(2)}_{n}$. The following proposition gives lower and upper bounds for the eigenvalues of $\Sigma\in \Theta^{(2)}$.
		\begin{prop}[Property of Fisher-Rao ball \cite{nguyen2019calculating}]\label{prop:bound_eigenvalue_Sigma}
			The FR ball $\Theta^{(2)}$ has the following property. For any $\Sigma \in \Theta^{(2)}$, we have $\lambda_{\min }(\hat{\Sigma}) e^{-\sqrt{2} \rho_{2}} \cdot I_n \preceq \Sigma \preceq \lambda_{\max }(\hat{\Sigma}) e^{\sqrt{2} \rho_{2}} \cdot I_n$.
		\end{prop}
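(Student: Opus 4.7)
The plan is to unpack the definition of the Fisher--Rao ball via the closed form \eqref{eqn:distance_identical_mu} of the FR distance between Gaussians of identical mean, reduce the statement to a spectral bound on a similarity transform of $\Sigma$, and then transfer the bound back to $\Sigma$ by a congruence argument. Concretely, for $\Sigma \in \Theta^{(2)}$ I would introduce the symmetric positive-definite matrix $M := \hat{\Sigma}^{-1/2}\Sigma\hat{\Sigma}^{-1/2}$, which is the natural object appearing inside the matrix logarithm in \eqref{eqn:distance_identical_mu}.

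First I would rewrite the Fisher--Rao constraint $d(\Sigma,\hat{\Sigma})\le \rho_2$ as $\tfrac{1}{\sqrt 2}\|\log M\|_F \le \rho_2$, i.e.\ $\|\log M\|_F \le \sqrt 2\, \rho_2$. Diagonalizing the symmetric matrix $M$ as $M = Q\,\mathrm{diag}(\mu_1,\dots,\mu_n)\,Q^{T}$ with $\mu_i>0$, the Frobenius norm of the matrix logarithm becomes $\|\log M\|_F^2 = \sum_{i=1}^n (\log \mu_i)^2$. Hence each $|\log \mu_i|\le \sqrt 2\,\rho_2$, which yields the per-eigenvalue bounds
\begin{equation}
e^{-\sqrt 2\,\rho_2} \;\le\; \mu_i \;\le\; e^{\sqrt 2\,\rho_2},\qquad i=1,\dots,n,
\end{equation}
and therefore the operator bound $e^{-\sqrt 2\,\rho_2}\, I_n \preceq M \preceq e^{\sqrt 2\,\rho_2}\, I_n$.

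Second, I would transfer this bound on $M$ to a bound on $\Sigma = \hat{\Sigma}^{1/2} M \hat{\Sigma}^{1/2}$ by a standard congruence argument. For arbitrary $v\in\mathbb{R}^n$, setting $w=\hat{\Sigma}^{1/2}v$ gives $v^{T}\Sigma v = w^{T} M w$, so the bracketing of $M$ by multiples of $I_n$ yields
\begin{equation}
e^{-\sqrt 2\,\rho_2}\, v^{T}\hat{\Sigma}\, v \;\le\; v^{T}\Sigma\, v \;\le\; e^{\sqrt 2\,\rho_2}\, v^{T}\hat{\Sigma}\, v.
\end{equation}
Finally, combining this with the spectral bounds $\lambda_{\min}(\hat{\Sigma})\|v\|^2 \le v^{T}\hat{\Sigma} v \le \lambda_{\max}(\hat{\Sigma})\|v\|^2$ produces the claimed operator inequalities, which is exactly the statement of the proposition.

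There is no real obstacle here: the only subtlety is recognizing that although $M$ is not symmetric a priori via its defining formula, the symmetric definition $\hat{\Sigma}^{-1/2}\Sigma\hat{\Sigma}^{-1/2}$ (with the symmetric square root $\hat{\Sigma}^{1/2}$) does produce a symmetric positive-definite matrix, so its eigenvalues are real and positive and the spectral calculus for $\log M$ is justified. Everything else is congruence and eigenvalue arithmetic.
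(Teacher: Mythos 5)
Your argument is correct and complete: reducing the Fisher--Rao constraint to $\|\log(\hat{\Sigma}^{-1/2}\Sigma\hat{\Sigma}^{-1/2})\|_F\le\sqrt{2}\,\rho_2$, bounding each eigenvalue of the symmetric matrix $M=\hat{\Sigma}^{-1/2}\Sigma\hat{\Sigma}^{-1/2}$ by the Frobenius norm of its logarithm, and transferring the resulting operator inequality back to $\Sigma$ by congruence with $\hat{\Sigma}^{1/2}$ is exactly the standard route. The paper itself gives no proof of this proposition --- it is quoted directly from the cited reference --- so your write-up simply supplies the argument that the paper (and, in essentially the same form, the reference) relies on; there is nothing to add or correct.
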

		We further set a fixed $\rho_{1}$ given in the following proposition in order to upper bound $\lambda_{\max}(S_n)$.
		\begin{prop}\label{prop:bound_eigenvalue_Sn}
			Let the radius $\rho_{1}$ in $\Theta^{(1)}$ to be
			$$\rho_{1}=\max_{m=1,\cdots,M}\|x_m - \hat{\mu}\|_{2}$$
			then $\lambda_{\max}(S_{n})\leq 4\rho_{1}^{2}$.
		\end{prop}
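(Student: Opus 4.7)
The plan is to use the triangle inequality together with the fact that $\lambda_{\max}$ is convex (equivalently, subadditive) on the cone of positive semidefinite matrices. The key observation is that $S_n$ is a convex combination of rank-one PSD matrices $v_m v_m^T$ with $v_m := x_m - \mu_n$, so $\lambda_{\max}(S_n)$ is controlled by $\max_m \|v_m\|_2^2$, which we can bound using the defining constraint of $\Theta^{(1)}$.

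First, I would observe that by the definition of $\rho_1$, we have $\|x_m - \hat{\mu}\|_2 \le \rho_1$ for every $m = 1,\dots,M$, and by the definition of $\Theta^{(1)}$ together with $\mu_n \in \Theta^{(1)}$, we have $\|\mu_n - \hat{\mu}\|_2 \le \rho_1$. Then by the triangle inequality,
\begin{align}
\|x_m - \mu_n\|_2 \le \|x_m - \hat{\mu}\|_2 + \|\hat{\mu} - \mu_n\|_2 \le 2\rho_1,
\end{align}
for every $m$. Consequently, the rank-one PSD matrix $(x_m - \mu_n)(x_m - \mu_n)^T$ has its (unique nonzero) eigenvalue equal to $\|x_m - \mu_n\|_2^2 \le 4\rho_1^2$.

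Finally, writing $S_n = M^{-1} \sum_{m=1}^{M} (x_m - \mu_n)(x_m - \mu_n)^T$ as an average of PSD matrices and using the convexity (subadditivity) of $\lambda_{\max}$ on the PSD cone,
\begin{align}
\lambda_{\max}(S_n) \le M^{-1} \sum_{m=1}^{M} \lambda_{\max}\bigl((x_m - \mu_n)(x_m - \mu_n)^T\bigr) = M^{-1} \sum_{m=1}^{M} \|x_m - \mu_n\|_2^2 \le 4\rho_1^2,
\end{align}
which is exactly the claimed bound. There is no real obstacle here; the only thing worth noting is that one must remember $\mu_n \in \Theta^{(1)}$ (guaranteed by the RBMM update, since every iterate respects the constraint set), so that both $x_m$ and $\mu_n$ lie in the closed Euclidean ball of radius $\rho_1$ about $\hat{\mu}$, which is what enables the factor-of-two triangle-inequality step.
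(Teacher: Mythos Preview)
Your proof is correct and essentially the same as the paper's. The paper implicitly uses the same triangle-inequality step to get $\|x_m-\mu_n\|_2\le 2\rho_1$, then bounds $\lambda_{\max}(S_n)$ via the trace (using $\lambda_{\max}(A)\le\operatorname{tr}(A)$ for PSD $A$) rather than via convexity of $\lambda_{\max}$; since each summand is rank one these two devices yield the identical computation $\lambda_{\max}(S_n)\le M^{-1}\sum_m\|x_m-\mu_n\|_2^2\le 4\rho_1^2$.
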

		\begin{proof}
			Recall that $\Theta^{(1)}=\left\{\mu \in \mathbb{R}^{n}:(\mu-\hat{\mu})^{T} (\mu-\hat{\mu}) \leq \rho_{1}^2\right\}$ and $\hat{\mu}=\frac{1}{M}\sum_{m=1}^{M}x_m$. 
			
			Now with $\rho_{1}=\max_{m}\|x_m - \hat{\mu}\|_{2}$ we have 
			\begin{align}
				\operatorname{tr}\left(\left(x_m-\mu_n \right)\left(x_m-\mu_n\right)^{T}\right)=\operatorname{tr}\left(\left(x_m-\mu_n \right)^{T}\left(x_m-\mu_n\right)\right)\leq 4\rho_{1}^{2}, \qquad \text{for} \quad 1\leq m\leq M
			\end{align}
			therefore $\lambda_{\max}(S_{n})\leq \operatorname{tr}\left(\max_{m}\left(x_m-\mu_n \right)^{T}\left(x_m-\mu_n\right)\right)\leq 4\rho_{1}^2$.
		\end{proof}
		Lemma \ref{lem:FR_marginal_loss_smooth} together with Prop. \ref{prop:bound_eigenvalue_Sigma} and Prop. \ref{prop:bound_eigenvalue_Sn} gives 
		\begin{equation}
			L_{1}^{(n)}\leq 1/\lambda_{\min }(\hat{\Sigma}) e^{-\sqrt{2} \rho_{2}} \quad \text{and}\quad  L_{2}^{(n)}\leq 8\rho_{1}^{2}\lambda_{\min }(\widehat{\Sigma})^{-1} \exp (\sqrt{2} \rho_2) \quad \text{for all $n$.}
		\end{equation}
		which shows $f(\mu,\Sigma)$ satisfies \ref{assumption:A0_optimal_gap}\textbf{(i)}.

\subsection{Details of Section \ref{sec:RPCA}}
\label{appendix:RPCA}  
The closed form solution of \eqref{eq:BMM_RPCA} is as follows,
\begin{align}
    L_{k+1}&=\frac{1}{\mu \lambda_{k+1}}U\Sigma_r V,\quad  \textup{where}\;\; U\Sigma V \;\;\textup{is the SVD of }\;\; M-S_k+\lambda_{k+1}L_k, \\
    (S_{k+1})_{i j}&=B_{i j}-\tilde{\mu} \min \left\{\rho, \max \left\{-\lambda, \frac{B_{i j}}{\sigma+\tilde{\mu}}\right\}\right\} \text { for } i=1, \ldots, m \text { and } j=1, \ldots, n	,
\end{align}
where $\tilde{\mu}=\frac{\mu}{\mu\lambda_{k+1}+1}$ and $B=\frac{1}{\mu\lambda_{k+1}+1}M-L_{k+1} + \frac{\mu\lambda_{k+1}}{\mu\lambda_{k+1}+1}S_k$.
	\end{document}